\title{Fluctuations of the local times of the self-repelling random walk with directed edges}
\author{Laure Mar{\^e}ch\'e}
\email{laure.mareche@math.unistra.fr}
\address{Institut de Recherche Mathématique Avancée, 
UMR 7501 Université de Strasbourg et CNRS, 
7 rue René-Descartes, 67000 Strasbourg, France}
\theoremstyle{plain}
\newtheorem{theorem}{Theorem}
\newtheorem*{theoremstar}{Theorem}
\newtheorem{lemma}[theorem]{Lemma}
\newtheorem{proposition}[theorem]{Proposition}
\newtheorem*{propositionstar}{Proposition}
\newtheorem{claim}[theorem]{Claim}
\theoremstyle{definition}
\theoremstyle{remark}
\newtheorem{remark}[theorem]{Remark}
\begin{document}

\begin{abstract}
In 2008, Tóth and Vető defined the self-repelling random walk with directed edges as a \emph{non-Markovian} random walk on $\mathds{Z}$: in this model, the probability that the walk moves from a point of $\mathds{Z}$ to a given neighbor depends on the number of previous crossings of the directed edge from the initial point to the target, called the local time of the edge. They found this model had a very peculiar behavior, as the process formed by the local times of all the edges, evaluated at a stopping time of a certain type and suitably renormalized, converges to a deterministic process, instead of a random one as in similar models. In this work, we study the fluctuations of the local times process around its deterministic limit, about which nothing was previously known. We prove that these fluctuations converge in the Skorohod $M_1$ topology, as well as in the uniform topology away from the discontinuities of the limit, but \emph{not} in the most classical Skorohod topology. We also prove the convergence of the fluctuations of the aforementioned stopping times.
\end{abstract}

\maketitle

\noindent\textbf{MSC2020:} Primary 60F17; Secondary 60G50, 60K35, 82C41.
\\
\textbf{Keywords:} Self-interacting random walks, self-repelling random walk with directed edges, local times, functional limit theorems, fluctuations.

\section{Introduction and results}

\subsection{Self-interacting random walks} 

The study of self-interacting random walks began in 1983 in an article of Amit et al. \cite{Amit_et_al1983}. Before \cite{Amit_et_al1983}, the expression ``self-avoiding random walk'' referred to paths on graphs that do not intersect themselves. However, these are not easy to construct step by step, hence one would consider the set of all possible paths of a given length. Since one does not follow a single path as it grows with time, it is not really a random walk model. In order to work with an actual random walk model with a self-avoiding behavior, the authors of \cite{Amit_et_al1983} introduced the \emph{``true'' self-avoiding random walk}. It is a random walk on $\mathds{Z}^d$ for which, at each step, the position of the process at the next step is chosen randomly among the neighbors of the current position depending on the number of the previous visits to said neighbors, with lower probabilities for those that have been visited the most. This process is a random walk in the sense that it is constructed step by step, but contrary to most random walks in the literature, it is \emph{non-Markovian}: at each step, the law of the next step depends on the whole past of the process.

It turns out that the ``true'' self-avoiding random walk is hard to study. This led to the introduction by Tóth \cite{Toth1994,Toth1995,Toth1996} of non-Markovian random walks \emph{with bond repulsion}, for which the probability to go from one site to another, instead of depending of the number of previous visits to the target, depends on the number of previous crossings of the undirected edge between the two sites, which is called the \emph{local time} of the edge, with lower probabilities for the edges that were crossed the most in the past. These walks are much easier to study, at least on $\mathds{Z}$, because one can apply the \emph{Ray-Knight approach} to them. This approach was introduced by Ray and Knight in \cite{Ray1963,Knight_1963}, and used for the first time for non-Markovian random walks by Tóth in \cite{Toth1994,Toth1995,Toth1996}. Since then, it was applied to many other non-Markovian random walks, such as a continuous-time version of the ``true'' self-avoiding random walk in \cite{Toth_et_al2011}, \emph{edge-reinforced random walks} (see the corresponding part of the review \cite{Pemantle2007} and references therein) and \emph{excited random walks} (see \cite{Kosygina_et_al2022} and references therein). The Ray-Knight approach works as follows: though the random walk itself is not Markovian, if we stop it when the local time at a given edge has reached a certain threshold, then the local times on the edges will form a Markov chain, which allows their analysis. Thanks to this approach, Tóth was able to prove scaling limits for the local times process for many different random walks with bond repulsion in his works \cite{Toth1994,Toth1995,Toth1996}. The law of the limit depends on the random walk model, but it is always a random process\footnote{The model studied by Tóth in \cite{Toth1997} has a deterministic limit, but it is not a random walk with bond repulsion, as it is \emph{self-attracting}: the more an edge was crossed in the past, the more likely it is to be crossed in the future.}.

\subsection{The self-repelling random walk with directed edges}

In 2008, Tóth and Vető \cite{Toth_et_al2008} introduced a process seemingly very similar to the aforementioned random walks with bond repulsion, in which the probability to go from one site to another depends on the number of crossings of the \emph{directed} edge between them instead of the crossings of the undirected edge. This process, called \emph{self-repelling random walk with directed edges}, is a nearest-neighbor random walk on $\mathds{Z}$ defined as follows. For any set $A$, we denote by $|A|$ the cardinal of $A$. Let $w : \mathds{Z} \mapsto (0,+\infty)$ be a non-decreasing and non-constant function. We will denote the walk by $(X_n)_{n\in\mathds{N}}$. We set $X_0=0$, and for any $n\in\mathds{N}$, $i\in\mathds{Z}$, we denote $\ell^\pm(n,i)=|\{0 \leq m \leq n-1 \,|\, (X_m,X_{m+1})=(i,i\pm1)\}|$ the number of crossings of the directed edge $(i,i\pm1)$ before time $n$, that is the \emph{local time} of the directed edge at time $n$. Then 
\[
\mathds{P}(X_{n+1}=X_n\pm1)=\frac{w(\pm(\ell^-(n,X_n)-\ell^+(n,X_n)))}{w(\ell^+(n,X_n)-\ell^-(n,X_n))+w(\ell^-(n,X_n)-\ell^+(n,X_n))}. 
\]

Using the local time of directed edges instead of that of undirected edges may seem like a very small change in the definition of the process, but the behavior of the self-repelling random walk with directed edges is actually very different from that of classical random walks with bond repulsion. Indeed, Tóth and Vető \cite{Toth_et_al2008} were able to prove that the local times process has a \emph{deterministic} scaling limit, which is in sharp contrast with the random limit processes obtained for the random walks with bond repulsion on undirected edges \cite{Toth1994,Toth1995,Toth1996} and even for the simple random walk~\cite{Knight_1963}. 

The result of \cite{Toth_et_al2008} is as follows. For any $a\in\mathds{R}$, we denote $a_+=\max(a,0)$. If for any $n\in\mathds{N}$, $i\in\mathds{Z}$, we denote by $T_{n,i}^\pm$ the stopping time defined by $T_{n,i}^\pm=\min\{m\in\mathds{N}\,|\,\ell^\pm(m,i) = n\}$, then $T_{n,i}^\pm$ is almost-surely finite by Proposition 1 of \cite{Toth_et_al2008} and we have the following.

\begin{theoremstar}[Theorem 1 of \cite{Toth_et_al2008}]
 For any $\theta>0$, $x\in \mathds{R}$, then $\sup_{y \in \mathds{R}}|\frac{1}{N}\ell^+(T_{\lfloor N \theta\rfloor,\lfloor N x\rfloor}^\pm,\lfloor N y\rfloor)-(\frac{|x|-|y|}{2}+\theta)_+|$ converges in probability to 0 when $N$ tends to $+\infty$.
\end{theoremstar}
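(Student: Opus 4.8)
The plan is to follow the ``Ray-Knight approach'': stop the walk at the relevant stopping time, show that the resulting directed local times form a space-Markov chain, and prove a (deterministic) law of large numbers for it. By reflecting the walk in the origin and noting that the dynamics uses $w$ only through the sign of $\ell^--\ell^+$, it suffices to treat $x\geq 0$ and $T:=T^+_{\lfloor N\theta\rfloor,\lfloor Nx\rfloor}$; write $n=\lfloor N\theta\rfloor$, $m=\lfloor Nx\rfloor$, so that $X_T=m+1$. First I would record the exact deterministic identities tying neighbouring directed local times: counting signed crossings of the cut between $i$ and $i+1$ gives $\ell^+(T,i)-\ell^-(T,i+1)=\mathds{1}_{X_T>i}-\mathds{1}_{X_0>i}$, and balancing arrivals against departures at a site gives $\ell^+(T,i-1)+\ell^-(T,i+1)=\ell^+(T,i)+\ell^-(T,i)+\mathds{1}_{X_T=i}-\mathds{1}_{X_0=i}$. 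Both right-hand sides are $O(1)$ and vanish except near $i\in\{0,m,m+1\}$, so these identities express the whole profile in terms of $i\mapsto\ell^+(T,i)$ up to uniformly bounded corrections, and it is enough to prove a uniform law of large numbers for that sequence.

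The heart of the argument is to show that $i\mapsto\ell^+(T,i)$ is an inhomogeneous Markov chain, run outward from the marked sites $0,m,m+1$, and to identify its drift. The Markov property is the standard excursion-revealing fact: once one reveals the local times at all edges on one side of a cut together with the trajectory on that side, what remains is a concatenation of excursions across the cut whose number is the corresponding value of $\ell^+(T,\cdot)$ and whose joint law depends only on that number, since during such an excursion the walk sees and modifies only the directed local times strictly on the far side, which all start from $0$. For the drift one uses the single-site self-repelling mechanism: in the clock of successive visits to a site, the difference of its two directed local times performs a mean-reverting random walk whose restoring drift towards $0$ is bounded away from $0$ once the difference is large --- this is exactly where one uses that $w$ is non-decreasing and non-constant --- so after $K$ visits that difference is $O(\log K)$ with overwhelming probability. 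Substituting this into the conservation identities pins the per-site imbalance, leaving $\frac1N\ell^+(T,\cdot)$ as the solution of a deterministic difference equation whose limit, with the boundary value $\theta$ at $y=x$ coming from $\ell^+(T,m)=n$ and the profile truncated at $0$ (the walk no longer creates local time where it has none), is $(\frac{|x|-|y|}{2}+\theta)_+$. One must also check, from the same identities plus a crude a priori bound on $T$, that with probability tending to $1$ the walk visits no site whose distance to $\{-(|x|+2\theta)N,(|x|+2\theta)N\}$ exceeds $o(N)$ before time $T$, so that the profile is genuinely $0$ off the support of the limit.

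It remains to bound the fluctuations around this solution. Crucially, the $O(\log N)$ bound on each site's imbalance, fed through the conservation identities, forces \emph{every} increment $\ell^+(T,i)-\ell^+(T,i-1)$ to be $O(\log N)$ as well, uniformly over the $O(N)$ relevant sites; so writing $\ell^+(T,i)$ as the deterministic solution plus the martingale of compensated Markov-chain increments plus the accumulated drift error, the martingale has increments bounded by $O(\log N)$ and hence running maximum $o(N)$ with probability tending to $1$ by an Azuma-type concentration bound, while the drift error is controlled by summation by parts using the same $O(\log N)$ imbalance bound. This is in sharp contrast with the bond-repulsion models, where the analogous space-Markov chain has $\Theta(N)$ fluctuations and a genuinely random scaling limit. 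Together with the range bound and the bounded conservation corrections, this yields the claimed uniform-in-$y$ convergence in probability.

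The step I expect to be the main obstacle is the drift identification in the second paragraph --- in particular, proving that the random, a priori $w$-dependent per-site corrections cancel to leave a \emph{deterministic, $w$-independent} limiting profile with slope exactly $\tfrac12$. Making the bookkeeping of the self-repelling mechanism precise enough to see this collapse to a deterministic shape, rather than a mere $o(N)$ bound on the imbalance, is where most of the real work lies.
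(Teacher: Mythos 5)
Your overall architecture --- Ray--Knight reduction, the two crossing/conservation identities, the space-Markov property of $i\mapsto\ell^+(T,i)$, and a concentration bound for the resulting sum --- is the same as in Tóth and Vető's proof, whose ingredients this paper recalls in Section \ref{sec_coupling} (compare your identities with \eqref{eq_local_times} and \eqref{eq_local_times_2}). But the step you yourself flag as ``where most of the real work lies'' is exactly the step your proposal does not contain, and the mechanism you offer for it cannot deliver it. Knowing that the single-site imbalance $\ell^-(T,i)-\ell^+(T,i)$ is $O(\log N)$ after many visits gives, through the conservation identities, increments $\ell^+(T,i-1)-\ell^+(T,i)=O(\log N)$, hence only $O(N\log N)$ control of the profile over the $O(N)$ relevant sites --- no law of large numbers at all. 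What is needed is the exact mean of the imbalance, and this does not follow from mean reversion; it follows from sampling the imbalance at the right random times. If $\xi_i$ is the imbalance in the clock of visits to $i$ and one observes it at its successive downward steps (the correct sampling when $X_T$ lies to the right of $i$, since then the last move out of $i$ is to the right), one obtains the Markov chain $\eta$ whose unique invariant law $\rho_-$ has the explicit product form \eqref{eq_rho}; that formula satisfies $\rho_-(i)=\rho_-(-1-i)$, so its mean is exactly $-\tfrac12$ for \emph{every} admissible $w$. Adding the crossing correction $\mathds{1}_{\{j>0\}}$ yields increments of mean $\pm\tfrac12$, which is where the slope of the triangle and its $w$-independence come from. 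To turn this into a law of large numbers one further needs quantitative convergence of $\eta$ to $\rho_-$ (Lemma 1 of \cite{Toth_et_al2008}, quoted as Lemma \ref{lem_Toth} here), which allows coupling the $\eta_{j,-}(\ell^+(T,j))$ with genuinely i.i.d.\ $\rho_-$-distributed variables once $\ell^+(T,j)$ exceeds a mixing scale; your ``accumulated drift error'' is controlled by this exponential mixing, not by summation by parts against the $O(\log N)$ imbalance bound.

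A second, smaller gap: the coupling necessarily fails where the local times are small, i.e., near the tips $\pm(|x|+2\theta)N$ of the triangle, and your range statement (``the walk visits no site whose distance \dots exceeds $o(N)$'') is asserted rather than derived. The standard way to close this is the comparison with the auxiliary chain $L(m+1)=L(m)+\eta_m(L(m))$ and the hitting-time estimate $\mathds{E}(\tau\,|\,L(0)=k)\le 3k+K$ (Lemma 2 of \cite{Toth_et_al2008}, Lemma \ref{lem_Toth_tau} here), which shows that once the local time drops below the mixing scale it reaches $0$ within $o(N)$ further sites, after which it is absorbed at $0$; this is what confines the profile to an $o(N)$-neighbourhood of the support of the limit.
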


Thus the local times process of the self-repelling random walk with directed edges admits the deterministic scaling limit $: y \mapsto (\frac{|x|-|y|}{2}+\theta)_+$, which has the shape of a triangle. This also implies the following convergence result to a deterministic limit for the $T_{\lfloor N \theta\rfloor,\lfloor N x\rfloor}^\pm$.

\begin{propositionstar}[Corollary 1 of \cite{Toth_et_al2008}]
 For any $\theta>0$, $x\in \mathds{R}$, then $\frac{1}{N^2}T_{\lfloor N \theta\rfloor,\lfloor N x\rfloor}^\pm$ converges in probability to $(|x|+2\theta)^2$ when $N$ tends to $+\infty$.
\end{propositionstar}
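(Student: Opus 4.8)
The plan is to deduce the statement from Theorem 1 of \cite{Toth_et_al2008} by means of an exact ``conservation of steps'' identity. The starting point is the following elementary remark, valid for every $n\in\mathds{N}$: each of the first $n$ steps of the walk is the crossing of exactly one directed edge, so the number of rightward steps equals $\sum_{i\in\mathds{Z}}\ell^+(n,i)$, the number of leftward steps equals $\sum_{i\in\mathds{Z}}\ell^-(n,i)$, their sum is $n$, and their difference is $X_n-X_0=X_n$; eliminating the leftward steps gives
\[
n=2\sum_{i\in\mathds{Z}}\ell^+(n,i)-X_n.
\]
I would then apply this at the stopping time $T:=T^\pm_{\lfloor N\theta\rfloor,\lfloor Nx\rfloor}$, which is almost surely finite by Proposition 1 of \cite{Toth_et_al2008}, and divide by $N^2$ to get
\[
\frac{1}{N^2}\,T=\frac{2}{N^2}\sum_{i\in\mathds{Z}}\ell^+(T,i)-\frac{X_T}{N^2}.
\]
So it is enough to prove that $\frac{1}{N^2}\sum_{i\in\mathds{Z}}\ell^+(T,i)$ converges in probability to $\frac12(|x|+2\theta)^2$ and that $\frac{1}{N^2}X_T$ converges in probability to $0$.

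For the first convergence, set $f(y)=\big(\tfrac{|x|-|y|}{2}+\theta\big)_+$, the triangular profile appearing in Theorem 1 of \cite{Toth_et_al2008}; it is continuous, supported on $[-(|x|+2\theta),\,|x|+2\theta]$, and a direct computation gives $\int_{\mathds{R}}f(y)\,dy=\frac12(|x|+2\theta)^2$. Writing
\[
\frac{1}{N^2}\sum_{i\in\mathds{Z}}\ell^+(T,i)=\frac1N\sum_{i\in\mathds{Z}}\frac1N\,\ell^+\!\big(T,\lfloor N\cdot(i/N)\rfloor\big),
\]
I would compare this with the Riemann sum $\frac1N\sum_{i\in\mathds{Z}}f(i/N)$, which tends to $\int_{\mathds{R}}f$ since $f$ is continuous with compact support. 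The difference between the two sums is bounded by $\frac1N$ times the number of indices $i$ for which $\ell^+(T,i)$ or $f(i/N)$ is nonzero, times $\sup_{y\in\mathds{R}}\big|\frac1N\ell^+(T,\lfloor Ny\rfloor)-f(y)\big|$; the last factor tends to $0$ in probability by Theorem 1 of \cite{Toth_et_al2008}, and, since $\ell^+(T,i)$ vanishes off the range of $(X_m)_{0\le m\le T}$ while $f(i/N)$ vanishes for $|i|\ge(|x|+2\theta)N$, the number of nonzero indices is $O(R_T+N)$, where $R_T$ denotes the diameter of the range of the walk up to time $T$.

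Everything therefore reduces to an a priori bound showing that $R_T/N$ is bounded in probability (which also yields $|X_T|\le R_T=o_{\mathds{P}}(N^2)$, hence the second convergence). This is the one input not contained in the bare statement of Theorem 1 of \cite{Toth_et_al2008}, and I expect it to be the main obstacle: a uniform control of $\frac1N\ell^+(T,\lfloor Ny\rfloor)$ for fixed $y$ does not by itself exclude a long tail of edges carrying a small but positive local time, so it does not by itself bound the range. However, such a bound is available from the proof of Theorem 1 of \cite{Toth_et_al2008}, which already controls the walk on the spatial scale $N$ throughout the interval $[0,T]$; if a self-contained argument is preferred, one can instead bound the displacement of the walk directly, using that $w$ is non-decreasing together with standard hitting-time estimates to show that the walk has not travelled more than $O(N)$ by time $O(N^2)$. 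Granting $R_T=O_{\mathds{P}}(N)$, the comparison above gives $\frac{1}{N^2}\sum_{i\in\mathds{Z}}\ell^+(T,i)\to\frac12(|x|+2\theta)^2$ and $\frac{X_T}{N^2}\to0$ in probability, and substituting into the displayed identity yields $\frac{1}{N^2}T\to 2\cdot\frac12(|x|+2\theta)^2=(|x|+2\theta)^2$ in probability, as claimed.
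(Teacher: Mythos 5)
This proposition is quoted from \cite{Toth_et_al2008} (their Corollary 1) and the present paper gives no proof of it, so there is nothing in-paper to compare against line by line; I am judging your argument on its own terms. Your skeleton is the standard and correct one: the step-counting identity $n=2\sum_{i}\ell^+(n,i)-X_n$ is right, $\int_{\mathds{R}}\bigl(\tfrac{|x|-|y|}{2}+\theta\bigr)_+\mathrm{d}y=\tfrac12(|x|+2\theta)^2$ is right, and the Riemann-sum comparison is valid \emph{provided} you can bound the number of nonzero summands by $O(N)$. (A small simplification: $X_T=\lfloor Nx\rfloor\pm1$ deterministically by the definition of $T^\pm_{n,i}$, so the term $X_T/N^2$ needs no range bound at all.)

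The one genuine gap is exactly the one you flag: the bound $R_T=O_{\mathds{P}}(N)$ on the range, and you do not close it. It truly cannot be extracted from the \emph{statement} of Theorem 1 of \cite{Toth_et_al2008}: uniform convergence of $\tfrac1N\ell^+(T,\lfloor Ny\rfloor)$ to $0$ outside $[-|x|-2\theta,|x|+2\theta]$ is compatible with local times equal to $1$ on a stretch of length $N^{3/2}$, which would make both $\sum_i\ell^+(T,i)/N^2$ and the range blow up. Your proposed ``self-contained'' fix is circular as stated: to show the walk has not travelled more than $O(N)$ ``by time $O(N^2)$'' you would first need the upper bound $T=O_{\mathds{P}}(N^2)$, which is the harder half of what you are proving. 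The clean way to close the gap is the one used elsewhere in this paper: with $I^+=\inf\{i\geq\chi(N):\ell^-(T_N,i)=0\}$ and $I^-=\sup\{i<\chi(N):\ell^+(T_N,i)=0\}$, the walk never visits any site $\geq I^+$ nor any site $\leq I^-$ before time $T_N$ (otherwise it would have to recross the corresponding directed edge to return to $X_{T_N}=\lfloor Nx\rfloor\iota1$, contradicting the vanishing of the local time there), so the range is contained in $(I^-,I^+)$; and $I^\pm=\mp(|x|+2\theta)N+O_{\mathds{P}}(N^{1/2+\delta})$ by Lemma \ref{lem_control_I}, whose outward half rests on inequalities (51) and (53) of \cite{Toth_et_al2008}, i.e.\ on the \emph{proof} of their Theorem 1 rather than its statement. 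With that input your argument goes through; without it, it is incomplete.
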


The deterministic character of these limits makes the behavior of the self-repelling random walk with directed edges very unusual, hence worthy of study. In particular, it is natural to consider the possible fluctuations of the local times process and of the $T_{\lfloor N \theta\rfloor,\lfloor N x\rfloor}^\pm$ around their deterministic limits. However, prior to this paper, nothing was known about these fluctuations. In this work, we prove convergence in distribution of the fluctuations of the local times process and of the $T_{\lfloor N \theta\rfloor,\lfloor N x\rfloor}^\pm$. It happens that the limit of the fluctuations of the local times process is discontinuous, hence before stating the results, we have to be careful of the topology in which it may converge.

\subsection{Topologies for the convergence of the local times process}

For any interval $I \subset R$, let $D I$ be the space of \emph{càdlàg} functions on $I$, that is the set of functions $ : I \mapsto \mathds{R}$ that are right-continuous and have left limits everywhere in $I$. For any function $Z : I \mapsto \mathds{R}$, we denote by $\|Z\|_\infty=\sup_{y\in I}|Z(y)|$ the uniform norm of $Z$ on $I$. The uniform norm on $I$ gives a topology on $DI$, but it is often too strong to deal with discontinuous functions.

For discontinuous càdlàg functions, the most widely used topology is the \emph{Skorohod $J_1$ topology}, introduced by Skorohod in \cite{Skorohod1956} (see chapter VI of \cite{PollardJ1} for a course), which is often called ``the'' Skorohod topology. Intuitively, two functions are close in this topology if they are close for the uniform norm after allowing some small perturbation of time. Rigorously, for $a<b$ in $\mathds{R}$ the Skorohod $J_1$ topology on $D[a,b]$ is defined as follows. We call $\Lambda_{a,b}$ the set of functions $\lambda : [a,b] \mapsto [a,b]$ that are bijective, strictly increasing and continuous (they correspond to the possible perturbations of time), and we denote by $\mathrm{Id}_{a,b} : [a,b] \mapsto [a,b]$ the identity map, defined by $\mathrm{Id}_{a,b}(y)=y$ for all $y\in[a,b]$. The Skorohod $J_1$ topology on $D[a,b]$ is defined through the following metric: for any $Z_1,Z_2 \in D[a,b]$, we set $d_{J_1,a,b}(Z_1,Z_2)=\inf_{\lambda\in\Lambda_{a,b}}\max(\|Z_1\circ\lambda-Z_2\|_\infty,\|\lambda-\mathrm{Id}_{a,b}\|_\infty)$. It can be proven rather easily that this is indeed a metric. We can then define the Skorohod $J_1$ topology in $D(-\infty,\infty)$ with the following metric: if for any sets $A_1 \subset A_2$ and $A_3$ and any function $Z:A_2 \mapsto A_3$, we denote $Z|_{A_1}$ the restriction of $Z$ to $A_1$, then for $Z_1,Z_2 \in D(-\infty,\infty)$, we set $d_{J_1}(Z_1,Z_2)=\int_0^{+\infty}e^{-a}(d_{J_1,-a,a}(Z_1|_{[-a,a]},Z_2|_{[-a,a]}) \wedge 1 )\mathrm{d}a$. The Skorohod $J_1$ topology is widely used to study the convergence of càdlàg functions. However, when the limit function has a jump, which will be the case here, convergence in the Skorohod $J_1$ topology requires the converging functions to have a single big jump approximating the jump of the limit process. To account for other cases, like having the jump of the limit functions approximated by several smaller jumps in quick succession or by a very steep continuous slope, one has to use a less restrictive topology, like the \emph{Skorohod $M_1$ topology}. 

The Skorohod $M_1$ topology was also introduced by Skorohod in \cite{Skorohod1956} (see Section 3.3 of \cite{Whitt_Skorohod_topologies} for an overview). For any $a<b$ in $\mathds{R}$, the Skorohod $M_1$ distance on $D[a,b]$ is defined as follows: the distance between two functions will be roughly ``the distance between the completed graphs of the functions''. More rigorously, if $Z\in D[a,b]$, we denote $Z(a^-)=Z(a)$ and for any $y \in (a,b]$, we denote $Z(y^-)=\lim_{y' \to y, y'<y}Z(y')$. Then the \emph{completed graph of $Z$} is $\Gamma_Z=\{(y,z)\,|\,y\in[a,b],\exists\,\varepsilon\in[0,1]$ so that $z=\varepsilon Z(y^-)+(1-\varepsilon)Z(y)\}$. To express the ``distance between two such completed graphs'', we need to define the \emph{parametric representations} of $\Gamma_Z$ (by abuse of notation, we will often write ``the parametric representations of $Z$''). We define an order on $\Gamma_Z$ as follows: for $(y_1,z_1),(y_2,z_2) \in \Gamma_Z$, we have $(y_1,z_1) \leq (y_2,z_2)$ when $y_1 < y_2$ or when $y_1=y_2$ and $|Z(y_1^-)-z_1|\leq|Z(y_1^-)-z_2|$. A \emph{parametric representation} of $\Gamma_Z$ is a continuous, surjective function $(u,r):[0,1] \mapsto \Gamma_Z$ that is non-decreasing with respect to this order, thus intuitively, when $t$ goes from 0 to 1, $(u(t),r(t))$ ``travels through the completed graph of $Z$ from its beginning to its end''. A parametric representation of $Z$ always exists (see Remark 12.3.3 in \cite{Whitt_Skorohod_topologies}). For $Z_1,Z_2 \in D[a,b]$, the Skorohod $M_1$ distance between $Z_1$ and $Z_2$, denoted by $d_{M_1,a,b}(Z_1,Z_2)$, is $\inf\{\max(\|u_1-u_2\|_\infty,\|r_1-r_2\|_\infty)\}$ where the infimum is on the parametric representations $(u_1,r_1)$ of $Z_1$ and $(u_2,r_2)$ of $Z_2$. It can be proven that this indeed gives a metric (see Theorem 12.3.1 of \cite{Whitt_Skorohod_topologies}), and this metric defines the Skorohod $M_1$ topology on $D[a,b]$. For any $a>0$, we will denote $d_{M_1,-a,a}$ by $d_{M_1,a}$ for short. We can now define the Skorohod $M_1$ topology in $D(-\infty,\infty)$ through the following metric: for $Z_1,Z_2 \in D(-\infty,\infty)$, we set $d_{M_1}(Z_1,Z_2)=\int_0^{+\infty}e^{-a}(d_{M_1,a}(Z_1|_{[-a,a]},Z_2|_{[-a,a]}) \wedge 1 )\mathrm{d}a$. It can be seen that the Skorohod $M_1$ topology is weaker than the Skorohod $J_1$ topology (see Theorem 12.3.2 of \cite{Whitt_Skorohod_topologies}), thus less restrictive. Indeed, since the distance between two functions is roughly ``the distance between the completed graphs of the functions'', the Skorohod $M_1$ topology will allow a function with a jump to be the limit of functions with steep slopes or with several smaller jumps. For this reason, the Skorohod $M_1$ topology is often more adapted when considering convergence to a discontinuous function.

\subsection{Results}

We are now ready to state our results on the convergence of the fluctuations of the local times process. For any $\theta>0$, $x\in \mathds{R}$, $\iota\in\{+,-\}$, for any $N \in \mathds{N}^*$, we define functions $Y_N^-,Y_N^+$ as follows: for any $y\in\mathds{R}$, we set 
\[
Y_N^\pm(y)=\frac{1}{\sqrt{N}}\left(\ell^\pm(T_{\lfloor N \theta\rfloor,\lfloor N x\rfloor}^\iota,\lfloor N y\rfloor)-N\left(\frac{|x|-|y|}{2}+\theta\right)_+\right).
\]
$Y_N^\pm$ actually depends on $\iota$, but we do not write this dependency in the notation to make it lighter. Moreover, $(B_y^x)_{y\in\mathds{R}}$ will denote a two-sided Brownian motion with $B_x^x=0$ and variance $\mathrm{Var}(\rho_-)$, where $\rho_-$ is the distribution on $\mathds{Z}$ defined later in \eqref{eq_rho}. We proved the following convergence for the fluctuations of the local times process of the self-repelling random walk with directed edges.

\begin{theorem}\label{thm_main}
 For any $\theta>0$, $x\in \mathds{R}$, $\iota\in\{+,-\}$, the process $Y_N^\pm$ converges in distribution to $(B_y^x \mathds{1}_{\{y\in[-|x|-2\theta,|x|+2\theta)\}})_{y\in\mathds{R}}$ in the Skorohod $M_1$ topology on $D(-\infty,+\infty)$ when $N$ tends to $+\infty$. 
\end{theorem}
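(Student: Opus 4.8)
The plan is to combine the Ray--Knight-type Markovian structure of the local times with a functional central limit theorem, and then to transfer the resulting process-level convergence through the deterministic limit of Tóth and Vető to obtain the stated convergence in the $M_1$ topology. First I would recall (from \cite{Toth_et_al2008}) that if we stop the walk at $T_{\lfloor N\theta\rfloor,\lfloor Nx\rfloor}^\iota$, the edge local times, read off along the sites in the appropriate order, form an inhomogeneous Markov chain; more precisely, the key observation is that the increments $\ell^-(n,i)-\ell^+(n,i)$ (and related differences between consecutive sites) evolve like partial sums of i.i.d.\ (or stationary, geometric-type) increments with the law $\rho_-$ appearing in \eqref{eq_rho}, up to boundary terms that are controlled by the triangle-shaped deterministic profile $(\tfrac{|x|-|y|}{2}+\theta)_+$. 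The heart of the matter is therefore to show that, after subtracting $N(\tfrac{|x|-|y|}{2}+\theta)_+$ and dividing by $\sqrt N$, these centered partial sums converge to a Brownian motion; this is a Donsker-type statement, and the variance $\mathrm{Var}(\rho_-)$ is exactly the per-step variance of the underlying increments.

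The main structural point to get right is the behavior at the two endpoints $y=\pm(|x|+2\theta)$ of the support of the limiting triangle, and at $y=x$. At $y=x$ the profile has value $\theta$ but the fluctuation should vanish, which reflects the normalization $B_x^x=0$: the local time at the starting edge at the stopping time is pinned (it equals $\lfloor N\theta\rfloor$ by definition of $T^\iota$), so the Brownian motion is a bridge-like object anchored at $x$ and run outward in both directions, hence ``two-sided Brownian motion with $B_x^x=0$''. At the outer endpoints $\pm(|x|+2\theta)$ the deterministic profile hits $0$ and stays $0$ beyond; here the true local times are identically $0$ once the walk never reached that far, but near the endpoint the centered-and-rescaled local times behave like a Brownian motion that is being forced down to $0$. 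This forcing produces the jump of the limit process at the endpoint: on the inside the fluctuation is a nondegenerate Gaussian, and then it drops instantly to $0$. Proving that this drop is genuinely a jump (so that $J_1$ fails) while the rescaled discrete profile approximates it by a steep-but-continuous descent (so that $M_1$ succeeds) is, I expect, the main obstacle, and it is where the choice of $M_1$ over $J_1$ is forced.

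Concretely, I would proceed in the following steps. (1) Set up the Markov chain of local times at the stopping time and identify its increment law, extracting the martingale part and a negligible remainder; here one needs uniform control, over all relevant sites (a window of order $N$ around $0$), of the error between the true conditional increments and their idealized i.i.d.\ surrogates. (2) Apply a martingale functional central limit theorem (Donsker/Rebolledo-type) to the rescaled centered partial sums on compact intervals $[-a,a]$ contained in the interior of the support, obtaining convergence in the uniform topology there; this already yields the ``uniform topology away from the discontinuities'' statement promised in the abstract. (3) Handle the endpoints: show that on $[-|x|-2\theta,\,-|x|-2\theta+\varepsilon]$ and the symmetric interval, the rescaled local times are, with high probability, a monotone descent from an $O(1)$ Gaussian value down to exactly $0$, occurring over a spatial scale $o(1)$; combine this with a tightness estimate for the modulus of continuity in the $M_1$ sense (oscillation of the completed graph), for which Whitt's criteria in \cite{Whitt_Skorohod_topologies} are tailored. (4) Identify the limit: check finite-dimensional distributions at points away from $\pm(|x|+2\theta)$ converge to those of $(B_y^x\mathds{1}_{\{y\in[-|x|-2\theta,|x|+2\theta)\}})_y$, using independence of increments coming from the (near-)independence of the local-time increments across disjoint blocks of sites, and that the contribution outside the support vanishes. (5) Assemble: on each $[-a,a]$, $M_1$-tightness plus convergence of finite-dimensional distributions at continuity points of the limit gives $M_1$-convergence on $[-a,a]$, and then the definition of $d_{M_1}$ on $D(-\infty,\infty)$ via the integral $\int_0^\infty e^{-a}(d_{M_1,a}\wedge 1)\,\mathrm{d}a$ upgrades this to convergence on the whole line by dominated convergence.

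I expect step (3) — the precise analysis of the endpoint behavior, showing the approximating functions descend continuously and steeply rather than by a single jump, and quantifying this well enough to verify the $M_1$ oscillation bound — to be the principal difficulty, since it is exactly the phenomenon that distinguishes this model's fluctuations from the $J_1$-convergent behavior one might naively expect, and it requires understanding how the self-repelling walk with directed edges ``turns around'' at the extremities of its range.
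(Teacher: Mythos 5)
Your overall strategy matches the paper's in its main ingredients: a Ray--Knight reading of the local times, a coupling of the increments with i.i.d.\ $\rho_-$-distributed variables, Donsker's theorem in the bulk, and a separate treatment of the endpoints $\pm(|x|+2\theta)$ where the profile hits $0$. Where you genuinely diverge is in how the $M_1$ convergence on $D(-\infty,+\infty)$ is assembled: you propose the classical route of $M_1$-tightness (via Whitt's oscillation function) plus convergence of finite-dimensional distributions at continuity points of the limit, whereas the paper avoids tightness altogether and instead \emph{constructs explicit parametric representations} of $Y_N^\pm$ and of a comparison process $Y_N$ (the rescaled i.i.d.\ sums truncated to $[-|x|-2\theta,|x|+2\theta)$), bounds $\max(\|u_N^\pm-u_N\|_\infty,\|r_N^\pm-r_N\|_\infty)$ directly, and then lets $Y_N$ converge by Donsker plus a transfer lemma for the map $Z\mapsto Z\mathds{1}_{[-|x|-2\theta,|x|+2\theta)}$. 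Your route is legitimate in principle and arguably more standard; the paper's route buys an explicit quantitative rate ($d_{M_1}\leq 3N^{-1/12}$ outside a small event) and sidesteps the delicate endpoint issues in the restriction maps $D(-\infty,\infty)\to D[-a,a]$, which your ``dominated convergence'' step would still have to address for the (measure-zero) values $a=|x|+2\theta$.

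The substantive gap is that your step (3) — which you correctly identify as the heart of the matter — is stated as a goal rather than proved, and the claim as stated (``a monotone descent from an $O(1)$ Gaussian value down to exactly $0$, occurring over a spatial scale $o(1)$'') needs both correction and quantitative support. First, the transition is not monotone: on the window between the last site where $\ell^-(T_N,\cdot)\geq \lfloor N^{1/6}\rfloor$ and the site $I^+$ where it hits $0$, the rescaled process decreases at the deterministic rate $\tfrac{1}{2}$ per site \emph{plus} a centered random walk; what one must show is that the random part has uniform oscillation $o(\sqrt{N})$ over the window, so that the path stays within $o(1)$ of the segment joining its endpoint values — this is precisely the $M_1$ oscillation bound, and it is the content of the paper's events $\mathcal{B}_3^\pm$ and Claim 18. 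Second, ``spatial scale $o(1)$'' requires two separate localization estimates that you do not supply: (i) concentration of the zero-hitting site $I^\pm$ within $N^{1/2+\delta}$ of $\pm(|x|+2\theta)N$ (a large-deviation bound for the coupled sums, Lemma 7 in the paper, with the leftmost point also needing the lower bound imported from Tóth--Vető), and (ii) a bound of order $N^{1/4}$ on the width of the region where the local time is small but nonzero, where the coupling fails entirely; the paper obtains this from the expected absorption time of the auxiliary chain $L$ of Tóth--Vető (Lemma 10). Without (i) and (ii), plus the observation that the local time is \emph{identically} zero beyond $I^\pm$ (so the process is exactly $0$ there, not merely small), neither the fdd identification outside the support nor the $M_1$ oscillation control near the jump can be completed. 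So the proposal is a sound roadmap, but the part of the argument that constitutes most of the paper remains to be written.
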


Therefore the fluctuations of the local times process have a diffusive limit behavior. However, it is necessary to use the Skorohod $M_1$ topology here, as the following result states the convergence does not occur in the stronger Skorohod $J_1$ topology. 

\begin{proposition}\label{prop_no_J1_conv}
 For any $\theta>0$, $x\in \mathds{R}$, $\iota\in\{+,-\}$, the process $Y_N^\pm$ does not converge in distribution in the Skorohod $J_1$ topology on $D(-\infty,+\infty)$ when $N$ tends to $+\infty$. 
\end{proposition}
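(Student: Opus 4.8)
The plan is to argue by contradiction, exploiting the well-known characterization of $J_1$-convergence through the oscillation modulus together with the fact that the limit process $(B_y^x\mathds{1}_{\{y\in[-|x|-2\theta,|x|+2\theta)\}})_{y\in\mathds{R}}$ has, almost surely, a genuine jump of random nonzero size at the point $y_0=|x|+2\theta$ (and at $-|x|-2\theta$). Suppose $Y_N^\pm$ converged in distribution in the Skorohod $J_1$ topology to some limit $Z$. Since $J_1$-convergence is stronger than $M_1$-convergence, the $M_1$-limit identified in Theorem \ref{thm_main} forces $Z \overset{d}{=} (B_y^x\mathds{1}_{\{y\in[-|x|-2\theta,|x|+2\theta)\}})_{y\in\mathds{R}}$. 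In the Skorohod $J_1$ topology on $D[-a,a]$ with $a>y_0$, convergence $Y_N^\pm|_{[-a,a]}\to Z|_{[-a,a]}$ implies (by the Continuous Mapping Theorem applied to the functional "size of the largest jump," which is continuous at limits with at most finitely many jumps above any threshold — see e.g. Section 12.4 of \cite{Whitt_Skorohod_topologies}) that the largest jump of $Y_N^\pm|_{[-a,a]}$ converges in distribution to the largest jump of $Z|_{[-a,a]}$, which is a.s. strictly positive.

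The heart of the argument is then to show this is impossible: the jumps of $Y_N^\pm$ are too small. First I would note that $Y_N^\pm$ is, up to the deterministic affine rescaling, the function $i\mapsto \ell^\pm(T,i)$ where $T=T_{\lfloor N\theta\rfloor,\lfloor Nx\rfloor}^\iota$; the deterministic part $N(\frac{|x|-|y|}{2}+\theta)_+$ is continuous in $y$, so the jumps of $Y_N^\pm$ at the lattice points $\lfloor Ny\rfloor$ come entirely from $\frac{1}{\sqrt N}(\ell^\pm(T,\lfloor Ny\rfloor+1)-\ell^\pm(T,\lfloor Ny\rfloor))$, i.e. from the jumps of the integer-valued local time profile. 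The key elementary fact about the self-repelling walk with directed edges, which must be extracted from the structure used in \cite{Toth_et_al2008} and in the proof of Theorem \ref{thm_main}, is that consecutive directed-edge local times $\ell^+(T,i)$ and $\ell^+(T,i+1)$ differ by at most a bounded amount with overwhelming probability — indeed the increments are governed by the same underlying i.i.d.\ structure (geometric-type variables with distribution related to $\rho_-$) that produces the Brownian scaling, and in particular $\max_{|i|\le aN}|\ell^+(T,i+1)-\ell^+(T,i)|$ is $O_{\mathds P}(\log N)$, hence $o(\sqrt N)$ when divided by $\sqrt N$. Therefore the largest jump of $Y_N^\pm|_{[-a,a]}$ converges in probability to $0$, contradicting the previous paragraph.

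Concretely, the key steps in order are: (i) assume $J_1$-convergence in distribution and deduce, via the $M_1$-limit of Theorem \ref{thm_main}, that the limit must be $(B_y^x\mathds{1}_{\{y\in[-|x|-2\theta,|x|+2\theta)\}})_{y\in\mathds{R}}$; (ii) restrict to $D[-a,a]$ for fixed $a>|x|+2\theta$ and invoke the continuity of the maximal-jump functional to transfer the convergence to the maximal jump sizes; (iii) observe that the maximal jump of $Y_N^\pm$ equals $\frac{1}{\sqrt N}$ times the maximal nearest-neighbor increment of the directed local time profile at time $T$, plus a vanishing contribution from the discretization of the deterministic triangle; (iv) prove a tail bound showing this maximal increment is $o_{\mathds P}(\sqrt N)$; (v) conclude that the maximal jump of the limit would be $0$ a.s., contradicting the a.s.\ presence of the jump of size $|B_{y_0^-}^x|>0$ at $y_0=|x|+2\theta$ (which has a continuous positive density, so is nonzero a.s.).

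The main obstacle is step (iv): establishing the quantitative control $\max_{|i|\le aN}|\ell^\pm(T,i+1)-\ell^\pm(T,i)| = o_{\mathds P}(\sqrt N)$. This requires reaching into the Ray–Knight-type representation of the local-time profile that underlies the proof of Theorem \ref{thm_main} — writing the profile as a Markov chain (in the space variable) driven by i.i.d.\ innovations with distribution linked to $\rho_-$ defined in \eqref{eq_rho}, and showing that a single innovation, and more importantly a single step of this chain, cannot be of order $\sqrt N$; a union bound over the $O(N)$ relevant sites then needs the per-site jump to have at least a stretched-exponential or sufficiently fast polynomial tail. If such tail estimates are already available from the machinery built for Theorem \ref{thm_main}, the rest is routine; if not, one may instead extract a contradiction more cheaply by testing against a single well-chosen coordinate — e.g. comparing $Y_N^\pm$ evaluated just to the left and just to the right of $y_0$ and showing that, under $J_1$-convergence, no admissible time-change $\lambda_N$ can simultaneously match a jump at $y_0$ while keeping $\|\lambda_N-\mathrm{Id}\|_\infty\to 0$, because the pre-limit functions have no single increment of macroscopic size there.
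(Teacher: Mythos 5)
Your proposal is correct in outline, and its second half takes a genuinely different route from the paper. Step (i) is exactly the paper's first step (its Lemma \ref{lem_lim_J1_M1}): $J_1$ dominates $M_1$, so any $J_1$-limit must coincide with the $M_1$-limit from Theorem \ref{thm_main}. For the contradiction itself, the paper does \emph{not} use the maximal-jump functional; it instead builds an explicit set $\Xi$ of functions whose modulus enters a band $[\delta_1,2\delta_1]$ near $|x|+2\theta$, proves $\Xi$ is $J_1$-closed, shows $\mathds{P}(Y_N^\pm\in\Xi)\geq 1/2$ eventually while $\mathds{P}(\text{limit}\in\Xi)\leq 1/4$, and concludes by Portmanteau. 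Your route --- restrict to $D[-a,a]$ for a fixed $a>|x|+2\theta$ (a.s. a continuity point of the limit, so the restriction is a.s. continuous), apply the continuous mapping theorem to the maximal-jump functional, and contrast ``max jump $\to 0$ in probability'' with ``max jump of the limit is $|B^x_{|x|+2\theta}|>0$ a.s.'' --- is cleaner and avoids the closedness verification (the paper's Lemma \ref{lem_J1_closed}), at the cost of citing the $J_1$-continuity of the maximal-jump functional. Both arguments ultimately rest on the same probabilistic input, and the obstacle you flag in step (iv) is in fact already resolved by the paper's machinery: on the complement of $\mathcal{B}_2$ (Lemma \ref{lem_zeta_eta_small}, which has probability tending to $1$), the increments $\ell^\pm(T_N,i+1)-\ell^\pm(T_N,i)$ are controlled via \eqref{eq_local_times}--\eqref{eq_local_times_2} by $|\eta_{i,\mp}(\ell^\pm(T_N,i))+1/2|<N^{1/16}$ uniformly over the relevant window, and the local times vanish beyond $I^\pm$, so the maximal jump of $Y_N^\pm$ is $O(N^{1/16-1/2})=o_{\mathds{P}}(1)$; this is precisely the bound the paper uses inside its Lemma \ref{lem_J1_proba_bound} (your stated $O_{\mathds{P}}(\log N)$ is not needed and not what the paper proves, but $N^{1/16}$ suffices). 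One small advantage of the paper's band-crossing construction is that it also exhibits \emph{where} the approximation of the jump happens (via $Y_N^\pm(|x|+2\theta-\delta_2)$ being close in law to $B^x_{|x|+2\theta-\delta_2}$ and $Y_N^\pm(|x|+2\theta+\delta_2)=0$), whereas your argument only needs the limit to jump and the prelimit jumps to be small.
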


We stress the fact that the use of the Skorohod $M_1$ topology is only required to deal with the discontinuities of the limit process at $-|x|-2\theta$ and $|x|+2\theta$. Indeed, if we consider the convergence of the process on an interval that does not include $-|x|-2\theta$ or $|x|+2\theta$, it converges in the much stronger topology given by the uniform norm, which is the following result.

\begin{proposition}\label{prop_uniform_conv}
 For any $\theta>0$, $x\in \mathds{R}$, $\iota\in\{+,-\}$, for any closed interval $I\in\mathds{R}$ that does not contain $-|x|-2\theta$ or $|x|+2\theta$, the process $(Y_N^\pm(y))_{y\in I}$ converges in distribution to $(B_y^x \mathds{1}_{\{y\in[-|x|-2\theta,|x|+2\theta)\}})_{y\in I}$ in the topology on $D I$ given by the uniform norm when $N$ tends to $+\infty$.
\end{proposition}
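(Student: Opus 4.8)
The plan is to deduce Proposition~\ref{prop_uniform_conv} from the $M_1$-convergence of Theorem~\ref{thm_main} by exploiting the fact that, away from the two discontinuities $-|x|-2\theta$ and $|x|+2\theta$, the limit process $(B^x_y\mathds{1}_{\{y\in[-|x|-2\theta,|x|+2\theta)\}})_{y\in\mathds{R}}$ is almost surely continuous. The key topological input is the classical fact (see e.g. Theorem 12.4.1 of \cite{Whitt_Skorohod_topologies}) that on a subinterval where the limit is continuous, the $M_1$ topology, the $J_1$ topology and the uniform topology all coincide; more precisely, if $Z_N\to Z$ in $D[a,b]$ for the $M_1$ topology and $Z$ is continuous on $[a,b]$, then $\|Z_N-Z\|_\infty\to 0$. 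So the strategy has two steps: first, upgrade the $M_1$-convergence on the whole line to $M_1$-convergence of the restrictions $(Y_N^\pm|_I)$ on a closed interval $I$ avoiding the two discontinuity points; second, combine this with the almost-sure continuity of the limit on $I$ and a continuous-mapping / Skorohod-representation argument to conclude uniform convergence.

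For the first step I would note that the restriction map $Z\mapsto Z|_{[a,b]}$ from $D(-\infty,\infty)$ to $D[a,b]$ is continuous at every point $Z$ whose restriction is continuous at $a$ and $b$ (this mild care at the endpoints is standard for Skorohod spaces), and the limit process is a.s. continuous at the endpoints of $I$ since $I$ avoids $-|x|-2\theta$ and $|x|+2\theta$, the only possible discontinuities. Hence by the continuous mapping theorem applied to Theorem~\ref{thm_main}, $(Y_N^\pm(y))_{y\in I}$ converges in distribution to $(B^x_y\mathds{1}_{\{y\in[-|x|-2\theta,|x|+2\theta)\}})_{y\in I}$ in the $M_1$ topology on $DI$. (Alternatively, since $I$ is compact one can take $I\subset[-a,a]$ and read the convergence of $d_{M_1,a}$ off the definition of $d_{M_1}$.)

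For the second step, since the limit $Z_\infty:=(B^x_y\mathds{1}_{\{y\in[-|x|-2\theta,|x|+2\theta)\}})_{y\in I}$ is a.s. an element of $C(I)\subset DI$, I would invoke the Skorohod representation theorem to realize the $M_1$-convergent sequence on a common probability space so that $d_{M_1,I}(Y_N^\pm,Z_\infty)\to0$ almost surely, and then apply the deterministic fact above (continuity of the limit forces the $M_1$ distance to dominate, up to $o(1)$, the uniform distance) to get $\|Y_N^\pm-Z_\infty\|_\infty\to0$ almost surely on that space. Almost sure convergence implies convergence in distribution, and since convergence in distribution does not depend on the choice of realization, this gives the claimed convergence in distribution of $(Y_N^\pm(y))_{y\in I}$ to $Z_\infty$ in the uniform topology on $DI$.

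The only genuine subtlety — and the step I would be most careful about — is the endpoint behavior when passing from $D(-\infty,\infty)$ to $DI$: the restriction map is not continuous at functions with a jump exactly at an endpoint of $I$, so one must use that $I$ is chosen to avoid $-|x|-2\theta$ and $|x|+2\theta$ and that these are the only discontinuities of the limit, so that almost surely the limit (and the restriction map at the limit) is continuous at $\partial I$; a short argument, e.g. enlarging $I$ slightly to an interval still avoiding the two points and using that $M_1$ convergence on the larger interval implies uniform convergence on it, removes any worry. Everything else is a routine application of the continuous mapping theorem, the Skorohod representation theorem, and the equivalence of the $M_1$ and uniform topologies at continuous functions.
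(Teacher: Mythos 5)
Your proposal takes a genuinely different route from the paper, and for compact $I$ it is essentially sound. The paper does not pass through Theorem \ref{thm_main} at all: for $I\subset(-|x|-2\theta,|x|+2\theta)$ it proves directly (Lemma \ref{lem_Ypm_close_Y'}, built from Lemmas \ref{lem_Y_close}, \ref{lem_Y_close_Y'} and the control of $I^\pm$) that $\|Y_N^\pm|_I-Y_N'|_I\|_\infty\leq 2N^{-1/12}$ with high probability, and then combines the Donsker convergence of $Y_N'$ (Lemma \ref{lem_conv_Y'_C}) with the same Portmanteau argument as in the proof of Theorem \ref{thm_main}; for $I$ outside $[-|x|-2\theta,|x|+2\theta]$ it shows $Y_N^\pm|_I\equiv 0$ with probability tending to $1$. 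Your soft argument — restriction of the $M_1$-convergence of Theorem \ref{thm_main} to an interval avoiding the jumps, then the fact that $M_1$-convergence to a continuous limit forces uniform convergence, via Skorohod representation — is shorter and correctly identifies the only delicate point (continuity of the restriction map at the endpoints of $I$, which holds a.s. because the limit's only jumps are at $\mp(|x|+2\theta)$). What the paper's heavier route buys is a quantitative bound ($2N^{-1/12}$) that is reused later, in the proof of Proposition \ref{prop_no_J1_conv} (Lemma \ref{lem_J1_proba_bound} needs $\|Y_N^\pm|_{[0,|x|+2\theta-\delta_2]}-Y_N'|_{[0,|x|+2\theta-\delta_2]}\|_\infty$ small), so the paper cannot dispense with Lemma \ref{lem_Ypm_close_Y'} anyway.

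There is one genuine gap: the statement allows unbounded closed intervals, e.g.\ $I=[a,+\infty)$ with $a>|x|+2\theta$, and the paper's proof explicitly treats this case. Your argument is intrinsically local: convergence in $d_{M_1}$ on $D(-\infty,+\infty)$ (an exponentially weighted integral of the $d_{M_1,a}$) controls only compact windows $[-a,a]$ and gives no uniform control as $y\to+\infty$, so ``$M_1$ plus continuity of the limit implies uniform convergence'' cannot be invoked on $[a,+\infty)$. To close this you need the path observation the paper uses: once $\ell^-(T_N,\cdot)$ hits $0$ at $I^+$ the walk never goes beyond, so $Y_N^\pm(y)=0$ for all $y\geq\frac{I^+}{N}\vee(|x|+2\theta)$ simultaneously, and Lemma \ref{lem_control_I} puts $\frac{I^+}{N}$ below $a$ with high probability; on that event $Y_N^\pm|_I$ is identically the limit. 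With that supplement (and the symmetric one on the left), your proof covers all cases.
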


Finally, we also proved the convergence of the fluctuations of $T_{\lfloor N \theta\rfloor,\lfloor N x\rfloor}^\pm$. For any $\sigma^2>0$, we denote by $\mathcal{N}(0,\sigma^2)$ the Gaussian distribution with mean 0 and variance $\sigma^2$, and we recall that $\rho_-$ will be defined in \eqref{eq_rho}. We then have the following.

\begin{proposition}\label{prop_fluctuations_T}
 For any $\theta>0$, $x\in \mathds{R}$, $\iota\in\{+,-\}$, we have that $\frac{1}{N^{3/2}}( T_{\lfloor N \theta\rfloor,\lfloor N x\rfloor}^\iota-N^2(|x|+2\theta)^2)$ converges in distribution to $\mathcal{N}(0,\frac{32}{3}\mathrm{Var}(\rho_-)((|x|+\theta)^3+\theta^3))$ when $N$ tends to $+\infty$. 
\end{proposition}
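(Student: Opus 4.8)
The plan is to reduce the statement, via elementary and exact identities, to a statement about the integral $\int_{\mathds R}Y_N^+$, and then to read off the limit of that integral from Theorem~\ref{thm_main} by applying the continuous mapping theorem to the integration functional. Write $T_N:=T^\iota_{\lfloor N\theta\rfloor,\lfloor Nx\rfloor}$. Each of the $T_N$ steps of the walk is the crossing of exactly one directed edge, so $T_N=\sum_{j\in\mathds Z}\bigl(\ell^+(T_N,j)+\ell^-(T_N,j)\bigr)$. Moreover, for each $j$ the quantity $\phi_j:=\ell^+(T_N,j)-\ell^-(T_N,j+1)$ is the net rightward flux of the path across the undirected edge $\{j,j+1\}$, which equals $\mathds 1_{\{X_{T_N}\geq j+1\}}-\mathds 1_{\{j\leq-1\}}$; summing over $j$ (after shifting the index in the $\ell^-$ sum) gives $\sum_{j\in\mathds Z}\ell^-(T_N,j)=\sum_{j\in\mathds Z}\ell^+(T_N,j)-X_{T_N}$, hence $T_N=2\sum_{j\in\mathds Z}\ell^+(T_N,j)-X_{T_N}$, where $X_{T_N}=\lfloor Nx\rfloor+\iota1$ is deterministic. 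Using the exact identity $\sum_{j\in\mathds Z}g(j)=N\int_{\mathds R}g(\lfloor Ny\rfloor)\,\mathrm dy$, the definition of $Y_N^+$, and the triangle-area computation $\int_{\mathds R}\bigl(\tfrac{|x|-|y|}{2}+\theta\bigr)_+\,\mathrm dy=\tfrac12(|x|+2\theta)^2$, this rearranges into the exact identity
\[
\frac{1}{N^{3/2}}\Bigl(T_N-N^2(|x|+2\theta)^2\Bigr)=2\int_{\mathds R}Y_N^+(y)\,\mathrm dy-\frac{\lfloor Nx\rfloor+\iota1}{N^{3/2}},
\]
whose second term tends to $0$.

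It remains to pass to the limit in $\int_{\mathds R}Y_N^+(y)\,\mathrm dy$. The function $Y_N^+$ vanishes outside a random compact set, but to exploit Theorem~\ref{thm_main} one needs this support to lie in a \emph{fixed} interval with high probability: I would use a bound on the range of the walk up to $T_N$, namely $\max_{m\leq T_N}|X_m|\leq N(|x|+2\theta+\varepsilon)$ with probability tending to $1$ for every $\varepsilon>0$ (such an estimate is part of the localization underlying Theorem~\ref{thm_main}), so that, fixing $M>|x|+2\theta$, with high probability $\int_{\mathds R}Y_N^+=\int_{-M}^MY_N^+$. The map $Z\mapsto\int_{-M}^MZ(y)\,\mathrm dy$ is continuous from $(D[-M,M],d_{M_1,M})$ to $\mathds R$: representing $\int_{-M}^MZ\,\mathrm dy$ as the Riemann–Stieltjes integral $\int_0^1r\,\mathrm du$ against a monotone parametric representation $(u,r)$ of $Z$ and using that near-optimal parametric representations converge uniformly yields this. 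Since $M$ and $-M$ are not discontinuities of the limit, the continuous mapping theorem applied to the convergence $Y_N^+\to B^x\mathds 1_{[-|x|-2\theta,|x|+2\theta)}$ in the $M_1$ topology (Theorem~\ref{thm_main}) then gives that $\tfrac{1}{N^{3/2}}(T_N-N^2(|x|+2\theta)^2)$ converges in distribution to $2\int_{-|x|-2\theta}^{|x|+2\theta}B_y^x\,\mathrm dy$.

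Finally I would identify this Gaussian law. The variable $2\int_{-|x|-2\theta}^{|x|+2\theta}B_y^x\,\mathrm dy$ is centered Gaussian; splitting the integral at $x$ into the halves over $[x,|x|+2\theta]$ and $[-|x|-2\theta,x]$, which involve the two independent one-sided Brownian motions making up $B^x$, and using $\mathrm{Var}\bigl(\int_0^t\beta_s\,\mathrm ds\bigr)=t^3/3$ for a standard Brownian motion $\beta$ together with $\{|x|+2\theta-x,\ x+|x|+2\theta\}=\{2\theta,2|x|+2\theta\}$, its variance comes out to $4\cdot\tfrac{\mathrm{Var}(\rho_-)}{3}\bigl((2\theta)^3+(2|x|+2\theta)^3\bigr)=\tfrac{32}{3}\mathrm{Var}(\rho_-)\bigl((|x|+\theta)^3+\theta^3\bigr)$, which is the announced answer. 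The main obstacle is the input used in the second paragraph: the a priori control on the range of the walk up to $T_N$ — needed precisely because $M_1$ convergence carries no information about the behaviour far from the origin, so that without it the integral could pick up a nonvanishing tail contribution — together with the (routine, but not entirely free) verification that integration is $M_1$-continuous via monotone parametric representations; by contrast the reduction of the first paragraph and the variance computation are straightforward.
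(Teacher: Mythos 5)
Your proposal is correct, and it takes a genuinely different route from the paper's in both of its main steps, so a comparison is worthwhile. For the reduction, the paper also starts from $T_N=\sum_i(\ell^+(T_N,i)+\ell^-(T_N,i))$, but it then approximates this sum by $\int(Y_N^++Y_N^-)$ with error terms that must be controlled by separate bounds on the local times in windows of width $N^{5/8}$ around $I^\pm$ (its estimate \eqref{eq_small_local_times}); your telescoping flux identity $T_N=2\sum_j\ell^+(T_N,j)-X_{T_N}$ is exact, is consistent with the relations $\ell^-(T_N,i)=\ell^+(T_N,i-1)$ or $\ell^+(T_N,i-1)-1$ recorded in Section \ref{sec_coupling}, and eliminates that error analysis entirely. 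For the convergence of the integral, the paper deliberately does \emph{not} invoke Theorem \ref{thm_main}: it shows $\int(Y_N^++Y_N^-)$ is close to $2\int Y_N'$ (the interpolated partial sums of the coupled i.i.d.\ variables $\zeta_i$) and concludes via Donsker and continuity of integration in the uniform topology on $C[-|x|-2\theta,|x|+2\theta]$, plus Lemma \ref{lem_int_BM}. You instead apply the continuous mapping theorem to Theorem \ref{thm_main} with the functional $Z\mapsto\int_{-M}^MZ$; this requires checking that restriction to $[-M,M]$ is continuous at the (a.s.\ continuous at $\pm M$) limit and that integration is $M_1$-continuous — both true, e.g.\ because $M_1$-convergence forces uniform boundedness and pointwise convergence at continuity points of the limit so that dominated convergence applies, or via your parametric-representation argument — but this is precisely the topological bookkeeping the paper's detour through $Y_N'$ avoids. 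The two inputs you flag as needed are indeed available in the paper: the range bound is Lemma \ref{lem_control_I} together with the observation, used repeatedly there, that $\ell^\pm(T_N,i)=0$ for $i\geq I^+$ and for $i\leq I^-$. Your variance computation agrees with the paper's. Net effect: your reduction is cleaner and shorter, at the cost of proving one general topological fact that the paper sidesteps by working with the explicit coupled process.
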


\begin{remark}
 Instead of studying the fluctuations of $\ell^\pm(T_{\lfloor N \theta\rfloor,\lfloor N x\rfloor}^\iota,.)$, it would seem more natural to consider those of $\ell^\pm(N^2,.)$. However, the Ray-Knight arguments that allow to study $\ell^\pm(T_{\lfloor N \theta\rfloor,\lfloor N x\rfloor}^\iota,.)$ completely break down for $\ell^\pm(N^2,.)$, and it is not even clear whether these two processes should have the same behavior.
\end{remark}

\begin{remark}
 Besides the article of Tóth and Vető \cite{Toth_et_al2008} that introduced the self-repelling random walk with directed edges, there have been few other works on this model. These works were motivated by another important question, that of the existence of a scaling limit for $(X_n)_{n \in \mathds{N}}$, which means the convergence in distribution of the process $(\frac{1}{N^\alpha}X_{\lfloor Nt\rfloor})_{t \geq 0}$ for some $\alpha$. Obtaining such a scaling limit for the trajectory of the random walk is harder that obtaining scaling limits for the local times. Indeed, for the random walks with bond repulsion with undirected edges introduced by Tóth in \cite{Toth1994,Toth1995,Toth1996}, the scaling limits for the local times are known since the introduction of the models, but the scaling limits for the trajectories are not. Some results were proven by Kosygina, Mountford and Peterson in \cite{Kosygina_et_al2022bis}, but they do not cover all models. For the self-repelling random walk with directed edges, the behavior of the scaling limit of the trajectory turns out to be surprising. Indeed, Mountford, Pimentel, and Valle proved in \cite{Mountford_et_al2014} that $\frac{1}{\sqrt{N}}X_N$ converges in distribution, but Mountford and the author showed in \cite{Mareche_Mountford_2023} that $(\frac{1}{\sqrt{N}}X_{\lfloor Nt \rfloor})_{t \geq 0}$ does \emph{not} converge in distribution, and that the trajectories of the walk satisfy a more complex limit theorem, of a new kind. 
\end{remark}

\subsection{Proof ideas}

We begin by explaining why the limit of the local times process $Y_N^\pm$ is $(B_y^x \mathds{1}_{\{y\in[-|x|-2\theta,|x|+2\theta)\}})_{y\in\mathds{R}}$ and the ideas behind the proofs of Theorem \ref{thm_main} and Proposition \ref{prop_uniform_conv}. To show the convergence of the local times process, we use a Ray-Knight argument, that is we notice that $(\ell^-(T_{\lfloor N \theta\rfloor,\lfloor N x\rfloor}^\iota,i))_i$ is a Markov chain. Moreover, as long as $\ell^-(T_{\lfloor N \theta\rfloor,\lfloor N x\rfloor}^\iota,i)$ is not too low, the $\ell^-(T_{\lfloor N \theta\rfloor,\lfloor N x\rfloor}^\iota,i+1)-\ell^-(T_{\lfloor N \theta\rfloor,\lfloor N x\rfloor}^\iota,i)$ will roughly be i.i.d. random variables in the sense that they can be coupled with i.i.d. random variables with a high probability to be equal to them. This coupling was already used in \cite{Toth_et_al2008} to prove the convergence of $\frac{1}{N}\ell^+(T_{\lfloor N \theta\rfloor,\lfloor N x\rfloor}^\pm,\lfloor N y\rfloor)$ to its deterministic limit (for a given $y$, the coupling makes this convergence a law of large numbers). However, when $\ell^-(T_{\lfloor N \theta\rfloor,\lfloor N x\rfloor}^\iota,\lfloor N y\rfloor)$ is too low, the coupling fails and the $\ell^-(T_{\lfloor N \theta\rfloor,\lfloor N x\rfloor}^\iota,\lfloor N y\rfloor+1)-\ell^-(T_{\lfloor N \theta\rfloor,\lfloor N x\rfloor}^\iota,\lfloor N y\rfloor)$ are no longer i.i.d. We have to prove that this occurs only around $|x|+2\theta$ and $-|x|-2\theta$, and most of our work is dealing with what happens there. To show it occurs only around $|x|+2\theta$ and $-|x|-2\theta$, we control the amplitude of the fluctuations to prove the local times are close to their deterministic limit. This limit is large inside $(-|x|-2\theta,|x|+2\theta)$, so we can use the coupling inside this interval, thus the $\ell^-(T_{\lfloor N \theta\rfloor,\lfloor N x\rfloor}^\iota,\lfloor N y\rfloor+1)-\ell^-(T_{\lfloor N \theta\rfloor,\lfloor N x\rfloor}^\iota,\lfloor N y\rfloor)$ are roughly i.i.d. there, hence the fluctuations will converge to a Brownian motion by Donsker’s Invariance Principle. When we are close to $|x|+2\theta$ (the same reasoning works for $-|x|-2\theta$) the deterministic limit will be small hence the local times too, and tools of \cite{Toth_et_al2008} allow to prove that they reach 0 quickly. Once they reach 0, we notice that for $y \geq |x|+2\theta$, if $\ell^-(T_{\lfloor N \theta\rfloor,\lfloor N x\rfloor}^\iota,\lfloor N y\rfloor)=0$, the walk $X$ did not go from $\lfloor N y\rfloor$ to $\lfloor N y\rfloor+1$ before time $T_{\lfloor N \theta\rfloor,\lfloor N x\rfloor}^\iota$, so it did not go to $\lfloor N y\rfloor+1$ before time $T_{\lfloor N \theta\rfloor,\lfloor N x\rfloor}^\iota$, hence $\ell^-(T_{\lfloor N \theta\rfloor,\lfloor N x\rfloor}^\iota,j)=0$ for any $j \geq \lfloor N y\rfloor$. Therefore, once the local times process reaches 0, it stays there. Consequently, we expect $\ell^-(T_{\lfloor N \theta\rfloor,\lfloor N x\rfloor}^\iota,\lfloor N y\rfloor)$ to be 0 when $y > |x|+2\theta$, and thus to have no fluctuations when $y > |x|+2\theta$, and similarly when $y < -|x|-2\theta$. This is why our limit is $(B_y^x \mathds{1}_{\{y\in[-|x|-2\theta,|x|+2\theta)\}})_{y\in\mathds{R}}$. Since Proposition \ref{prop_uniform_conv} only describes convergence away from $-|x|-2\theta$ and $|x|+2\theta$, the previous arguments are enough to prove it. To prove the convergence in the Skorohod $M_1$ topology on $D(-\infty,+\infty)$ stated in Theorem \ref{thm_main}, we need to handle what happens around $-|x|-2\theta$ and $|x|+2\theta$ with more precision. We first have to bound the difference between the local times and the i.i.d. random variables of the coupling even where the coupling fails. Afterwards comes the most important part of the paper: defining parametric representations of $Y_N^\pm$ and of the sum of the i.i.d. random variables of the coupling, properly renormalized and set to 0 outside of $[-|x|-2\theta,|x|+2\theta)$, and then proving that they are close to each other. That allows to prove $Y_N^\pm$ is close in the Skorohod $M_1$ distance to a process that will converge in distribution to $(B_y^x \mathds{1}_{\{y\in[-|x|-2\theta,|x|+2\theta)\}})_{y\in\mathds{R}}$ in the Skorohod $M_1$ topology and to complete the proof of Theorem \ref{thm_main}. 

To prove Proposition \ref{prop_no_J1_conv}, that is that $Y_N^\pm$ does not converge in the $J_1$ topology, we first notice that since the $J_1$ topology is stronger than the $M_1$ topology, if $Y_N^\pm$ did converge in the $J_1$ topology its limit would be $(B_y^x \mathds{1}_{\{y\in[-|x|-2\theta,|x|+2\theta)\}})_{y\in\mathds{R}}$. However, it is not possible, as $(B_y^x \mathds{1}_{\{y\in[-|x|-2\theta,|x|+2\theta)\}})_{y\in\mathds{R}}$ has a jump at $|x|+2\theta$, while the jumps of $Y_N^\pm$ have typical size of order $\frac{1}{\sqrt{N}}$, so the jump in $(B_y^x \mathds{1}_{\{y\in[-|x|-2\theta,|x|+2\theta)\}})_{y\in\mathds{R}}$ is approximated in $Y_N^\pm$ by either a sequence of small jumps or a continuous slope, which prevents the convergence in the Skorohod $J_1$ topology. 

Finally, to prove Proposition \ref{prop_fluctuations_T} on the fluctuations of $T_{\lfloor N \theta\rfloor,\lfloor N x\rfloor}^\iota$, we use the fact that we have $T_{\lfloor N \theta\rfloor,\lfloor N x\rfloor}^\iota=\sum_{i\in\mathds{Z}}(\ell^+(T_{\lfloor N \theta\rfloor,\lfloor N x\rfloor}^\iota,i)+\ell^-(T_{\lfloor N \theta\rfloor,\lfloor N x\rfloor}^\iota,i))$. It can be checked that $|\ell^+(T_{\lfloor N \theta\rfloor,\lfloor N x\rfloor}^\iota,i)-\ell^-(T_{\lfloor N \theta\rfloor,\lfloor N x\rfloor}^\iota,i+1)|=0$ or 1, hence controlling the $\ell^-(T_{\lfloor N \theta\rfloor,\lfloor N x\rfloor}^\iota,i)$ is enough. By using the coupling for the $\ell^-(T_{\lfloor N \theta\rfloor,\lfloor N x\rfloor}^\iota,i+1)-\ell^-(T_{\lfloor N \theta\rfloor,\lfloor N x\rfloor}^\iota,i)$ when $\ell^-(T_{\lfloor N \theta\rfloor,\lfloor N x\rfloor}^\iota,i)$ is high enough and our estimates on the size of the window in which $\ell^-(T_{\lfloor N \theta\rfloor,\lfloor N x\rfloor}^\iota,i)$ is neither high enough nor 0, we can prove that $T_{\lfloor N \theta\rfloor,\lfloor N x\rfloor}^\iota$ is close to the integral of the sum of the i.i.d. random variables of the coupling, which will yield the convergence. 

\subsection{Organization of the paper}

In Section \ref{sec_coupling}, we define the coupling between the increments of the local time and i.i.d. random variables and prove some of its properties. In Section \ref{sec_hit0}, we control where the local times hit 0, as well as where the local times are too low for the coupling of Section \ref{sec_coupling} to be useful. In Section \ref{sec_Skorohod_dist}, we prove a bound on the Skorohod $M_1$ distance between $Y_N^\pm$ and the renormalized sum of the i.i.d. random variables of the coupling set to 0 outside of $[-|x|-2\theta,|x|+2\theta)$ by writing explicit parametric representations of the two functions. In Section \ref{sec_conv_processes}, we complete the proof of the convergence of $Y_N^\pm$ stated in Theorem \ref{thm_main} and Proposition \ref{prop_uniform_conv}. In Section \ref{sec_no_J1_conv}, we prove that as claimed in Proposition \ref{prop_no_J1_conv}, $Y_N^\pm$ does not converge in the $J_1$ topology. Finally, in Section \ref{sec_conv_T}, we prove the convergence of the fluctuations of $T_{\lfloor N \theta\rfloor,\lfloor N x\rfloor}^\pm$ stated in Proposition \ref{prop_fluctuations_T}. 

In what follows, we set $\theta>0$, $\iota\in\{+,-\}$ and $x > 0$ (the cases $x<0$ and $x=0$ can be dealt with in the same way). To shorten the notation, we denote $T_N=T_{\lfloor N \theta\rfloor,\lfloor N x\rfloor}^\iota$. Moreover, for any $a,b\in\mathds{R}$, we denote $a \vee b=\max(a,b)$ and $a \wedge b=\min(a,b)$.

\section{Coupling of the local times increments with i.i.d. random variables}\label{sec_coupling}

Our goal in this section will be to couple the $\ell^\pm(T_N,i+1)-\ell^\pm(T_N,i)$ with i.i.d. random variables and to prove some properties of this coupling. This part of the work is not very different from what was done in \cite{Toth_et_al2008}, but we still recall their concepts and definitions. If we fix $i\in\mathds{Z}$ and observe the evolution of $(\ell^-(n,i)-\ell^+(n,i))_{n\in\mathds{N}}$, and if we ignore the steps at which $\ell^-(n,i)-\ell^+(n,i)$ does not move (i.e. those at which the random walk is not at $i$), we obtain a Markov chain $\xi_i$ whose distribution $\xi$ has the following transition probabilities: for all $n \in \mathds{N}$, $\mathds{P}(\xi(n+1)=\xi(n)\pm1)=\frac{w(\mp\xi(n))}{w(\xi(n))+w(-\xi(n))}$, and so that $\xi_i(0)=0$. Now, we denote $\tau_{i,\pm}(0)=0$ and for any $n\in\mathds{N}$, we denote $\tau_{i,\pm}(n+1)=\inf\{m>\tau_{i,\pm}(n)\,|\,\xi_i(m)=\xi_i(m-1)\pm1\}$, so that $\tau_{i,+}(n)$ is the time of the $n$-th upwards step of $\xi_i$ and $\tau_{i,-}(n)$ is the time of the $n$-th downwards step of $\xi_i$. Then since the distribution of $\xi$ is symmetric, the processes $(\eta_{i,+}(n))_{n\in\mathds{N}}=(-\xi_{i}(\tau_{i,+}(n)))_{n\in\mathds{N}}$ and $(\eta_{i,-}(n))_{n\in\mathds{N}}=(\xi_{i}(\tau_{i,-}(n)))_{n\in\mathds{N}}$ have the same distribution, called $\eta$, and it can be checked that $\eta$ is a Markov chain. 

We are going to give an expression of $\ell^\pm(T_N,i+1)-\ell^\pm(T_N,i)$ depending on the $\eta_{i,-}$, $\eta_{i,+}$. We assume $N$ large enough (so that $\lfloor N x\rfloor-1>0$). By definition of $T_N$ we have $X_{T_N}=\lfloor N x\rfloor\iota 1$. If $i \leq 0$ we thus have $X_{T_N}>i$, so the last step of the walk at $i$ before $T_N$ was going to the right, so the last step of $\xi_i$ was a downwards step, and by definition of $\ell^+(T_N,i)$ we have that $\xi_i$ made $\ell^+(T_N,i)$ downwards steps, hence $\ell^-(T_N,i)-\ell^+(T_N,i)=\xi_i(\tau_{i,-}(\ell^+(T_N,i)))=\eta_{i,-}(\ell^+(T_N,i))$, which yields $\ell^-(T_N,i)-\ell^+(T_N,i)=\eta_{i,-}(\ell^+(T_N,i))$. In addition, $\ell^-(T_N,i)=\ell^+(T_N,i-1)$, hence $\ell^+(T_N,i-1)=\ell^+(T_N,i)+\eta_{i,-}(\ell^+(T_N,i))$. If $0 < i < \lfloor N x\rfloor$ (for $\iota=-$) or $0 < i \leq \lfloor N x\rfloor$ (for $\iota=+$), the last step of the walk at $i$ was also going to the right, so we also have $\ell^-(T_N,i)-\ell^+(T_N,i)=\eta_{i,-}(\ell^+(T_N,i))$. However, $\ell^-(T_N,i)=\ell^+(T_N,i-1)-1$, so $\ell^+(T_N,i-1)=\ell^+(T_N,i)+\eta_{i,-}(\ell^+(T_N,i))+1$. Finally, if $i \geq \lfloor N x\rfloor$ (for $\iota=-$) or $i > \lfloor N x\rfloor$ (for $\iota=+$), then the last step of the walk at $i$ was going to the left, so the last step of $\xi_i$ was an upwards step, and $\xi_i$ made $\ell^-(T_N,i)$ upwards steps, therefore $\ell^-(T_N,i)-\ell^+(T_N,i)=\xi_i(\tau_{i,+}(\ell^-(T_N,i)))=-\eta_{i,+}(\ell^-(T_N,i))$, which yields $\ell^-(T_N,i)-\ell^+(T_N,i)=-\eta_{i,+}(\ell^-(T_N,i))$. Moreover, $\ell^+(T_N,i)=\ell^-(T_N,i+1)$, hence $\ell^-(T_N,i+1)=\ell^-(T_N,i)+\eta_{i,+}(\ell^-(T_N,i))$. 

We are going to use these results to deduce an expression of the $\ell^\pm(T_N,i)$ which will be very useful throughout this work. Denoting $\chi(N)=\lfloor N x\rfloor$ if $\iota=-$ and $\chi(N)=\lfloor N x\rfloor+1$ if $\iota=+$, for $i \geq \chi(N)$ we have $\ell^-(T_N,i)=\ell^-(T_N,\chi(N))+\sum_{j=\chi(N)}^{i-1}\eta_{j,+}(\ell^-(T_N,j))$, and for $i < \chi(N)$ we have $\ell^+(T_N,i)=\ell^+(T_N,\chi(N)-1)+\sum_{j=i+1}^{\chi(N)-1}(\eta_{j,-}(\ell^+(T_N,j))+\mathds{1}_{\{j > 0\}})$. Now, we remember that the definition of $T_N$ implies $\ell^\iota(T_N,\lfloor N x\rfloor)=\lfloor N \theta\rfloor$, so if $\iota=-$ we have $\ell^-(T_N,\chi(N))=\lfloor N \theta\rfloor$ and $\ell^+(T_N,\chi(N)-1)=\ell^-(T_N,\chi(N))=\lfloor N \theta\rfloor$, and if $\iota=+$ we have $\ell^+(T_N,\chi(N)-1)=\lfloor N \theta\rfloor$ and $\ell^-(T_N,\chi(N))=\ell^+(T_N,\chi(N)-1)-1=\lfloor N \theta\rfloor-1$. Consequently, we have the following.
\begin{equation}\label{eq_local_times}
\begin{split}
 \text{If }i \geq \chi(N),& \quad \ell^-(T_N,i)=\lfloor N \theta\rfloor-\mathds{1}_{\{\iota=+\}}+\sum_{j=\chi(N)}^{i-1}\eta_{j,+}(\ell^-(T_N,j)). \\
 \text{If }i < \chi(N),& \quad \ell^+(T_N,i)=\lfloor N \theta\rfloor+\sum_{j=i+1}^{\chi(N)-1}(\eta_{j,-}(\ell^+(T_N,j))+\mathds{1}_{\{j > 0\}}).
 \end{split}
\end{equation}
We will also need to remember the following.
\begin{equation}\label{eq_local_times_2}
 \text{If }i \geq \chi(N),\quad \ell^-(T_N,i)-\ell^+(T_N,i)=-\eta_{i,+}(\ell^-(T_N,i)). 
 \quad\text{ If } i < \chi(N),\quad \ell^-(T_N,i)-\ell^+(T_N,i)=\eta_{i,-}(\ell^+(T_N,i)).
\end{equation}

To couple the $\ell^\pm(T_N,i+1)-\ell^\pm(T_N,i)$ with i.i.d. random variables, we need to understand the $\eta_{i,+}(\ell^-(T_N,i))$ and the $\eta_{i,-}(\ell^+(T_N,i))$. \cite{Toth_et_al2008} proved that the following measure $\rho_-$ is the unique invariant probability distribution of the Markov chain $\eta$:  
\begin{equation}\label{eq_rho}
 \forall i \in \mathds{Z}, \quad \rho_-(i)=\frac{1}{R} \prod_{j=1}^{\lfloor|2i+1|/2\rfloor}\frac{w(-j)}{w(j)}\quad\text{with}\quad R=\sum_{i\in\mathds{Z}}\prod_{j=1}^{\lfloor|2i+1|/2\rfloor}\frac{w(-j)}{w(j)}. 
\end{equation}
We also denote $\rho_0$ the measure on $\frac{1}{2}+\mathds{Z}$ defined by $\rho_0(\cdot)=\rho_-(\cdot-\frac{1}{2})$. 

We are now in position to construct the coupling of the $\ell^\pm(T_N,i+1)-\ell^\pm(T_N,i)$ with i.i.d. random variables $(\zeta_i)_{i\in\mathds{Z}}$. The idea is that $\eta$ can be expected to converge to its invariant distribution $\rho_-$, hence when $\ell^\pm(T_N,i)$ is large, $\eta_{i,\mp}(\ell^\pm(T_N,i))$ will be close to a random variable of law $\rho_-$. More rigorously, we begin by defining an i.i.d. sequence $(r_i)_{i\in\mathds{Z}}$ of random variables of distribution $\rho_-$ so that for $i \geq \chi(N)$ then $\mathds{P}(r_i \neq \eta_{i,+}(\lfloor N^{1/6}\rfloor))$ is minimal, and for $i < \chi(N)$ then $\mathds{P}(r_i \neq \eta_{i,-}(\lfloor N^{1/6}\rfloor))$ is minimal. We can then define i.i.d. Markov chains $(\bar\eta_{i,+}(n))_{n \geq \lfloor N^{1/6}\rfloor}$ for $i \geq \chi(N)$ and $(\bar\eta_{i,-}(n))_{n \geq \lfloor N^{1/6}\rfloor}$ for $i < \chi(N)$ so that $\bar\eta_{i,\pm}(\lfloor N^{1/6}\rfloor)=r_i$, $\bar\eta_{i,\pm}$ is a Markov chain of distribution that of $\eta$, and if $\bar\eta_{i,\pm}(\lfloor N^{1/6}\rfloor)=\eta_{i,\pm}(\lfloor N^{1/6}\rfloor)$ then $\bar\eta_{i,\pm}(n)=\eta_{i,\pm}(n)$ for any $n \geq \lfloor N^{1/6}\rfloor$. Since $\rho_-$ is invariant for $\eta$, if $n \geq \lfloor N^{1/6}\rfloor$, the $\bar\eta_{i,+}(n)$ for $i \geq \chi(N)$ and $\bar\eta_{i,-}(n)$ for $i < \chi(N)$ have distribution $\rho_-$. We define the random variables $(\zeta_i)_{i\in\mathds{Z}}$ as follows: for $i \geq \chi(N)$ we set $\zeta_i=\bar\eta_{i,+}(\ell^-(T_N,i)\vee\lfloor N^{1/6}\rfloor)+\frac{1}{2}$, and for $i < \chi(N)$ we set $\zeta_i=\bar\eta_{i,-}(\ell^+(T_N,i)\vee\lfloor N^{1/6}\rfloor)+\frac{1}{2}$. For $i \geq \chi(N)$, \eqref{eq_local_times} implies that $\ell^-(T_N,i)$ depends only on the $\eta_{j,+}$, $\chi(N) \leq j \leq i-1$, hence is independent from $\bar\eta_{i,+}$, which implies $\zeta_i$ has distribution $\rho_0$ and is independent from the $\zeta_j$, $\chi(N) \leq j \leq i-1$. This and a similar argument for $i < \chi(N)$ implies the $(\zeta_i)_{i\in\mathds{Z}}$ are i.i.d. with distribution $\rho_0$. 

We will prove several properties of $(\zeta_i)_{i\in\mathds{Z}}$ that we will use in the remainder of the proof. In order to do that, we need the following lemma of \cite{Toth_et_al2008}.

\begin{lemma}[Lemma 1 of \cite{Toth_et_al2008}]\label{lem_Toth}
 There exist two constants $\tilde c =\tilde c(w)>0$ and $\tilde C = \tilde C(w) < +\infty$ so that for any $n\in\mathds{N}$,
 \[
  \mathds{P}(\eta(n)=i|\eta(0)=0) \leq \tilde C e^{-\tilde c |i|} \quad \text{and} 
  \quad \sum_{i\in\mathds{Z}}|\mathds{P}(\eta(n)=i|\eta(0)=0)-\rho_-(i)| \leq \tilde C e^{-\tilde c n}.
 \]
\end{lemma}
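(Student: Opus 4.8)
The first task is to make the structure of the Markov chain $\eta$ explicit. By construction, $\eta$ is obtained by watching the nearest-neighbour chain $\xi$ at its successive downward steps: if $\eta(n)=k$ then $\xi$ sits at $k$ and will make a number $G\ge 0$ of upward steps followed by one downward step, so $\eta(n+1)=k+G-1$. Thus $\eta$ is \emph{skip-free to the left} (it drops by at most $1$ per step), and its increment $\eta(n+1)-\eta(n)=G-1$ has an exponential upper tail uniformly in $k$, because once $\xi$ has climbed above a fixed level its upward probability is bounded away from $1$. The quantitative input from ``$w$ non-decreasing and non-constant'' is: there are $i_0\in\mathds{N}^*$ and $q_0\in(0,1)$ with $w(-j)/w(j)\le q_0$ for $j\ge i_0$; hence the upward probability of $\xi$ at site $j$, namely $w(-j)/(w(j)+w(-j))$, is at most $q:=q_0/(1+q_0)<\tfrac12$ for $j\ge i_0$ and at least $p^\ast:=1/(1+q_0)>\tfrac12$ for $j\le -i_0$. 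Consequently, from a site $k\ge i_0$ the increment $G-1$ of $\eta$ is stochastically dominated by a $\mathrm{Geometric}(1-q)$ variable minus $1$, so it has negative mean; from a site $k\le -i_0$ one gets $\mathds{E}[G]\ge p^\ast/(1-p^\ast)>1$ up to an error going to $0$ as $k\to-\infty$, so the increment has positive mean bounded away from $0$. I would also record, directly from the product formula \eqref{eq_rho} and $w(-j)/w(j)\le q_0<1$ for $j\ge i_0$, that $\rho_-$ has exponentially decaying tails.

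For the first inequality I would set up a geometric Foster--Lyapunov drift condition with test function $V(k)=e^{\alpha|k|}$, $\alpha>0$ small. On the positive side, for $k\ge i_0$ one has $\eta(n+1)\ge k-1\ge 0$, so $\mathds{E}[V(\eta(n+1))\mid\eta(n)=k]=V(k)\,e^{-\alpha}\mathds{E}[e^{\alpha G}\mid\eta(n)=k]\le V(k)\,e^{-\alpha}\tfrac{1-q}{1-qe^{\alpha}}$, and the last factor is $1$ at $\alpha=0$ with derivative $(2q-1)/(1-q)<0$, hence $\le\gamma_+<1$ for $\alpha$ small. On the negative side, for $k\le -M$ with $M$ large, split according to the sign of $\eta(n+1)$: on $\{\eta(n+1)<0\}$ the contribution is $V(k)\,\mathds{E}[e^{-\alpha(G-1)}\mathds{1}_{\{\eta(n+1)<0\}}]\le V(k)\,e^{\alpha}\mathds{E}[e^{-\alpha G}]$, and $e^{\alpha}\mathds{E}[e^{-\alpha G}]$ is $1$ at $\alpha=0$ with derivative $1-\mathds{E}[G]<0$ by the drift estimate above, hence $\le\gamma_-<1$; on $\{\eta(n+1)\ge 0\}$ the contribution is $V(k)^{-1}\mathds{E}[e^{\alpha(G-1)}\mathds{1}_{\{\eta(n+1)\ge 0\}}]$, which is bounded by a constant uniformly in $k$ because the overshoot of $\xi$ above $0$ has a uniformly exponential tail. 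Together with the trivial bound $\le K$ for $|k|<M$, this gives, after enlarging $M$, constants $\gamma<1$ and $K<\infty$ with $\mathds{E}[V(\eta(n+1))\mid\eta(n)]\le\gamma V(\eta(n))+K$. Iterating from $\eta(0)=0$ yields $\mathds{E}[V(\eta(n))]\le V(0)+K/(1-\gamma)$ for all $n$, and Markov's inequality then gives $\mathds{P}(\eta(n)=i\mid\eta(0)=0)\le\mathds{P}(V(\eta(n))\ge e^{\alpha|i|})\le\bigl(1+K/(1-\gamma)\bigr)e^{-\alpha|i|}$, which is the first bound with $\tilde c=\alpha$.

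For the second inequality I would invoke the standard geometric ergodicity theorem: $\eta$ is irreducible and aperiodic on $\mathds{Z}$ (every site has a positive-probability self-loop, and both neighbours are accessible, since $w>0$), and it satisfies the geometric drift condition just established with a level set $\{|k|\le M'\}$ as small set; hence there are $C<\infty$ and $r\in(0,1)$ with $\sum_{i\in\mathds{Z}}\bigl|\mathds{P}(\eta(n)=i\mid\eta(0)=0)-\rho_-(i)\bigr|=2\,\|\mathds{P}(\eta(n)\in\cdot\mid\eta(0)=0)-\rho_-\|_{\mathrm{TV}}\le 2C r^{n}$, i.e.\ the second bound with $\tilde c=-\log r$. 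One can also bypass this black box by a coupling: the drift condition gives an exponential tail for the return time of $\eta$ to a fixed centre state from $0$, and (using that $\rho_-$ has exponential tails) also from a $\rho_-$-distributed start; running two copies independently until they are simultaneously at that state and coalescing them there, the coalescence time has an exponential tail, and the total-variation distance at time $n$ is at most the probability that coalescence has not yet occurred.

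I expect the main obstacle to be the verification of the drift condition on the \emph{negative} side. There $\eta$ is skip-free to the left and its drift toward $0$ is only of order $1$ (not of order $|k|$), so naive estimates do not close: one must use carefully that, although $G=0$ has probability bounded away from $0$ even from very negative sites, $\mathds{E}[G]$ from such sites still exceeds $1$ by a fixed amount, and that this constant positive drift combines with the skip-free-to-the-left structure to produce an exponential tail; concretely this amounts to controlling $\mathds{E}[e^{-\alpha G}]$ and $\mathds{E}[e^{\alpha G}]$ uniformly over deep negative starting sites, via the biased-random-walk behaviour of $\xi$ on the negative half-line. The positive side, the irreducibility and aperiodicity, and the passage from the Lyapunov bound to the two displayed inequalities are routine.
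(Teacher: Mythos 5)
There is nothing in the paper to compare your argument with: the statement is imported verbatim as Lemma 1 of \cite{Toth_et_al2008} and used as an external input, so this paper contains no proof of it. Judged on its own, your route is sound and would give a self-contained proof. The structural observation $\eta(n+1)=\eta(n)+G-1$, with $G$ the number of upward steps of $\xi$ before its next downward step, is correct, as are the consequences you draw from $w$ being non-decreasing and non-constant (up-probability of $\xi$ at most $q<\tfrac12$ above some level $i_0$, at least $p^\ast>\tfrac12$ below $-i_0$, exponential tails of $\rho_-$ from \eqref{eq_rho}). The Foster--Lyapunov scheme with $V(k)=e^{\alpha|k|}$, the iteration giving $\sup_n\mathds{E}[V(\eta(n))]<\infty$ and hence the first inequality by Markov, and the passage to the second inequality via geometric ergodicity of an irreducible aperiodic chain with a geometric drift towards a finite set (or via the coupling you sketch) are standard and correct in outline; irreducibility and aperiodicity are as you say, since $G\in\{0,1,2\}$ all have positive probability because $w>0$.

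The one step to tighten is exactly the one you flag, and your stated fix via Taylor expansion at $\alpha=0$ is not quite the right tool: on the negative side, from a site $k\ll -i_0$ the variable $G$ is typically of order $|k|$ (the chain $\xi$ climbs essentially all the way back towards $0$ before its first downward step), so $\mathds{E}[G]$ and the higher moments entering the Taylor remainder of $e^{\alpha}\mathds{E}[e^{-\alpha G}]$ are not bounded in $k$, and the admissible $\alpha$ produced by the argument ``value $1$ at $\alpha=0$, derivative $1-\mathds{E}[G]<0$'' would a priori depend on $k$. The repair is easy and goes in the direction you indicate at the end: since $t\mapsto e^{-\alpha t}$ is decreasing and each step of $\xi$ below $-i_0$ is upward with probability at least $p^\ast$, the variable $G$ stochastically dominates $\min(G^\ast,|k|-i_0)$ with $G^\ast$ satisfying $\mathds{P}(G^\ast\geq m)=(p^\ast)^m$; hence $e^{\alpha}\mathds{E}[e^{-\alpha G}]\leq e^{\alpha}\mathds{E}[e^{-\alpha\min(G^\ast,|k|-i_0)}]$, which for a fixed small $\alpha$ (chosen so that $e^{\alpha}\frac{1-p^\ast}{1-p^\ast e^{-\alpha}}<1$, possible since $p^\ast>\tfrac12$) is bounded by some $\gamma_-<1$ uniformly over all $k$ below a threshold $-M(\alpha)$. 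Combined with the uniform geometric control of the overshoot above $0$ that you already use for the event $\{\eta(n+1)\geq 0\}$, this gives the drift inequality $\mathds{E}[V(\eta(n+1))\mid\eta(n)=k]\leq\gamma V(k)+K$ with constants independent of $k$, and the rest of your argument then goes through as written.
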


Firstly, we want to prove that our coupling is actually useful: that the $\zeta_i$ are close to the $\ell^\pm(T_N,i+1)-\ell^\pm(T_N,i)$. More precisely, we will show that except on an event of probability tending to 0, if $\ell^\pm(T_N,i)$ is large then $\zeta_i=\eta_{i,\mp}(\ell^\pm(T_N,i))+1/2$, which \eqref{eq_local_times} relates to $\ell^\pm(T_N,i+1)-\ell^\pm(T_N,i)$. We denote 
\begin{equation}\label{eq_zeta=eta}
\begin{split}
 \mathcal{B}_1^-&=\{\exists\, i\in\{-\lceil2(|x|+2\theta)N\rceil,....,\chi(N)-1\}, \ell^+(T_N,i)\geq\lfloor N^{1/6}\rfloor\text{ and }\zeta_i\neq\eta_{i,-}(\ell^+(T_N,i))+1/2\}, \\
 \mathcal{B}_1^+&=\{\exists\, i\in\{\chi(N),....,\lceil2(|x|+2\theta)N\rceil\}, \ell^-(T_N,i)\geq\lfloor N^{1/6}\rfloor\text{ and }\zeta_i\neq\eta_{i,+}(\ell^-(T_N,i))+1/2\}.
 \end{split}
\end{equation}
Lemma \ref{lem_Toth} will allow us to prove the following.

\begin{lemma}\label{lem_zeta=eta}
$\mathds{P}(\mathcal{B}_1^-)$ and $\mathds{P}(\mathcal{B}_1^+)$ tend to 0 when $N \to +\infty$.
\end{lemma}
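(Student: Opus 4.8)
The plan is to bound each of $\mathds{P}(\mathcal{B}_1^-)$ and $\mathds{P}(\mathcal{B}_1^+)$ by a union bound over the $O(N)$ relevant sites, and to show that each individual term is exponentially small in a power of $N$, so that the sum still tends to $0$. I will treat $\mathcal{B}_1^+$ (the case of $\mathcal{B}_1^-$ is symmetric, with $\eta_{i,-}$, $\ell^+$ in place of $\eta_{i,+}$, $\ell^-$). First, I would fix $i \in \{\chi(N),\dots,\lceil 2(|x|+2\theta)N\rceil\}$ and examine the event $\{\ell^-(T_N,i)\geq\lfloor N^{1/6}\rfloor,\ \zeta_i\neq\eta_{i,+}(\ell^-(T_N,i))+1/2\}$. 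By construction, $\zeta_i = \bar\eta_{i,+}(\ell^-(T_N,i)\vee\lfloor N^{1/6}\rfloor)+\tfrac12$, and on the event $\ell^-(T_N,i)\geq\lfloor N^{1/6}\rfloor$ this is $\bar\eta_{i,+}(\ell^-(T_N,i))+\tfrac12$; moreover, the coupling was built so that whenever $\bar\eta_{i,+}(\lfloor N^{1/6}\rfloor)=\eta_{i,+}(\lfloor N^{1/6}\rfloor)$ we have $\bar\eta_{i,+}(n)=\eta_{i,+}(n)$ for all $n\geq\lfloor N^{1/6}\rfloor$. Hence on $\{\ell^-(T_N,i)\geq\lfloor N^{1/6}\rfloor\}$ the event $\{\zeta_i\neq\eta_{i,+}(\ell^-(T_N,i))+1/2\}$ forces $r_i=\bar\eta_{i,+}(\lfloor N^{1/6}\rfloor)\neq\eta_{i,+}(\lfloor N^{1/6}\rfloor)$, so it suffices to bound $\mathds{P}(r_i\neq\eta_{i,+}(\lfloor N^{1/6}\rfloor))$.

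The next step is the key estimate: since $r_i$ was chosen so that $\mathds{P}(r_i\neq\eta_{i,+}(\lfloor N^{1/6}\rfloor))$ is minimal among couplings of $\rho_-$ with the law of $\eta_{i,+}(\lfloor N^{1/6}\rfloor)$, this minimal probability equals the total variation distance between the law of $\eta(\lfloor N^{1/6}\rfloor)$ (started from $0$, which is indeed the distribution of $\eta_{i,+}(\lfloor N^{1/6}\rfloor)$) and $\rho_-$. The second bound in Lemma \ref{lem_Toth} gives exactly
\[
\mathds{P}(r_i\neq\eta_{i,+}(\lfloor N^{1/6}\rfloor)) = \tfrac12\sum_{k\in\mathds{Z}}|\mathds{P}(\eta(\lfloor N^{1/6}\rfloor)=k\,|\,\eta(0)=0)-\rho_-(k)| \leq \tfrac12\tilde C e^{-\tilde c\lfloor N^{1/6}\rfloor}.
\]
Then a union bound over the at most $2\lceil 2(|x|+2\theta)N\rceil+2$ values of $i$ yields
\[
\mathds{P}(\mathcal{B}_1^+) \leq \big(2\lceil 2(|x|+2\theta)N\rceil+2\big)\cdot\tfrac12\tilde C e^{-\tilde c\lfloor N^{1/6}\rfloor},
\]
which is $O(N e^{-\tilde c N^{1/6}/2})$ and hence tends to $0$ as $N\to+\infty$. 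The argument for $\mathcal{B}_1^-$ is identical once one checks that $\eta_{i,-}(\lfloor N^{1/6}\rfloor)$ also has the law of $\eta(\lfloor N^{1/6}\rfloor)$ started at $0$, which follows from the fact that $\eta_{i,+}$ and $\eta_{i,-}$ both have distribution $\eta$ as recalled just before \eqref{eq_rho}.

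The main obstacle is a bookkeeping one rather than an analytic one: one must be careful that on the event defining $\mathcal{B}_1^+$, the random index $\ell^-(T_N,i)$ is at least $\lfloor N^{1/6}\rfloor$ so that $\zeta_i$ really is $\bar\eta_{i,+}(\ell^-(T_N,i))+\tfrac12$ with the chain evaluated past the coupling time, and that the coupling of $r_i$ with $\eta_{i,+}(\lfloor N^{1/6}\rfloor)$ is genuinely optimal so that its failure probability is the total variation distance controlled by Lemma \ref{lem_Toth}. Once these points are in place, the union bound is immediate because the polynomial factor $O(N)$ is dwarfed by the stretched-exponential factor $e^{-\tilde c N^{1/6}}$.
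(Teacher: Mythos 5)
Your proposal is correct and follows essentially the same route as the paper's proof: reduce the event on $\{\ell^\mp(T_N,i)\geq\lfloor N^{1/6}\rfloor\}$ to the failure of the optimal coupling $r_i\neq\eta_{i,\pm}(\lfloor N^{1/6}\rfloor)$, bound that failure probability via the total-variation estimate of Lemma \ref{lem_Toth}, and conclude with a union bound over the $O(N)$ sites. The only difference is that you spell out explicitly the identification of the minimal coupling failure probability with the total variation distance, which the paper leaves implicit.
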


\begin{proof}
 By definition, for any $i\in\{-\lceil2(|x|+2\theta)N\rceil,....,\chi(N)-1\}$ we have $\zeta_i=\bar\eta_{i,-}(\ell^+(T_N,i)\vee\lfloor N^{1/6}\rfloor)+\frac{1}{2}$, which is $\bar\eta_{i,-}(\ell^+(T_N,i))+\frac{1}{2}$ when $\ell^+(T_N,i)\geq\lfloor N^{1/6}\rfloor$. Now, $\bar\eta_{i,-}=\eta_{i,-}$ if $\bar\eta_{i,-}(\lfloor N^{1/6}\rfloor)=\eta_{i,-}(\lfloor N^{1/6}\rfloor)$, that is $r_i=\eta_{i,-}(\lfloor N^{1/6}\rfloor)$. We deduce $\mathds{P}(\mathcal{B}_1^-) \leq \mathds{P}(\exists\, i\in\{-\lceil2(|x|+2\theta)N\rceil,....,\chi(N)-1\},r_i\neq\eta_{i,-}(\lfloor N^{1/6}\rfloor))$. Now, for any $i < \chi(N)$, we have $\mathds{P}(r_i\neq\eta_{i,-}(\lfloor N^{1/6}\rfloor))$ minimal, thus smaller than $\tilde C e^{-\tilde c \lfloor N^{1/6}\rfloor}$ by Lemma \ref{lem_Toth}. Consequently, when $N$ is large enough, $\mathds{P}(\mathcal{B}_1^-) \leq 3(|x|+2\theta)N\tilde C e^{-\tilde c \lfloor N^{1/6}\rfloor}$, which tends to 0 when $N \to +\infty$. The proof for $\mathds{P}(\mathcal{B}_1^+)$ is the same.
\end{proof}

Unfortunately, the previous lemma does not allow to control the local times when $\ell^\pm(T_N,i)$ is small. In order to do that, we show several additional properties. We have to control the probability of 
\begin{equation*}
\begin{split}
 \mathcal{B}_2=&\{\exists \, i \in \{-\lceil2(|x|+2\theta)N\rceil,....,\lceil2(|x|+2\theta)N\rceil\}, |\zeta_i| \geq N^{1/16}\} \\
 &\cup \{\exists \, i \in \{-\lceil2(|x|+2\theta)N\rceil,....,\chi(N)-1\}, |\eta_{i,-}(\ell^+(T_N,i))+1/2| \geq N^{1/16}\} \\
 &\cup \{\exists \, i \in \{\chi(N),....,\lceil2(|x|+2\theta)N\rceil\}, |\eta_{i,+}(\ell^-(T_N,i))+1/2| \geq N^{1/16}\}.
 \end{split}
\end{equation*}

\begin{lemma}\label{lem_zeta_eta_small}
 $\mathds{P}(\mathcal{B}_2)$ tends to 0 when $N$ tends to $+\infty$.
\end{lemma}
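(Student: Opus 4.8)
The plan is to bound each of the three events in the definition of $\mathcal{B}_2$ separately, in each case by a union bound over the $O(N)$ indices involved, after showing that each individual probability is super-polynomially small in $N$; this forces $\mathds{P}(\mathcal{B}_2) \to 0$. The common input is Lemma \ref{lem_Toth}. First, letting $n \to +\infty$ in its first inequality and using the second inequality to identify the limit gives $\rho_-(i) \leq \tilde C e^{-\tilde c |i|}$ for all $i \in \mathds{Z}$. Second, for any $n \in \mathds{N}$, summing the first inequality of Lemma \ref{lem_Toth} over $\{j \,:\, |j| \geq N^{1/16} - 1/2\}$ yields $\mathds{P}(|\eta(n) + 1/2| \geq N^{1/16} \mid \eta(0) = 0) \leq \frac{2\tilde C}{1 - e^{-\tilde c}} e^{-\tilde c (N^{1/16} - 1/2)} \leq C_1 e^{-\tilde c N^{1/16}/2}$ for $N \geq 1$, where $C_1 = C_1(w) < +\infty$; crucially this bound is \emph{uniform in $n$}. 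The same computation bounds $\sum_{j \,:\, |j| \geq N^{1/16} - 1/2} \rho_-(j)$ by $C_1 e^{-\tilde c N^{1/16}/2}$.

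For the first event in $\mathcal{B}_2$, recall from Section \ref{sec_coupling} that the $(\zeta_i)_{i\in\mathds{Z}}$ are i.i.d.\ with distribution $\rho_0$, so $\zeta_i - 1/2$ has distribution $\rho_-$; hence $\mathds{P}(|\zeta_i| \geq N^{1/16}) \leq \mathds{P}(|\zeta_i - 1/2| \geq N^{1/16} - 1/2) \leq C_1 e^{-\tilde c N^{1/16}/2}$ by the bound on $\rho_-$ above. A union bound over the at most $C_2 N$ indices $i \in \{-\lceil 2(|x|+2\theta)N\rceil,\dots,\lceil 2(|x|+2\theta)N\rceil\}$ shows that the probability of the first event is at most $C_1 C_2 N e^{-\tilde c N^{1/16}/2}$.

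For the second and third events, the key point is the independence structure already exploited in the construction of the coupling. By \eqref{eq_local_times}, for $i < \chi(N)$ the variable $\ell^+(T_N,i)$ is a measurable function of $(\eta_{j,-})_{i+1 \leq j \leq \chi(N)-1}$, hence independent of $\eta_{i,-}$; symmetrically, for $i \geq \chi(N)$ the variable $\ell^-(T_N,i)$ depends only on $(\eta_{j,+})_{\chi(N) \leq j \leq i-1}$ and is independent of $\eta_{i,+}$. Since $\eta_{i,-}$ (resp.\ $\eta_{i,+}$) is a Markov chain with the transition probabilities of $\eta$ started from $0$, conditioning on the value of $\ell^+(T_N,i)$ (resp.\ $\ell^-(T_N,i)$) and using the bound uniform in $n$ from the first paragraph gives $\mathds{P}(|\eta_{i,-}(\ell^+(T_N,i)) + 1/2| \geq N^{1/16}) \leq C_1 e^{-\tilde c N^{1/16}/2}$, and likewise for the third event. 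A further union bound over the $O(N)$ indices, together with the estimate of the previous paragraph, yields $\mathds{P}(\mathcal{B}_2) \leq C_3 N e^{-\tilde c N^{1/16}/2}$ for some constant $C_3 = C_3(x,\theta,w)$, which tends to $0$ as $N \to +\infty$. The only delicate point is the independence claim for the second and third events, but this is exactly the structure coming from \eqref{eq_local_times} together with the independence of the family $(\eta_{j,-})_{j<\chi(N)}$ (resp.\ $(\eta_{j,+})_{j\geq\chi(N)}$) that was used just above to prove the $\zeta_i$ are i.i.d.; everything else is a routine tail estimate via Lemma \ref{lem_Toth}.
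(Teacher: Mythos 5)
Your proposal is correct and follows essentially the same route as the paper: a union bound over the $O(N)$ indices, the exponential tail of $\rho_0$ (equivalently $\rho_-$) for the $\zeta_i$, and for the other two events the independence of $\ell^+(T_N,i)$ from $\eta_{i,-}$ (resp.\ $\ell^-(T_N,i)$ from $\eta_{i,+}$) coming from \eqref{eq_local_times}, combined with the $n$-uniform tail bound from the first part of Lemma \ref{lem_Toth} after conditioning on the value of the local time. The only cosmetic difference is that you derive the exponential tail of $\rho_-$ explicitly from Lemma \ref{lem_Toth}, whereas the paper simply invokes it.
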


\begin{proof}
It is enough to find some constants $c>0$ and $C < +\infty$ so that for any $i \in \{-\lceil2(|x|+2\theta)N\rceil,....,\lceil2(|x|+2\theta)N\rceil\}$ we have $\mathds{P}(|\zeta_i| \geq N^{1/16}) \leq Ce^{-cN^{1/16}}$, for any $i \in \{-\lceil2(|x|+2\theta)N\rceil,....,\chi(N)-1\}$ we have $\mathds{P}(|\eta_{i,-}(\ell^+(T_N,i))+1/2| \geq N^{1/16}) \leq Ce^{-cN^{1/16}}$, and for all $i \in \{\chi(N),....,\lceil2(|x|+2\theta)N\rceil\}$ we have $\mathds{P}(|\eta_{i,+}(\ell^-(T_N,i))+1/2| \geq N^{1/16}) \leq Ce^{-cN^{1/16}}$. For all $i\in\mathds{Z}$, $\zeta_i$ has distribution $\rho_0$, which has exponential tails, hence there exists constants $c'=c'(w)>0$ and $C'=C'(w) < +\infty$ so that for $i \in \{-\lceil2(|x|+2\theta)N\rceil,....,\lceil2(|x|+2\theta)N\rceil\}$ we have $\mathds{P}(|\zeta_i| \geq N^{1/16}) \leq C'e^{-c'N^{1/16}}$. We now consider $i \in \{-\lceil2(|x|+2\theta)N\rceil,....,\chi(N)-1\}$ and $\mathds{P}(|\eta_{i,-}(\ell^+(T_N,i))+1/2| \geq N^{1/16})$ (the $\mathds{P}(|\eta_{i,+}(\ell^-(T_N,i))+1/2| \geq N^{1/16})$ can be dealt with in the same way). Equation \eqref{eq_local_times} implies $\ell^+(T_N,i)$ depends only on the $\eta_{j,-}$ for $j>i$, hence is independent of $\eta_{i,-}$. This implies $\mathds{P}(|\eta_{i,-}(\ell^+(T_N,i))+1/2| \geq N^{1/16}) = \sum_{k\in\mathds{N}}\mathds{P}(|\eta_{i,-}(k)+1/2| \geq N^{1/16})\mathds{P}(\ell^+(T_N,i)=k)$. Therefore the first part of Lemma \ref{lem_Toth} implies $\mathds{P}(|\eta_{i,-}(\ell^+(T_N,i))+1/2| \geq N^{1/16}) \leq \sum_{k\in\mathds{N}}\frac{2\tilde C e^{\tilde c /2}}{1-e^{-\tilde c}}e^{-\tilde c N^{1/16}}\mathds{P}(\ell^+(T_N,i)=k)=\frac{2\tilde C e^{\tilde c /2}}{1-e^{-\tilde c}}e^{-\tilde c N^{1/16}}$, which is enough. 
\end{proof}

We will also need the following, which is a rather standard result of large deviations.

\begin{lemma}\label{lem_large_deviations}
 For any $\alpha>0$, $\varepsilon>0$, $\mathds{P}(\max_{0 \leq i_1 \leq i_2 \leq \lceil N^\alpha\rceil}|\sum_{i=i_1}^{i_2}\zeta_i| \geq N^{\alpha/2+\varepsilon})$ tends to 0 when $N \to +\infty$.
\end{lemma}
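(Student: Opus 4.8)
The plan is a Chernoff bound followed by a union bound, exploiting that the $\zeta_i$ are i.i.d.\ with a distribution $\rho_0$ that does not depend on $N$, has exponential tails (as used in the proof of Lemma~\ref{lem_zeta_eta_small}), and is centered. I would first record the centering: by \eqref{eq_rho}, for every $i\in\mathds{Z}$ one has $\lfloor|2(-1-i)+1|/2\rfloor=\lfloor|2i+1|/2\rfloor$, hence $\rho_-(-1-i)=\rho_-(i)$, so $\rho_-$ is symmetric about $-\tfrac12$ and $\rho_0=\rho_-(\cdot-\tfrac12)$ is symmetric about $0$. In particular $\mathds{E}[\zeta_0]=0$ and $-\zeta_0$ has the same distribution as $\zeta_0$. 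Combined with the exponential tails, this yields constants $\lambda_0=\lambda_0(w)>0$ and $C=C(w)<+\infty$ with $\mathds{E}[e^{\lambda\zeta_0}]\leq e^{C\lambda^2}$ for all $\lambda\in[-\lambda_0,\lambda_0]$.

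Next I would reduce the maximum over pairs to a maximum over partial sums. Put $S_{-1}=0$ and $S_k=\sum_{i=0}^{k}\zeta_i$ for $0\leq k\leq\lceil N^\alpha\rceil$. Since $\sum_{i=i_1}^{i_2}\zeta_i=S_{i_2}-S_{i_1-1}$, we get $\max_{0\leq i_1\leq i_2\leq\lceil N^\alpha\rceil}\bigl|\sum_{i=i_1}^{i_2}\zeta_i\bigr|\leq 2\max_{-1\leq k\leq\lceil N^\alpha\rceil}|S_k|$, so it is enough to prove that $\mathds{P}\bigl(\max_{-1\leq k\leq\lceil N^\alpha\rceil}|S_k|\geq\tfrac12 N^{\alpha/2+\varepsilon}\bigr)$ tends to $0$.

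Then, for a fixed $k\in\{-1,\dots,\lceil N^\alpha\rceil\}$ and $t=\tfrac12 N^{\alpha/2+\varepsilon}$, the exponential Markov inequality gives, for every $\lambda\in(0,\lambda_0]$, that $\mathds{P}(S_k\geq t)\leq e^{-\lambda t}\,\mathds{E}[e^{\lambda\zeta_0}]^{k+1}\leq e^{-\lambda t+C\lambda^2(k+1)}$. Taking $\lambda=\min\bigl(\lambda_0,\tfrac{t}{2C(k+1)}\bigr)$: if $\tfrac{t}{2C(k+1)}\leq\lambda_0$ the exponent equals $-\tfrac{t^2}{4C(k+1)}$, which for $N$ large is at most $-\tfrac{N^{2\varepsilon}}{32C}$ because $k+1\leq N^\alpha+2\leq 2N^\alpha$; and if $\tfrac{t}{2C(k+1)}>\lambda_0$, then $C\lambda_0(k+1)<\tfrac{t}{2}$, so the exponent $-\lambda_0 t+C\lambda_0^2(k+1)=\lambda_0\bigl(-t+C\lambda_0(k+1)\bigr)$ is at most $-\tfrac{\lambda_0}{2}N^{\alpha/2+\varepsilon}$. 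In both cases $\mathds{P}(S_k\geq t)\leq e^{-cN^\delta}$ with $\delta=\min(2\varepsilon,\tfrac{\alpha}{2}+\varepsilon)>0$ and $c>0$ depending only on $w,\alpha,\varepsilon$, uniformly in $k$; by the symmetry of $\rho_0$ the same bound holds for $\mathds{P}(S_k\leq-t)$. A union bound over the at most $N^\alpha+3$ values of $k$ gives $\mathds{P}\bigl(\max_{-1\leq k\leq\lceil N^\alpha\rceil}|S_k|\geq t\bigr)\leq 2(N^\alpha+3)e^{-cN^\delta}$, which tends to $0$, completing the proof.

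There is essentially no real obstacle — this is, as the paper says, a rather standard large deviations estimate, and the generous $N^\varepsilon$ slack makes the crude union bound over $O(N^\alpha)$ events more than enough. The only two small points to be careful with are deducing that $\zeta_0$ is centered (from the symmetry of $\rho_-$ about $-\tfrac12$, which also supplies the matching lower-tail bound for free) and capping $\lambda$ at $\lambda_0$ in the Chernoff optimization, so that the argument also covers the regime $\varepsilon\geq\alpha/2$, where $t$ is large relative to the number of summands.
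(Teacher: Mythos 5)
Your proof is correct and follows essentially the same route as the paper's: an exponential Markov/Chernoff bound exploiting the symmetry (hence centering) and exponential tails of $\rho_0$, followed by a union bound; the paper simply fixes $\lambda=N^{-\alpha/2}$ and takes the union over all $O(N^{2\alpha})$ pairs $(i_1,i_2)$ instead of reducing to partial sums and optimizing $\lambda$ with a cap. (One negligible nit: in the capped regime your exponent bound should be $-\tfrac{\lambda_0}{4}N^{\alpha/2+\varepsilon}$ rather than $-\tfrac{\lambda_0}{2}N^{\alpha/2+\varepsilon}$, since $t=\tfrac12 N^{\alpha/2+\varepsilon}$, which changes nothing in the conclusion.)
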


\begin{proof}
 Let $0 \leq i_1 \leq i_2 \leq \lceil N^\alpha\rceil$, let us study $\mathds{P}(|\sum_{i=i_1}^{i_2}\zeta_i| \geq N^{\alpha/2+\varepsilon})$. We know the $\zeta_i$, $i\in\mathds{Z}$ are i.i.d. with distribution $\rho_0$, and it can be checked that $\rho_0$ is symmetric with respect to 0, so from that and the Markov inequality we get 
 \begin{equation}\label{eq_large_devations}
  \begin{split}
  \mathds{P}\left(\left|\sum_{i=i_1}^{i_2}\zeta_i\right| \geq N^{\alpha/2+\varepsilon}\right)
  \leq 2 \mathds{P}\left(\sum_{i=i_1}^{i_2}\zeta_i \geq N^{\alpha/2+\varepsilon}\right)
  =2 \mathds{P}\left(\exp\left(\frac{1}{N^{\alpha/2}}\sum_{i=i_1}^{i_2}\zeta_i\right) \geq \exp(N^{\varepsilon})\right) \\
  \leq 2e^{-N^{\varepsilon}}\mathds{E}\left(\exp\left(\frac{1}{N^{\alpha/2}}\sum_{i=i_1}^{i_2}\zeta_i\right)\right)
  \leq 2e^{-N^{\varepsilon}}\prod_{i=i_1}^{i_2}\mathds{E}\left(\exp\left(\frac{1}{N^{\alpha/2}}\zeta_i\right)\right).\qquad\qquad\quad
 \end{split}
 \end{equation}
 Now, if $\zeta$ has distribution $\rho_0$, we can write $\exp(\frac{1}{N^{\alpha/2}}\zeta)=1+\frac{1}{N^{\alpha/2}}\zeta+\frac{1}{2}(\frac{1}{N^{\alpha/2}}\zeta)^2\exp(\frac{1}{N^{\alpha/2}}\zeta')$ with $|\zeta'|\leq|\zeta|$. Since $\rho_0$ is symmetric with respect to 0, we have $\mathds{E}(\zeta)=0$, therefore  
 \[
  \mathds{E}\left(\exp\left(\frac{1}{N^{\alpha/2}}\zeta\right)\right)
  = 1+\mathds{E}\left(\frac{1}{2}\left(\frac{1}{N^{\alpha/2}}\zeta\right)^2\exp\left(\frac{1}{N^{\alpha/2}}\zeta'\right)\right)
  \leq 1+\frac{1}{2N^\alpha}\mathds{E}\left(\zeta^2\exp\left(\frac{1}{N^{\alpha/2}}|\zeta|\right)\right). 
 \]
Moreover, $\rho_0$ has exponential tails, hence there exists constants $C<+\infty$ and $c>0$ so that $\mathds{E}(\zeta^2e^{c \,|\zeta|}) \leq C$. When $N$ is large enough, $\frac{1}{N^{\alpha/2}} \leq c$, therefore $\mathds{E}(\exp(\frac{1}{N^{\alpha/2}}\zeta)) \leq 1+\frac{C}{2N^\alpha}\leq \exp(\frac{C}{2N^\alpha})$. Together with \eqref{eq_large_devations}, this yields $\mathds{P}(|\sum_{i=i_1}^{i_2}\zeta_i| \geq N^{\alpha/2+\varepsilon})\leq 2e^{-N^{\varepsilon}}e^{(i_2-i_1+1)\frac{C}{2N^\alpha}} \leq 2e^{-N^{\varepsilon}}e^{(\lceil N^\alpha\rceil+1)\frac{C}{2N^\alpha}} \leq 2e^{C}e^{-N^{\varepsilon}}$ when $N$ is large enough. We deduce that when $N$ is large enough, $\mathds{P}(\max_{0 \leq i_1 \leq i_2 \leq \lceil N^\alpha\rceil}|\sum_{i=i_1}^{i_2}\zeta_i| \geq N^{\alpha/2+\varepsilon}) \leq (\lceil N^\alpha\rceil+1)^2 2e^{C}e^{-N^{\varepsilon}}$, which tends to 0 when $N$ tends to $+\infty$.
\end{proof}

We also prove an immediate application of Lemma \ref{lem_large_deviations}, which we will use several times. If we define
\[
 \mathcal{B}_3^-=\left\{\max_{-\lfloor (|x|+2\theta)N\rfloor-N^{3/4} \leq i_1 \leq i_2 \leq -\lfloor (|x|+2\theta)N\rfloor+N^{3/4}} \left|\sum_{i=i_1}^{i_2} \zeta_i\right| \geq N^{19/48}\right\},
 \]
 \[
 \mathcal{B}_3^+=\left\{\max_{\lfloor (|x|+2\theta)N\rfloor-N^{3/4} \leq i_1 \leq i_2 \leq \lfloor (|x|+2\theta)N\rfloor+N^{3/4}} \left|\sum_{i=i_1}^{i_2} \zeta_i\right| \geq N^{19/48}\right\},
\]
we have the following lemma.

\begin{lemma}\label{lem_sum_zeta}
 $\mathds{P}(\mathcal{B}_3^-)$ and $\mathds{P}(\mathcal{B}_3^+)$ tend to 0 when $N$ tends to $+\infty$. 
\end{lemma}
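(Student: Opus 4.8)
The plan is to deduce Lemma~\ref{lem_sum_zeta} essentially for free from Lemma~\ref{lem_large_deviations}; the only task is to recast the maxima appearing in $\mathcal{B}_3^\pm$ in the form handled there. I will describe the argument for $\mathcal{B}_3^-$, the case of $\mathcal{B}_3^+$ being identical after replacing $-\lfloor(|x|+2\theta)N\rfloor$ by $\lfloor(|x|+2\theta)N\rfloor$.

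First I would note that the set of integers $i$ with $-\lfloor(|x|+2\theta)N\rfloor - N^{3/4}\le i\le -\lfloor(|x|+2\theta)N\rfloor + N^{3/4}$ has at most $2N^{3/4}+1$ elements. Since the $(\zeta_i)_{i\in\mathds{Z}}$ are i.i.d.\ by the construction of Section~\ref{sec_coupling}, the joint distribution of the partial sums $\sum_{i=i_1}^{i_2}\zeta_i$ indexed by pairs $i_1\le i_2$ lying in this window is unchanged if the window is shifted so as to start at $0$; hence $\mathds{P}(\mathcal{B}_3^-)=\mathds{P}(\max_{0\le i_1\le i_2\le K_N}|\sum_{i=i_1}^{i_2}\zeta_i|\ge N^{19/48})$ for some integer $K_N\le 2N^{3/4}$. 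Now pick any $\alpha\in(3/4,19/24)$ (for instance $\alpha=7/9$) and then $\varepsilon>0$ small enough that $\alpha/2+\varepsilon\le 19/48$. For $N$ large one has $2N^{3/4}\le\lceil N^\alpha\rceil$, because $\alpha>3/4$, so the maximum above is at most $\max_{0\le i_1\le i_2\le\lceil N^\alpha\rceil}|\sum_{i=i_1}^{i_2}\zeta_i|$; moreover $N^{19/48}\ge N^{\alpha/2+\varepsilon}$, so the event $\{\max\ge N^{19/48}\}$ is contained in $\{\max_{0\le i_1\le i_2\le\lceil N^\alpha\rceil}|\sum_{i=i_1}^{i_2}\zeta_i|\ge N^{\alpha/2+\varepsilon}\}$. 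By Lemma~\ref{lem_large_deviations} the latter has probability tending to $0$, and therefore $\mathds{P}(\mathcal{B}_3^-)\to 0$; the same reasoning gives $\mathds{P}(\mathcal{B}_3^+)\to 0$.

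I do not expect any real difficulty: the lemma is an immediate corollary of Lemma~\ref{lem_large_deviations}, and the only points that require a little care are the reduction, via the i.i.d.\ property, to a window of indices starting at $0$ (so as to be exactly in the setting of that lemma), and the bookkeeping check that $19/48$ leaves enough room above $\tfrac12\cdot\tfrac34=\tfrac38$ to absorb both the slack $\alpha-\tfrac34$ needed to fit the window of length $\approx 2N^{3/4}$ inside $\lceil N^\alpha\rceil$ and the $\varepsilon$ of Lemma~\ref{lem_large_deviations}. Alternatively one can keep $\alpha=3/4$: split each sum $\sum_{i=i_1}^{i_2}\zeta_i$ at the midpoint $-\lfloor(|x|+2\theta)N\rfloor$ of the window into (at most) two sums over index ranges of length $\le\lceil N^{3/4}\rceil$, apply Lemma~\ref{lem_large_deviations} with $\varepsilon=1/96$ to each, and use $2N^{37/96}\le N^{19/48}$ for $N$ large.
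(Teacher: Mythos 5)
Your argument is correct and is essentially the paper's own proof: both use the i.i.d.\ property of the $(\zeta_i)$ to translate the window to start at $0$, enlarge it into $\{0,\dots,\lceil N^\alpha\rceil\}$ for some $\alpha$ slightly above $3/4$ (the paper takes $\alpha=37/48$, $\varepsilon=1/96$, so that $\alpha/2+\varepsilon=19/48$ exactly), and then invoke Lemma~\ref{lem_large_deviations}. Your exponent bookkeeping checks out, so nothing further is needed.
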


\begin{proof}
 Since the $(\zeta_i)_{i\in\mathds{Z}}$ are i.i.d., $\mathds{P}(\mathcal{B}_3^+)=\mathds{P}(\mathcal{B}_3^-)=\mathds{P}(\max_{0 \leq i_1 \leq i_2 \leq 2 \lceil N^{3/4} \rceil} |\sum_{i=i_1}^{i_2} \zeta_i| \geq N^{19/48})$, which is smaller than $\mathds{P}(\max_{0 \leq i_1 \leq i_2 \leq \lceil N^{37/48} \rceil} |\sum_{i=i_1}^{i_2} \zeta_i| \geq N^{19/48})$ when $N$ is large enough. Moreover, Lemma \ref{lem_large_deviations}, used with $\alpha=37/48$ and $\varepsilon=1/96$, yields that the latter probability tends to 0 when $N$ tends to $+\infty$. 
\end{proof}

\section{Where the local times approach 0}\label{sec_hit0}

The aim of this section is twofold. Firstly, we need to control the place where $\ell^-(T_N,i)$ hits 0 when $i$ is at the right of 0, as well as the place where $\ell^+(T_N,i)$ hits 0 when $i$ is at the left of 0. Secondly, we have to show that even when $\ell^\pm(T_N,i)$ is close to 0, the local times do not stray too far away from the coupling. For any $N \in \mathds{N}$, we denote $I^+=\inf\{i \geq \chi(N)\,|\,\ell^-(T_N,i)=0\}$ and $I^-=\sup\{i < \chi(N)\,|\,\ell^+(T_N,i)=0\}$. We notice that $\ell^+(T_N,I^-) = 0$, and from the definition of $T_N$ we have $\ell^+(T_N,i) > 0$ for any $0 \leq i \leq \chi(N)-1$, hence $I^-<0$. We first state an elementary result that we will use many times in this work.

\begin{lemma}\label{lem_0_after_I}
 For any $i \geq I^+$ or $i \leq I^-$ we have $\ell^\pm(T_N,i)=0$.
\end{lemma}

\begin{proof}
 Since $\ell^+(T_N,I^-) = 0$ and the random walk is at $\lfloor Nx \rfloor \iota1>0$ at time $T_N$, the random walk did not reach $I^-$ before time $T_N$, thus $\ell^\pm(T_N,i)=0$ for any $i \leq I^-$. Moreover, $\ell^-(T_N,\chi(N)) > 0$ by definition of $T_N$, hence $I^+>\chi(N)$ thus $X_{T_N} < I^+$ hence $\ell^-(T_N,I^+) = 0$ implies the random walk did not reach $I^+$ before time $T_N$, thus $\ell^\pm(T_N,i)=0$ for any $i \geq I^+$. 
\end{proof}

We will also need the auxiliary random variables $\tilde I^+=\inf\{i \geq \chi(N)\,|\,\ell^-(T_N,i)\leq\lfloor N^{1/6}\rfloor\}$ and $\tilde I^-=\sup\{i < \chi(N)\,|\,\ell^+(T_N,i)\leq\lfloor N^{1/6}\rfloor\}$. 

\subsection{Place where we hit 0}

We have the following result of control on $I^+$ and $I^-$. 

\begin{lemma}\label{lem_control_I}
 For any $\delta>0$, $\mathds{P}(|I^-+(|x|+2\theta)N| \geq N^{\delta+1/2})$ and $\mathds{P}(|I^+-(|x|+2\theta)N| \geq N^{\delta+1/2})$ tend to 0 when $N$ tends to $+\infty$. 
\end{lemma}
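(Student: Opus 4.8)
The plan is to use the representation \eqref{eq_local_times} of the local times as deterministic linear functions of $i$ perturbed by sums of the $\eta$'s, then compare those sums with the i.i.d. random variables $\zeta_i$ from the coupling of Section \ref{sec_coupling}, whose partial sums are controlled by the large-deviation estimate of Lemma \ref{lem_large_deviations}. I will treat $I^+$ in detail; the argument for $I^-$ is symmetric (with $\eta_{j,-}$ and $\ell^+$ in place of $\eta_{j,+}$ and $\ell^-$, and keeping track of the $\mathds{1}_{\{j>0\}}$ terms, which only change the answer by a deterministic shift absorbed into $(|x|+2\theta)N$). Recall from \eqref{eq_local_times} that for $i \geq \chi(N)$ one has $\ell^-(T_N,i) = \lfloor N\theta\rfloor - \mathds{1}_{\{\iota=+\}} + \sum_{j=\chi(N)}^{i-1}\eta_{j,+}(\ell^-(T_N,j))$. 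Since each $\eta_{j,+}$ has mean close to $-1/2$ under $\rho_-$ (the $+1/2$ shift is exactly what makes $\zeta_j = \eta_{j,+}(\cdots)+1/2$ centered), the right-hand side behaves like $N\theta - \frac{1}{2}(i-\chi(N))$, which vanishes around $i = \chi(N) + 2N\theta \approx (|x|+2\theta)N$. So $I^+$ should be at distance $O(\sqrt{N}\cdot N^\varepsilon)$ from $(|x|+2\theta)N$, giving the claimed bound for every $\delta > 0$.

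The key steps, in order: \textbf{(1)} Work on the complement of $\mathcal{B}_1^+ \cup \mathcal{B}_2 \cup \mathcal{B}_3^+$ and of the large-deviation event of Lemma \ref{lem_large_deviations} with $\alpha$ slightly larger than $1$, all of which have probability tending to $0$. \textbf{(2)} As long as $\ell^-(T_N,j) \geq \lfloor N^{1/6}\rfloor$, i.e. for $\chi(N)\le j < \tilde I^+$, Lemma \ref{lem_zeta=eta} lets us replace $\eta_{j,+}(\ell^-(T_N,j))$ by $\zeta_j - 1/2$ off $\mathcal{B}_1^+$, so for $i \le \tilde I^+$ we get $\ell^-(T_N,i) = \lfloor N\theta\rfloor - \mathds{1}_{\{\iota=+\}} - \frac{1}{2}(i-\chi(N)) + \sum_{j=\chi(N)}^{i-1}\zeta_j$, and off the Lemma \ref{lem_large_deviations} event the $\zeta$-sum is $O(N^{1/2+\varepsilon})$ uniformly in $i$. \textbf{(3)} Solving $\ell^-(T_N,i) = 0$ in this regime forces $\tilde I^+ = (|x|+2\theta)N + O(N^{1/2+\varepsilon})$: indeed the deterministic part hits $0$ at $i = \chi(N) + 2(\lfloor N\theta\rfloor - \mathds{1}_{\{\iota=+\}})$ which is $(|x|+2\theta)N + O(1)$, and a fluctuation term bounded by $N^{1/2+\varepsilon}$ can move the (roughly linear, slope $-1/2$) zero-crossing by at most $O(N^{1/2+\varepsilon})$, so $\ell^-(T_N,\cdot)$ is still $\geq \lfloor N^{1/6}\rfloor$ well before $(|x|+2\theta)N - N^{\delta+1/2}$ and has dropped below $\lfloor N^{1/6}\rfloor$ well before $(|x|+2\theta)N + N^{\delta+1/2}$ — this pins down $\tilde I^+$. \textbf{(4)} Finally pass from $\tilde I^+$ to $I^+$: once $\ell^-(T_N,\tilde I^+) \leq \lfloor N^{1/6}\rfloor$, I must show the local time actually reaches $0$ within a further $N^{3/4}$ sites; here I use that on a window of width $N^{3/4}$ around $(|x|+2\theta)N$ the increments $\ell^-(T_N,j+1)-\ell^-(T_N,j) = \eta_{j,+}(\ell^-(T_N,j))$ are, off $\mathcal{B}_2$, bounded by $N^{1/16}$ in absolute value, and their partial sums (comparing again with $\zeta_j$, now controlled on exactly this window by $\mathcal{B}_3^+$, i.e. bounded by $N^{19/48}$) cannot keep $\ell^-$ positive: a process starting at height $\leq N^{1/6}$ with drift $-1/2$ per step and fluctuations $\leq N^{19/48} \ll N^{3/4}$ must hit $0$ within $O(N^{1/6})$ steps, so $|I^+ - \tilde I^+| = O(N^{1/6})$, which is negligible against $N^{\delta+1/2}$. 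Combining (3) and (4) gives $|I^+ - (|x|+2\theta)N| \leq N^{\delta+1/2}$ off an event of vanishing probability.

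The main obstacle is step (4): making rigorous that once $\ell^-(T_N,\cdot)$ has dropped to the $N^{1/6}$ scale it genuinely hits $0$ quickly and does not, say, linger near a small positive value or get pushed back up. This requires the increment bounds off $\mathcal{B}_2$ together with the windowed partial-sum bound off $\mathcal{B}_3^+$, and a little care because the $\eta_{j,+}$ are only conditionally i.i.d.-like (the actual argument of $\eta_{j,+}$ is $\ell^-(T_N,j)$, which depends on the earlier $\eta$'s) — this is precisely why $\mathcal{B}_3^\pm$ was defined on the raw partial sums of $\zeta$ over a window of width $N^{3/4}$, so that the comparison is valid regardless of how $\ell^-(T_N,\cdot)$ wanders, as long as it stays in that window. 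A secondary but routine point is checking the one-step identity $\ell^-(T_N,I^+) = 0 \Rightarrow \ell^-(T_N,j) = 0$ for all $j \geq I^+$ (noted already in the proof-ideas subsection), which guarantees $I^+$ is well-defined and that reaching $0$ once is the same as the absorption discussed in the introduction.
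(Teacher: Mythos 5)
Your steps (1)--(3) reproduce, essentially, the paper's own argument for the direction ``the local time cannot drop too early'': since $I^\pm$ lies beyond $\tilde I^\pm$, it suffices to rule out $\tilde I^\pm$ being too early, and on $(\mathcal{B}_1^\pm)^c$ the representation \eqref{eq_local_times} converts this into a large-deviation event for $\sum_j\zeta_j$, handled by Lemma \ref{lem_large_deviations}. That half is sound (the paper only carries out this half explicitly; for the opposite direction it simply cites inequalities (51) and (53) of \cite{Toth_et_al2008}). Your additional observation that $\tilde I^+$ also cannot be too \emph{late} --- because the coupled representation would then be valid on the whole stretch and would force $\ell^-(T_N,\cdot)$ below $\lfloor N^{1/6}\rfloor$ earlier, again up to a large deviation of $\sum_j\zeta_j$ --- is likewise correct.

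The gap is in your step (4). Between $\tilde I^+$ and $I^+$ the local time satisfies $0<\ell^-(T_N,j)<\lfloor N^{1/6}\rfloor$, and this is exactly the regime in which the coupling gives nothing: by construction $\zeta_j=\bar\eta_{j,+}(\ell^-(T_N,j)\vee\lfloor N^{1/6}\rfloor)+1/2=\bar\eta_{j,+}(\lfloor N^{1/6}\rfloor)+1/2$ there, which bears no relation to the actual increment $\eta_{j,+}(\ell^-(T_N,j))$; Lemma \ref{lem_zeta=eta} identifies the two only when $\ell^-(T_N,j)\ge\lfloor N^{1/6}\rfloor$. Hence $\mathcal{B}_3^+$, which controls partial sums of the $\zeta_j$, says nothing about the partial sums of the true increments in that window, and the claimed ``drift $-1/2$ per step'' is unjustified: the chain $\eta$ started from a small value need not have increments centered near $-1/2$ --- only the invariant measure $\rho_0$ is centered. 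Nothing in your argument then prevents the local time from lingering at small positive values over a stretch longer than $N^{\delta+1/2}$. The missing ingredient is precisely Lemma \ref{lem_Toth_tau}: after $\tilde I^+$ the local time evolves as the Markov chain $L$ started from a level at most $\lfloor N^{1/6}\rfloor$, whose absorption time $\tau$ satisfies $\mathds{E}(\tau|L(0)=k)\le 3k+K$, so Markov's inequality gives $I^+-\tilde I^+\le N^{1/4}$ with high probability (this is how the paper proves Lemma \ref{lem_I-tildeI}). If you import that bound --- or cite (51) and (53) of \cite{Toth_et_al2008} for the whole ``survives too long'' direction, as the paper does --- your proof closes; as written, step (4) does not.
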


\begin{proof}
The idea is to control the fluctuations of the local times around their deterministic limit: as long as $\ell^\pm(T_N,i)$ is large, the $\ell^\pm(T_N,i+1)-\ell^\pm(T_N,i)$ will be close to the i.i.d. random variables of the coupling, so the fluctuations of $\ell^\pm(T_N,i)$ around its deterministic limit are bounded and $\ell^\pm(T_N,i)$ can be small only when the deterministic limit is small, that is around $-(|x|+2\theta)N$ and $(|x|+2\theta)N$. We only spell out the proof for $I^-$, as the argument for $I^+$ is similar. The fact that $\mathds{P}(I^-+(|x|+2\theta)N \leq -N^{\delta+1/2})$ tends to 0 when $N$ tends to $+\infty$ comes from inequalities (51) and (53) of \cite{Toth_et_al2008}, so we only have to prove that $\mathds{P}(I^-+(|x|+2\theta)N \geq N^{\delta+1/2})$ tends to 0 when $N$ tends to $+\infty$. Since $I^- \leq \tilde I^-$, it is enough to prove that $\mathds{P}(\tilde I^-+(|x|+2\theta)N \geq N^{\delta+1/2})$ tends to 0 when $N$ tends to $+\infty$. Since by Lemma \ref{lem_zeta=eta} we have that $\mathds{P}(\mathcal{B}_1^-)$ tends to 0 when $N$ tends to $+\infty$, it is enough to prove $\mathds{P}(\tilde I^-+(|x|+2\theta)N \geq N^{\delta+1/2},(\mathcal{B}_1^-)^c)$ tends to 0 when $N$ tends to $+\infty$.  We now assume $N$ is large enough, $\tilde I^-+(|x|+2\theta)N \geq N^{\delta+1/2}$ and $(\mathcal{B}_1^-)^c$. Then there exists $i\in\{\lceil-(|x|+2\theta)N+N^{\delta+1/2}\rceil,...,\chi(N)-1\}$ so that $\ell^+(T_N,i) \leq \lfloor N^{1/6}\rfloor$ and $\ell^+(T_N,j) > \lfloor N^{1/6}\rfloor$ for all $j \in\{i+1,...,\chi(N)-1\}$. Thus, by \eqref{eq_local_times} we get $\lfloor N \theta\rfloor+\sum_{j=i+1}^{\chi(N)-1}(\eta_{j,-}(\ell^+(T_N,j))+\mathds{1}_{\{j > 0\}})=\ell^+(T_N,i) \leq \lfloor N^{1/6}\rfloor$. Furthermore, for all $j \in\{i+1,...,\chi(N)-1\}$, since $(\mathcal{B}_1^-)^c$ occurs and $\ell^+(T_N,j) > \lfloor N^{1/6}\rfloor$, we have $\eta_{j,-}(\ell^+(T_N,j))+1/2=\zeta_j$. We deduce $\lfloor N \theta\rfloor+\sum_{j=i+1}^{\chi(N)-1}(\zeta_j+(\mathds{1}_{\{j > 0\}}-\mathds{1}_{\{j \leq 0\}})/2) \leq \lfloor N^{1/6}\rfloor$, thus $\sum_{j=i+1}^{\chi(N)-1}\zeta_j+\lfloor N \theta\rfloor+\sum_{j=i+1}^{\chi(N)-1}(\mathds{1}_{\{j > 0\}}-\mathds{1}_{\{j \leq 0\}})/2 \leq \lfloor N^{1/6}\rfloor$. Moreover, since $i \in\{\lceil-(|x|+2\theta)N+N^{\delta+1/2}\rceil,...,\chi(N)-1\}$ we have $\sum_{j=i+1}^{\chi(N)-1}(\mathds{1}_{\{j > 0\}}-\mathds{1}_{\{j \leq 0\}})/2 =\frac{1}{2}(\chi(N)-1+i) \geq \frac{1}{2}(Nx-2-(|x|+2\theta)N+N^{\delta+1/2})=-\theta N+\frac{1}{2}N^{\delta+1/2}-1$. This yields $\sum_{j=i+1}^{\chi(N)-1}\zeta_j+\lfloor N \theta\rfloor-\theta N+\frac{1}{2}N^{\delta+1/2}-1 \leq \lfloor N^{1/6}\rfloor$, hence $\sum_{j=i+1}^{\chi(N)-1}\zeta_j \leq -\frac{1}{2}N^{\delta+1/2}+\lfloor N^{1/6}\rfloor+2 \leq -N^{(1+\delta)/2}$ since $N$ is large enough. Consequently, when $N$ is large enough, $\mathds{P}(\tilde I^-+(|x|+2\theta)N \geq N^{\delta+1/2},(\mathcal{B}_1^-)^c) \leq \mathds{P}(\exists\, i\in\{\lceil-(|x|+2\theta)N+N^{\delta+1/2}\rceil,...,\chi(N)-1\}, \sum_{j=i+1}^{\chi(N)-1}\zeta_j \leq -N^{(1+\delta)/2})$. Since the $\zeta_i$, $i\in\mathds{Z}$ are i.i.d., when $N$ is large enough this yields $\mathds{P}(\tilde I^-+(|x|+2\theta)N \geq N^{\delta+1/2},(\mathcal{B}_1^-)^c) \leq \mathds{P}(\max_{0 \leq i_1 \leq i_2 \leq \lceil N^{1+\delta/2}\rceil}|\sum_{i=i_1}^{i_2}\zeta_i|\geq N^{(1+\delta)/2})$, which tends to 0 when $N$ tends to $+\infty$ by Lemma \ref{lem_large_deviations} (applied with $\alpha=1+\delta/2$ and $\varepsilon=\delta/4$). This shows that $\mathds{P}(I^-+(|x|+2\theta)N \geq N^{\delta+1/2})$ converges to 0 when $N$ tends to $+\infty$, which ends the proof of Lemma \ref{lem_control_I}. 
\end{proof}

\subsection{Control of low local times}

We have to show that even when $\ell^\pm(T_N,i)$ is small, the local times are not too far from the random variables of the coupling. In order to do that, we first prove that the window where $\ell^\pm(T_N,i)$ is small but not zero, that is between $\tilde I^+$ and $I^+$ and between $I^-$ and $\tilde I^-$, is small. Afterwards, we will give bounds on what happens inside. We begin by showing the following easy result.

\begin{lemma}\label{lem_I^-<0}
 $\mathds{P}(\tilde I^- \geq 0)$ tends to 0 when $N \to +\infty$.
\end{lemma}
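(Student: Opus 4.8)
The plan is to recall what $\tilde I^-$ means: it is the largest index $i < \chi(N)$ such that $\ell^+(T_N,i) \leq \lfloor N^{1/6} \rfloor$. The statement $\mathds{P}(\tilde I^- \geq 0) \to 0$ thus says that, with high probability, all the local times $\ell^+(T_N,i)$ for $0 \leq i \leq \chi(N)-1$ are strictly larger than $\lfloor N^{1/6} \rfloor$. The natural route is to use the explicit representation \eqref{eq_local_times}: for $i < \chi(N)$,
\[
\ell^+(T_N,i)=\lfloor N \theta\rfloor+\sum_{j=i+1}^{\chi(N)-1}\big(\eta_{j,-}(\ell^+(T_N,j))+\mathds{1}_{\{j > 0\}}\big).
\]
When $0 \leq i \leq \chi(N)-1$, every index $j$ in the sum satisfies $j > 0$, so the deterministic part is $\lfloor N\theta \rfloor + (\chi(N)-1-i)$, which is at least $\lfloor N\theta \rfloor \geq \theta N - 1$; the only thing that could push $\ell^+(T_N,i)$ down to $\lfloor N^{1/6}\rfloor$ or below is the random part $\sum_{j=i+1}^{\chi(N)-1}\eta_{j,-}(\ell^+(T_N,j))$ being very negative, of size at least of order $\theta N$.

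First I would work on the complement of $\mathcal{B}_1^-$, which by Lemma \ref{lem_zeta=eta} has probability tending to $1$; on this event, whenever $\ell^+(T_N,j) \geq \lfloor N^{1/6}\rfloor$ we may replace $\eta_{j,-}(\ell^+(T_N,j))$ by $\zeta_j - 1/2$. Suppose $\tilde I^- \geq 0$; then there is some $i \in \{0,\dots,\chi(N)-1\}$ with $\ell^+(T_N,i) \leq \lfloor N^{1/6}\rfloor$, and I may take $i$ maximal with this property among $\{0,\dots,\chi(N)-1\}$, so that $\ell^+(T_N,j) > \lfloor N^{1/6}\rfloor$ for all $j \in \{i+1,\dots,\chi(N)-1\}$ (here I should note that if no such index below $\chi(N)-1$ works, the argument is the same with an empty sum). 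Exactly as in the proof of Lemma \ref{lem_control_I}, on $(\mathcal{B}_1^-)^c$ this gives
\[
\lfloor N\theta\rfloor + (\chi(N)-1-i) + \sum_{j=i+1}^{\chi(N)-1}\Big(\zeta_j - \tfrac12\Big) \leq \lfloor N^{1/6}\rfloor,
\]
and since $i \geq 0$ so that $\chi(N)-1-i \leq \chi(N)-1 = \lfloor Nx \rfloor + \mathds{1}_{\{\iota=+\}} - 1$ and $\chi(N)-1-i \geq 0$, the deterministic terms $\lfloor N\theta\rfloor + \tfrac12(\chi(N)-1-i) \geq \lfloor N\theta\rfloor$ while the number of summands is $\chi(N)-1-i \leq \chi(N)-1 \leq 2Nx$ for large $N$. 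Hence $\sum_{j=i+1}^{\chi(N)-1}\zeta_j \leq \lfloor N^{1/6}\rfloor - \lfloor N\theta\rfloor \leq -\tfrac12\theta N$ for $N$ large, with the range of summation contained in $\{1,\dots,\chi(N)-1\} \subset \{1,\dots,\lceil N^2 \rceil\}$ — or more economically in $\{1,\dots,\lceil 2xN\rceil\}$.

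I would then conclude by a union bound / large-deviations estimate: for $N$ large,
\[
\mathds{P}(\tilde I^- \geq 0) \leq \mathds{P}(\mathcal{B}_1^-) + \mathds{P}\Big(\max_{0 \leq i_1 \leq i_2 \leq \lceil 2xN\rceil}\Big|\sum_{i=i_1}^{i_2}\zeta_i\Big| \geq \tfrac12\theta N\Big),
\]
and since $\tfrac12\theta N \geq N^{1/2 + 1/4} = N^{3/4}$ for $N$ large while $\max_{0\le i_1\le i_2\le \lceil 2xN\rceil}|\sum \zeta_i|$ is of order $N^{1/2+o(1)}$, Lemma \ref{lem_large_deviations} applied with, say, $\alpha = 1$ and $\varepsilon = 1/4$ (noting $\lceil 2xN \rceil \leq \lceil N^{1+\epsilon'}\rceil$ for large $N$, or simply rescaling) shows this probability tends to $0$. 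Combined with $\mathds{P}(\mathcal{B}_1^-) \to 0$ from Lemma \ref{lem_zeta=eta}, this gives $\mathds{P}(\tilde I^- \geq 0) \to 0$. There is no real obstacle here — the statement is genuinely ``easy'' as announced; the only mild care needed is the bookkeeping of the deterministic drift $+\mathds{1}_{\{j>0\}}$ in \eqref{eq_local_times} (which only helps, since all indices are positive) and making sure the range of the sum of $\zeta_j$'s is polynomially bounded so that Lemma \ref{lem_large_deviations} applies.
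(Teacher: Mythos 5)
Your proof is correct, but it takes a genuinely different route from the paper. The paper's proof is a two-line reduction to Theorem 1 of \cite{Toth_et_al2008}: if $\tilde I^- \geq 0$ then some $i\in\{0,\dots,\lfloor Nx\rfloor\}$ has $\ell^+(T_N,i)\leq\lfloor N^{1/6}\rfloor\leq N\theta/2$, which forces $\sup_y|\frac{1}{N}\ell^+(T_N,\lfloor Ny\rfloor)-(\frac{|x|-|y|}{2}+\theta)_+|\geq\theta/2$, an event whose probability vanishes by the cited law-of-large-numbers result. You instead rerun, for nonnegative indices, the coupling-plus-large-deviations argument of Lemma \ref{lem_control_I}: take $i=\tilde I^-$ maximal, use \eqref{eq_local_times} and $(\mathcal{B}_1^-)^c$ to convert the increments into $\zeta_j-\tfrac12$, observe that the drift terms only help since all $j>0$, and conclude that $\sum_{j=i+1}^{\chi(N)-1}\zeta_j\leq-\tfrac12\theta N$, which is ruled out by Lemma \ref{lem_large_deviations}. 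Both arguments are sound; yours is longer but self-contained within Section \ref{sec_coupling} (it does not invoke the external Theorem 1) and yields a quantitative rate, whereas the paper's inherits only convergence in probability. The one point to tidy in your write-up is the application of Lemma \ref{lem_large_deviations}: its index range is $\{0,\dots,\lceil N^\alpha\rceil\}$, so you should take $\alpha$ strictly larger than $1$ (e.g.\ $\alpha=3/2$, $\varepsilon=1/8$, giving a threshold $N^{7/8}\leq\tfrac12\theta N$ and a range $\lceil N^{3/2}\rceil\geq\lceil 2xN\rceil$ for large $N$) rather than $\alpha=1$; this is exactly the device the paper uses in the proof of Lemma \ref{lem_control_I}, and it closes the minor bookkeeping issue you flagged yourself.
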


\begin{proof}
Let $N$ be large enough. If $\tilde I^- \geq 0$, there exists $i\in\{0,...,\lfloor Nx\rfloor\}$ so that $\ell^+(T_N,i)\leq\lfloor N^{1/6}\rfloor$. Since $N$ is large enough, this implies $\ell^+(T_N,i)\leq N\theta/2$, therefore $\sup_{y \in \mathds{R}}|\frac{1}{N}\ell^+(T_N,\lfloor N y\rfloor)-(\frac{|x|-|y|}{2}+\theta)_+| \geq \theta/2$. Moreover, by Theorem 1 of \cite{Toth_et_al2008}, $\sup_{y \in \mathds{R}}|\frac{1}{N}\ell^+(T_N,\lfloor N y\rfloor)-(\frac{|x|-|y|}{2}+\theta)_+|$ converges in probability to 0 when $N$ tends to $+\infty$, hence we deduce that $\mathds{P}(\sup_{y \in \mathds{R}}|\frac{1}{N}\ell^+(T_N,\lfloor N y\rfloor)-(\frac{|x|-|y|}{2}+\theta)_+| \geq \theta/2)$ tends to 0 when $N \to +\infty$. Therefore $\mathds{P}(\tilde I^- \geq 0)$ tends to 0 when $N \to +\infty$. 
\end{proof}

In order to control $I^+$, $I^-$, $\tilde I^+$ and $\tilde I^-$, we will use the fact the local times behave as the Markov chain $L$ from \cite{Toth_et_al2008}, defined as follows. We consider i.i.d. copies of the Markov chain $\eta$ starting at 0, called $(\eta_m)_{m\in\mathds{N}}$. For any $m\in\mathds{N}$, we then set $L(m+1)=L(m)+\eta_{m}(L(m))$. We denote $\tau=\inf\{m\in\mathds{N}\,|\,L(m) \leq 0\}$. The following was proven in \cite{Toth_et_al2008}.

\begin{lemma}[Lemma 2 of \cite{Toth_et_al2008}]\label{lem_Toth_tau}
 There exists a constant $K < +\infty$ so that for any $k\in \mathds{N}$ we have $\mathds{E}(\tau|L(0)=k) \leq 3k+K$. 
\end{lemma}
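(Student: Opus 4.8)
The plan is to run a Foster--Lyapunov drift argument for the chain $L$, using the identity $k \mapsto k$ as Lyapunov function. The first step is to identify the drift. For $k\in\mathds{N}$ write $d(k)=\mathds{E}[\eta(k)\mid\eta(0)=0]=\sum_{i\in\mathds{Z}}i\,\mathds{P}(\eta(k)=i\mid\eta(0)=0)$; since $L(m+1)-L(m)=\eta_m(L(m))$ with $\eta_m$ an independent copy of $\eta$ started at $0$, and $\eta_m$ is independent of the past of $L$, we have $\mathds{E}[L(m+1)-L(m)\mid\mathcal{F}_m]=d(L(m))$, so $d(k)$ is exactly the one-step drift of $L$ at level $k$. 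The uniform tail bound $\mathds{P}(\eta(k)=i\mid\eta(0)=0)\le\tilde C e^{-\tilde c|i|}$ of Lemma \ref{lem_Toth} shows $|d(k)|\le D_0:=\sum_i|i|\tilde C e^{-\tilde c|i|}<+\infty$ for every $k$, and together with the total-variation bound $\sum_i|\mathds{P}(\eta(k)=i\mid\eta(0)=0)-\rho_-(i)|\le\tilde C e^{-\tilde c k}$ it gives $d(k)\to\sum_i i\,\rho_-(i)$ as $k\to+\infty$. From the explicit formula \eqref{eq_rho} one reads off the symmetry $\rho_-(i)=\rho_-(-1-i)$ (the two products coincide because $\lfloor|2i+1|/2\rfloor=\lfloor|2(-1-i)+1|/2\rfloor$), whence $\sum_i i\,\rho_-(i)=-\tfrac12$. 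Fix $k_0\in\mathds{N}$ large enough that $d(k)\le-\tfrac13$ for all $k\ge k_0$.

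The second step records two elementary facts about $L$. Since $\eta_m$ starts at $0$ and its increments are bounded below by $-1$, one has $\eta_m(n)\ge-n$ for all $n\ge0$, so $L(m+1)=L(m)+\eta_m(L(m))\ge0$, and by induction $L(m)\ge0$ for every $m$; in particular $\tau=\inf\{m:L(m)=0\}$ and $L$ cannot overshoot $0$ from above. Moreover, by the uniform bound on $d$, $\mathds{E}|L(m\wedge\tau)|\le k+mD_0<+\infty$, so $L$ has integrable trajectories. The third step is the supermartingale estimate. Let $\sigma=\inf\{m:L(m)<k_0\}\le\tau$; on $\{m<\sigma\}$ we have $L(m)\ge k_0$, hence $\mathds{E}[L(m+1)-L(m)\mid\mathcal{F}_m]=d(L(m))\le-\tfrac13$, so $M(m):=L(m\wedge\sigma)+\tfrac13(m\wedge\sigma)$ is an integrable supermartingale with $M(0)=k$. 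Taking expectations and using $L(m\wedge\sigma)\ge0$ yields $\tfrac13\mathds{E}[m\wedge\sigma]\le k$, and letting $m\to+\infty$ (monotone convergence) gives $\mathds{E}[\sigma\mid L(0)=k]\le3k$ for every $k\ge k_0$, in particular $\sigma<+\infty$ almost surely.

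It remains to control the excursions of $L$ below $k_0$. The condition $d(k)\le-\tfrac13$ for $k\ge k_0$ together with $|d(k)|\le D_0$ for $k<k_0$ is Foster's criterion for the chain $L$ killed upon hitting $0$, so this chain is positive recurrent and hits $0$ in finite expected time from every state; concretely, from each $j\in\{1,\dots,k_0-1\}$ there is a probability bounded below uniformly over this finite set (since $\eta(j)$ has unbounded support, so $\mathds{P}(\eta(j)\ge k_0-j)>0$) of reaching a state $\ge k_0$ in one step, whereupon the third step applies, giving $K:=\max_{0\le j\le k_0-1}\mathds{E}[\tau\mid L(0)=j]<+\infty$. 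Applying the strong Markov property at $\sigma$ and noting $L(\sigma)\in\{0,\dots,k_0-1\}$, we obtain for $k\ge k_0$ that $\mathds{E}[\tau\mid L(0)=k]\le\mathds{E}[\sigma\mid L(0)=k]+K\le3k+K$, while for $k<k_0$ trivially $\mathds{E}[\tau\mid L(0)=k]\le K\le3k+K$. This proves the lemma with the constant $K$ just defined.

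I expect the last step to be the main obstacle: establishing $\mathds{E}[\tau\mid L(0)=j]<+\infty$ for the finitely many small values $j<k_0$, where the drift of $L$ need not be negative, so one must genuinely use that $L$ escapes upward (into the region where the $-\tfrac13$ drift takes over) in finite expected time rather than lingering near $0$. Making the "uniformly positive probability of jumping above $k_0$'' statement precise and splicing it together with the return-time bound of the third step requires some care with the strong Markov property and with integrability of $L$ at the successive return times to $\{<k_0\}$; the rest of the argument is routine optional-stopping bookkeeping.
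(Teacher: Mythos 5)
The paper does not actually prove this lemma (it is imported verbatim as Lemma 2 of Tóth and Vető \cite{Toth_et_al2008}), so your proposal has to stand on its own. Most of it does: the identification of the one-step drift $d(k)=\mathds{E}[\eta(k)\mid\eta(0)=0]$, its convergence to $\sum_i i\rho_-(i)=-\tfrac12$ via Lemma \ref{lem_Toth} and the symmetry $\rho_-(i)=\rho_-(-1-i)$ read off from \eqref{eq_rho}, the observation that the increments of $\eta$ are bounded below by $-1$ so that $L\geq 0$ and $\tau$ is the hitting time of $0$, and the supermartingale/optional-stopping bound $\mathds{E}[\sigma\mid L(0)=k]\leq 3k$ for $k\geq k_0$ are all correct and are the natural route to the linear bound.

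The genuine gap is exactly where you suspected it, and the mechanism you sketch for it would fail. You propose to handle $j\in\{1,\dots,k_0-1\}$ by noting a uniformly positive probability of jumping \emph{above} $k_0$ in one step and then invoking the third step; but the third step only returns the chain to the finite set $\{0,\dots,k_0-1\}$, so iterating it produces arbitrarily many excursions without ever forcing a visit to $0$. Likewise, the appeal to Foster's criterion (``the killed chain is positive recurrent and hits $0$ in finite expected time'') is not a valid inference here: since $\eta(0)=0$, the state $0$ is absorbing for $L$, the chain is not irreducible, and Foster's theorem only bounds the expected return time to the finite set, not the expected absorption time at the particular state $0$. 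What is needed, and what repairs the argument, is a uniformly positive probability of actually reaching $0$ from each $j\in\{1,\dots,k_0-1\}$: since $\eta(j)\geq -j$, hitting $0$ in one $L$-step means $\eta(j)=-j$, i.e.\ $\xi$ makes $j$ consecutive downward steps, an event of positive probability because $w>0$ makes every finite $\xi$-path have positive probability; set $\delta=\min_{1\leq j<k_0}\mathds{P}(\eta(j)=-j)>0$. Combining this with a uniform bound on the expected length of each excursion away from $\{0,\dots,k_0-1\}$ (one step whose landing point has mean at most $k_0+D_0$, followed, by your third step and the strong Markov property, by an expected number of steps at most $3\,\mathds{E}[L(1)]$ to come back below $k_0$) and a geometric-trials argument over successive visits to $\{1,\dots,k_0-1\}$ gives $K:=\max_{0\leq j<k_0}\mathds{E}[\tau\mid L(0)=j]<+\infty$, after which your splicing at $\sigma$ yields $\mathds{E}[\tau\mid L(0)=k]\leq 3k+K$. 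So the architecture is sound, but the last step as written rests on the wrong event, and the ``positive recurrence implies finite expected hitting time of $0$'' shortcut is not available for a chain with $0$ absorbing.
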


Since the local times will behave as $L$, Lemma \ref{lem_Toth_tau} implies that if the local time starts small, then the time at which it reaches 0 has small expectation hence is not too large. This will help us to prove the following control on the window where $\ell^\pm(T_N,i)$ is small but not zero. 

\begin{lemma}\label{lem_I-tildeI}
$\mathds{P}(I^+-\tilde I^+ \geq N^{1/4})$ and $\mathds{P}(\tilde I^--I^- \geq N^{1/4})$ tend to 0 when $N \to +\infty$.
\end{lemma}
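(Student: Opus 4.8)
The plan is to prove the statement for $\tilde I^- - I^-$; the argument for $I^+ - \tilde I^+$ is symmetric (using the representation of $\ell^-(T_N,i)$ for $i\geq\chi(N)$ in \eqref{eq_local_times} instead of that of $\ell^+(T_N,i)$ for $i<\chi(N)$). The key observation is that, reading the indices $i$ from $\tilde I^-$ downwards to $I^-$, the local times $\ell^+(T_N,i)$ evolve essentially like the Markov chain $L$ of Lemma \ref{lem_Toth_tau} started from a value $\leq \lfloor N^{1/6}\rfloor$: indeed, \eqref{eq_local_times} gives $\ell^+(T_N,i-1) = \ell^+(T_N,i) + \eta_{i,-}(\ell^+(T_N,i)) + \mathds{1}_{\{i>0\}}$, and for $i\leq 0$ (which holds on the relevant window once $N$ is large, by Lemma \ref{lem_I^-<0}) the indicator vanishes, so $\ell^+(T_N,i-1) = \ell^+(T_N,i) + \eta_{i,-}(\ell^+(T_N,i))$ — exactly the recursion defining $L$. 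The point at which $\ell^+$ first drops to $0$ as $i$ decreases is $I^-$, so $\tilde I^- - I^-$ is precisely the analogue of the hitting time $\tau$ for a chain started at $\ell^+(T_N,\tilde I^-)\leq\lfloor N^{1/6}\rfloor$.

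First I would reduce to the event $\{\tilde I^- < 0\}$, whose complement has probability tending to $0$ by Lemma \ref{lem_I^-<0}. On this event, and using the Markov property of the local times process at $\tilde I^-$ together with the fact that the $\eta_{i,-}$ for $i<\tilde I^-$ are fresh i.i.d. copies of $\eta$ independent of $\ell^+(T_N,\tilde I^-)$, the process $(\ell^+(T_N,\tilde I^- - m))_{m\geq 0}$ has the law of the chain $L$ started at $L(0)=\ell^+(T_N,\tilde I^-)\leq\lfloor N^{1/6}\rfloor$, run until it hits $0$; and $\tilde I^- - I^-$ equals the corresponding $\tau$. Then Lemma \ref{lem_Toth_tau} gives $\mathds{E}(\tilde I^- - I^-\,|\,\tilde I^-<0,\ell^+(T_N,\tilde I^-)=k)\leq 3k+K\leq 3\lfloor N^{1/6}\rfloor+K$ for every admissible $k$, hence $\mathds{E}((\tilde I^- - I^-)\mathds{1}_{\{\tilde I^-<0\}})\leq 3\lfloor N^{1/6}\rfloor+K$. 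By Markov's inequality,
\[
\mathds{P}(\tilde I^- - I^- \geq N^{1/4}) \leq \mathds{P}(\tilde I^-\geq 0) + \frac{3\lfloor N^{1/6}\rfloor+K}{N^{1/4}},
\]
and both terms tend to $0$ as $N\to+\infty$.

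The main obstacle is the bookkeeping needed to make the identification of $(\ell^+(T_N,\tilde I^- - m))_m$ with the chain $L$ rigorous: $\tilde I^-$ is itself a random (stopping) index, so one must invoke a strong-Markov-type property of the sequence $(\ell^+(T_N,i))_i$ — or, more carefully, argue conditionally on the value of $\tilde I^-$ and on $\ell^+(T_N,\tilde I^-)$, exploiting that \eqref{eq_local_times} exhibits $\ell^+(T_N,i)$ for $i<\tilde I^-$ as a deterministic functional of $\ell^+(T_N,\tilde I^-)$ and of the chains $\eta_{i,-}$ with $i<\tilde I^-$, which are independent of everything determining $\tilde I^-$ and $\ell^+(T_N,\tilde I^-)$. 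One also has to check the elementary point that once $i\leq 0$ the drift term $\mathds{1}_{\{i>0\}}$ in \eqref{eq_local_times} disappears, so that the recursion really matches that of $L$ with no extra $+1$; Lemma \ref{lem_I^-<0} ensures this is the case on the window in question with probability $\to 1$. Everything else is a direct application of Lemma \ref{lem_Toth_tau} and Markov's inequality.
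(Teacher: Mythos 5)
Your proposal is correct and follows essentially the same route as the paper: reduce to $\{\tilde I^-<0\}$ via Lemma \ref{lem_I^-<0}, identify $(\ell^+(T_N,\tilde I^--m))_{m\geq0}$ with the chain $L$ started below $\lfloor N^{1/6}\rfloor$ using \eqref{eq_local_times} (the paper likewise conditions on the value of $\ell^+(T_N,\tilde I^-)$ to handle the randomness of $\tilde I^-$), and conclude by Lemma \ref{lem_Toth_tau} and Markov's inequality. The only cosmetic difference is that you apply Markov's inequality after summing over $k$ rather than termwise.
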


\begin{proof}
Let $N$ be large enough. We deal only with $\mathds{P}(\tilde I^--I^- \geq N^{1/4})$, since $\mathds{P}(I^+-\tilde I^+ \geq N^{1/4})$ can be dealt with in the same way and with simpler arguments. Thanks to Lemma \ref{lem_I^-<0}, it is enough to prove that $\mathds{P}(\tilde I^--I^- \geq N^{1/4},\tilde I^-<0)$ tends to 0 when $N \to +\infty$. Moreover, if $\tilde I^- < 0$, thanks to \eqref{eq_local_times}, for any $i < \tilde I^-$ we get $\ell^+(T_N,i)=\ell^+(T_N,\tilde I^-)+\sum_{j=i+1}^{\tilde I^-}\eta_{j,-}(\ell^+(T_N,j))$, which allows to prove that $(\ell^+(T_N,\tilde I^--i))_{i \in \mathds{N}}$ is a Markov chain with the transition probabilities of $L$. Therefore we have (recalling the notations just before Lemma \ref{lem_Toth_tau}) 
 \[
  \mathds{P}\left(\tilde I^--I^- \geq N^{1/4},\tilde I^-<0\right)=\sum_{k=0}^{\lfloor N^{1/6} \rfloor}\mathds{P}\left(\tilde I^--I^- \geq N^{1/4},\tilde I^-<0\left|\ell^+(T_N,\tilde I^-)=k\right.\right)\mathds{P}\left(\ell^+(T_N,\tilde I^-)=k\right)
 \]
\[
 =\sum_{k=0}^{\lfloor N^{1/6} \rfloor}\mathds{P}\left(\left.\tau \geq N^{1/4}\right|L(0)=k\right)\mathds{P}\left(\ell^+(T_N,\tilde I^-)=k\right)
 \leq \sum_{k=0}^{\lfloor N^{1/6} \rfloor}\frac{1}{N^{1/4}}\mathds{E}(\tau|L(0)=k)\mathds{P}(\ell^+(T_N,\tilde I^-)=k).
\]
By Lemma \ref{lem_Toth_tau} we deduce 
 \[
  \mathds{P}\left(\tilde I^--I^- \geq N^{1/4},\tilde I^-<0\right)
   \leq \frac{1}{N^{1/4}}\sum_{k=0}^{\lfloor N^{1/6} \rfloor}(3k+K)\mathds{P}(\ell^+(T_N,\tilde I^-)=k)
   \leq \frac{3N^{1/6}+K}{N^{1/4}} \leq 4 N^{-1/12}
 \]
 since $N$ is large enough, hence $\mathds{P}(\tilde I^--I^- \geq N^{1/4},\tilde I^-<0)$ tends to 0 when $N \to +\infty$, which ends the proof.
\end{proof}

We are now going to prove that even when $\ell^\pm(T_N,i)$ is small, the local times are not too far from the random variables of the coupling. More precisely, for any $n\in\mathds{N}$, we define the following events.
\[
 \mathcal{B}_4^-=\left\{\exists \, i \in \{I^-,...,\chi(N)-1\},\left|\sum_{j=i+1}^{\chi(N)-1}(\eta_{j,-}(\ell^+(T_N,j))+1/2)-\sum_{j=i+1}^{\chi(N)-1}\zeta_j\right| \geq N^{1/3} \right\},
\]
\[
 \mathcal{B}_4^+=\left\{\exists \, i \in \{\chi(N),...,I^+\},\left|\sum_{j=\chi(N)}^{i-1}(\eta_{j,+}(\ell^-(T_N,j))+1/2)-\sum_{j=\chi(N)}^{i-1}\zeta_j\right| \geq N^{1/3} \right\}.
\]

\begin{lemma}\label{lem_sum_eta_zeta}
 $\mathds{P}(\mathcal{B}_4^-)$ and $\mathds{P}(\mathcal{B}_4^+)$ tend to 0 when $N$ tend to $+\infty$.
\end{lemma}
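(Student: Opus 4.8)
The plan is to decompose the sum over $\{i+1,\dots,\chi(N)-1\}$ (for $\mathcal{B}_4^-$; the case of $\mathcal{B}_4^+$ is symmetric) into the portion where the coupling already succeeds and the short portion near $I^-$ where it may fail. First I would note that by Lemma \ref{lem_control_I} (applied with, say, $\delta=1/4$) we may restrict to the event on which $I^-$ lies in the window $\{-\lfloor(|x|+2\theta)N\rfloor-N^{3/4},\dots,-\lfloor(|x|+2\theta)N\rfloor+N^{3/4}\}$, and by Lemma \ref{lem_I-tildeI} we may restrict to $\tilde I^--I^-<N^{1/4}$, and by Lemma \ref{lem_I^-<0} to $\tilde I^-<0$; all three complementary events have vanishing probability. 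On this good event, split the difference in the definition of $\mathcal{B}_4^-$ according to whether the index $j$ satisfies $j>\tilde I^-$ (where $\ell^+(T_N,j)\geq\lfloor N^{1/6}\rfloor$, so outside $\mathcal{B}_1^-$ we have $\eta_{j,-}(\ell^+(T_N,j))+1/2=\zeta_j$ exactly and the contribution is $0$) or $I^-\leq j\leq\tilde I^-$ (the short bad window, of length at most $N^{1/4}$).

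It remains to bound the contribution of the bad window, namely $\left|\sum_{j=i+1}^{\tilde I^-}(\eta_{j,-}(\ell^+(T_N,j))+1/2)-\sum_{j=i+1}^{\tilde I^-}\zeta_j\right|$ over $i\in\{I^-,\dots,\tilde I^-\}$, by $N^{1/3}$. I would bound the two sums separately by the triangle inequality. For the $\zeta$ part, on the complement of $\mathcal{B}_3^-$ the sum $\left|\sum_{j=i+1}^{\tilde I^-}\zeta_j\right|$ is at most $N^{19/48}\ll N^{1/3}$, since $I^-$ and $\tilde I^-$ both lie in the window of $\mathcal{B}_3^-$ (using $\tilde I^--I^-<N^{1/4}$ together with the location of $I^-$ from Lemma \ref{lem_control_I}). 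For the $\eta$ part, on the complement of $\mathcal{B}_2$ each term $|\eta_{j,-}(\ell^+(T_N,j))+1/2|$ is at most $N^{1/16}$, and there are at most $N^{1/4}$ such terms, so the sum is at most $N^{1/4+1/16}=N^{5/16}\ll N^{1/3}$. Adding these, $N^{19/48}+N^{5/16}<N^{1/3}$ for $N$ large (indeed $19/48<1/3$ and $5/16<1/3$), so the bad-window contribution is below $N^{1/3}$ and $\mathcal{B}_4^-$ does not occur on the intersection of all the good events.

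Combining, $\mathds{P}(\mathcal{B}_4^-)$ is at most the sum of $\mathds{P}(\mathcal{B}_1^-)$, $\mathds{P}(\mathcal{B}_2)$, $\mathds{P}(\mathcal{B}_3^-)$, $\mathds{P}(|I^-+(|x|+2\theta)N|\geq N^{3/4})$, $\mathds{P}(\tilde I^--I^-\geq N^{1/4})$ and $\mathds{P}(\tilde I^-\geq0)$, each of which tends to $0$ by Lemmas \ref{lem_zeta=eta}, \ref{lem_zeta_eta_small}, \ref{lem_sum_zeta}, \ref{lem_control_I}, \ref{lem_I-tildeI} and \ref{lem_I^-<0} respectively; the argument for $\mathds{P}(\mathcal{B}_4^+)$ is entirely analogous, using $\mathcal{B}_1^+$, $\mathcal{B}_3^+$, $I^+$ and $\tilde I^+$, and is in fact slightly simpler because there is no boundary shift by $\mathds{1}_{\{j>0\}}$. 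The main obstacle is purely bookkeeping: one must check that the window of $\mathcal{B}_3^-$ is wide enough to contain all indices between $I^-$ and $\tilde I^-$ on the good event (which forces the choice $\delta=1/4$ in Lemma \ref{lem_control_I}, so that $N^{\delta+1/2}=N^{3/4}$ matches the half-width appearing in $\mathcal{B}_3^-$), and that the exponents $19/48$, $5/16$ indeed stay below $1/3$; there is no genuine probabilistic difficulty beyond the lemmas already established.
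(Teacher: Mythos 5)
Your decomposition is exactly the one the paper uses: restrict to the good events, observe that for $j>\tilde I^-$ the terms $\eta_{j,-}(\ell^+(T_N,j))+1/2$ and $\zeta_j$ cancel exactly on $(\mathcal{B}_1^-)^c$, and then bound the contribution of the short bad window $\{I^-,\dots,\tilde I^-\}$ of length at most $N^{1/4}$. However, there is an arithmetic error in your treatment of the $\zeta$-part of the bad window: you claim $19/48<1/3$, but $1/3=16/48$, so in fact $19/48>1/3$ and $N^{19/48}\gg N^{1/3}$. Consequently the bound $\bigl|\sum_{j=i+1}^{\tilde I^-}\zeta_j\bigr|\leq N^{19/48}$ obtained from $(\mathcal{B}_3^-)^c$ does \emph{not} give the required estimate, and the inequality $N^{19/48}+N^{5/16}<N^{1/3}$ is false for large $N$. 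As written, the argument therefore does not close.

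The gap is easily repaired, and the repair is what the paper does: the event $\mathcal{B}_2$ already contains $\{\exists\, i,\ |\zeta_i|\geq N^{1/16}\}$, so on $(\mathcal{B}_2)^c$ each $|\zeta_j|$ in the bad window is at most $N^{1/16}$, exactly like each $|\eta_{j,-}(\ell^+(T_N,j))+1/2|$. The whole bad-window contribution is then at most $2(\tilde I^--I^-)N^{1/16}\leq 2N^{1/4}N^{1/16}=2N^{5/16}<N^{1/3}$ for $N$ large, and the detour through $\mathcal{B}_3^-$ (together with the bookkeeping needed to fit the bad window inside its range) is unnecessary. With that substitution your proof coincides with the paper's; the remaining differences (invoking Lemma \ref{lem_I^-<0} and the precise window for $I^-$) are harmless.
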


\begin{proof}
The idea of the argument is that when $\ell^\pm(T_N,i)$ is large, $\eta_{i,\mp}(\ell^\pm(T_N,i))+1/2=\zeta_i$ thanks to Lemma \ref{lem_zeta=eta}, that the window where $\ell^\pm(T_N,i)$ is small is bounded by Lemma \ref{lem_I-tildeI}, and that inside this window the $\eta_{i,\mp}(\ell^\pm(T_N,i))+1/2$, $\zeta_i$ are also bounded by Lemma \ref{lem_zeta_eta_small}. We only spell out the proof for $\mathds{P}(\mathcal{B}_4^-)$, since the proof for $\mathds{P}(\mathcal{B}_4^+)$ is the same. By Lemma \ref{lem_control_I}, we have that $\mathds{P}(I^- \leq -2(|x|+\theta)N)$ tends to 0 when $N$ tends to $+\infty$. Furthermore, Lemma \ref{lem_I-tildeI} implies that $\mathds{P}(\tilde I^--I^- \geq N^{1/4})$ tends to 0 when $N$ tends to $+\infty$. In addition, by Lemmas \ref{lem_zeta=eta} and \ref{lem_zeta_eta_small} we have that $\mathds{P}(\mathcal{B}_1^-)$ and $\mathds{P}(\mathcal{B}_2)$ tend to 0 when $N$ tends to $+\infty$. Consequently, it is enough to prove that for $N$ large enough, if $(\mathcal{B}_1^-)^c$, $(\mathcal{B}_2)^c$ occur, if $\tilde I^--I^- < N^{1/4}$ and if $I^- > -2(|x|+\theta)N$, then $(\mathcal{B}_4^-)^c$ occurs. We assume $(\mathcal{B}_1^-)^c$, $(\mathcal{B}_2)^c$, $\tilde I^--I^- < N^{1/4}$ and $I^- > -2(|x|+\theta)N$. Since $(\mathcal{B}_1^-)^c$ occurs and $\tilde I^- \geq I^- > -2(|x|+\theta)N$, we get $\zeta_i=\eta_{j,-}(\ell^+(T_N,j))+1/2$ for any $i\in\{\tilde I^-+1,...,\chi(N)-1\}$. Therefore, if $i\in\{\tilde I^-,...,\chi(N)-1\}$ we get $\sum_{j=i+1}^{\chi(N)-1}(\eta_{j,-}(\ell^+(T_N,j))+1/2)-\sum_{j=i+1}^{\chi(N)-1}\zeta_j=0$, and for $i \in \{I^-,...,\tilde I^--1\}$ we have 
\[
 \left|\sum_{j=i+1}^{\chi(N)-1}(\eta_{j,-}(\ell^+(T_N,j))+1/2)-\sum_{j=i+1}^{\chi(N)-1}\zeta_j\right|
 = \left|\sum_{j=i+1}^{\tilde I^-}(\eta_{j,-}(\ell^+(T_N,j))+1/2)-\sum_{j=i+1}^{\tilde I^-}\zeta_j\right|
 \]
 \[
 \leq \sum_{j=i+1}^{\tilde I^-}\left(|\eta_{j,-}(\ell^+(T_N,j))+1/2|+|\zeta_j|\right)
 \leq 2(\tilde I^- -I^-)N^{1/16}
\]
since $(\mathcal{B}_2^-)^c$ occurs, $i+1 \geq I^- > -2(|x|+\theta)N$ and by definition $\tilde I^- \leq \chi(N)-1 \leq 2(|x|+\theta)N$. Moreover, we assumed $\tilde I^--I^- < N^{1/4}$, which implies $|\sum_{j=i+1}^{\chi(N)-1}(\eta_{j,-}(\ell^+(T_N,j))+1/2)-\sum_{j=i+1}^{\chi(N)-1}\zeta_j| \leq 2 N^{1/4}N^{1/16}=2N^{5/16} < N^{1/3}$ when $N$ is large enough. Consequently, for any $i\in\{I^-,...,\chi(N)-1\}$ we have $|\sum_{j=i+1}^{\chi(N)-1}(\eta_{j,-}(\ell^+(T_N,j))+1/2)-\sum_{j=i+1}^{\chi(N)-1}\zeta_j| < N^{1/3}$, therefore $(\mathcal{B}_4^-)^c$ occurs, which ends the proof. 
\end{proof}

\section{Skorohod $M_1$ distance}\label{sec_Skorohod_dist}

The goal of this section is to prove that when $N$ is large, $Y_N^\pm$ is close in the Skorohod $M_1$ distance to the function $Y_N$ defined as follows. For any $N$ large enough, for $y\in\mathds{R}$, we set $Y_N(y)=\frac{1}{\sqrt{N}}\sum_{i=\lfloor N y\rfloor+1}^{\chi(N)-1}\zeta_i$ if $y \in [-|x|-2\theta,\frac{\chi(N)}{N})$, $Y_N(y)=\frac{1}{\sqrt{N}}\sum_{i=\chi(N)}^{\lfloor N y\rfloor-1}\zeta_i$ if $y \in [\frac{\chi(N)}{N},|x|+2\theta)$, and $Y_N(y)=0$ otherwise. We want to prove the following proposition.

\begin{proposition}\label{prop_distance_param}
 $\mathds{P}(d_{M_1}(Y_N^\pm,Y_N) > 3N^{-1/12})$ tends to 0 when $N$ tends to $+\infty$. 
\end{proposition}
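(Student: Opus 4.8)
The plan is to build explicit parametric representations of the completed graphs $\Gamma_{Y_N^\pm}$ and $\Gamma_{Y_N}$ that stay uniformly close, and then invoke the definition of $d_{M_1}$. The key observation is that, by \eqref{eq_local_times}, both $Y_N^\pm$ and $Y_N$ are (up to the $1/\sqrt N$ scaling and the $\mathds{1}_{\{j>0\}}$ correction terms, which contribute only a deterministic $O(1/\sqrt N)$ shift that I will absorb into the error) partial sums of the same increments: $Y_N^\pm$ involves $\eta_{j,\mp}(\ell^\pm(T_N,j))+1/2$ while $Y_N$ involves $\zeta_j$, and on the complement of the bad events of Section \ref{sec_coupling}–\ref{sec_hit0} these agree for every index $j$ with $\ell^\pm(T_N,j)$ not too small. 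Outside $[-|x|-2\theta,|x|+2\theta)$, $Y_N$ is identically $0$, and there $Y_N^\pm$ equals $\frac{1}{\sqrt N}\ell^\pm(T_N,\lfloor Ny\rfloor)$ (the triangle limit vanishes), which by Lemma \ref{lem_control_I} is $0$ except on a window of width $O(N^{\delta+1/2})$ around $\pm(|x|+2\theta)N$; inside that window Lemmas \ref{lem_I-tildeI}, \ref{lem_zeta_eta_small}, \ref{lem_sum_eta_zeta} control both the local times and the $\zeta_i$.

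Concretely, I would work on the event $\mathcal A_N = (\mathcal B_1^-)^c\cap(\mathcal B_1^+)^c\cap(\mathcal B_2)^c\cap(\mathcal B_3^-)^c\cap(\mathcal B_3^+)^c\cap(\mathcal B_4^-)^c\cap(\mathcal B_4^+)^c \cap \{|I^\pm \mp (|x|+2\theta)N| < N^{1/2+\delta}\} \cap \{I^+-\tilde I^+ < N^{1/4}\}\cap\{\tilde I^- - I^- < N^{1/4}\}\cap\{\tilde I^-<0\}$ for a suitably small fixed $\delta$ (e.g.\ $\delta = 1/100$), which by the lemmas of Sections \ref{sec_coupling} and \ref{sec_hit0} has probability tending to $1$. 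On $\mathcal A_N$ I claim $d_{M_1}(Y_N^\pm,Y_N) \le 3N^{-1/12}$. Since $d_{M_1} = \int_0^\infty e^{-a}(d_{M_1,a}\wedge 1)\,\mathrm da$, it suffices to bound $d_{M_1,a}$ by $3N^{-1/12}$ for every $a>0$, hence to work on a fixed interval $[-a,a]$; I may assume $a > |x|+2\theta$ since otherwise the discontinuities of the limit are irrelevant and the estimate is easier. To build the parametric representations: parametrize $\Gamma_{Y_N}$ so that it "waits" (travels vertically) through its two jumps at $\chi(N)/N$ (actually $Y_N$ jumps at $-|x|-2\theta$, at $\chi(N)/N$, and at $|x|+2\theta$ — the jump heights being the relevant partial sums of $\zeta_i$), and allocate matching parameter sub-intervals in the representation of $\Gamma_{Y_N^\pm}$; the point is that $Y_N^\pm$ changes from its value near $\chi(N)/N$ to $0$ near $\pm(|x|+2\theta)N$ over a short spatial window $[I^-,\tilde I^-]\cup[-|x|-2\theta, I^-/N]$ (resp.\ on the right), and by $\mathcal B_3^\pm$ and $\mathcal B_4^\pm$ the values of $Y_N^\pm$ there stay within $N^{19/48-1/2} + N^{1/3-1/2} + N^{1/16}\cdot N^{1/4}/\sqrt N = o(N^{-1/12})$ of the endpoint values, so the completed graph of $Y_N^\pm$ over that window is contained in an $O(N^{-1/12})$-neighbourhood of the relevant vertical segment of $\Gamma_{Y_N}$. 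In the "good" region $(-|x|-2\theta + N^{-1/2+\delta}, |x|+2\theta - N^{-1/2+\delta})$ away from $\chi(N)/N$, the two functions agree up to the deterministic correction $\frac{1}{2\sqrt N}|\{j\in(\lfloor Ny\rfloor,\chi(N))\colon j\le 0\}|$ and a $\zeta$-sum over the at most $N^{1/4}$ indices where $\ell^\pm$ is small near $0$ — wait, actually near $0$ the local time is large (it is $\approx N(|x|/2+\theta) \gg N^{1/6}$), so the coupling holds there and the only genuine discrepancy is the $\mathds{1}_{\{j>0\}}$ shift, which I should double-check is $\le N^{-1/2}\cdot \frac12\cdot(\text{width}) $ — it can be as large as $\Theta(\sqrt N)$, so in fact $Y_N$ as defined already builds in the $\zeta$-sum only and the $\mathds{1}_{\{j>0\}}$ term is part of the triangle limit absorbed into $Y_N^\pm$'s definition; I must verify that the bookkeeping in \eqref{eq_local_times} makes $Y_N^\pm(y) - Y_N(y)$ equal to exactly (a piece of) $\frac{1}{\sqrt N}\sum(\eta - \zeta)$ plus an $O(1/\sqrt N)$ rounding term, which on $\mathcal A_N$ is $\le N^{1/3}/\sqrt N + O(1/\sqrt N) = o(N^{-1/12})$ by $\mathcal B_4^\pm$.

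The main obstacle, and the bulk of the work, is the explicit construction of the two parametric representations and the verification that $\max(\|u_1-u_2\|_\infty, \|r_1-r_2\|_\infty) \le 3N^{-1/12}$ uniformly: one must carefully match the parametrizations across the three transition regions (the two endpoints $\pm(|x|+2\theta)$, where $Y_N$ jumps while $Y_N^\pm$ ramps down continuously-ish over width $O(N^{1/2+\delta})$, so $\|u_1-u_2\|_\infty \le N^{1/2+\delta}/N = N^{-1/2+\delta} = o(N^{-1/12})$, and the interior jump at $\chi(N)/N$, where both jump by the same amount up to $o(N^{-1/12})$). The spatial ($u$) coordinates stay within $N^{-1/2+\delta}$ and the value ($r$) coordinates within $\max(N^{19/48}, N^{1/3}, N^{5/16})/\sqrt N \le N^{-1/12}$ once $\delta < 1/12$, whence both norms are below $3N^{-1/12}$ for $N$ large, and since $\mathbb P(\mathcal A_N)\to 1$ and $d_{M_1} \le d_{M_1,a}$ is dominated pointwise (integrand $\le 1$), monotone/dominated convergence over $a$ gives $\mathds{P}(d_{M_1}(Y_N^\pm,Y_N) > 3N^{-1/12}) \le \mathds{P}(\mathcal A_N^c) \to 0$.
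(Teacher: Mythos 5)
Your overall strategy is the one the paper uses: restrict to the complement of the bad events of Sections \ref{sec_coupling} and \ref{sec_hit0}, reduce $d_{M_1}$ to $d_{M_1,a}$ for each $a$, and build explicit order-preserving parametric representations of $Y_N^\pm|_{[-a,a]}$ and $Y_N|_{[-a,a]}$ that agree in the central region (via the coupling and the bookkeeping of \eqref{eq_local_times}, which you correctly reduce to the $\mathcal{B}_4^\pm$ bound plus $O(N^{-1/2})$ rounding) and are matched by hand across the windows around $\pm(|x|+2\theta)$. The ingredients and exponents you cite are the right ones.

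There is, however, a genuine gap in your treatment of the transition windows, which is precisely the hard part. You argue that on, say, $[\lfloor(|x|+2\theta)N\rfloor/N, I^+/N]$ the values of $Y_N^\pm$ stay within $N^{-5/48}+N^{-1/6}+N^{-3/16}=o(N^{-1/12})$ ``of the endpoint values'', and conclude that the completed graph of $Y_N^\pm$ over that window lies in an $O(N^{-1/12})$-neighbourhood of the vertical jump segment of $\Gamma_{Y_N}$. Two problems. First, the quantitative claim is false as stated: across that window $Y_N^\pm$ descends from $Y_N^\pm(|x|+2\theta)\approx B^x_{|x|+2\theta}$, an order-one quantity, to $0$, so it is \emph{not} within $o(N^{-1/12})$ of either endpoint value; the correct statement (which your error terms actually prove) is that it is within $o(N^{-1/12})$ of the \emph{affine interpolation} $Y_N^\pm(|x|+2\theta)-\frac{\lfloor Ny\rfloor-\lfloor(|x|+2\theta)N\rfloor}{2\sqrt N}$, because each increment is $\eta_{j,+}(\ell^-(T_N,j))\approx\zeta_j-\tfrac12$ and $\mathcal{B}_3^+$ controls the $\zeta$-sum. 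Second, and more importantly, containment of one completed graph in a small neighbourhood of the other does not bound $d_{M_1}$: the $M_1$ distance requires a pair of parametrizations that are monotone for the order on each completed graph and uniformly close, and a function that merely stayed in the neighbourhood of the vertical segment while oscillating could not be matched to a monotone traversal of that segment. What saves the argument — and what the paper's Claim \ref{claim_param_late_stop} is devoted to — is exactly the approximate affineness/monotonicity of the descent, which lets you parametrize $Y_N$'s jump segment by the matching affine function $\hat\phi$ and verify $\|r_1-r_2\|_\infty\le 2N^{-1/12}$ pointwise in the parameter. You have the estimates needed for this but not the argument. A minor further point: you cannot bound $d_{M_1,a}$ for literally every $a>0$; for $a$ within $N^{-1/8}$ of $|x|+2\theta$ the endpoint of $[-a,a]$ sits inside the transition window and the construction degenerates, so one excludes those $a$ and pays an extra $2N^{-1/8}$ in the integral $\int_0^\infty e^{-a}(\cdot\wedge1)\,\mathrm{d}a$ — this is where the constant $3$ in $3N^{-1/12}$ comes from.
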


If we denote 
\[
 \mathcal{B}=\mathcal{B}_1^- \cup \mathcal{B}_1^+ \cup \mathcal{B}_2 \cup \mathcal{B}_3^- \cup \mathcal{B}_3^+ \cup \mathcal{B}_4^- \cup \mathcal{B}_4^+ \cup \{|I^-+(|x|+2\theta)N| \geq N^{3/4}\} \cup \{|I^+-(|x|+2\theta)N| \geq N^{3/4}\},
\]
it will be enough to prove the following proposition.

\begin{proposition}\label{prop_distance_param_bis}
 When $N$ is large enough, for all $a >0$ with $|(|x|+2\theta)-a|>N^{-1/8}$, we have that $\mathcal{B}^c \subset \{d_{M_1,a}(Y_N^\pm|_{[-a,a]},Y_N|_{[-a,a]}) \leq 2N^{-1/12}\}$.  
\end{proposition}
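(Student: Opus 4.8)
The goal is to produce, on the event $\mathcal{B}^c$, explicit parametric representations $(u_1,r_1)$ of $Y_N^\pm|_{[-a,a]}$ and $(u_2,r_2)$ of $Y_N|_{[-a,a]}$ that stay uniformly within $2N^{-1/12}$ of each other. First I would split $[-a,a]$ according to the geometry of the deterministic limit triangle: the ``bulk'' region where $|y|$ is comfortably inside $(-|x|-2\theta,|x|+2\theta)$, and two ``edge windows'' of width $\lesssim N^{3/4}/N$ around $\pm(|x|+2\theta)$, where $\mathcal{B}^c$ guarantees (via the control on $I^\pm$) that both $Y_N^\pm$ and $Y_N$ vanish just outside. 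The hypothesis $|(|x|+2\theta)-a|>N^{-1/8}$ ensures the endpoint $a$ is not itself stuck inside an edge window, so the two functions agree (both are $0$, or both are in the bulk) near $\pm a$ and the completed graphs have clean common endpoints.

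**Bulk region.** In the bulk, $\ell^-(T_N,\lfloor Ny\rfloor)$ is large, so on $\mathcal{B}^c$ (using $\mathcal{B}_1^\pm$, i.e. Lemma~\ref{lem_zeta=eta}) we have $\eta_{i,\mp}(\ell^\pm(T_N,i))+1/2=\zeta_i$ for every relevant $i$; plugging this into \eqref{eq_local_times} shows that $\ell^\pm(T_N,\lfloor Ny\rfloor)-N(\tfrac{|x|-|y|}{2}+\theta)_+$ differs from $\sum \zeta_i$ only by the deterministic bookkeeping terms $\mathds{1}_{\{j>0\}}$ and the floor corrections, which after dividing by $\sqrt N$ are $O(1/\sqrt N)\ll N^{-1/12}$. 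Hence $Y_N^\pm$ and $Y_N$ differ by at most $N^{-1/12}$ pointwise in the bulk, and since both are step functions with jumps only of size $O(N^{-1/2})$, I can take the trivial parametric representation that traverses $y$ linearly and fills in each jump; the two representations then satisfy $\|u_1-u_2\|_\infty=0$ and $\|r_1-r_2\|_\infty\le N^{-1/12}$ there.

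**Edge windows.** This is the main obstacle. Near $y=|x|+2\theta$, the function $Y_N^\pm$ has a genuine jump down to $0$ at $y=I^+/N$ (or rather stops fluctuating once the local time hits $0$, since \eqref{eq_local_times} and the fact that the walk cannot have crossed past a zero local time force $\ell^-(T_N,j)=0$ for $j\ge I^+$), whereas $Y_N$ jumps to $0$ exactly at $y=|x|+2\theta$; these jump locations differ by up to $N^{3/4}/N=N^{-1/4}$. The $M_1$ representations must therefore ``slow down'' in the $y$-coordinate across both jump locations and across the low-local-time window $[\tilde I^+/N,I^+/N]$, which has length $\le N^{1/4}/N$ by Lemma~\ref{lem_I-tildeI}. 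The key point is that throughout this whole window both functions take values of size at most $N^{19/48}/\sqrt N = N^{-5/48}$: for $Y_N$ this is exactly $\mathcal{B}_3^+$; for $Y_N^\pm$ it follows because in the window $[\tilde I^+,I^+]$ the increments $\eta_{i,+}(\ell^-(T_N,i))$ are small on $\mathcal{B}_2^c$ (Lemma~\ref{lem_zeta_eta_small}) and, combined with $\mathcal{B}_4^+$, the local time stays within $N^{1/3}/\sqrt N$ of $\sum\zeta_i$, which in turn is small by $\mathcal{B}_3^+$. So both completed graphs, restricted to the edge window, live in a horizontal strip of height $\le 2N^{-5/48}\ll N^{-1/12}$ and a vertical strip of width $\le C N^{-1/4}\ll N^{-1/12}$ in the $y$-coordinate; I parametrize both graphs to traverse this small rectangle monotonically (matching up the common endpoint values at the two ends of the window, which are equal by the bulk analysis on one side and are both $0$ on the other), so that $\|u_1-u_2\|_\infty\le CN^{-1/4}$ and $\|r_1-r_2\|_\infty\le 2N^{-5/48}$, both $\le N^{-1/12}$ for $N$ large.

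**Assembling.** Finally I concatenate the bulk and edge-window parametrizations into single parametric representations $(u_1,r_1)$, $(u_2,r_2)$ on $[0,1]$, taking care that at each junction the two functions share a common point on their completed graphs so the pieces glue continuously and monotonically; the resulting uniform bound is the maximum of the piecewise bounds, namely $\le 2N^{-1/12}$. Hence on $\mathcal{B}^c$, $d_{M_1,a}(Y_N^\pm|_{[-a,a]},Y_N|_{[-a,a]})\le 2N^{-1/12}$, which is exactly Proposition~\ref{prop_distance_param_bis}. (The mirror-image window at $-|x|-2\theta$ is handled identically using $\mathcal{B}_1^-,\mathcal{B}_3^-,\mathcal{B}_4^-$ and the control on $I^-$.) I expect the bookkeeping of the concatenation and the verification of monotonicity of the glued parametric representations to be the most delicate routine part, while the genuinely substantive step is establishing that $Y_N^\pm$ is uniformly tiny across the edge window — that is where all the estimates of Sections~\ref{sec_coupling} and~\ref{sec_hit0} are brought together.
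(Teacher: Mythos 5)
Your bulk analysis and your overall strategy (explicit parametric representations, following the two completed graphs in parallel away from $\pm(|x|+2\theta)$, using the hypothesis on $a$ to keep the endpoint out of the edge windows) match the paper's proof. The gap is in the edge windows, and it is not a bookkeeping issue: your central claim there — that on $\mathcal{B}^c$ both $Y_N^\pm$ and $Y_N$ are uniformly of size $O(N^{-5/48})$ throughout a window of width $O(N^{-1/4})$ around $|x|+2\theta$, so that both completed graphs fit in a tiny rectangle — is false. For $y$ just below $|x|+2\theta$, $Y_N(y)=\frac{1}{\sqrt N}\sum_{i=\chi(N)}^{\lfloor Ny\rfloor-1}\zeta_i$ is a sum of order $N$ i.i.d.\ centered terms divided by $\sqrt N$, hence of order $1$ (it converges to $B^x_{|x|+2\theta}$, a nondegenerate Gaussian), and likewise $Y_N^\pm(y)$ is of order $1$ there; indeed this is exactly why the limit has a jump at $|x|+2\theta$ and why the $M_1$ topology is needed at all — if both functions were uniformly tiny across the window your argument would upgrade to uniform convergence on all of $\mathds{R}$, contradicting Proposition \ref{prop_no_J1_conv}. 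What $\mathcal{B}_3^+$ actually controls is the \emph{increment} $\max|\sum_{i=i_1}^{i_2}\zeta_i|$ over the window, not the values of the functions.

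The substantive step you are missing is therefore how to match, in the $M_1$ sense, the descent of $Y_N^\pm$ from its order-one value down to $0$ against the purely vertical jump segment in the completed graph of $Y_N$ at abscissa $|x|+2\theta$. The paper does this in Claims \ref{claim_param_early_stop} and \ref{claim_param_late_stop} by distinguishing whether $I^+$ falls before or after $\lfloor(|x|+2\theta)N\rfloor$. When $I^+\le\lfloor(|x|+2\theta)N\rfloor$, the function $Y_N^-$ equals the deterministic ramp $-\sqrt N(\frac{|x|-|y|}{2}+\theta)_+$ on $[\frac{I^+}{N},|x|+2\theta]$, which is affine, so one freezes the $Y_N^-$ representation at $(\frac{I^+}{N},Y_N^-(\frac{I^+}{N}))$ while the $Y_N$ representation finishes its graph (closeness there uses $\mathcal{B}_3^+$ to bound the increment of $Y_N$ over the window), and then moves both representations affinely to $(|x|+2\theta,0)$. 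When $I^+>\lfloor(|x|+2\theta)N\rfloor$, one must show that on $[|x|+2\theta,\frac{I^+}{N}]$ the process $Y_N^-$ decreases approximately affinely at rate $-\frac{1}{2\sqrt N}$ per lattice site — this comes from \eqref{eq_local_times}, $\mathcal{B}_4^+$ and $\mathcal{B}_3^+$, since the increments are $\eta_{j,+}(\ell^-(T_N,j))\approx\zeta_j-\frac12$ and the $\zeta_j$ contribute negligibly — so that its graph can be matched against the affine interpolation of the vertical jump of $Y_N$. Without one of these two mechanisms the edge-window estimate cannot be closed, so as written the proof does not go through.
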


\begin{proof}[Proof of Proposition \ref{prop_distance_param} given Proposition \ref{prop_distance_param_bis}.] We assume Proposition \ref{prop_distance_param_bis} holds. Then, when $N$ is large enough, if $\mathcal{B}^c$ occurs, for all $a >0$ with $|(|x|+2\theta)-a|>N^{-1/8}$ we have $d_{M_1,a}(Y_N^\pm|_{[-a,a]},Y_N|_{[-a,a]}) \leq 2N^{-1/12}$, which yields $d_{M_1}(Y_N^\pm,Y_N)=\int_0^{+\infty}e^{-a}(d_{M_1,a}(Y_N^\pm|_{[-a,a]},Y_N|_{[-a,a]}) \wedge 1 )\mathrm{d}a \leq \int_0^{+\infty}e^{-a}2N^{-1/12}\mathrm{d}a + 2 N^{-1/8}= 2N^{-1/12}+ 2 N^{-1/8} \leq 3 N^{-1/12}$. This implies $\mathds{P}(d_{M_1}(Y_N^\pm,Y_N) > 3 N^{-1/12}) \leq \mathds{P}(\mathcal{B})$ when $N$ is large enough. In addition,
\begin{align*}
 \mathds{P}(\mathcal{B}) \leq & \mathds{P}(\mathcal{B}_1^-) +\mathds{P}(\mathcal{B}_1^+) + \mathds{P}(\mathcal{B}_2) + \mathds{P}(\mathcal{B}_3^-) + \mathds{P}(\mathcal{B}_3^+) + \mathds{P}(\mathcal{B}_4^-) + \mathds{P}(\mathcal{B}_4^+) \\
 & +\mathds{P}(|I^-+(|x|+2\theta)N| \geq N^{3/4}) + \mathds{P}(|I^+-(|x|+2\theta)N| \geq N^{3/4}).
\end{align*}
Applying Lemmas \ref{lem_zeta=eta}, \ref{lem_zeta_eta_small}, \ref{lem_sum_zeta}, \ref{lem_control_I} and \ref{lem_sum_eta_zeta} implies $\mathds{P}(\mathcal{B})$ tends to 0 when $N$ tends to $+\infty$, hence $\mathds{P}(d_{M_1}(Y_N^\pm,Y_N) > 3 N^{-1/12})$ tends to 0 when $N$ tends to $+\infty$, which is Proposition \ref{prop_distance_param}. 
 
\end{proof}

The remainder of this section is devoted to the proof of Proposition \ref{prop_distance_param_bis}. The first thing we do is showing that between $\frac{(-(|x|+2\theta)N) \vee I^-}{N}$ and $\frac{((|x|+2\theta)N) \wedge I^+}{N}$, the functions $Y_N^\pm$ and $Y_N$ are close in uniform distance, which is the following lemma. 

\begin{lemma}\label{lem_Y_close}
 When $N$ is large enough, if $(\mathcal{B}_2)^c$, $(\mathcal{B}_4^-)^c$ and $(\mathcal{B}_4^+)^c$ occur, then if $I^+ < (|x|+2\theta)N$ then for any $y\in[\frac{(-(|x|+2\theta)N) \vee I^-}{N},\frac{((|x|+2\theta)N) \wedge I^+}{N}]$ we have $|Y_N^\pm(y)-Y_N(y)| \leq N^{-1/12}$, while if $I^+ \geq (|x|+2\theta)N$ we have $|Y_N^\pm(y)-Y_N(y)| \leq N^{-1/12}$ for $y\in[\frac{(-(|x|+2\theta)N) \vee I^-}{N},\frac{((|x|+2\theta)N) \wedge I^+}{N})$. 
\end{lemma}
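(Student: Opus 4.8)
The plan is to fix $y$ in the stated interval, set $i=\lfloor Ny\rfloor$, and use the exact identities \eqref{eq_local_times}, \eqref{eq_local_times_2} together with the events $(\mathcal{B}_2)^c$, $(\mathcal{B}_4^-)^c$, $(\mathcal{B}_4^+)^c$ to compare $\ell^\pm(T_N,i)$ with the partial $\zeta$-sum defining $Y_N(y)$. Since $\chi(N)\in\mathds{Z}$, the condition $y\geq\chi(N)/N$ is equivalent to $i\geq\chi(N)$ and $y<\chi(N)/N$ to $i<\chi(N)$, which splits the argument into two cases matching the two branches of $Y_N$. On the whole interval one has $y\in[-|x|-2\theta,|x|+2\theta)$ (the left endpoint is $\geq-|x|-2\theta$, and the right endpoint is $<|x|+2\theta$, whether it equals $I^+/N$ or the excluded value $|x|+2\theta$), so $i$ lies in all the index windows appearing in $\mathcal{B}_2$. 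Thus, by \eqref{eq_local_times_2}, $|\ell^+(T_N,i)-\ell^-(T_N,i)|$ equals $|\eta_{i,+}(\ell^-(T_N,i))|$ (if $i\geq\chi(N)$) or $|\eta_{i,-}(\ell^+(T_N,i))|$ (if $i<\chi(N)$), which is at most $N^{1/16}+\tfrac12$ on $(\mathcal{B}_2)^c$, so $|Y_N^+(y)-Y_N^-(y)|\leq 2N^{-7/16}$; it therefore suffices to bound $|Y_N^-(y)-Y_N(y)|$ when $i\geq\chi(N)$ and $|Y_N^+(y)-Y_N(y)|$ when $i<\chi(N)$.

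For $i\geq\chi(N)$ one has $i\in\{\chi(N),\dots,I^+\}$ (a point worth checking at the right endpoint, where $y=I^+/N$ is attained precisely when $I^+<(|x|+2\theta)N$: there $\ell^-(T_N,I^+)=0$, but \eqref{eq_local_times} still applies and the limit $(\tfrac{|x|-|y|}2+\theta)_+$ equals $\tfrac{x-I^+/N}2+\theta\geq0$, i.e. the relevant linear branch). Then \eqref{eq_local_times} gives $\ell^-(T_N,i)=\lfloor N\theta\rfloor-\mathds{1}_{\{\iota=+\}}+\sum_{j=\chi(N)}^{i-1}\eta_{j,+}(\ell^-(T_N,j))$ and $Y_N(y)=\tfrac1{\sqrt N}\sum_{j=\chi(N)}^{i-1}\zeta_j$, and on $(\mathcal{B}_4^+)^c$ we may substitute $\sum_{j=\chi(N)}^{i-1}\eta_{j,+}(\ell^-(T_N,j))=\sum_{j=\chi(N)}^{i-1}\zeta_j-\tfrac{i-\chi(N)}2+R$ with $|R|<N^{1/3}$. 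Using $N(\tfrac{x-y}2+\theta)$ for the limit and rearranging, $\sqrt N\,(Y_N^-(y)-Y_N(y))=(\lfloor N\theta\rfloor-N\theta)-\mathds{1}_{\{\iota=+\}}+\tfrac{\chi(N)-Nx}2+\tfrac{Ny-\lfloor Ny\rfloor}2+R$; since $|\chi(N)-Nx|\leq1$ and $0\leq Ny-\lfloor Ny\rfloor<1$, every term but $R$ is bounded by a universal constant $C_0$, so $|Y_N^-(y)-Y_N(y)|\leq(N^{1/3}+C_0)/\sqrt N$. The case $i<\chi(N)$ is parallel: one uses the second line of \eqref{eq_local_times}, the event $(\mathcal{B}_4^-)^c$, the range $i\in\{I^-,\dots,\chi(N)-1\}$ (deduced from $y\geq\tfrac{(-(|x|+2\theta)N)\vee I^-}N$ by integrality), and the elementary identity $\sum_{j=i+1}^{\chi(N)-1}(\mathds{1}_{\{j>0\}}-\tfrac12)=\tfrac12\big((\chi(N)-1)+(i\wedge0)-(i\vee0)\big)$; splitting according to the sign of $i$ (equivalently of $y$), this drift cancels the active linear branch of $(\tfrac{|x|-|y|}2+\theta)_+$ up to $O(1)$, and one again gets $|Y_N^+(y)-Y_N(y)|\leq(N^{1/3}+C_0)/\sqrt N$. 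Combining with $|Y_N^+-Y_N^-|\leq2N^{-7/16}$ gives $|Y_N^\pm(y)-Y_N(y)|\leq N^{-1/6}+C_0N^{-1/2}+2N^{-7/16}\leq N^{-1/12}$ for $N$ large, uniformly in $y$, which is the lemma.

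The hard part here is bookkeeping rather than estimation: for every $y$ in the interval — closed on the right when $I^+<(|x|+2\theta)N$, half-open otherwise — one must line up $i=\lfloor Ny\rfloor$ with the exact index set of $\mathcal{B}_4^\pm$, handling the endpoints where $\ell^\pm(T_N,i)$ may already vanish, and one must correctly identify which linear branch of the triangular limit is active so that the half-integer drift coming from the $+\tfrac12$ shifts in the definition of the $\zeta_i$ (and from the $\mathds{1}_{\{j>0\}}$ terms in \eqref{eq_local_times}) cancels it up to $O(1)$. The analytic input is cheap: the coupling error is only $O(N^{1/3})$ from $\mathcal{B}_4^\pm$ and $O(N^{1/16})$ from $\mathcal{B}_2$, both crushed by the $1/\sqrt N$ normalization well below the target $N^{-1/12}$.
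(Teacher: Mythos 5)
Your argument is correct and follows essentially the same route as the paper: both rest on \eqref{eq_local_times} plus $(\mathcal{B}_4^\pm)^c$ to replace the $\eta$-increments by the $\zeta_j$ up to an $O(N^{1/3})$ error, on \eqref{eq_local_times_2} plus $(\mathcal{B}_2)^c$ to pass between $\ell^+$ and $\ell^-$ at cost $O(N^{1/16})$, and on the same $O(1)$ cancellation of the half-integer drift against the linear branch of the triangular limit. The only difference is organizational — you extract the bound $|Y_N^+(y)-Y_N^-(y)|\leq 2N^{-7/16}$ once at the start instead of performing the $\ell^-\!\to\ell^+$ conversion inside the case $\lfloor Ny\rfloor<\chi(N)$ as the paper does — and your endpoint/index bookkeeping matches the paper's.
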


\begin{proof}[Proof of Lemma \ref{lem_Y_close}.]
 Writing down the proof is only a technical matter, as the meaning of $(\mathcal{B}_4^\pm)^c$ is that the local times are close to the process formed from the random variables of the coupling. $(\mathcal{B}_2)^c$ is there to ensure that the difference terms that appear will be small. We only spell out the proof for $Y^-$, as the proof for $Y^+$ is similar. We assume $(\mathcal{B}_2)^c$, $(\mathcal{B}_4^-)^c$ and $(\mathcal{B}_4^+)^c$. Then if $y \in [\frac{\chi(N)}{N},\frac{((|x|+2\theta)N) \wedge I^+}{N}]$ (if $I^+\geq(|x|+2\theta)N$ we exclude the case $y=\frac{((|x|+2\theta)N) \wedge I^+}{N}$) we have $y \in [\frac{\chi(N)}{N},|x|+2\theta)$, so $|Y_N^-(y)-Y_N(y)| = \frac{1}{\sqrt{N}}|\ell^-(T_N,\lfloor N y\rfloor)-N(\frac{|x|-|y|}{2}+\theta)_+-\sum_{i=\chi(N)}^{\lfloor N y\rfloor-1}\zeta_i|$, thus by \eqref{eq_local_times} we obtain the following: 
 \[
 |Y_N^-(y)-Y_N(y)| = \frac{1}{\sqrt{N}}\left|\lfloor N \theta\rfloor-\mathds{1}_{\{\iota=+\}}+\sum_{i=\chi(N)}^{\lfloor N y\rfloor-1}\eta_{i,+}(\ell^-(T_N,i))-N\left(\frac{|x|-|y|}{2}+\theta\right)_+-\sum_{i=\chi(N)}^{\lfloor N y\rfloor-1}\zeta_i\right| 
 \]
 \[
 \leq \frac{1}{\sqrt{N}}\left|\sum_{i=\chi(N)}^{\lfloor N y\rfloor-1}\eta_{i,+}(\ell^-(T_N,i))+\frac{\lfloor N y\rfloor-\chi(N)}{2}-\sum_{i=\chi(N)}^{\lfloor N y\rfloor-1}\zeta_i\right|+\frac{3}{\sqrt{N}} 
 \]
 \[
 = \frac{1}{\sqrt{N}}\left|\sum_{i=\chi(N)}^{\lfloor N y\rfloor-1}(\eta_{i,+}(\ell^-(T_N,i))+1/2)-\sum_{i=\chi(N)}^{\lfloor N y\rfloor-1}\zeta_i\right|+\frac{3}{\sqrt{N}}.
 \]
 Now, $y \in [\frac{\chi(N)}{N},\frac{((|x|+2\theta)N) \wedge I^+}{N}]$ implies $\lfloor N y\rfloor \in \{\chi(N),...,I^+\}$, thus $(\mathcal{B}_4^+)^c$ yields $|Y_N^-(y)-Y_N(y)| \leq \frac{1}{\sqrt{N}}N^{1/3}+\frac{3}{\sqrt{N}} \leq N^{-1/12}$ when $N$ is large enough. We now consider the case $y \in [\frac{(-(|x|+2\theta)N) \vee I^-}{N},\frac{\chi(N)}{N})$. Then $y \in [-|x|-2\theta,\frac{\chi(N)}{N})$, hence $|Y_N^-(y)-Y_N(y)| = \frac{1}{\sqrt{N}}|\ell^-(T_N,\lfloor N y\rfloor)-N(\frac{|x|-|y|}{2}+\theta)_+-\sum_{i=\lfloor N y\rfloor+1}^{\chi(N)-1}\zeta_i|$. Now, \eqref{eq_local_times_2} yields $|\ell^-(T_N,\lfloor N y\rfloor)-\ell^+(T_N,\lfloor N y\rfloor)|=|\eta_{\lfloor N y\rfloor,-}(\ell^+(T_N,\lfloor N y\rfloor))|$, which is smaller than $N^{1/16}+1/2$ thanks to $(\mathcal{B}_2)^c$. We deduce that $|Y_N^-(y)-Y_N(y)| \leq \frac{1}{\sqrt{N}}|\ell^+(T_N,\lfloor N y\rfloor)-N(\frac{|x|-|y|}{2}+\theta)_+-\sum_{i=\lfloor N y\rfloor+1}^{\chi(N)-1}\zeta_i|+\frac{N^{1/16}+1/2}{\sqrt{N}}$, thus \eqref{eq_local_times} implies 
 \[
 |Y_N^-(y)-Y_N(y)| \leq \frac{1}{\sqrt{N}}\left|\lfloor N \theta\rfloor+\sum_{i=\lfloor N y\rfloor+1}^{\chi(N)-1}(\eta_{i,-}(\ell^+(T_N,i))+\mathds{1}_{\{i > 0\}})-N\left(\frac{|x|-|y|}{2}+\theta\right)_+-\sum_{i=\lfloor N y\rfloor+1}^{\chi(N)-1}\zeta_i\right|+\frac{N^{1/16}+1/2}{\sqrt{N}}
 \]
 \[
 \leq \frac{1}{\sqrt{N}}\left|\sum_{i=\lfloor N y\rfloor+1}^{\chi(N)-1}(\eta_{i,-}(\ell^+(T_N,i))+\mathds{1}_{\{i > 0\}})+\frac{\lfloor N y\rfloor+1-\chi(N)}{2}-\sum_{i=\lfloor N y\rfloor+1}^{\chi(N)-1}\zeta_i\right|+\frac{N^{1/16}+3}{\sqrt{N}}
 \]
 \[
 \leq \frac{1}{\sqrt{N}}\left|\sum_{i=\lfloor N y\rfloor+1}^{\chi(N)-1}(\eta_{i,-}(\ell^+(T_N,i))+1/2)-\sum_{i=\lfloor N y\rfloor+1}^{\chi(N)-1}\zeta_i\right|+\frac{N^{1/16}+3}{\sqrt{N}}.
 \]
  Furthermore, $y \in [\frac{(-(|x|+2\theta)N) \vee I^-}{N},\frac{\chi(N)}{N})$ implies $\lfloor N y\rfloor \in \{I^-,...,\chi(N)-1\}$, hence $(\mathcal{B}_4^-)^c$ yields $|Y_N^-(y)-Y_N(y)| \leq \frac{1}{\sqrt{N}}N^{1/3}+\frac{N^{1/16}+3}{\sqrt{N}} \leq N^{-1/12}$ when $N$ is large enough. Consequently, for any $y\in[\frac{(-(|x|+2\theta)N) \vee I^-}{N},\frac{((|x|+2\theta)N) \wedge I^+}{N}]$ we have $|Y_N^-(y)-Y_N(y)| \leq N^{-1/12}$, which ends the proof of Lemma \ref{lem_Y_close}. 
\end{proof}

We now prove Proposition \ref{prop_distance_param_bis}. Let $a>0$ so that $|(|x|+2\theta)-a|>N^{-1/8}$, we will prove that when $N$ is large enough, $\mathcal{B}^c \subset \{d_{M_1,a}(Y_N^\pm|_{[-a,a]},Y_N|_{[-a,a]}) \leq 2N^{-1/12}\}$, and the threshold for $N$ given by the proof will not depend on the value of $a$. There will be two cases depending on if $a$ is smaller than $|x|+2\theta$ or not. 

\subsection{Case $\mathbf{a\in(0,|x|+2\theta-N^{-1/8})}$}

 This is the easier case. Indeed, the interval $[-a,a]$ will then be contained in $[\frac{(-(|x|+2\theta)N) \vee I^-}{N},\frac{((|x|+2\theta)N) \wedge I^+}{N})$, inside which $Y_N^\pm$ and $Y_N$ are close for the uniform norm by Lemma \ref{lem_Y_close}. We may then define parametric representations $(u_N^\pm,r_N^\pm)$ and $(u_N,r_N)$ of $Y_N^-|_{[-a,a]}$ and $Y_N|_{[-a,a]}$ ``following the graphs of $Y_N^\pm|_{[-a,a]}$ and $Y_N|_{[-a,a]}$ together'' so that $u_N^\pm(t)=u_N(t)$ for all $t\in[0,1]$, and $\|r_N^\pm-r_N\|_\infty \leq \sup_{y\in[-a,a]}|Y_N^\pm(y)-Y_N(y)|$ (an explicit construction of these representations can be found in the first arXiv version of this paper \cite{Mareche2022v1}). We deduce $d_{M_1,a}(Y_N^\pm|_{[-a,a]},Y_N|_{[-a,a]}) \leq \sup_{y\in[-a,a]}|Y_N^\pm(y)-Y_N(y)|$. Moreover, if $\mathcal{B}^c$ occurs, since $a\in(0,|x|+2\theta-N^{-1/8})$, for any $y \in [-a,a]$ we have $y \in (-|x|-2\theta+N^{-1/8},|x|+2\theta-N^{-1/8})$ thus $-(|x|+2\theta)N+N^{3/4} \leq N y \leq (|x|+2\theta)N-N^{3/4}$, hence $I^- < N y < I^+$, hence $y\in(\frac{(-(|x|+2\theta)N) \vee I^-}{N},\frac{((|x|+2\theta)N) \wedge I^+}{N})$, so by Lemma \ref{lem_Y_close} we have $|Y_N^\pm(y)-Y_N(y)| \leq N^{-1/12}$. Consequently, if $\mathcal{B}^c$ occurs, $d_{M_1,a}(Y_N^\pm|_{[-a,a]},Y_N|_{[-a,a]}) \leq N^{-1/12}$.

 \subsection{Case $\mathbf{a>|x|+2\theta+N^{-1/8}}$}
 
This is the harder case, as we have to deal with what happens around $|x|+2\theta$ and $-|x|-2\theta$. We only write down the proof for $Y_N^-$, since the proof for $Y_N^+$ is similar (one may remember that \eqref{eq_local_times_2} allows to bound the $\ell^-(T_N,i)-\ell^+(T_N,i)$ when $(\mathcal{B}_2)^c$ occurs, hence when $\mathcal{B}^c$ occurs). Once again, we will define parametric representations $(u_N^-,r_N^-)$ and $(u_N,r_N)$ of $Y_N^-|_{[-a,a]}$ and $Y_N|_{[-a,a]}$. The definition will depend on whether $I^+ \leq \lfloor(|x|+2\theta)N\rfloor$ or not, and also on whether $I^- \geq -\lfloor(|x|+2\theta)N\rfloor$ or not. We explain it for abscissas in $[0,a]$ depending on whether $I^+ \leq \lfloor(|x|+2\theta)N\rfloor$ or not; the construction for abscissas in $[-a,0]$ are similar depending on whether $I^- \geq -\lfloor(|x|+2\theta)N\rfloor$ or not. We first assume $I^+ \leq \lfloor(|x|+2\theta)N\rfloor$. Between $0$ and $\frac{I^+}{N}$, the parametric representations will be, as in the case $a\in(0,|x|+2\theta-N^{-1/8})$, following the completed graphs of $Y_N^-$ and $Y_N$ in parallel (see Figure \ref{fig_I+small}(a)). The next step, once $(u_N^-,r_N^-)$ reached $(\frac{I^+}{N},Y_N^-(\frac{I^+}{N}))$, is to freeze it there while $(u_N,r_N)$ follows the graph of $Y_N$ from $(\frac{I^+}{N},Y_N(\frac{I^+}{N}))$ to $(|x|+2\theta,Y_N((|x|+2\theta))^-)$ (see Figure \ref{fig_I+small}(b)). For $y \geq \frac{I^+}{N}$ we have $\ell^-(T_N,\lfloor N y\rfloor)=0$ (see Lemma \ref{lem_0_after_I}) thus $Y_N(y)=-N(\frac{|x|-|y|}{2}+\theta)_+$, hence $Y_N^-:[\frac{I^+}{N},|x|+2\theta] \mapsto \mathds{R}$ is affine. Therefore, the following step is to move at the same time $(u_N^-,r_N^-)$ from $(\frac{I^+}{N},Y_N^-(\frac{I^+}{N}))$ to $(|x|+2\theta,Y_N^-(|x|+2\theta))=(|x|+2\theta,0)$ and $(u_N,r_N)$ from $(|x|+2\theta,Y_N((|x|+2\theta)^-))$ to $(|x|+2\theta,0)$ (see Figure \ref{fig_I+small}(c)), and the two parametric representations will remain close. After this step, both parametric representations are at $(|x|+2\theta,0)$, and they will go together to $(a,0)$ (see Figure \ref{fig_I+small}(d)). We now assume $I^+ > \lfloor(|x|+2\theta)N\rfloor$. We also assume $\frac{I^+}{N} \leq a$ (if $\frac{I^+}{N} > a$, we may choose anything for $(u_N^-,r_N^-)$, $(u_N,r_N)$; it will not happen if $\mathcal{B}^c$ occurs). Between $0$ and $|x|+2\theta$, the parametric representations will follow the completed graphs of $Y_N^-$ and $Y_N$ in parallel (see Figure \ref{fig_I+large}(a)). Once abscissa $|x|+2\theta$ is reached, the next step is to move $(u_N^-,r_N^-)$ from $(|x|+2\theta,Y_N^-(|x|+2\theta))$ to $(\frac{I^+}{N},Y_N^-(\frac{I^+}{N}))$, which is $(\frac{I^+}{N},0)$, and to move at the same time $(u_N,r_N)$ from $(|x|+2\theta,Y_N(|x|+2\theta))$ to $(|x|+2\theta,0)$ (see Figure \ref{fig_I+large}(b)). We will prove the two representations are close by controlling the local times. At the next step we freeze $(u_N^-,r_N^-)$ at $(\frac{I^+}{N},0)$ while $(u_N,r_N)$ goes from $(|x|+2\theta,0)$ to $(\frac{I^+}{N},0)$ (see Figure \ref{fig_I+large}(c)). After this step, both parametric representations are at $(\frac{I^+}{N},0)$, and they will go together from $(\frac{I^+}{N},0)$ to $(a,0)$ (see Figure \ref{fig_I+large}(d)). Again, a more rigorous definition of the parametric representations is available in the first arXiv version of this paper \cite{Mareche2022v1}.
 
 \begin{figure}
 \begin{center}
\parbox{0.24\textwidth}{
 \begin{tikzpicture}[scale=0.25]
  \draw[->,>=latex] (0,-14)--(0,2);
  \draw (-1,0)--(1,0);
  \draw (0.75,-0.5)--(1.25,0.5);
  \draw (1.75,-0.5)--(2.25,0.5);
  \draw[->,>=latex] (2,0)--(17,0);
  \draw[ultra thick,red] (2,-10)--(3,-8);
  \draw[ultra thick,red] (3,-8)--(3,-11);
  \draw[ultra thick,red] (3,-11)--(4,-9);
  \draw[ultra thick,red] (4,-9)--(4,-10);
  \draw[ultra thick,red] (4,-10)--(5,-8);
  \draw[ultra thick,red] (5,-8)--(5,-10);
  \draw[ultra thick,red] (5,-10)--(6,-8);
  \draw[ultra thick,red] (6,-8)--(6,-9);
  \draw[ultra thick,red] (6,-9)--(7,-7);
  \draw[ultra thick,red] (7,-7)--(7,-8);
  \draw (7,-8)--(11,0);
  \draw (11,0)--(16,0);
  \draw[ultra thick,red] (2,-12)--(3,-12);
  \draw[ultra thick,red] (3,-12)--(3,-13);
  \draw[ultra thick,red] (3,-13)--(4,-13);
  \draw[ultra thick,red] (4,-13)--(4,-12.5);
  \draw[ultra thick,red] (4,-12.5)--(5,-12.5);
  \draw[ultra thick,red] (5,-12.5)--(5,-11.5);
  \draw[ultra thick,red] (5,-11.5)--(6,-11.5);
  \draw[ultra thick,red] (6,-11.5)--(6,-12);
  \draw[ultra thick,red] (6,-12)--(7,-12);
  \draw[ultra thick,red] (7,-12)--(7,-10.5);
  \draw (7,-10.5)--(8,-10.5);
  \draw[dotted] (8,-10.5)--(8,-11.5);
  \draw (8,-11.5)--(9,-11.5);
  \draw[dotted] (9,-11.5)--(9,-12);
  \draw (9,-12)--(10,-12);
  \draw[dotted] (10,-12)--(10,-11.5);
  \draw (10,-11.5)--(11,-11.5);
  \draw[dotted] (11,-11.5)--(11,0);
  \draw (0,0) node[above left] {$0$};
  \draw (7,0) node {$\mid$} node[above] {\small$\frac{I^+}{N}$} ;
  \draw (11,0) node {$\mid$} node[above] {\small$|x|+2\theta$} ;
  \draw (16,0) node {$\mid$} node[above] {$a$} ;
  \draw (4.5,-6.5) node {$Y_N^-$};
  \draw (6.5,-12.5) node[below] {$Y_N$};
 \end{tikzpicture}
 
 \center (a)
 }
 \parbox{0.24\textwidth}{
 \begin{tikzpicture}[scale=0.25]
  \draw[->,>=latex] (0,-14)--(0,2);
  \draw (-1,0)--(1,0);
  \draw (0.75,-0.5)--(1.25,0.5);
  \draw (1.75,-0.5)--(2.25,0.5);
  \draw[->,>=latex] (2,0)--(17,0);
  \draw (2,-10)--(3,-8);
  \draw[dotted] (3,-8)--(3,-11);
  \draw (3,-11)--(4,-9);
  \draw[dotted] (4,-9)--(4,-10);
  \draw (4,-10)--(5,-8);
  \draw[dotted] (5,-8)--(5,-10);
  \draw (5,-10)--(6,-8);
  \draw[dotted] (6,-8)--(6,-9);
  \draw (6,-9)--(7,-7);
  \draw[dotted] (7,-7)--(7,-8);
  \fill[red] (7,-8) circle(0.3) ;
  \draw (7,-8)--(11,0);
  \draw (11,0)--(16,0);
  \draw (2,-12)--(3,-12);
  \draw[dotted] (3,-12)--(3,-13);
  \draw (3,-13)--(4,-13);
  \draw[dotted] (4,-13)--(4,-12.5);
  \draw (4,-12.5)--(5,-12.5);
  \draw[dotted] (5,-12.5)--(5,-11.5);
  \draw (5,-11.5)--(6,-11.5);
  \draw[dotted] (6,-11.5)--(6,-12);
  \draw (6,-12)--(7,-12);
  \draw[dotted] (7,-12)--(7,-10.5);
  \draw[ultra thick,red] (7,-10.5)--(8,-10.5);
  \draw[ultra thick,red] (8,-10.5)--(8,-11.5);
  \draw[ultra thick,red] (8,-11.5)--(9,-11.5);
  \draw[ultra thick,red] (9,-11.5)--(9,-12);
  \draw[ultra thick,red] (9,-12)--(10,-12);
  \draw[ultra thick,red] (10,-12)--(10,-11.5);
  \draw[ultra thick,red] (10,-11.5)--(11,-11.5);
  \draw[dotted] (11,-11.5)--(11,0);
  \draw (0,0) node[above left] {$0$};
  \draw (7,0) node {$\mid$} node[above] {\small$\frac{I^+}{N}$} ;
  \draw (11,0) node {$\mid$} node[above] {\small$|x|+2\theta$} ;
  \draw (16,0) node {$\mid$} node[above] {$a$} ;
  \draw (4.5,-6.5) node {$Y_N^-$};
  \draw (6.5,-12.5) node[below] {$Y_N$};
 \end{tikzpicture}
 
 \center (b)
 }
 \parbox{0.24\textwidth}{
 \begin{tikzpicture}[scale=0.25]
  \draw[->,>=latex] (0,-14)--(0,2);
  \draw (-1,0)--(1,0);
  \draw (0.75,-0.5)--(1.25,0.5);
  \draw (1.75,-0.5)--(2.25,0.5);
  \draw[->,>=latex] (2,0)--(17,0);
  \draw (2,-10)--(3,-8);
  \draw[dotted] (3,-8)--(3,-11);
  \draw (3,-11)--(4,-9);
  \draw[dotted] (4,-9)--(4,-10);
  \draw (4,-10)--(5,-8);
  \draw[dotted] (5,-8)--(5,-10);
  \draw (5,-10)--(6,-8);
  \draw[dotted] (6,-8)--(6,-9);
  \draw (6,-9)--(7,-7);
  \draw[dotted] (7,-7)--(7,-8);
  \draw[ultra thick,red] (7,-8)--(11,0);
  \draw (11,0)--(16,0);
  \draw (2,-12)--(3,-12);
  \draw[dotted] (3,-12)--(3,-13);
  \draw (3,-13)--(4,-13);
  \draw[dotted] (4,-13)--(4,-12.5);
  \draw (4,-12.5)--(5,-12.5);
  \draw[dotted] (5,-12.5)--(5,-11.5);
  \draw (5,-11.5)--(6,-11.5);
  \draw[dotted] (6,-11.5)--(6,-12);
  \draw (6,-12)--(7,-12);
  \draw[dotted] (7,-12)--(7,-10.5);
  \draw (7,-10.5)--(8,-10.5);
  \draw[dotted] (8,-10.5)--(8,-11.5);
  \draw (8,-11.5)--(9,-11.5);
  \draw[dotted] (9,-11.5)--(9,-12);
  \draw (9,-12)--(10,-12);
  \draw[dotted] (10,-12)--(10,-11.5);
  \draw (10,-11.5)--(11,-11.5);
  \draw[ultra thick,red] (11,-11.5)--(11,0);
  \draw (0,0) node[above left] {$0$};
  \draw (7,0) node {$\mid$} node[above] {\small$\frac{I^+}{N}$} ;
  \draw (11,0) node {$\mid$} node[above] {\small$|x|+2\theta$} ;
  \draw (16,0) node {$\mid$} node[above] {$a$} ;
  \draw (4.5,-6.5) node {$Y_N^-$};
  \draw (6.5,-12.5) node[below] {$Y_N$};
 \end{tikzpicture}
 
 \center (c)
 }
 \parbox{0.24\textwidth}{
 \begin{tikzpicture}[scale=0.25]
  \draw[->,>=latex] (0,-14)--(0,2);
  \draw (-1,0)--(1,0);
  \draw (0.75,-0.5)--(1.25,0.5);
  \draw (1.75,-0.5)--(2.25,0.5);
  \draw[->,>=latex] (2,0)--(17,0);
  \draw (2,-10)--(3,-8);
  \draw[dotted] (3,-8)--(3,-11);
  \draw (3,-11)--(4,-9);
  \draw[dotted] (4,-9)--(4,-10);
  \draw (4,-10)--(5,-8);
  \draw[dotted] (5,-8)--(5,-10);
  \draw (5,-10)--(6,-8);
  \draw[dotted] (6,-8)--(6,-9);
  \draw (6,-9)--(7,-7);
  \draw[dotted] (7,-7)--(7,-8);
  \draw (7,-8)--(11,0);
  \draw[ultra thick,red] (11,0)--(16,0);
  \draw (2,-12)--(3,-12);
  \draw[dotted] (3,-12)--(3,-13);
  \draw (3,-13)--(4,-13);
  \draw[dotted] (4,-13)--(4,-12.5);
  \draw (4,-12.5)--(5,-12.5);
  \draw[dotted] (5,-12.5)--(5,-11.5);
  \draw (5,-11.5)--(6,-11.5);
  \draw[dotted] (6,-11.5)--(6,-12);
  \draw (6,-12)--(7,-12);
  \draw[dotted] (7,-12)--(7,-10.5);
  \draw (7,-10.5)--(8,-10.5);
  \draw[dotted] (8,-10.5)--(8,-11.5);
  \draw (8,-11.5)--(9,-11.5);
  \draw[dotted] (9,-11.5)--(9,-12);
  \draw (9,-12)--(10,-12);
  \draw[dotted] (10,-12)--(10,-11.5);
  \draw (10,-11.5)--(11,-11.5);
  \draw[dotted] (11,-11.5)--(11,0);
  \draw (0,0) node[above left] {$0$};
  \draw (7,0) node {$\mid$} node[above] {\small$\frac{I^+}{N}$} ;
  \draw (11,0) node {$\mid$} node[above] {\small$|x|+2\theta$} ;
  \draw (16,0) node {$\mid$} node[above] {$a$} ;
  \draw (4.5,-6.5) node {$Y_N^-$};
  \draw (6.5,-12.5) node[below] {$Y_N$};
 \end{tikzpicture}
 
 \center (d)
 }
 \end{center}
 \caption{The successive steps of the parametric representations of $Y_N^-|_{[-a,a]}$ and $Y_N|_{[-a,a]}$ if $I^+ \leq \lfloor(|x|+2\theta)N\rfloor$. At each step, the parts of the graphs the parametric representations travel through are thickened.}\label{fig_I+small}
 \end{figure}
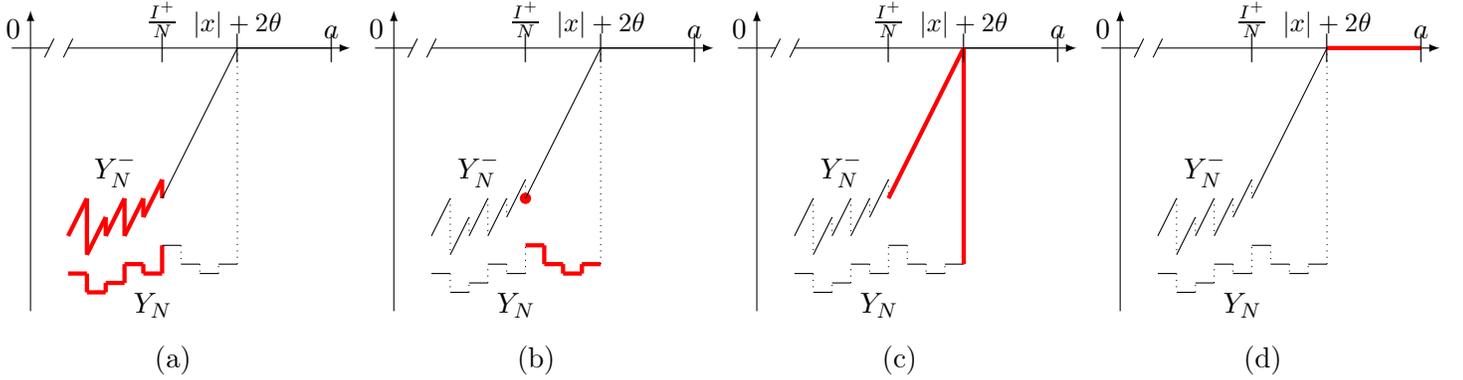
 
 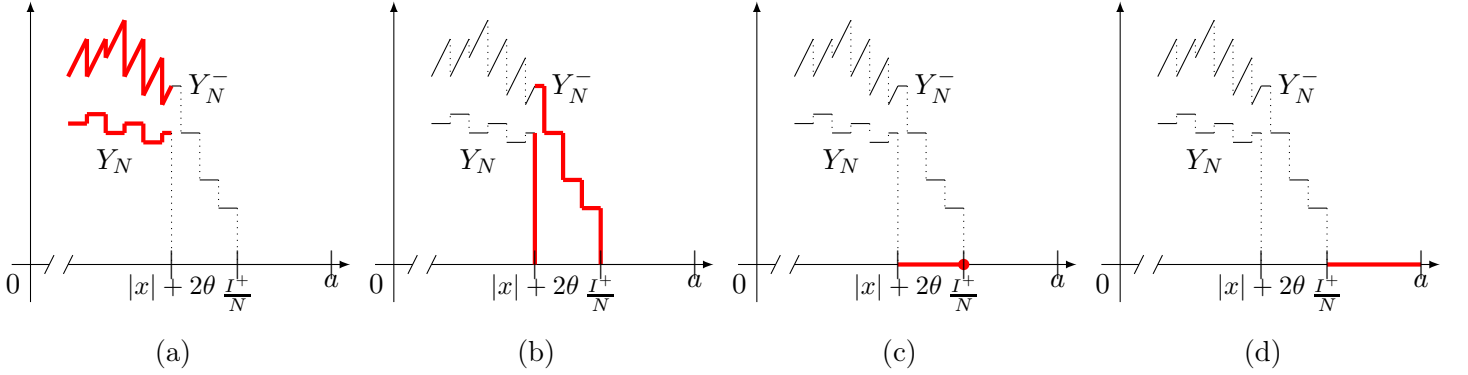
\begin{figure}
  \begin{center}
  \parbox{0.24\textwidth}{
   \begin{tikzpicture}[scale=0.25]
  \draw[->,>=latex] (0,-2)--(0,14);
  \draw (-1,0)--(1,0);
  \draw (0.75,-0.5)--(1.25,0.5);
  \draw (1.75,-0.5)--(2.25,0.5);
  \draw[->,>=latex] (2,0)--(17,0);
  \draw[ultra thick,red] (2,10)--(3,12);
  \draw[ultra thick,red] (3,12)--(3,10);
  \draw[ultra thick,red] (3,10)--(4,12);
  \draw[ultra thick,red] (4,12)--(4,11);
  \draw[ultra thick,red] (4,11)--(5,13);
  \draw[ultra thick,red] (5,13)--(5,10);
  \draw[ultra thick,red] (5,10)--(6,12);
  \draw[ultra thick,red] (6,12)--(6,9);
  \draw[ultra thick,red] (6,9)--(7,11);
  \draw[ultra thick,red] (7,11)--(7,8.5);
  \draw[ultra thick,red] (7,8.5)--(7.5,9.5);
  \draw (7.5,9.5)--(8,9.5);
  \draw[dotted] (8,9.5)--(8,7);
  \draw (8,7)--(9,7);
  \draw[dotted] (9,7)--(9,4.5);
  \draw (9,4.5)--(10,4.5);
  \draw[dotted] (10,4.5)--(10,3);
  \draw (10,3)--(11,3);
  \draw[dotted] (11,3)--(11,0);
  \draw (11,0)--(16,0);
  \draw[ultra thick,red] (2,7.5)--(3,7.5);
  \draw[ultra thick,red] (3,7.5)--(3,8);
  \draw[ultra thick,red] (3,8)--(4,8);
  \draw[ultra thick,red] (4,8)--(4,7);
  \draw[ultra thick,red] (4,7)--(5,7);
  \draw[ultra thick,red] (5,7)--(5,7.5);
  \draw[ultra thick,red] (5,7.5)--(6,7.5);
  \draw[ultra thick,red] (6,7.5)--(6,6.5);
  \draw[ultra thick,red] (6,6.5)--(7,6.5);
  \draw[ultra thick,red] (7,6.5)--(7,7);
  \draw[ultra thick,red] (7,7)--(7.5,7);
  \draw[dotted] (7.5,7)--(7.5,0);
  \draw (0,0) node[below left] {$0$};
  \draw (7.5,0) node {$\mid$} node[below] {\small$|x|+2\theta$} ;
  \draw (11,0) node {$\mid$} node[below] {\small$\frac{I^+}{N}$} ;
  \draw (16,0) node {$\mid$} node[below] {$a$} ;
   \draw (9.5,9.5) node {$Y_N^-$};
  \draw (4.5,5.5) node {$Y_N$};
   \end{tikzpicture}
   
   \center (a)
   }
   \parbox{0.24\textwidth}{
   \begin{tikzpicture}[scale=0.25]
    \draw[->,>=latex] (0,-2)--(0,14);
  \draw (-1,0)--(1,0);
  \draw (0.75,-0.5)--(1.25,0.5);
  \draw (1.75,-0.5)--(2.25,0.5);
  \draw[->,>=latex] (2,0)--(17,0);
  \draw (2,10)--(3,12);
  \draw[dotted] (3,12)--(3,10);
  \draw (3,10)--(4,12);
  \draw[dotted] (4,12)--(4,11);
  \draw (4,11)--(5,13);
  \draw[dotted] (5,13)--(5,10);
  \draw (5,10)--(6,12);
  \draw[dotted] (6,12)--(6,9);
  \draw (6,9)--(7,11);
  \draw[dotted] (7,11)--(7,8.5);
  \draw (7,8.5)--(7.5,9.5);
  \draw[ultra thick,red] (7.5,9.5)--(8,9.5);
  \draw[ultra thick,red] (8,9.5)--(8,7);
  \draw[ultra thick,red] (8,7)--(9,7);
  \draw[ultra thick,red] (9,7)--(9,4.5);
  \draw[ultra thick,red] (9,4.5)--(10,4.5);
  \draw[ultra thick,red] (10,4.5)--(10,3);
  \draw[ultra thick,red] (10,3)--(11,3);
  \draw[ultra thick,red] (11,3)--(11,0);
  \draw (11,0)--(16,0);
  \draw (2,7.5)--(3,7.5);
  \draw[dotted] (3,7.5)--(3,8);
  \draw (3,8)--(4,8);
  \draw[dotted] (4,8)--(4,7);
  \draw (4,7)--(5,7);
  \draw[dotted] (5,7)--(5,7.5);
  \draw (5,7.5)--(6,7.5);
  \draw[dotted] (6,7.5)--(6,6.5);
  \draw (6,6.5)--(7,6.5);
  \draw[dotted] (7,6.5)--(7,7);
  \draw (7,7)--(7.5,7);
  \draw[ultra thick,red] (7.5,7)--(7.5,0);
  \draw (0,0) node[below left] {$0$};
  \draw (7.5,0) node {$\mid$} node[below] {\small$|x|+2\theta$} ;
  \draw (11,0) node {$\mid$} node[below] {\small$\frac{I^+}{N}$} ;
  \draw (16,0) node {$\mid$} node[below] {$a$} ;
   \draw (9.5,9.5) node {$Y_N^-$};
  \draw (4.5,5.5) node {$Y_N$};
   \end{tikzpicture}
   
   \center (b)
   }
   \parbox{0.24\textwidth}{
   \begin{tikzpicture}[scale=0.25]
    \draw[->,>=latex] (0,-2)--(0,14);
  \draw (-1,0)--(1,0);
  \draw (0.75,-0.5)--(1.25,0.5);
  \draw (1.75,-0.5)--(2.25,0.5);
  \draw[->,>=latex] (2,0)--(17,0);
  \draw (2,10)--(3,12);
  \draw[dotted] (3,12)--(3,10);
  \draw (3,10)--(4,12);
  \draw[dotted] (4,12)--(4,11);
  \draw (4,11)--(5,13);
  \draw[dotted] (5,13)--(5,10);
  \draw (5,10)--(6,12);
  \draw[dotted] (6,12)--(6,9);
  \draw (6,9)--(7,11);
  \draw[dotted] (7,11)--(7,8.5);
  \draw (7,8.5)--(7.5,9.5);
  \draw (7.5,9.5)--(8,9.5);
  \draw[dotted] (8,9.5)--(8,7);
  \draw (8,7)--(9,7);
  \draw[dotted] (9,7)--(9,4.5);
  \draw (9,4.5)--(10,4.5);
  \draw[dotted] (10,4.5)--(10,3);
  \draw (10,3)--(11,3);
  \draw[dotted] (11,3)--(11,0);
  \draw (11,0)--(16,0);
  \fill[red] (11,0) circle(0.3) ;
  \draw (2,7.5)--(3,7.5);
  \draw[dotted] (3,7.5)--(3,8);
  \draw (3,8)--(4,8);
  \draw[dotted] (4,8)--(4,7);
  \draw (4,7)--(5,7);
  \draw[dotted] (5,7)--(5,7.5);
  \draw (5,7.5)--(6,7.5);
  \draw[dotted] (6,7.5)--(6,7);
  \draw (6,6.5)--(7,6.5);
  \draw[dotted] (7,6.5)--(7,7);
  \draw (7,7)--(7.5,7);
  \draw[dotted] (7.5,7)--(7.5,0);
  \draw[ultra thick,red] (7.5,0)--(11,0);
  \draw (0,0) node[below left] {$0$};
  \draw (7.5,0) node {$\mid$} node[below] {\small$|x|+2\theta$} ;
  \draw (11,0) node {$\mid$} node[below] {\small$\frac{I^+}{N}$} ;
  \draw (16,0) node {$\mid$} node[below] {$a$} ;
   \draw (9.5,9.5) node {$Y_N^-$};
  \draw (4.5,5.5) node {$Y_N$};
   \end{tikzpicture}
   
   \center (c)
   }
   \parbox{0.24\textwidth}{
   \begin{tikzpicture}[scale=0.25]
    \draw[->,>=latex] (0,-2)--(0,14);
  \draw (-1,0)--(1,0);
  \draw (0.75,-0.5)--(1.25,0.5);
  \draw (1.75,-0.5)--(2.25,0.5);
  \draw[->,>=latex] (2,0)--(17,0);
  \draw (2,10)--(3,12);
  \draw[dotted] (3,12)--(3,10);
  \draw (3,10)--(4,12);
  \draw[dotted] (4,12)--(4,11);
  \draw (4,11)--(5,13);
  \draw[dotted] (5,13)--(5,10);
  \draw (5,10)--(6,12);
  \draw[dotted] (6,12)--(6,9);
  \draw (6,9)--(7,11);
  \draw[dotted] (7,11)--(7,8.5);
  \draw (7,8.5)--(7.5,9.5);
  \draw (7.5,9.5)--(8,9.5);
  \draw[dotted] (8,9.5)--(8,7);
  \draw (8,7)--(9,7);
  \draw[dotted] (9,7)--(9,4.5);
  \draw (9,4.5)--(10,4.5);
  \draw[dotted] (10,4.5)--(10,3);
  \draw (10,3)--(11,3);
  \draw[dotted] (11,3)--(11,0);
  \draw[ultra thick,red] (11,0)--(16,0);
  \draw (2,7.5)--(3,7.5);
  \draw[dotted] (3,7.5)--(3,8);
  \draw (3,8)--(4,8);
  \draw[dotted] (4,8)--(4,7);
  \draw (4,7)--(5,7);
  \draw[dotted] (5,7)--(5,7.5);
  \draw (5,7.5)--(6,7.5);
  \draw[dotted] (6,7.5)--(6,6.5);
  \draw (6,6.5)--(7,6.5);
  \draw[dotted] (7,6.5)--(7,7);
  \draw (7,7)--(7.5,7);
  \draw[dotted] (7.5,7)--(7.5,0);
  \draw (0,0) node[below left] {$0$};
  \draw (7.5,0) node {$\mid$} node[below] {\small$|x|+2\theta$} ;
  \draw (11,0) node {$\mid$} node[below] {\small$\frac{I^+}{N}$} ;
  \draw (16,0) node {$\mid$} node[below] {$a$} ;
   \draw (9.5,9.5) node {$Y_N^-$};
  \draw (4.5,5.5) node {$Y_N$};
   \end{tikzpicture}
   
   \center (d)
   }
  \end{center}
\caption{The successive steps of the parametric representations of $Y_N^-|_{[-a,a]}$ and $Y_N|_{[-a,a]}$ if $I^+ > \lfloor(|x|+2\theta)N\rfloor$. At each step, the parts of the graphs the parametric representations travel through are thickened.}\label{fig_I+large}
 \end{figure}

We can now bound the Skorohod $M_1$ distance between $Y_N^-|_{[-a,a]}$ and $Y_N|_{[-a,a]}$. From its definition, we have $d_{M_1,a}(Y_N^-|_{[-a,a]},Y_N|_{[-a,a]}) \leq \max(\|u_N^--u_N\|_\infty,\|r_N^--r_N\|_\infty)$, hence we only have to prove $\mathcal{B}^c \subset \{\max(\|u_N^--u_N\|_\infty,\|r_N^--r_N\|_\infty) \leq 2N^{-1/12}\}$ when $N$ is large enough. We are going to break down $\{\max(\|u_N^--u_N\|_\infty,\|r_N^--r_N\|_\infty) \leq 2N^{-1/12}\}$ into several events. We may write 
\[
 \{\max(\|u_N^--u_N\|_\infty,\|r_N^--r_N\|_\infty) \leq 2N^{-1/12}\} 
 \]
 \begin{align*}
 = &\left\{\text{between }\frac{(-(|x|+2\theta)N) \vee I^-}{N}\text{ and }\frac{((|x|+2\theta)N) \wedge I^+}{N},\|u_N^--u_N\|_\infty,\|r_N^--r_N\|_\infty \leq 2N^{-1/12}\right\} \\
 &\cap \left\{\text{between }\frac{((|x|+2\theta)N)\wedge I^+}{N}\text{ and }a,\|u_N^--u_N\|_\infty,\|r_N^--r_N\|_\infty \leq 2N^{-1/12}\right\} \\
 &\cap \left\{\text{between }-a\text{ and }\frac{(-(|x|+2\theta)N) \vee I^-}{N},\|u_N^--u_N\|_\infty,\|r_N^--r_N\|_\infty \leq 2N^{-1/12}\right\}.
\end{align*}
Consequently, to prove that $\mathcal{B}^c \subset \{\max(\|u_N^--u_N\|_\infty,\|r_N^--r_N\|_\infty) \leq 2N^{-1/12}\}$ when $N$ is large enough and thus end the proof of Proposition \ref{prop_distance_param_bis}, we only have to prove the following claims.

\begin{claim}\label{claim_param_center}
 $\mathcal{B}^c \subset \{$between $\frac{(-(|x|+2\theta)N) \vee I^-}{N}$ and $\frac{((|x|+2\theta)N) \wedge I^+}{N},\|u_N^--u_N\|_\infty,\|r_N^--r_N\|_\infty \leq 2N^{-1/12}\}$ when $N$ is large enough. 
\end{claim}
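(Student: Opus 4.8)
The plan is to reduce Claim~\ref{claim_param_center} entirely to the uniform estimate of Lemma~\ref{lem_Y_close}, together with one short direct computation to cover the single exceptional value of $i$ built into the definition of $(u_N,r_N)$. First I would record that $\mathcal{B}^c \subset (\mathcal{B}_2)^c \cap (\mathcal{B}_4^-)^c \cap (\mathcal{B}_4^+)^c$, so that on $\mathcal{B}^c$ Lemma~\ref{lem_Y_close} gives, for $N$ large, $|Y_N^-(y)-Y_N(y)| \leq N^{-1/12}$ for every $y$ in the interval $\mathcal{I}_N:=[\frac{(-(|x|+2\theta)N) \vee I^-}{N},\frac{((|x|+2\theta)N) \wedge I^+}{N})$, the right endpoint being moreover included when $I^+<(|x|+2\theta)N$. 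I would also observe that on each interval $[\frac{k}{N},\frac{k+1}{N})$ the map $y\mapsto\lfloor Ny\rfloor$ is constant while $y\mapsto(\frac{|x|-|y|}{2}+\theta)_+$ is continuous, so $Y_N^-$ and $Y_N$ are continuous there and have discontinuities only at points of $\frac{1}{N}\mathds{Z}$; hence, whenever $[\frac{k-1}{N},\frac{k}{N})\subset\mathcal{I}_N$, one has $|Y_N^-((\frac{k}{N})^-)-Y_N((\frac{k}{N})^-)| = \lim_{y\uparrow k/N}|Y_N^-(y)-Y_N(y)| \leq N^{-1/12}$.

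Then I would fix $t$ with $\phi(t)$ in the central range and call $i$ the corresponding index, which by construction belongs to $\{(-\lfloor(|x|+2\theta)N\rfloor) \vee I^-,\dots,\lfloor(|x|+2\theta)N\rfloor \wedge I^+-1\}$; since $\lfloor(|x|+2\theta)N\rfloor \leq (|x|+2\theta)N$ this forces $[\frac{i}{N},\frac{i+1}{N})\subset\mathcal{I}_N$, and even $\frac{i+1}{N}\in\mathcal{I}_N$ unless we are in the exceptional case ($(|x|+2\theta)N$ integer, $I^+\geq(|x|+2\theta)N$, $i=(|x|+2\theta)N-1$). In both clauses of the definition of the central part one has $u_N^-(t)=u_N(t)$, so $|u_N^-(t)-u_N(t)|=0$. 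If $\phi(t)\in[\frac{2i}{N},\frac{2i+1}{N})$ then $r_N^-(t)-r_N(t)=Y_N^-(y)-Y_N(y)$ with $y=\phi(t)-\frac{i}{N}\in[\frac{i}{N},\frac{i+1}{N})\subset\mathcal{I}_N$, so this is $\leq N^{-1/12}$ in absolute value. If $\phi(t)\in[\frac{2i+1}{N},\frac{2i+2}{N}]$ then $r_N^-(t)$ and $r_N(t)$ are convex combinations with the \emph{same} weights of $\{Y_N^-((\frac{i+1}{N})^-),Y_N^-(\frac{i+1}{N})\}$ and $\{Y_N((\frac{i+1}{N})^-),Y_N(\frac{i+1}{N})\}$ respectively, hence $|r_N^-(t)-r_N(t)| \leq \max(|Y_N^-((\frac{i+1}{N})^-)-Y_N((\frac{i+1}{N})^-)|,|Y_N^-(\frac{i+1}{N})-Y_N(\frac{i+1}{N})|)$, and by the previous paragraph both terms are $\leq N^{-1/12}$; this covers every case but the exceptional one.

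The only genuinely delicate point, and the one I expect to cost the most care, is that exceptional case, precisely because Lemma~\ref{lem_Y_close} does not control $Y_N^-$ at $y=|x|+2\theta$ itself when $I^+\geq(|x|+2\theta)N$. There $r_N(t)$ is frozen at $Y_N((|x|+2\theta)^-)$ while $r_N^-(t)$ interpolates between $Y_N^-((|x|+2\theta)^-)$ and $Y_N^-(|x|+2\theta)$; I would keep the bound $|Y_N^-((|x|+2\theta)^-)-Y_N((|x|+2\theta)^-)| \leq N^{-1/12}$ from the first paragraph, and for the remaining term split $Y_N^-(|x|+2\theta)-Y_N((|x|+2\theta)^-)$ into the jump of $Y_N^-$ at $|x|+2\theta$ plus $Y_N^-((|x|+2\theta)^-)-Y_N((|x|+2\theta)^-)$. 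By \eqref{eq_local_times} the jump equals $\frac{1}{\sqrt N}\eta_{(|x|+2\theta)N-1,+}(\ell^-(T_N,(|x|+2\theta)N-1))$, which on $(\mathcal{B}_2)^c$ has absolute value at most $\frac{N^{1/16}+1/2}{\sqrt N}\leq N^{-1/12}$ for $N$ large. Summing up yields $|r_N^-(t)-r_N(t)|\leq 2N^{-1/12}$ in this case as well, which together with the previous paragraph proves Claim~\ref{claim_param_center}; since no threshold on $N$ used above depends on $a$, the bound is uniform in $a$ as required.
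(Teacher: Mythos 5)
Your proposal is correct and follows essentially the same route as the paper: reduce the central part to the uniform bound of Lemma \ref{lem_Y_close} via the parallel parametric representations (with $\|u_N^--u_N\|_\infty=0$ and $|r_N^--r_N|$ controlled by the sup of $|Y_N^--Y_N|$), and treat the exceptional case ($(|x|+2\theta)N$ integer, $I^+\geq(|x|+2\theta)N$) by splitting off the jump of $Y_N^-$ at $|x|+2\theta$, which \eqref{eq_local_times} identifies with $\frac{1}{\sqrt N}\eta_{(|x|+2\theta)N-1,+}(\ell^-(T_N,(|x|+2\theta)N-1))$ and $(\mathcal{B}_2)^c$ bounds by $\frac{N^{1/16}+1/2}{\sqrt N}$. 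Your explicit handling of the convex-combination clause is slightly more detailed than the paper's, but the argument is the same.
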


\begin{claim}\label{claim_param_early_stop}
 $\mathcal{B}^c \cap \{I^+ \leq \lfloor(|x|+2\theta)N\rfloor\} \subset \{$between $\frac{((|x|+2\theta)N) \wedge I^+}{N}$ and $a$, $\|u_N^--u_N\|_\infty,\|r_N^--r_N\|_\infty \leq 2N^{-1/12}\}$ and $\mathcal{B}^c \cap \{I^- \geq -\lfloor(|x|+2\theta)N\rfloor\} \subset \{$between $-a$ and $\frac{(-(|x|+2\theta)N) \vee I^-}{N}$, $\|u_N^--u_N\|_\infty,\|r_N^--r_N\|_\infty \leq 2N^{-1/12}\})$ when $N$ is large enough. 
\end{claim}

\begin{claim}\label{claim_param_late_stop}
 $\mathcal{B}^c \cap \{I^+ > \lfloor(|x|+2\theta)N\rfloor\} \subset \{$between $\frac{((|x|+2\theta)N) \wedge I^+}{N}$ and $a$, $\|u_N^--u_N\|_\infty,\|r_N^--r_N\|_\infty \leq 2N^{-1/12}\}$ and $\mathcal{B}^c \cap \{I^- < -\lfloor(|x|+2\theta)N\rfloor\} \subset \{$between $-a$ and $\frac{(-(|x|+2\theta)N) \vee I^-}{N}$, $\|u_N^--u_N\|_\infty,\|r_N^--r_N\|_\infty \leq 2N^{-1/12}\})$ when $N$ is large enough. 
\end{claim}

We now prove Claims \ref{claim_param_center}, \ref{claim_param_early_stop} and \ref{claim_param_late_stop}.

\begin{proof}[Proof of Claim \ref{claim_param_center}.] 
We assume $\mathcal{B}^c$ occurs. In the part of the parametric representations between $\frac{(-(|x|+2\theta)N) \vee I^-}{N}$ and $\frac{((|x|+2\theta)N) \wedge I^+}{N}$, corresponding to Figures \ref{fig_I+small}(a) and \ref{fig_I+large}(a), we follow the completed graphs of $Y_N^-$ and $Y_N$ in parallel. Therefore we have $u_N^-(t)=u_N(t)$ and $|r_N^-(t)-r_N(t)| \leq \sup\{|Y_N^-(y)-Y_N(y)|:y\in[\frac{(-(|x|+2\theta)N) \vee I^-}{N},\frac{((|x|+2\theta)N) \wedge I^+}{N}]\}$. If $(|x|+2\theta)N$ is not an integer or $I^+<(|x|+2\theta)N$, this is smaller than $N^{-12}$ when $N$ is large enough by Lemma \ref{lem_Y_close}, and we are done. If $(|x|+2\theta)N$ is an integer and $I^+\geq(|x|+2\theta)N$, there is a small complication, since the parametric representations follow the graph of $Y_N$ until $Y_N((|x|+2\theta)^-)$, but should follow the graph of $Y_N^-$ until $Y_N^-(|x|+2\theta)$. The solution is to freeze the representation of $Y_N$ at $Y_N((|x|+2\theta)^-)$ while that of $Y_N^-$ goes from $Y_N^-((|x|+2\theta)^-)$ to $Y_N^-(|x|+2\theta)$. Then, between $\frac{(-(|x|+2\theta)N) \vee I^-}{N}$ and $(|x|+2\theta)^-$ we have $|r_N^-(t)-r_N(t)| \leq \sup\{|Y_N^-(y)-Y_N(y)|:y\in[\frac{(-(|x|+2\theta)N) \vee I^-}{N},(|x|+2\theta)N)\} \leq N^{-1/12}$ by Lemma \ref{lem_Y_close} when $N$ is large enough. Furthermore, when going from $Y_N^-((|x|+2\theta)^-)$ to $Y_N^-(|x|+2\theta)$, we have $|r_N^-(t)-r_N(t)| \leq |Y_N^-((|x|+2\theta)^-)-Y_N((|x|+2\theta)^-)|+|Y_N^-((|x|+2\theta)^-)-Y_N^-(|x|+2\theta)| \leq N^{-1/12}+|Y_N^-((|x|+2\theta)^-)-Y_N^-(|x|+2\theta)|$ when $N$ is large enough. In addition, when $N$ is large enough \eqref{eq_local_times} yields $|Y_N^-(|x|+2\theta)-Y_N^-((|x|+2\theta)^-)|=\frac{1}{\sqrt{N}}|\ell^-(T_N,(|x|+2\theta)N)-\ell^-(T_N,(|x|+2\theta)N-1)|=\frac{1}{\sqrt{N}}|\eta_{(|x|+2\theta)N-1,+}(\ell^-(T_N,(|x|+2\theta)N-1))|\leq \frac{N^{1/16}+1/2}{\sqrt{N}}$ since $(\mathcal{B}_2)^c$ occurs. This yields $|r_N^-(t)-r_N(t)| \leq N^{-1/12}+\frac{N^{1/16}+1/2}{\sqrt{N}} \leq 2N^{-1/12}$ when $N$ is large enough, which ends the proof. 
\end{proof}

\begin{proof}[Proof of Claim \ref{claim_param_early_stop}.] 
This claim deals with the ``right part'' of the parametric representations in the case $I^+ \leq \lfloor(|x|+2\theta)N\rfloor$, and with the ``left part'' in the case $I^- \geq -\lfloor(|x|+2\theta)N\rfloor$, corresponding to Figure \ref{fig_I+small}(b), (c) and (d). The idea of the argument is that in the step of Figure \ref{fig_I+small}(b), the representation of $Y_N$ does not move much horizontally as $\frac{I^+}{N}$ is close to $|x|+2\theta$ by Lemma \ref{lem_control_I}, so it does not have time to move too much vertically. In the step of Figure \ref{fig_I+small}(c), the representations of $Y_N^-$ and $Y_N$ will thus start from points that are close and go to the same point, hence stay close to each other. We now give the rigorous argument. We only spell out the proof for $\mathcal{B}^c \cap \{I^+ \leq \lfloor(|x|+2\theta)N\rfloor\}$, as the other case is similar. Let us assume $\mathcal{B}^c$ occurs and $I^+ \leq \lfloor(|x|+2\theta)N\rfloor$. Firstly, we notice that in the part of the parametric representations corresponding to Figure \ref{fig_I+small}(d) we have $(u_N^-(t),r_N^-(t))=(u_N(t),r_N(t))$, so we only consider the parts corresponding to Figure \ref{fig_I+small}(b) and Figure \ref{fig_I+small}(c). We first consider the case in which $(|x|+2\theta)N$ is not an integer or $I^+ < \lfloor(|x|+2\theta)N\rfloor$. We begin by dealing with $|u_N^-(t)-u_N(t)|$. By the definition of our parametric representations, $|u_N^-(t)-u_N(t)| \leq ||x|+2\theta-\frac{I^+}{N}|$. Furthermore, $\mathcal{B}^c$ occurs, thus we have $|I^+-(|x|+2\theta)N| < N^{3/4}$, hence $|u_N^-(t)-u_N(t)| \leq N^{-1/4}$. We now deal with $|r_N^-(t)-r_N(t)|$. Remembering the definition of our parametric representations, we notice that in the part corresponding to Figure \ref{fig_I+small}(c), $r_N^-$ and $r_N$ are affine functions, so the maximum value of $|r_N^-(t)-r_N(t)|$ on this part is reached either at the beginning or at the end of the part. Moreover, at the end of the part we have $r_N^-(t)=r_N(t)=0$, so the maximum is reached at the beginning. Therefore, if $|r_N^-(t)-r_N(t)| \leq 2N^{-1/12}$ in the part corresponding to Figure \ref{fig_I+small}(b), then $|r_N^-(t)-r_N(t)| \leq 2N^{-1/12}$ in the part corresponding to Figure \ref{fig_I+small}(c), and this ends the proof when $(|x|+2\theta)N$ is not an integer or $I^+ < \lfloor(|x|+2\theta)N\rfloor$. 

We thus have to study the part corresponding to Figure \ref{fig_I+small}(b). By the definition of our parametric representations, $|r_N^-(t)-r_N(t)| \leq \sup\{|Y_N^-(\frac{I^+}{N})-Y_N(y)|:y\in[\frac{I^+}{N},|x|+2\theta)\}$, so it is enough to prove that when $N$ is large enough, $\sup\{|Y_N^-(\frac{I^+}{N})-Y_N(y)|:y\in[\frac{I^+}{N},|x|+2\theta)\} \leq 2N^{-1/12}$. Moreover, for any $y\in[\frac{I^+}{N},|x|+2\theta)$, we have $|Y_N^-(\frac{I^+}{N})-Y_N(y)| \leq |Y_N^-(\frac{I^+}{N})-Y_N(\frac{I^+}{N})|+|Y_N(\frac{I^+}{N})-Y_N(y)|$. Since $\mathcal{B}^c$ occurs, we have that $(\mathcal{B}_2)^c$, $(\mathcal{B}_4^-)^c$ and $(\mathcal{B}_4^+)^c$ occur, hence Lemma \ref{lem_Y_close} implies $|Y_N^-(\frac{I^+}{N})-Y_N(\frac{I^+}{N})| \leq N^{-1/12}$ when $N$ is large enough, thus $|Y_N^-(\frac{I^+}{N})-Y_N(y)| \leq |Y_N(\frac{I^+}{N})-Y_N(y)|+N^{-1/12} \leq \frac{1}{\sqrt{N}}|\sum_{i=I^+}^{\lfloor Ny \rfloor-1}\zeta_i|+N^{-1/12}$. We deduce $\sup\{|Y_N^-(\frac{I^+}{N})-Y_N(y)|:y\in[\frac{I^+}{N},|x|+2\theta)\} \leq \sup\{\frac{1}{\sqrt{N}}|\sum_{i=I^+}^{\lfloor Ny \rfloor-1}\zeta_i|:y\in[\frac{I^+}{N},|x|+2\theta)\}+N^{-1/12}$. Furthermore, $\mathcal{B}^c$ occurs hence $|I^+-(|x|+2\theta)N| < N^{3/4}$, thus $\sup\{|Y_N^-(\frac{I^+}{N})-Y_N(y)|:y\in[\frac{I^+}{N},|x|+2\theta)\} \leq \frac{1}{\sqrt{N}}\max_{\lfloor(|x|+2\theta)N\rfloor-N^{3/4} \leq i_1 \leq i_2 \leq \lfloor (|x|+2\theta)N\rfloor}\frac{1}{\sqrt{N}}|\sum_{i=i_1}^{i_2}\zeta_i|+N^{-1/12}$. Since $\mathcal{B}^c$ occurs, $(\mathcal{B}_3^+)^c$ occurs, hence $\sup\{|Y_N^-(\frac{I^+}{N})-Y_N(y)|:y\in[\frac{I^+}{N},|x|+2\theta)\} \leq \frac{N^{19/48}}{\sqrt{N}}+N^{-1/12}=N^{-5/48}+N^{-1/12} \leq 2 N^{-1/12}$, which is enough.  

We now consider the case in which $(|x|+2\theta)N$ is an integer and $I^+ = \lfloor(|x|+2\theta)N\rfloor$. Then the step of Figure \ref{fig_I+small}(b) does not exist, we only have to deal with that of Figure \ref{fig_I+small}(c), which comes mostly from Lemma \ref{lem_Y_close} as this lemma ensures $Y_N^-((|x|+2\theta)^-)$ and $Y_N((|x|+2\theta)^-)$ are close (we will actually prove they are both close to 0). Since $I^+ = \lfloor(|x|+2\theta)N\rfloor$, we have $u_N^-(t)=u_N(t)$. Moreover, $\ell^-(T_N,\lfloor N(|x|+2\theta) \rfloor)=\ell^-(T_N,I^+)=0$, so $Y_N^-(|x|+2\theta)=0$, hence $r_N^-(t)=0$. Furthermore, $|r_N(t)|\leq |Y_N((|x|+2\theta)^-)|$. Therefore we only have to prove that $|Y_N((|x|+2\theta)^-)| \leq 2N^{-1/12}$ when $N$ is large enough. In addition, $\mathcal{B}^c$ occurs, thus by Lemma \ref{lem_Y_close} we have $|Y_N((|x|+2\theta)^-)-Y_N^-((|x|+2\theta)^-)| \leq N^{-1/12}$ when $N$ is large enough. Moreover by the definition of $Y_N^-$ and by \eqref{eq_local_times}, we have $Y_N^-(|x|+2\theta)=Y_N^-((|x|+2\theta)^-)+\frac{1}{\sqrt{N}}\eta_{(|x|+2\theta)N-1,+}(\ell^-(T_N,(|x|+2\theta)N-1))$, and since $\mathcal{B}$ occurs, $(\mathcal{B}_2)^c$ occurs, hence we get $|Y_N^-(|x|+2\theta)-Y_N^-((|x|+2\theta)^-)|\leq \frac{1}{\sqrt{N}}(N^{1/16}+1/2) \leq N^{-1/4}$. Since $Y_N^-(|x|+2\theta)=0$, this yields $|Y_N^-((|x|+2\theta)^-)|\leq N^{-1/4}$, which yields $|Y_N((|x|+2\theta)^-)| \leq N^{-1/12}+N^{-1/4}< 2N^{-1/12}$, which is enough and ends the proof of Claim \ref{claim_param_early_stop}.
\end{proof}

\begin{proof}[Proof of Claim \ref{claim_param_late_stop}.] 
This claim deals with the ``right part'' of the parametric representations in the case $I^+ > \lfloor(|x|+2\theta)N\rfloor$, and with the ``left part'' in the case $I^- < -\lfloor(|x|+2\theta)N\rfloor$, corresponding to Figure \ref{fig_I+large}(b), (c) and (d). We first give an idea of the argument. The most important part of the proof is to deal with the step corresponding to Figure \ref{fig_I+large}(b). In this step, the function $Y_N^-(y)=\frac{1}{\sqrt{N}}\ell^-(T_N,\lfloor Ny\rfloor)$ evolves as a sum of $\frac{1}{\sqrt{N}}\eta_{j,+}(\ell^-(T_N,j))$ by \eqref{eq_local_times}, which is close to the sum of $\frac{1}{\sqrt{N}}(\zeta_j-\frac{1}{2})$ as $(\mathcal{B}_4^+)^c$ occurs. Since the $\zeta_j$ are i.i.d. with mean 0, the sum of $\frac{1}{\sqrt{N}}\zeta_j$ will be small, and the evolution of $Y_N^-$ will be close to that of a deterministic sum of $-\frac{1}{2\sqrt{N}}$, thus it reaches 0 at constant speed, which is also what our parametric representation of $Y_N$ does. We now give the proof, beginning with the detail of the argument to deal with $\mathcal{B}^c \cap \{I^- < -\lfloor(|x|+2\theta)N\rfloor\}$. Let us assume $\mathcal{B}^c$ occurs and $I^- < -\lfloor(|x|+2\theta)N\rfloor$. We first see that $\frac{I^-}{N} \geq -a$, as since $\mathcal{B}^c$ occurs we have $|I^-+(|x|+2\theta)N| < N^{3/4}$, hence $\frac{I^-}{N} > -|x|-2\theta - N^{-1/4}$, and by assumption $a>|x|+2\theta+N^{-1/8}$, so $-a<-|x|-2\theta-N^{-1/8}<\frac{I^-}{N}$, hence $\frac{I^-}{N} \geq -a$. Moreover, in the part of the parametric representations corresponding to Figure \ref{fig_I+large}(d), we have $(u_N^-(t),r_N^-(t))=(u_N(t),r_N(t))$. We now consider the equivalent of Figure \ref{fig_I+large}(c). Then $r_N^-(t)=r_N(t)=0$, and $|u_N^-(t)-u_N(t)| \leq |\frac{I^-}{N}+(|x|+2\theta)|$, which is strictly smaller than $2N^{-1/12}$ since $|I^-+(|x|+2\theta)N| < N^{3/4}$. It remains to consider the equivalent of Figure \ref{fig_I+large}(b). Then $|u_N^-(t)-u_N(t)| \leq |\frac{I^-}{N}+(|x|+2\theta)|$, which is strictly smaller than $2N^{-1/12}$, so we only have to prove $|r_N^-(t)-r_N(t)| \leq 2N^{-1/12}$. 
 
 We are going to study $\sup_{y\in[\frac{I^-}{N},-|x|-2\theta]}|Y_N^-(y)-Y_N^-(-|x|-2\theta)+\frac{\lfloor (|x|+2\theta)N \rfloor-\lfloor Ny\rfloor}{2\sqrt{N}}|$. Let $y\in[\frac{I^-}{N},-|x|-2\theta]$. By the definition of $Y_N^-$ we have $Y_N^-(y)-Y_N^-(-|x|-2\theta)=\frac{1}{\sqrt{N}}(\ell^-(T_N,\lfloor Ny\rfloor)-\ell^-(T_N,\lfloor -(|x|+2\theta)N \rfloor))$. By \eqref{eq_local_times_2} and since $(\mathcal{B}_2)^c$ occurs (remembering $\lfloor Ny\rfloor \geq I^- \geq -(|x|+2\theta)N-N^{3/4} \geq -\lceil2(|x|+2\theta)N\rceil$), we deduce 
 \[
 \left|Y_N^-(y)-Y_N^-(-|x|-2\theta)-\frac{1}{\sqrt{N}}(\ell^+(T_N,\lfloor Ny\rfloor)-\ell^+(T_N,\lfloor -(|x|+2\theta)N \rfloor))\right|
 \]
 \[
 =\left|\frac{\eta_{\lfloor Ny\rfloor,-}(\ell^+(T_N,\lfloor Ny\rfloor))-\eta_{\lfloor -(|x|+2\theta)N\rfloor,-}(\ell^+(T_N,\lfloor -(|x|+2\theta)N\rfloor))}{\sqrt{N}}\right|\leq \frac{2N^{1/16}}{\sqrt{N}}.
 \]
 In addition, \eqref{eq_local_times} yields the following: 
 \[
 \ell^+(T_N,\lfloor Ny\rfloor)-\ell^+(T_N,\lfloor -(|x|+2\theta)N \rfloor)=\sum_{i=\lfloor Ny\rfloor+1}^{\chi(N)-1}(\eta_{i,-}(\ell^+(T_N,i))+\mathds{1}_{\{i>0\}})-\sum_{i=\lfloor -(|x|+2\theta)N \rfloor+1}^{\chi(N)-1}(\eta_{i,-}(\ell^+(T_N,i))+\mathds{1}_{\{i>0\}})
 \]
 \[
 =\sum_{i=\lfloor Ny\rfloor+1}^{\chi(N)-1}(\eta_{i,-}(\ell^+(T_N,i))+1/2)-\sum_{i=\lfloor -(|x|+2\theta)N \rfloor+1}^{\chi(N)-1}(\eta_{i,-}(\ell^+(T_N,i))+1/2)-\frac{\lfloor -(|x|+2\theta)N \rfloor-\lfloor Ny\rfloor}{2}.
 \]
  Since $(\mathcal{B}_4^-)^c$ occurs, this yields $|\ell^+(T_N,\lfloor Ny\rfloor)-\ell^+(T_N,\lfloor -(|x|+2\theta)N \rfloor)+\frac{\lfloor-(|x|+2\theta)N \rfloor-\lfloor Ny\rfloor}{2}| \leq |\sum_{i=\lfloor Ny\rfloor+1}^{\chi(N)-1}\zeta_i-\sum_{i=\lfloor -(|x|+2\theta)N \rfloor+1}^{\chi(N)-1}\zeta_i|+2N^{1/3}=|\sum_{i=\lfloor Ny\rfloor+1}^{\lfloor -(|x|+2\theta)N \rfloor}\zeta_i|+2N^{1/3}$. As we also have $|Y_N^-(y)-Y_N^-(-|x|-2\theta)-\frac{1}{\sqrt{N}}(\ell^+(T_N,\lfloor Ny\rfloor)-\ell^+(T_N,\lfloor -(|x|+2\theta)N \rfloor))|\leq \frac{2N^{1/16}}{\sqrt{N}}$, this implies $\sup_{y\in[\frac{I^-}{N},-|x|-2\theta]}|Y_N^-(y)-Y_N^-(-|x|-2\theta)+\frac{\lfloor (|x|+2\theta)N \rfloor-\lfloor Ny\rfloor}{2\sqrt{N}}|\leq \max_{I^-+1 \leq i \leq \lfloor -(|x|+2\theta)N \rfloor}\frac{1}{\sqrt{N}}|\sum_{j=i}^{\lfloor -(|x|+2\theta)N \rfloor} \zeta_j| +\frac{2N^{1/16}}{\sqrt{N}}+\frac{2N^{1/3}}{\sqrt{N}}$. Moreover, $\mathcal{B}^c$ occurs, hence $|I^-+(|x|+2\theta)N| < N^{3/4}$ and $(\mathcal{B}_3^-)^c$ occurs, therefore we obtain that $\sup_{y\in[\frac{I^-}{N},-|x|-2\theta]}|Y_N^-(y)-Y_N^-(-|x|-2\theta)+\frac{\lfloor (|x|+2\theta)N \rfloor-\lfloor Ny\rfloor}{2\sqrt{N}}|\leq \max_{ -\lfloor(|x|+2\theta)N \rfloor - N^{3/4} \leq i \leq \lfloor -(|x|+2\theta)N\rfloor}\frac{1}{\sqrt{N}}|\sum_{j=i}^{\lfloor -(|x|+2\theta)N \rfloor} \zeta_j| +\frac{2N^{1/16}}{\sqrt{N}}+\frac{2N^{1/3}}{\sqrt{N}} \leq \frac{N^{19/48}}{\sqrt{N}}+\frac{2N^{1/16}}{\sqrt{N}}+\frac{2N^{1/3}}{\sqrt{N}} \leq 2 N^{-5/48}$ when $N$ is large enough. This yields $\sup_{y\in[\frac{I^-}{N},-|x|-2\theta]}|Y_N^-(y)-Y_N^-(-|x|-2\theta)+\frac{\lfloor (|x|+2\theta)N \rfloor-\lfloor Ny\rfloor}{2\sqrt{N}}|\leq 2 N^{-5/48}$ when $N$ is large enough. 
 
We also need an explicit expression of the parametric representations. Assume the part of $[0,1]$ devoted to the equivalent of Figure \ref{fig_I+large}(b) in the parametric representations is $[a_N,a_N']$. We set $\phi$ the affine function mapping $a_N$ to $-\frac{2I^-}{N}$ and $a_N'$ to $-(|x|+2\theta)N$. Then, if $\phi(t)$ belongs to some $[\frac{2i}{N},\frac{2i+1}{N})$ with $i\in\{I^-,...,-\lfloor(|x|+2\theta)N\rfloor-1\}$, we set $(u_N^-(t),r_N^-(t))=(\phi(t)-\frac{i}{N},Y_N^-(\phi(t)-\frac{i}{N}))$, while if $\phi(t)$ belongs to some $[\frac{2i+1}{N},\frac{2i+2}{N}]$ for $i\in\{I^-,...,-\lfloor(|x|+2\theta)N\rfloor-1\}$, we set $(u_N^-(t),r_N^-(t))=(\frac{i+1}{N},(-N\phi(t)+2i+2)Y_N^-((\frac{i+1}{N})^-)+(N\phi(t)-2i-1)Y_N^-(\frac{i+1}{N}))$. In addition, we set $(u_N(t),r_N(t))=(-|x|-2\theta,\hat\phi(\phi(t)))$, where $\hat\phi$ is the affine function mapping $-|x|-2\theta-\frac{\lfloor (|x|+2\theta)N\rfloor+1}{N}$ to $Y_N(-|x|-2\theta)$ and $\frac{2I^-}{N}$ to 0.
 
 We recall that it is enough to prove $|r_N^-(t)-r_N(t)| < 2N^{-1/12}$. We are going to study $|r_N^-(t)-Y_N^-(-|x|-2\theta)+\frac{\sqrt{N}}{4}\phi(t)-\frac{\lfloor -(|x|+2\theta)N \rfloor}{2\sqrt{N}}|$. We first suppose that $\phi(t)\in[\frac{2i}{N},\frac{2i+1}{N})$ with $i\in\{I^-,...,-\lfloor (|x|+2\theta)N \rfloor-1\}$. In this case, $r_N^-(t)=Y_N^-(\phi(t)-\frac{i}{N})$ and $|\frac{\phi(t)}{2}-\frac{1}{N}\lfloor N(\phi(t)-\frac{i}{N})\rfloor| = |\frac{\phi(t)}{2}-\frac{i}{N}| \leq \frac{1}{2N}$, hence 
 \[
 \left|r_N^-(t)-Y_N^-(-|x|-2\theta)+\frac{\sqrt{N}}{4}\phi(t)-\frac{\lfloor -(|x|+2\theta)N \rfloor}{2\sqrt{N}}\right| 
 \]
 \[
 = \left|Y_N^-\left(\phi(t)-\frac{i}{N}\right)-Y_N^-(-|x|-2\theta)+\frac{\lfloor N(\phi(t)-\frac{i}{N})\rfloor-\lfloor -(|x|+2\theta)N \rfloor}{2\sqrt{N}}+\frac{\frac{N}{2}\phi(t)-\lfloor N(\phi(t)-\frac{i}{N})\rfloor}{2\sqrt{N}}\right|
 \]
 \[
 \leq \left|Y_N^-\left(\phi(t)-\frac{i}{N}\right)-Y_N^-(-|x|-2\theta)+\frac{\lfloor N(\phi(t)-\frac{i}{N})\rfloor-\lfloor -(|x|+2\theta)N \rfloor}{2\sqrt{N}}\right|+\frac{\sqrt{N}}{2}\left|\frac{\phi(t)}{2}-\frac{1}{N}\left\lfloor N\left(\phi(t)-\frac{i}{N}\right)\right\rfloor\right|
 \]
 is smaller than $2N^{-5/48}+\frac{1}{4\sqrt{N}}$, thus $|r_N^-(t)-Y_N^-(-|x|-2\theta)+\frac{\sqrt{N}}{4}\phi(t)-\frac{\lfloor -(|x|+2\theta)N \rfloor}{2\sqrt{N}}| \leq 2N^{-5/48}+\frac{1}{4\sqrt{N}}$. We now consider the case $\phi(t)\in[\frac{2i+1}{N},\frac{2i+2}{N}]$ with $i\in\{I^-,...,-\lfloor (|x|+2\theta)N \rfloor-1\}$. We temporarily denote $N\phi(t)-2i-1$ by $\varepsilon$ for short, with $\varepsilon \in [0,1]$. Then we have $r_N^-(t)=(1-\varepsilon)Y_N^-((\frac{i+1}{N})^-)+\varepsilon Y_N^-(\frac{i+1}{N})$, $|\frac{\phi(t)}{2}-\frac{i}{N}| \leq \frac{1}{N}$ and $|\frac{\phi(t)}{2}-\frac{i+1}{N}| \leq \frac{1}{2N}$, therefore 
 \[
 \left|r_N^-(t)-Y_N^-(-|x|-2\theta)+\frac{\sqrt{N}}{4}\phi(t)-\frac{\lfloor -(|x|+2\theta)N \rfloor}{2\sqrt{N}}\right|
 \]
 \begin{align*}
 =&\left|(1-\varepsilon)\left(Y_N^-\left(\left(\frac{i+1}{N}\right)^-\right)-Y_N^-(-|x|-2\theta)+\frac{\sqrt{N}}{4}\phi(t)-\frac{\lfloor -(|x|+2\theta)N \rfloor}{2\sqrt{N}}\right)\right. \\
  & +\left.\varepsilon \left(Y_N^-\left(\frac{i+1}{N}\right)-Y_N^-(-|x|-2\theta)+\frac{\sqrt{N}}{4}\phi(t)-\frac{\lfloor -(|x|+2\theta)N \rfloor}{2\sqrt{N}}\right)\right| \\
 \leq & (1-\varepsilon)\left|Y_N^-\left(\left(\frac{i+1}{N}\right)^-\right)-Y_N^-(-|x|-2\theta)+\frac{\sqrt{N}}{4}\phi(t)-\frac{\lfloor -(|x|+2\theta)N \rfloor}{2\sqrt{N}}\right| \\
 & +\varepsilon\left|Y_N^-\left(\frac{i+1}{N}\right)-Y_N^-(-|x|-2\theta)+\frac{\sqrt{N}}{4}\phi(t)-\frac{\lfloor -(|x|+2\theta)N \rfloor}{2\sqrt{N}}\right| \\
 \leq &(1-\varepsilon)\left|Y_N^-\left(\left(\frac{i+1}{N}\right)^-\right)-Y_N^-(-|x|-2\theta)+\frac{i-\lfloor -(|x|+2\theta)N \rfloor}{2\sqrt{N}}\right|+(1-\varepsilon)\left|\frac{\sqrt{N}}{4}\phi(t)-\frac{i}{2\sqrt{N}}\right| \\
 & +\varepsilon\left|Y_N^-\left(\frac{i+1}{N}\right)-Y_N^-(-|x|-2\theta)+\frac{i+1-\lfloor -(|x|+2\theta)N \rfloor}{2\sqrt{N}}\right|+\varepsilon\left|\frac{\sqrt{N}}{4}\phi(t)-\frac{i+1}{2\sqrt{N}}\right| \\
 \leq & (1-\varepsilon)\sup_{y \in[\frac{I^-}{N},-|x|-2\theta]}\left|Y_N^-(y)-Y_N^-(-|x|-2\theta)+\frac{\lfloor Ny\rfloor-\lfloor -(|x|+2\theta)N \rfloor}{2\sqrt{N}}\right|+(1-\varepsilon)\frac{\sqrt{N}}{2}\left|\frac{\phi(t)}{2}-\frac{i}{N}\right| \\
 & +\varepsilon\sup_{y \in[\frac{I^-}{N},-|x|-2\theta]}\left|Y_N^-(y)-Y_N^-(-|x|-2\theta)+\frac{\lfloor Ny\rfloor-\lfloor -(|x|+2\theta)N \rfloor}{2\sqrt{N}}\right|+\varepsilon\frac{\sqrt{N}}{2}\left|\frac{\phi(t)}{2}-\frac{i+1}{N}\right| 
 \end{align*}
 \[
 \leq \sup_{y \in[\frac{I^-}{N},-|x|-2\theta]}\left|Y_N^-(y)-Y_N^-(-|x|-2\theta)+\frac{\lfloor Ny\rfloor-\lfloor -(|x|+2\theta)N \rfloor}{2\sqrt{N}}\right|+\frac{1}{2\sqrt{N}} \leq 2N^{-5/48}+\frac{1}{2\sqrt{N}}
 \]
 thanks to our bound on the sup. Since this was also true for $\phi(t)\in[\frac{2i}{N},\frac{2i+1}{N})$ with $i\in\{I^-,...,-\lfloor (|x|+2\theta)N \rfloor-1\}$, we have $|r_N^-(t)-Y_N^-(-|x|-2\theta)+\frac{\sqrt{N}}{4}\phi(t)-\frac{\lfloor -(|x|+2\theta)N \rfloor}{2\sqrt{N}}|\leq 2N^{-5/48}+\frac{1}{2\sqrt{N}}$. 
 
 The latter expression yields $|r_N^-(t)-r_N(t)| \leq |r_N(t)-Y_N^-(-|x|-2\theta)+\frac{\sqrt{N}}{4}\phi(t)-\frac{\lfloor -(|x|+2\theta)N \rfloor}{2\sqrt{N}}|+2N^{-5/48}+\frac{1}{2\sqrt{N}}=|\hat\phi(\phi(t))-Y_N^-(-|x|-2\theta)+\frac{\sqrt{N}}{4}\phi(t)-\frac{\lfloor -(|x|+2\theta)N \rfloor}{2\sqrt{N}}|+2N^{-5/48}+\frac{1}{2\sqrt{N}}$, where $\hat\phi$ is the affine function mapping $-|x|-2\theta-\frac{\lfloor(|x|+2\theta)N\rfloor+1}{N}$ to $Y_N(-|x|-2\theta)$ and $\frac{2I^-}{N}$ to 0. Therefore it is enough to prove $|\hat\phi(\phi(t))-Y_N^-(-|x|-2\theta)+\frac{\sqrt{N}}{4}\phi(t)-\frac{\lfloor -(|x|+2\theta)N \rfloor}{2\sqrt{N}}| \leq N^{-1/12}+\frac{1}{2\sqrt{N}}$ to end the proof. Now, $\hat\phi(\phi(t))-Y_N^-(-|x|-2\theta)+\frac{\sqrt{N}}{4}\phi(t)-\frac{\lfloor -(|x|+2\theta)N \rfloor}{2\sqrt{N}}$ is an affine function of $\phi(t)$, so it is enough to prove the bound for $\phi(t)=-|x|-2\theta-\frac{\lfloor(|x|+2\theta)N\rfloor+1}{N}$ and for $\phi(t)=\frac{2I^-}{N}$. We first consider $\phi(t)=\frac{2I^-}{N}$. By Lemma \ref{lem_0_after_I}, $\ell^-(T_N,I^-) = 0$. Moreover, $I^- < -\lfloor(|x|+2\theta)N\rfloor$, hence $Y_N^-(\frac{I^-}{N})=0$. We deduce 
 \[
 \left|\hat\phi(\phi(t))-Y_N^-(-|x|-2\theta)+\frac{\sqrt{N}}{4}\phi(t)-\frac{\lfloor -(|x|+2\theta)N \rfloor}{2\sqrt{N}}\right|=\left|-Y_N^-(-|x|-2\theta)+\frac{\sqrt{N}}{4}\frac{2I^-}{N}-\frac{\lfloor -(|x|+2\theta)N \rfloor}{2\sqrt{N}}\right|
 \]
 \[
 =\left|Y_N^-\left(\frac{I^-}{N}\right)-Y_N^-(-|x|-2\theta)+\frac{I^--\lfloor -(|x|+2\theta)N \rfloor}{2\sqrt{N}}\right|
 \]
 \[
 \leq \sup_{y \in[\frac{I^-}{N},-|x|-2\theta]}\left|Y_N^-(y)-Y_N^-(-|x|-2\theta)+\frac{\lfloor Ny\rfloor-\lfloor -(|x|+2\theta)N \rfloor}{2\sqrt{N}}\right|\leq 2 N^{-5/48},
 \]
  which is enough. We now consider $\phi(t)=-|x|-2\theta-\frac{\lfloor(|x|+2\theta)N\rfloor+1}{N}$. Then $|\hat\phi(\phi(t))-Y_N^-(-|x|-2\theta)+\frac{\sqrt{N}}{4}\phi(t)-\frac{\lfloor -(|x|+2\theta)N \rfloor}{2\sqrt{N}}|$ is equal to
  \[
  \left|Y_N(-|x|-2\theta)-Y_N^-(-|x|-2\theta)+\frac{\sqrt{N}}{4}\left(-|x|-2\theta-\frac{\lfloor(|x|+2\theta)N\rfloor+1}{N}\right)-\frac{\lfloor -(|x|+2\theta)N \rfloor}{2\sqrt{N}}\right|
  \]
  \[
  \leq |Y_N(-|x|-2\theta)-Y_N^-(-|x|-2\theta)|+\left|\frac{1}{4\sqrt{N}}(-(|x|+2\theta)N-\lfloor (|x|+2\theta)N \rfloor-1-2\lfloor -(|x|+2\theta)N \rfloor\right|
  \]
  \[
  \leq |Y_N(-|x|-2\theta)-Y_N^-(-|x|-2\theta)|+\frac{1}{2\sqrt{N}} \leq N^{-1/12}+\frac{1}{2\sqrt{N}}
  \]
  by Lemma \ref{lem_Y_close}, which ends the proof for $\mathcal{B}^c \cap \{I^- < -\lfloor(|x|+2\theta)N\rfloor\}$. 
 
 The argument to show $\mathcal{B}^c \cap \{I^+ > \lfloor(|x|+2\theta)N\rfloor\} \subset \{$between $\frac{((|x|+2\theta)N) \wedge I^+}{N}$ and $a$, $\|u_N^--u_N\|_\infty,\|r_N^--r_N\|_\infty \leq 2N^{-1/12}\}$ is similar and simpler, except for the end of the argument, which we give here. In a similar way as in the previous case, we must bound $|Y_N((|x|+2\theta)^-)-Y_N^-(|x|+2\theta)+\frac{\sqrt{N}}{4}(|x|+2\theta+\frac{\lfloor(|x|+2\theta)N\rfloor}{N})-\frac{\lfloor (|x|+2\theta)N \rfloor}{2\sqrt{N}}| \leq |Y_N((|x|+2\theta)^-)-Y_N^-(|x|+2\theta)|+|\frac{(|x|+2\theta)N-\lfloor (|x|+2\theta)N \rfloor}{4\sqrt{N}}| \leq |Y_N((|x|+2\theta)^-)-Y_N^-((|x|+2\theta)^-)|+|Y_N^-((|x|+2\theta)^-)-Y_N^-(|x|+2\theta)|+\frac{1}{4\sqrt{N}}$, hence Lemma \ref{lem_Y_close} yields $|Y_N((|x|+2\theta)^-)-Y_N^-(|x|+2\theta)+\frac{\sqrt{N}}{4}(|x|+2\theta+\frac{\lfloor(|x|+2\theta)N\rfloor}{N})-\frac{\lfloor (|x|+2\theta)N \rfloor}{2\sqrt{N}}| \leq N^{-1/12}+|Y_N^-((|x|+2\theta)^-)-Y_N^-(|x|+2\theta)|+\frac{1}{4\sqrt{N}}$. In addition, the definition of $Y_N^-$ and \eqref{eq_local_times} yield that if $(|x|+2\theta)N$ is not an integer, then $Y_N^-((|x|+2\theta)^-)=Y_N^-(|x|+2\theta)$, while if $(|x|+2\theta)N$ is an integer then $|Y_N^-((|x|+2\theta)^-)-Y_N^-(|x|+2\theta)|=\frac{1}{\sqrt{N}}|\ell^-(T_N,(|x|+2\theta)N-1)-\ell^-(T_N,(|x|+2\theta)N)|=\frac{1}{\sqrt{N}}|\eta_{(|x|+2\theta)N-1,+}(\ell^-(T_N,(|x|+2\theta)N-1))| \leq \frac{N^{1/16}+1/2}{\sqrt{N}}$ since $(\mathcal{B}_2)^c$ occurs. In all cases we obtain $|Y_N^-((|x|+2\theta)^-)-Y_N^-(|x|+2\theta)|\leq \frac{N^{1/16}+1/2}{\sqrt{N}}$, therefore $|Y_N((|x|+2\theta)^-)-Y_N^-(|x|+2\theta)+\frac{\sqrt{N}}{4}(|x|+2\theta+\frac{\lfloor(|x|+2\theta)N\rfloor}{N})-\frac{\lfloor (|x|+2\theta)N \rfloor}{2\sqrt{N}}| \leq N^{-1/12}+\frac{N^{1/16}+1/2}{\sqrt{N}}+\frac{1}{4\sqrt{N}}$, which is a bound small enough to end the proof of the claim. 
\end{proof}

\section{Convergence of the local times process: proof of Theorem \ref{thm_main} and Proposition \ref{prop_uniform_conv}}\label{sec_conv_processes}

\subsection{Proof of Theorem \ref{thm_main}}\label{subsec_proof_thm}

Our aim is to prove that $Y_N^\pm$ converges in distribution to $(B_y^x \mathds{1}_{\{y\in[-|x|-2\theta,|x|+2\theta)\}})_{y\in\mathds{R}}$ in the Skorohod $M_1$ topology on $D(-\infty,+\infty)$ when $N$ tends to $+\infty$. Proposition \ref{prop_distance_param} yields that $Y_N^\pm$ is close to the function $Y_N$ defined by $Y_N(y)=\frac{1}{\sqrt{N}}\sum_{i=\lfloor N y\rfloor+1}^{\chi(N)-1}\zeta_i$ if $y \in [-|x|-2\theta,\frac{\chi(N)}{N})$, $Y_N(y)=\frac{1}{\sqrt{N}}\sum_{i=\chi(N)}^{\lfloor N y\rfloor-1}\zeta_i$ if $y \in [\frac{\chi(N)}{N},|x|+2\theta)$, and $Y_N(y)=0$ otherwise. One has the feeling that by Donsker's Invariance Principle, $Y_N$ should converge to $(B_y^x \mathds{1}_{\{y\in[-|x|-2\theta,|x|+2\theta)\}})_{y\in\mathds{R}}$ and so we should be able to conclude quickly, but proving rigorously the convergence in the Skorohod $M_1$ topology on $D(-\infty,+\infty)$ is harder than it looks. We are instead going to use a similar argument with a new process $Y_N''$ which will be ``like $Y_N$, but continuous in $[-|x|-2\theta,|x|+2\theta)$''. We will define it as follows. We first set a process $Y_N'$ thus: if $Ny\in\mathds{Z}$ then $Y_N'(y)=\frac{1}{\sqrt{N}}\sum_{i= N y+1}^{\chi(N)-1}\zeta_i$ if $y \in (-\infty,\frac{\chi(N)}{N})$ and $Y_N'(y)=\frac{1}{\sqrt{N}}\sum_{i=\chi(N)}^{N y-1}\zeta_i$ if $y \in [\frac{\chi(N)}{N},+\infty)$, and in-between $Y_N'$ is linearly interpoled. 
We then define $Y_N''$ by $Y_N''(y)=Y_N'(y)\mathds{1}_{\{y\in[-|x|-2\theta,|x|+2\theta)\}}$ for any $y\in\mathds{R}$. Then $Y_N''$ will converge to $(B_y^x \mathds{1}_{\{y\in[-|x|-2\theta,|x|+2\theta)\}})_{y\in\mathds{R}}$ and be close to $Y_N$, which is stated in the two following lemmas.

\begin{lemma}\label{lem_conv_Y''_M_1}
 $Y_N''$ converges to $(B_y^x \mathds{1}_{\{y\in[-|x|-2\theta,|x|+2\theta)\}})_{y\in\mathds{R}}$ in distribution when $N$ tends to $+\infty$ for the Skorohod $M_1$ topology in $D(-\infty,\infty)$.
\end{lemma}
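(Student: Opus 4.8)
The plan is to reduce the convergence in the Skorohod $M_1$ topology on $D(-\infty,\infty)$ to the restrictions on compact intervals $[-a,a]$ (using the definition of $d_{M_1}$ as an integral of the localized distances $d_{M_1,a}$ with weight $e^{-a}$), and then on each such interval to invoke Donsker's invariance principle. The key observation is that $Y_N'$, being the linear interpolation of the rescaled partial sums of the i.i.d., mean-zero, finite-variance (with variance $\mathrm{Var}(\rho_0) = \mathrm{Var}(\rho_-)$) random variables $\zeta_i$, converges in distribution to the two-sided Brownian motion $(B_y^x)_{y \in \mathds{R}}$ with $B_x^x = 0$ and variance $\mathrm{Var}(\rho_-)$ in the uniform topology on compacts — this is the classical Donsker theorem, taking care that the sums are anchored at $\chi(N) \sim Nx$ so the limit is anchored at $x$. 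From this I would deduce the convergence of $Y_N''(y) = Y_N'(y)\mathds{1}_{\{y \in [-|x|-2\theta,|x|+2\theta)\}}$ to $(B_y^x\mathds{1}_{\{y\in[-|x|-2\theta,|x|+2\theta)\}})_{y\in\mathds{R}}$, the point being that multiplying by the indicator of a fixed interval is a reasonably well-behaved operation in the $M_1$ topology.

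First I would fix $a > |x| + 2\theta$ (intervals $[-a,a]$ not containing $|x|+2\theta$ contribute negligibly, or rather are handled identically since the limit is continuous there) and work on $[-a,a]$. On this interval, $Y_N''$ is obtained from the continuous function $Y_N'|_{[-a,a]}$ by setting it to $0$ outside $[-|x|-2\theta, |x|+2\theta)$; its completed graph consists of the graph of $Y_N'$ on $[-|x|-2\theta,|x|+2\theta)$, the vertical segments at $-|x|-2\theta$ and at $|x|+2\theta$ joining $0$ to $Y_N'(-|x|-2\theta)$ and $Y_N'((|x|+2\theta)^-)$ respectively, and the constant $0$ elsewhere. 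I would construct explicit parametric representations of $Y_N''|_{[-a,a]}$ and of the limit $(B_y^x\mathds{1}_{\{y\in[-|x|-2\theta,|x|+2\theta)\}})_{y\in[-a,a]}$ that traverse the two completed graphs ``in parallel'': on $[-a,-|x|-2\theta]$ both stay at $0$; on the vertical segment at $-|x|-2\theta$ they climb the respective vertical segments at the same parametric speed; on $[-|x|-2\theta, |x|+2\theta)$ they follow the graphs of $Y_N'$ and of $B^x$ respectively at matched abscissa; on the vertical segment at $|x|+2\theta$ they descend to $0$; and on $[|x|+2\theta, a]$ both stay at $0$. With these representations, $\|u_N - u\|_\infty = 0$ and $\|r_N - r\|_\infty \leq \sup_{y \in [-a,a]}|Y_N'(y) - B_y^x|$, so $d_{M_1,a}(Y_N''|_{[-a,a]}, (B^x\mathds{1}_{[-|x|-2\theta,|x|+2\theta)})|_{[-a,a]}) \leq \|Y_N'|_{[-a,a]} - B^x|_{[-a,a]}\|_\infty$. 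By Donsker (via the Skorohod representation theorem, realizing the convergence almost surely) this upper bound tends to $0$, so $d_{M_1,a}$ of the restrictions tends to $0$ in probability, hence in distribution; since this holds for every $a$ and $d_{M_1,a} \leq d_{M_1,a} \wedge 1 \leq 1$, dominated convergence on $\int_0^\infty e^{-a}(\cdot)\,\mathrm{d}a$ gives convergence of $d_{M_1}$ to $0$, i.e. convergence in distribution in $D(-\infty,\infty)$ with the $M_1$ topology.

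The main subtlety — and the step I expect to require the most care — is the continuity issue at the endpoints $-|x|-2\theta$ and $|x|+2\theta$: the limit $B^x$ at these points is a nondegenerate Gaussian, so the vertical segments in the limit's completed graph have random nonzero length, and one must check that the matched parametric representations I describe are genuine parametric representations (continuous, surjective, nondecreasing in the graph order) and that the bound $\|r_N-r\|_\infty \leq \|Y_N'-B^x\|_\infty$ on $[-a,a]$ really does control the discrepancy on the vertical segments, not just on the horizontal parts. This is where the argument differs from a naive ``restriction of a uniformly convergent sequence'': the indicator truncation introduces jumps, and $M_1$ (not $J_1$ or the uniform topology) is exactly what is needed to absorb the mismatch between the value $Y_N'(-|x|-2\theta)$ where $Y_N''$ jumps and the value $B_{-|x|-2\theta}^x$ where the limit jumps. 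I would verify the graph-order monotonicity of $(u_N,r_N)$ and $(u,r)$ carefully at these two abscissas, using that on the vertical segments both parametrizations move monotonically from $0$ toward the respective one-sided limit value, and that $Y_N'$ and $B^x$ are continuous on the open interval so no extra vertical pieces appear there.
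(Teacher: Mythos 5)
Your proposal is correct and follows essentially the same route as the paper: Donsker's principle for $Y_N'$ (the paper's Lemma \ref{lem_conv_Y'_C}) combined with explicit ``parallel'' parametric representations showing that truncation by the indicator of $[-|x|-2\theta,|x|+2\theta)$ maps uniform closeness to $M_1$ closeness, including the linear matching of the vertical segments at the two jump points (the paper's Lemma \ref{lem_sup_to_M1}). The only cosmetic difference is the final transfer step, where you invoke the Skorohod representation theorem while the paper shows directly that $Z\mapsto f\bigl((Z(y)\mathds{1}_{\{y\in[-|x|-2\theta,|x|+2\theta)\}})_{y\in\mathds{R}}\bigr)$ is continuous from the uniform topology and applies the definition of convergence in distribution.
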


\begin{lemma}\label{lem_Y_close_Y''}
 $\mathds{P}(d_{M_1}(Y_N^\pm,Y_N'')>N^{-7/16})$ tends to 0 when $N$ tends to $+\infty$.
\end{lemma}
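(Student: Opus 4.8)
The plan is to bound $d_{M_1}(Y_N,Y_N'')$ by controlling, for each radius $a>0$, the restriction distance $d_{M_1,a}(Y_N|_{[-a,a]},Y_N''|_{[-a,a]})$, and then to integrate against $e^{-a}\,\mathrm{d}a$ as in the proof of Proposition \ref{prop_distance_param} given Proposition \ref{prop_distance_param_bis}. Both $Y_N$ and $Y_N''$ vanish outside $[-|x|-2\theta,|x|+2\theta)$, so for $a<|x|+2\theta$ there is nothing delicate happening at the endpoints, while for $a\geq|x|+2\theta$ the two functions agree (and equal $0$) on $[|x|+2\theta,a]$ and on $[-a,-|x|-2\theta)$; the only place where they differ is the interior interval $[-|x|-2\theta,|x|+2\theta)$, together with the behaviour right at $-|x|-2\theta$ and $|x|+2\theta$. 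The key observation is that inside this interval, $Y_N$ is a step function whose values at the discontinuity points $\frac{i}{N}$ coincide with the values of the piecewise-linear interpolation $Y_N'=Y_N''$ at those same points: indeed for $y=\frac{i}{N}$ with $i<\chi(N)$ we have $Y_N(y)=Y_N'(y)=\frac{1}{\sqrt N}\sum_{j=i+1}^{\chi(N)-1}\zeta_j$, and similarly for $i\geq\chi(N)$. So $Y_N$ and $Y_N''$ have the \emph{same completed graph vertices} on each cell $[\frac{i}{N},\frac{i+1}{N}]$; the completed graph of $Y_N$ on such a cell is the union of a horizontal segment and the vertical segment joining $Y_N((\frac{i+1}{N})^-)$ to $Y_N(\frac{i+1}{N})$, whereas the graph of $Y_N''$ is the slanted segment joining the same two endpoints. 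These two planar curves lie within Hausdorff distance at most $\frac{1}{N}$ in the $u$-coordinate and at most $\max_i|Y_N(\frac{i+1}{N})-Y_N(\frac{i+1}{N}-\frac1N)|=\frac{1}{\sqrt N}\max_i|\zeta_{i+1}|$ (roughly) in the $r$-coordinate — so one can build parametric representations travelling through both graphs in parallel, following the cells one after another, that stay this close.

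Concretely I would, for $a\in(|x|+2\theta-\text{something},\,a]$, define $\phi$ and $(u_N,r_N)$, $(u_N'',r_N'')$ exactly in the spirit of the ``central part'' construction of Section \ref{sec_Skorohod_dist}: on each cell $[\frac{2i}{N},\frac{2i+1}{N})$ follow the graph of $Y_N$ (resp. of $Y_N''$, which on that cell is a line segment) and on $[\frac{2i+1}{N},\frac{2i+2}{N}]$ traverse the portion of the completed graph corresponding to the discontinuity at $\frac{i+1}{N}$ (for $Y_N''$ this portion is degenerate, a single point, so $(u_N'',r_N'')$ is simply frozen there). With this choice $\|u_N-u_N''\|_\infty\leq\frac1N$ and $\|r_N-r_N''\|_\infty$ is bounded by the largest oscillation of $Y_N$ over a single cell plus the largest jump size, both of which are at most $\frac{C}{\sqrt N}\max_{|i|\leq\lceil2(|x|+2\theta)N\rceil}|\zeta_i|$. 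The endpoints $\pm(|x|+2\theta)$ need a tiny bit of extra care because $Y_N$ jumps to $0$ there while $Y_N''$ already reaches $0$ continuously; but the size of that jump is exactly $\frac{1}{\sqrt N}|Y_N'((|x|+2\theta)^-)\cdot(\text{one more }\zeta)|$-type term, controlled by the same maximum, so I would absorb it by letting the parametric representation of $Y_N$ do its final vertical descent while that of $Y_N''$ waits, adding at most this same quantity to $\|r_N-r_N''\|_\infty$.

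The remaining step is a large-deviation / maximum estimate: since the $\zeta_i$ are i.i.d.\ with exponential tails (distribution $\rho_0$), a union bound gives $\mathds{P}(\max_{|i|\leq\lceil2(|x|+2\theta)N\rceil}|\zeta_i|\geq N^{1/16})\to0$, exactly as in the first part of the proof of Lemma \ref{lem_zeta_eta_small}. On the complement of that event we get $d_{M_1,a}(Y_N|_{[-a,a]},Y_N''|_{[-a,a]})\leq \frac{C N^{1/16}}{\sqrt N}+\frac1N\leq N^{-7/16}$ for every $a>0$ once $N$ is large (uniformly in $a$), hence $d_{M_1}(Y_N,Y_N'')=\int_0^\infty e^{-a}(d_{M_1,a}\wedge1)\,\mathrm{d}a\leq N^{-7/16}$ on that event. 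Therefore $\mathds{P}(d_{M_1}(Y_N,Y_N'')>N^{-7/16})$ is at most the probability of the bad event for the $\zeta_i$, which tends to $0$. The main obstacle I anticipate is purely bookkeeping: writing the parametric representations cleanly enough, and handling the boundary cells at $\pm(|x|+2\theta)$ (and the parity issue of whether $(|x|+2\theta)N$ is an integer) without the estimate degrading — but none of this is conceptually hard, since unlike Proposition \ref{prop_distance_param_bis} here the two functions share all their graph vertices, so no genuine ``freezing while the other side catches up over a macroscopic interval'' is needed.
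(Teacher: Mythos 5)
Your proposal is correct and follows essentially the same route as the paper: the paper bounds $\sup\{|Y_N(y)-Y_N''(y)|:y\in[-|x|-2\theta,|x|+2\theta)\}$ by $\frac{1}{\sqrt N}\sup|\zeta_i|\leq N^{-7/16}$ on $(\mathcal{B}_2)^c$ (Lemma \ref{lem_Y_close_Y'}) and then converts this uniform bound into an $M_1$ bound via exactly the parallel parametric representations you describe, packaged as the general Lemma \ref{lem_sup_to_M1}. The only cosmetic difference is that the paper's factorization yields the clean bound $d_{M_1}\leq\sup|Y_N-Y_N''|$ with no additive $\frac1N$ slack (the relevant quantity is a max of $\|u_1-u_2\|_\infty$ and $\|r_1-r_2\|_\infty$, not a sum, and with your construction the $u$-components coincide), so the constant works out to exactly $N^{-7/16}$.
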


Given these two lemmas, the proof of Theorem \ref{thm_main} is rather standard. One may for example look at the end of the proof of the Donsker invariance principle in \cite{Morters_Peres_Brownian_Motion} (here $Y_N''$ converges to the desired distribution instead of having it outright, but this convergence yields that the probability $Y_N''$ is in a closed set has the right limit). Thus we only have to prove Lemmas \ref{lem_conv_Y''_M_1} and \ref{lem_Y_close_Y''}. In order to do this, we first need two easy lemmas which will also be used later in this work. If we denote $C[-|x|-2\theta,|x|+2\theta]$ the space of continuous functions $:[-|x|-2\theta,|x|+2\theta] \mapsto \mathds{R}$, since the $(\zeta_i)_{i\in\mathds{Z}}$ are i.i.d. with law $\rho_0$ which is symmetric so has zero mean, Donsker's Invariance Principle yields the following.

\begin{lemma}\label{lem_conv_Y'_C}
 $Y_N'|_{[-|x|-2\theta,|x|+2\theta]}$ converges in distribution to $B^x|_{[-|x|-2\theta,|x|+2\theta]}$ when $N$ tends to $+\infty$ for the topology defined on $C[-|x|-2\theta,|x|+2\theta]$ by the uniform norm.
\end{lemma}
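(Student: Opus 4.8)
The plan is to identify $Y_N'$ with (a small, deterministically vanishing time-shift of) the classical linearly interpolated partial-sum process of the i.i.d.\ sequence $(\zeta_i)_{i\in\mathds{Z}}$, and then to invoke Donsker's invariance principle applied on each side of the pinning point.

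First I would set, for $k\in\mathds{N}$, $S_N^+(k)=\sum_{i=\chi(N)}^{\chi(N)+k-1}\zeta_i$ and $S_N^-(k)=\sum_{i=\chi(N)-k}^{\chi(N)-1}\zeta_i$ (empty sums being $0$). By the construction of Section~\ref{sec_coupling}, the $(\zeta_i)_{i\in\mathds{Z}}$ are i.i.d.\ of law $\rho_0$, which is symmetric about $0$ — hence centered — with finite variance $\mathrm{Var}(\rho_-)$ (as $\rho_0$ is the translate of $\rho_-$ by $\tfrac12$). Consequently $S_N^+$ and $S_N^-$ are two \emph{independent} random walks with centered square-integrable increments of variance $\mathrm{Var}(\rho_-)$. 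Donsker's theorem then gives that the diffusively rescaled linear interpolations of $k\mapsto\frac{1}{\sqrt N}S_N^\pm(k)$ converge in distribution, in $C[0,T]$ for every $T>0$ with the uniform norm, to $\sqrt{\mathrm{Var}(\rho_-)}\,W^\pm$ with $W^+,W^-$ standard Brownian motions started at $0$; by independence the joint convergence holds, so the two-sided rescaled walk $Z_N$ obtained by gluing them at $0$ converges in distribution, uniformly on compacts, to $\sqrt{\mathrm{Var}(\rho_-)}\,W$, a two-sided standard Brownian motion with $W_0=0$.

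Next I would observe that on $[-|x|-2\theta,|x|+2\theta]$ the function $Y_N'$ coincides, up to a one-grid-point ambiguity of the pinning location, with $Z_N\big(\cdot-\tfrac{\chi(N)}{N}\big)$; precisely, the two differ in supremum norm by at most $\max_{|i|\le CN}|\zeta_i|/\sqrt N$ for a suitable $C<+\infty$, which tends to $0$ in probability since $\rho_0$ has a finite second moment (indeed exponential tails). Since $\tfrac{\chi(N)}{N}\to x$ deterministically and $x$ lies in the interior of $[-|x|-2\theta,|x|+2\theta]$ (recall $x>0$, so $x<|x|+2\theta$), composing the convergent sequence $Z_N$ with the deterministically convergent translations — a Slutsky/continuous-mapping argument using the uniform continuity on compacts of the limiting Brownian motion — yields that $Y_N'|_{[-|x|-2\theta,|x|+2\theta]}$ converges in distribution, for the uniform norm on $C[-|x|-2\theta,|x|+2\theta]$, to $\big(\sqrt{\mathrm{Var}(\rho_-)}\,W_{y-x}\big)_{y\in[-|x|-2\theta,|x|+2\theta]}$. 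As $y\mapsto\sqrt{\mathrm{Var}(\rho_-)}\,W_{y-x}$ is precisely a two-sided Brownian motion vanishing at $x$ with variance $\mathrm{Var}(\rho_-)$, this limit is $B^x|_{[-|x|-2\theta,|x|+2\theta]}$, which is the claim.

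The argument is essentially routine once the set-up is written down; the only points requiring a little care — none of them serious — are the verification that $S_N^+$ and $S_N^-$ are genuinely independent (immediate from the i.i.d.\ structure of $(\zeta_i)$), the passage from the step partial sums to the linear interpolation actually used to define $Y_N'$ (absorbed into the $\max_{|i|\le CN}|\zeta_i|/\sqrt N$ bound), and the absorption of the vanishing shift $\tfrac{\chi(N)}{N}-x$ and of the $\lfloor Ny\rfloor$-versus-$Ny$ rounding, which is handled by the standard fact that a process-valued convergent sequence composed with deterministically convergent time-changes converges to the composition.
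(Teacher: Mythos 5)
Your proposal is correct and follows exactly the route the paper takes: the paper proves this lemma by a one-line appeal to Donsker's invariance principle for the i.i.d.\ centered sequence $(\zeta_i)$, and your write-up simply supplies the details (independence of the two one-sided walks, the variance identification $\mathrm{Var}(\rho_0)=\mathrm{Var}(\rho_-)$, and the absorption of the vanishing shift $\tfrac{\chi(N)}{N}-x$ and the grid rounding), all of which are handled correctly.
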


The following lemma is also easy to prove.

\begin{lemma}\label{lem_Y_close_Y'}
 If $(\mathcal{B}_2)^c$ occurs, $\sup\{|Y_N(y)-Y_N''(y)|:y\in[-|x|-2\theta,|x|+2\theta)\} \leq N^{-7/16}$.
\end{lemma}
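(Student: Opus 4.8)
The plan is to exploit the fact that, on the interval $[-|x|-2\theta,|x|+2\theta)$, the function $Y_N$ is a step function taking on each mesh interval $[\frac{k}{N},\frac{k+1}{N})$ the constant value $Y_N'(\frac{k}{N})$, whereas $Y_N''$ coincides there with $Y_N'$, which is the piecewise-linear interpolation of the very same nodal values $Y_N'(\frac{k}{N})$. Thus on each mesh interval $Y_N$ and $Y_N''$ differ by at most one increment of $Y_N'$, and such an increment has the form $\pm\frac{1}{\sqrt{N}}\zeta_j$ for a single index $j$; the bound then follows because $(\mathcal{B}_2)^c$ controls all the relevant $|\zeta_j|$ by $N^{1/16}$, and $\frac{1}{\sqrt{N}}N^{1/16}=N^{-7/16}$.

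The first step is to check the identity $Y_N(y)=Y_N'(\lfloor Ny\rfloor/N)$ for every $y\in[-|x|-2\theta,|x|+2\theta)$: writing $k=\lfloor Ny\rfloor$, both sides equal $\frac{1}{\sqrt{N}}\sum_{i=k+1}^{\chi(N)-1}\zeta_i$ when $k<\chi(N)$ and $\frac{1}{\sqrt{N}}\sum_{i=\chi(N)}^{k-1}\zeta_i$ when $k\geq\chi(N)$, directly from the definition of $Y_N$ on $[-|x|-2\theta,|x|+2\theta)$ and of $Y_N'$ at mesh points. One must be slightly careful on the mesh interval straddling $-|x|-2\theta$ (when $(|x|+2\theta)N\notin\mathds{N}$): there $Y_N(\frac{k}{N})$ itself would be $0$ by definition, but the value $Y_N$ actually takes on $[-|x|-2\theta,\frac{k+1}{N})$ is still the partial sum above, which is $Y_N'(\frac{k}{N})$ because $Y_N'$ carries no such truncation. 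Since $Y_N'$ is affine on $[\frac{k}{N},\frac{k+1}{N}]$ and $Y_N''=Y_N'$ on $[-|x|-2\theta,|x|+2\theta)$, this gives, for such $y$,
\[
 |Y_N(y)-Y_N''(y)|=|Y_N'(\tfrac{k}{N})-Y_N'(y)|\leq\Bigl|Y_N'(\tfrac{k}{N})-Y_N'(\tfrac{k+1}{N})\Bigr|.
\]

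The second step is the one-increment estimate: inspecting the definition of $Y_N'$ in the cases $k\leq\chi(N)-2$, $k\in\{\chi(N)-1,\chi(N)\}$, and $k\geq\chi(N)+1$ shows that $|Y_N'(\frac{k}{N})-Y_N'(\frac{k+1}{N})|$ is either $0$ or equal to $\frac{1}{\sqrt{N}}|\zeta_j|$ for some $j\in\{k,k+1\}$, hence at most $\frac{1}{\sqrt{N}}\max(|\zeta_k|,|\zeta_{k+1}|)$. Finally, for $y\in[-|x|-2\theta,|x|+2\theta)$ and $N$ large, the indices $k$ and $k+1$ lie in $\{-\lceil 2(|x|+2\theta)N\rceil,\dots,\lceil 2(|x|+2\theta)N\rceil\}$, so on $(\mathcal{B}_2)^c$ we have $|\zeta_k|,|\zeta_{k+1}|<N^{1/16}$, and therefore $|Y_N(y)-Y_N''(y)|\leq\frac{1}{\sqrt{N}}N^{1/16}=N^{-7/16}$; taking the supremum over $y$ concludes.

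There is no real obstacle here: the argument is essentially bookkeeping of the piecewise definitions of $Y_N$, $Y_N'$ and $Y_N''$, and the probabilistic input is reduced to the uniform bound $|\zeta_i|<N^{1/16}$ supplied by $(\mathcal{B}_2)^c$. The only point that needs a moment's attention is matching $Y_N$ with the nodal value of $Y_N'$ at the two ``seams'' of the construction, namely near $-|x|-2\theta$ (where $Y_N'$ is not truncated but $Y_N$ looks as if it should be) and near $\chi(N)/N$ (where the defining formula for $Y_N$ switches); both are handled by the identity established in the first step.
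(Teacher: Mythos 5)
Your proof is correct and is in substance the same as the paper's, which simply observes that by the definitions the supremum of $|Y_N-Y_N''|$ over $[-|x|-2\theta,|x|+2\theta)$ is bounded by $\frac{1}{\sqrt{N}}\sup|\zeta_i|$ over the relevant index range and invokes $(\mathcal{B}_2)^c$. You merely spell out the bookkeeping (the identity $Y_N(y)=Y_N'(\lfloor Ny\rfloor/N)$, the one-increment bound, and the seam cases) that the paper leaves implicit.
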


\begin{proof}
 By the definition of $Y_N$ and $Y_N''$, we have $\sup\{|Y_N(y)-Y_N''(y)|:y\in[-|x|-2\theta,|x|+2\theta)\} \leq \frac{1}{\sqrt{N}}\sup\{|\zeta_i|:-(|x|+2\theta)N \leq i \leq (|x|+2\theta)N\}$, which is smaller than $\frac{N^{1/16}}{\sqrt{N}}=N^{-7/16}$ if $(\mathcal{B}_2)^c$ occurs. 
\end{proof}

We also need the following technical lemma in order to deduce results on the Skorohod $M_1$ topology from Lemmas \ref{lem_conv_Y'_C} and \ref{lem_Y_close_Y'}. 

\begin{lemma}\label{lem_sup_to_M1}
 Let $N>0$ and $Z_1,Z_2 \in D(-\infty,+\infty)$ whose possible discontinuities belong to $\frac{1}{N}\mathds{Z}$, then we have $d_{M_1}((Z_1(y)\mathds{1}_{\{y\in[-|x|-2\theta,|x|+2\theta)\}})_{y\in\mathds{R}},(Z_2(y)\mathds{1}_{\{y\in[-|x|-2\theta,|x|+2\theta)\}})_{y\in\mathds{R}}) \leq \sup\{|Z_1(y)-Z_2(y)|:y\in[-|x|-2\theta,|x|+2\theta)\}$.
\end{lemma}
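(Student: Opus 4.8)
The plan is to construct, from a single time-change, matching parametric representations of the two ``restricted-and-zeroed'' functions and read off the $M_1$ distance directly. Write $Z_i^\circ(y) = Z_i(y)\mathds{1}_{\{y\in[-|x|-2\theta,|x|+2\theta)\}}$ for $i=1,2$, and fix $a>0$; it suffices to bound $d_{M_1,a}(Z_1^\circ|_{[-a,a]},Z_2^\circ|_{[-a,a]})$ by $\sup\{|Z_1(y)-Z_2(y)|:y\in[-|x|-2\theta,|x|+2\theta)\}$ for every $a$, since then the same bound holds after integrating against $e^{-a}\,\mathrm{d}a$ (using that the integrand is also capped at $1$, and the sup is the same for all $a$). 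The key observation is that both $Z_1^\circ$ and $Z_2^\circ$ have \emph{exactly the same set of potential discontinuities} inside $[-a,a]$: the points of $\frac1N\mathds{Z}$ that lie in $[-a,a]$, together with the endpoint $|x|+2\theta$ (where the indicator drops to $0$) — and also $-|x|-2\theta$ if $-|x|-2\theta\in\frac1N\mathds{Z}$, but in general $Z_i^\circ$ is already $0$ just left of $-|x|-2\theta$ and continuous there from the left in the completed-graph sense is not an issue because $Z_i(-|x|-2\theta^-)$ need not be $0$; I will handle $-|x|-2\theta$ as a genuine jump point of $Z_i^\circ$ as well. Since the jump locations coincide, I can build both parametric representations off the \emph{same} affine driver $\phi:[0,1]\to[\,\cdot\,]$ exactly as in the constructions of Section \ref{sec_Skorohod_dist}: on the ``graph-following'' stretches set $(u_i(t),r_i(t))=(\phi(t)-\tfrac{k}{N},\,Z_i^\circ(\phi(t)-\tfrac{k}{N}))$, and on the ``jump'' stretches at a discontinuity point $c$ set $u_i(t)=c$ and $r_i(t)$ the corresponding convex combination $(1-\varepsilon)Z_i^\circ(c^-)+\varepsilon Z_i^\circ(c)$, with the \emph{same} $\varepsilon=\varepsilon(\phi(t))$ for both $i$. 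Because the $u$-coordinates are built identically, $\|u_1-u_2\|_\infty=0$; and because on every stretch $r_1(t)$ and $r_2(t)$ are the same convex combination of values of $Z_1^\circ$ and $Z_2^\circ$ at the same point(s), $|r_1(t)-r_2(t)|$ is at each $t$ bounded by a convex combination of numbers of the form $|Z_1^\circ(y)-Z_2^\circ(y)|$ with $y\in[-a,a]$.

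It remains to see that each such $|Z_1^\circ(y)-Z_2^\circ(y)|$ (and likewise each $|Z_1^\circ(y^-)-Z_2^\circ(y^-)|$) is at most $S:=\sup\{|Z_1(y)-Z_2(y)|:y\in[-|x|-2\theta,|x|+2\theta)\}$. Split into cases: if $y\in[-|x|-2\theta,|x|+2\theta)$ then $Z_i^\circ(y)=Z_i(y)$ and the bound is immediate; if $y\notin[-|x|-2\theta,|x|+2\theta)$ then $Z_1^\circ(y)=Z_2^\circ(y)=0$ and the difference vanishes. For the left-limit terms one uses that left limits of $Z_i^\circ$ at an interior point $y$ equal $Z_i(y^-)$ when $y\in(-|x|-2\theta,|x|+2\theta]$ — note the right endpoint is included here, so $Z_i^\circ((|x|+2\theta)^-)=Z_i((|x|+2\theta)^-)=\lim_{y'\uparrow|x|+2\theta}Z_i(y')$, which is a limit of values in $[-|x|-2\theta,|x|+2\theta)$ and hence $|Z_1^\circ((|x|+2\theta)^-)-Z_2^\circ((|x|+2\theta)^-)|\le S$ by continuity of the difference; and $Z_i^\circ(y^-)=0$ when $y\le-|x|-2\theta$ or $y>|x|+2\theta$. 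So every coordinate difference entering $\|r_1-r_2\|_\infty$ is $\le S$, giving $d_{M_1,a}(Z_1^\circ|_{[-a,a]},Z_2^\circ|_{[-a,a]})\le\max(0,S)=S$, and integrating yields $d_{M_1}(Z_1^\circ,Z_2^\circ)\le S$.

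The one point requiring a little care — and the only real obstacle — is checking that the map $(u_i,r_i)$ I wrote down is genuinely a \emph{parametric representation} in the sense of Section \ref{subsec} (Whitt's definition): it must be continuous, surjective onto the completed graph $\Gamma_{Z_i^\circ}$, and nondecreasing for the order on $\Gamma_{Z_i^\circ}$. Continuity and surjectivity are clear from the construction since $\phi$ is a continuous surjection and on each stretch we either trace a continuous piece of the graph or linearly sweep a vertical segment of the completed graph; monotonicity holds because $\phi$ is increasing, the graph-following stretches move $u$ strictly right, and the vertical stretches keep $u$ fixed while moving $r$ monotonically from $Z_i^\circ(c^-)$ to $Z_i^\circ(c)$, which is exactly the prescribed order on the fibre over $c$. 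The only genuinely delicate verification is that $\Gamma_{Z_i^\circ}$ has no discontinuities \emph{other} than those I listed — i.e. that $Z_i^\circ$ is continuous away from $(\frac1N\mathds{Z}\cup\{\pm(|x|+2\theta)\})\cap[-a,a]$ — which is immediate from the hypothesis that the discontinuities of $Z_i$ lie in $\frac1N\mathds{Z}$ together with the single extra indicator-jump at each endpoint of $[-|x|-2\theta,|x|+2\theta)$. With that in hand the proof is complete; it is essentially the ``trivial reparametrization'' lemma for $M_1$, made legitimate by the fact that the two functions share the same jump set.
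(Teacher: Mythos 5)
Your proposal is correct and follows essentially the same route as the paper: both constructions exploit the fact that the two restricted-and-zeroed functions share the same set of potential discontinuities (points of $\frac1N\mathds{Z}$ together with $\pm(|x|+2\theta)$), drive matching parametric representations with a single affine time-change $\phi$ that alternates between graph-following stretches and linearly swept vertical segments, and conclude $\|u_1-u_2\|_\infty=0$ while every value of $|r_1(t)-r_2(t)|$ is a convex combination of differences $|Z_1-Z_2|$ at points of $[-|x|-2\theta,|x|+2\theta)$ (or their left limits at $|x|+2\theta$), hence bounded by the supremum. The paper merely splits explicitly into the cases $a<|x|+2\theta$ and $a>|x|+2\theta$ and writes out the endpoint segments in detail, which your plan covers in substance.
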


\begin{proof}
  Lemma \ref{lem_sup_to_M1} can be shown by writing for each $a \neq |x|+2\theta$ parametric representations of the two processes on $[-a,a]$ ``following their completed graphs together'' (one can find an explicit construction of such representations in the first arXiv version of this paper \cite{Mareche2022v1}). 
 \end{proof}
 
 Lemma \ref{lem_sup_to_M1} will allow us to deduce Lemma \ref{lem_conv_Y''_M_1} from Lemma \ref{lem_conv_Y'_C}, and Lemma \ref{lem_Y_close_Y''} from Lemma \ref{lem_Y_close_Y'} and Proposition \ref{prop_distance_param}, which will end the proof of Theorem \ref{thm_main}.

\begin{proof}[Proof of Lemma \ref{lem_conv_Y''_M_1}.]
Let $f:D(-\infty,+\infty) \mapsto \mathds{R}$ be bounded and continuous with respect to the Skorohod $M_1$ topology on $D(-\infty,+\infty)$, we need to prove that $\mathds{E}(f(Y_N''))$ converges to $\mathds{E}(f((B_y^x \mathds{1}_{\{y\in[-|x|-2\theta,|x|+2\theta)\}})_{y\in\mathds{R}}))$ when $N$ tends to $+\infty$. We define $g : C[-|x|-2\theta,|x|+2\theta] \mapsto \mathds{R}$ by $g(Z)=f((Z(y)\mathds{1}_{\{y\in[-|x|-2\theta,|x|+2\theta)\}})_{y\in\mathds{R}})$ for any $Z \in C[-|x|-2\theta,|x|+2\theta]$. We then have $\mathds{E}(f(Y_N''))=\mathds{E}(g(Y_N'|_{[-|x|-2\theta,|x|+2\theta]}))$ and $\mathds{E}(f((B_y^x \mathds{1}_{\{y\in[-|x|-2\theta,|x|+2\theta)\}})_{y\in\mathds{R}}))=\mathds{E}(g(B^x |_{[-|x|-2\theta,|x|+2\theta]}))$, hence it is enough to prove $\mathds{E}(g(Y_N'|_{[-|x|-2\theta,|x|+2\theta]}))$ converges to $\mathds{E}(g(B^x |_{[-|x|-2\theta,|x|+2\theta]}))$ when $N$ tends to $+\infty$. Furthermore, Lemma \ref{lem_conv_Y'_C} yields that $Y_N'|_{[-|x|-2\theta,|x|+2\theta]}$ converges in distribution to $B^x|_{[-|x|-2\theta,|x|+2\theta]}$ when $N$ tends to $+\infty$ for the topology defined on $C[-|x|-2\theta,|x|+2\theta]$ by the uniform norm. Consequently, we only have to prove that $g$ is continuous for this topology.

Let $(Z_k)_{k\in\mathds{N}}$ be a sequence in $C[-|x|-2\theta,|x|+2\theta]$ converging uniformly to $Z \in C[-|x|-2\theta,|x|+2\theta]$ when $k$ tends to $+\infty$. Then Lemma \ref{lem_sup_to_M1} states that for all $k\in\mathds{N}$, $d_{M_1}((Z_k(y)\mathds{1}_{\{y\in[-|x|-2\theta,|x|+2\theta)\}})_{y\in\mathds{R}},(Z(y)\mathds{1}_{\{y\in[-|x|-2\theta,|x|+2\theta)\}})_{y\in\mathds{R}}) \leq \sup\{|Z_k(y)-Z(y)|:y\in[-|x|-2\theta,|x|+2\theta)\} \leq \|Z_k-Z\|_\infty$. Since the latter tends to 0 when $k$ tends to $+\infty$, we deduce $(Z_k(y)\mathds{1}_{\{y\in[-|x|-2\theta,|x|+2\theta)\}})_{y\in\mathds{R}}$ converges to $(Z(y)\mathds{1}_{\{y\in[-|x|-2\theta,|x|+2\theta)\}})_{y\in\mathds{R}}$ when $k$ tends to $+\infty$ with respect to the Skorohod $M_1$ topology on $D(-\infty,+\infty)$. Since $f$ is continuous with respect to this topology, $(g(Z_k))_{k\in\mathds{N}}$ converges to $g(Z)$ when $k$ tends to $+\infty$. Consequently $g$ is continuous for the topology defined on $C[-|x|-2\theta,|x|+2\theta]$ by the uniform norm, which ends the proof.
\end{proof}
 
 \begin{proof}[Proof of Lemma \ref{lem_Y_close_Y''}.]
 $\mathds{P}(d_{M_1}(Y_N^\pm,Y_N'')>4N^{-1/12}) \leq \mathds{P}(d_{M_1}(Y_N^\pm,Y_N)>3N^{-1/12})+\mathds{P}(d_{M_1}(Y_N,Y_N'')>N^{-7/16})$ when $N$ is large enough. By Lemmas \ref{lem_Y_close_Y'} and \ref{lem_sup_to_M1} $\mathds{P}(d_{M_1}(Y_N,Y_N'')>N^{-7/16}) \leq \mathds{P}(\mathcal{B}_2)$. Therefore $\mathds{P}(d_{M_1}(Y_N^\pm,Y_N'')>4N^{-1/12}) \leq \mathds{P}(d_{M_1}(Y_N^\pm,Y_N)>3N^{-1/12})+\mathds{P}(\mathcal{B}_2)$, which tends to 0 when $N$ tends to $+\infty$ by Proposition \ref{prop_distance_param} and Lemma \ref{lem_zeta_eta_small}. 
 \end{proof}
 
 \subsection{Proof of Proposition \ref{prop_uniform_conv}}
 
 Our goal is to prove that for any closed interval $I\in\mathds{R}$ that does not contain $-|x|-2\theta$ or $|x|+2\theta$, the process $(Y_N^\pm(y))_{y\in I}$ converges in distribution to $(B_y^x \mathds{1}_{\{y\in[-|x|-2\theta,|x|+2\theta)\}})_{y\in I}$ in the topology on $D I$ given by the uniform norm when $N$ tends to $+\infty$. We first assume $I=[a,b]$ or $[a,+\infty)$ with $a>|x|+2\theta$ (the case $I=[a,b]$ or $(-\infty,b]$ with $b<-|x|-2\theta$ can be dealt with in the same way). We are going to prove that outside an event of small probability, $(Y_N^\pm(y))_{y\in I}=0=(B_y^x \mathds{1}_{\{y\in[-|x|-2\theta,|x|+2\theta)\}})_{y\in I}$. For any $y \geq (|x|+2\theta)\vee\frac{I^+}{N}$, by Lemma \ref{lem_0_after_I} we have $\ell^\pm(T_N,\lfloor Ny \rfloor)=0$, thus $Y_N^\pm(y)=0$. We deduce that as soon as $\frac{I^+}{N} \leq a$, we have $(Y_N^\pm(y))_{y\in I}=0=(B_y^x \mathds{1}_{\{y\in[-|x|-2\theta,|x|+2\theta)\}})_{y\in I}$. In addition, when $N$ is large enough we have $a \geq |x|+2\theta+N^{-1/4}$. Therefore, when $N$ is large enough, $\mathds{P}((Y_N^\pm(y))_{y\in I}\neq(B_y^x \mathds{1}_{\{y\in[-|x|-2\theta,|x|+2\theta)\}})_{y\in I}) \leq \mathds{P}(|I^+-(|x|+2\theta)N|\geq N^{3/4})$, which tends to 0 when $N$ tends to $+\infty$ by Lemma \ref{lem_control_I}. This yields that $(Y_N^\pm(y))_{y\in I}$ converges in distribution to $(B_y^x \mathds{1}_{\{y\in[-|x|-2\theta,|x|+2\theta)\}})_{y\in I}$ in the topology on $D I$ given by the uniform norm.
 
 We now deal with the case $I=[a,b]$ with $-|x|-2\theta < a < b < |x|+2\theta$. The idea is that we will be far from the problems at $-|x|-2\theta$ and $|x|+2\theta$, thus $Y_N^\pm$ will be close to $Y_N'$ in all $I$, and $Y_N'$ converges to the right limit, hence $Y_N^\pm$ too. We first prove the following lemma.
 
 \begin{lemma}\label{lem_Ypm_close_Y'}
  For any $-|x|-2\theta < a < b < |x|+2\theta$, we have that $\mathds{P}(\|Y_N^\pm|_{[a,b]}-Y_N'|_{[a,b]}\|_\infty > 2N^{-1/12})$ tends to 0 when $N$ tends to $+\infty$. 
 \end{lemma}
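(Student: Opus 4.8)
The plan is to glue together two estimates we already have: Lemma \ref{lem_Y_close}, which compares $Y_N^\pm$ to $Y_N$ on the ``central'' interval, and the (nearly trivial) fact that $Y_N$ and $Y_N'$ differ by at most one increment of the $\zeta_i$'s, which is negligible on $(\mathcal{B}_2)^c$. Fix $-|x|-2\theta<a<b<|x|+2\theta$ and pick $\varepsilon>0$ with $-|x|-2\theta+\varepsilon<a$ and $b<|x|+2\theta-\varepsilon$. When $N$ is large, $\varepsilon N>N^{3/4}$, so on the event $\{|I^-+(|x|+2\theta)N|<N^{3/4}\}\cap\{|I^+-(|x|+2\theta)N|<N^{3/4}\}$ we have $I^-<-(|x|+2\theta)N+N^{3/4}<Na$ and $Nb<(|x|+2\theta)N-N^{3/4}<I^+$, hence $[a,b]\subset\big[\tfrac{(-(|x|+2\theta)N)\vee I^-}{N},\tfrac{((|x|+2\theta)N)\wedge I^+}{N}\big)$.

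The first step is then to restrict to the event $(\mathcal{B}_2)^c\cap(\mathcal{B}_4^-)^c\cap(\mathcal{B}_4^+)^c\cap\{|I^-+(|x|+2\theta)N|<N^{3/4}\}\cap\{|I^+-(|x|+2\theta)N|<N^{3/4}\}$ and invoke Lemma \ref{lem_Y_close}, which gives, for $N$ large and every $y\in[a,b]$, $|Y_N^\pm(y)-Y_N(y)|\le N^{-1/12}$ (the inclusion above places $[a,b]$ inside the interval on which that lemma applies, with a little room to spare so that the half-open endpoint is harmless). The second step is to compare $Y_N$ and $Y_N'$ on $[a,b]$: since $[a,b]\subset[-|x|-2\theta,|x|+2\theta)$ and $Y_N'$ and $Y_N''$ agree there, Lemma \ref{lem_Y_close_Y'} yields that on $(\mathcal{B}_2)^c$ one has $\sup_{y\in[a,b]}|Y_N(y)-Y_N'(y)|\le N^{-7/16}$ (concretely, $Y_N'$ is the piecewise-linear interpolation of the same values that $Y_N$ takes at the points $\tfrac iN$, so $|Y_N(y)-Y_N'(y)|\le\frac1{\sqrt N}\max_{|i|\le(|x|+2\theta)N}|\zeta_i|\le N^{-7/16}$ on $(\mathcal{B}_2)^c$).

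Combining, on that event and for $N$ large, $\|Y_N^\pm|_{[a,b]}-Y_N'|_{[a,b]}\|_\infty\le N^{-1/12}+N^{-7/16}\le 2N^{-1/12}$; consequently the event $\{\|Y_N^\pm|_{[a,b]}-Y_N'|_{[a,b]}\|_\infty>2N^{-1/12}\}$ is contained in the complement of that intersection, whose probability tends to $0$ by Lemma \ref{lem_zeta_eta_small} (for $\mathcal{B}_2$), Lemma \ref{lem_sum_eta_zeta} (for $\mathcal{B}_4^\pm$) and Lemma \ref{lem_control_I} (applied with $\delta=1/4$, for the control of $I^\pm$). This proves the lemma; Proposition \ref{prop_uniform_conv} in the remaining case then follows by combining it with the convergence of $Y_N'$ on $[-|x|-2\theta,|x|+2\theta]$ from Lemma \ref{lem_conv_Y'_C}. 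There is no serious obstacle here: the argument is bookkeeping, and the only point requiring a little care is making sure, with high probability, that the deterministic compact interval $[a,b]$ is swallowed by the random ``central'' interval $\big[\tfrac{(-(|x|+2\theta)N)\vee I^-}{N},\tfrac{((|x|+2\theta)N)\wedge I^+}{N}\big)$ and keeping track of the off-by-one discrepancies produced by the floor functions in the definitions of $Y_N$ and $Y_N'$.
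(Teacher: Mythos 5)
Your proposal is correct and follows essentially the same route as the paper: restrict to the event $(\mathcal{B}_2)^c\cap(\mathcal{B}_4^-)^c\cap(\mathcal{B}_4^+)^c\cap\{|I^\pm\mp(|x|+2\theta)N|<N^{3/4}\}$, observe that $[a,b]$ then lies strictly inside the central interval so Lemma \ref{lem_Y_close} applies, add the $N^{-7/16}$ bound from Lemma \ref{lem_Y_close_Y'}, and conclude via Lemmas \ref{lem_zeta_eta_small}, \ref{lem_sum_eta_zeta} and \ref{lem_control_I}. Nothing to correct.
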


 \begin{proof}
We assume $(\mathcal{B}_2)^c$, $(\mathcal{B}_4^-)^c$, $(\mathcal{B}_4^+)^c$ occurs, as well as $|I^-+(|x|+2\theta)N|< N^{3/4}$, $|I^+-(|x|+2\theta)N|< N^{3/4}$. When $N$ is large enough, we have $a \geq -|x|-2\theta+N^{-1/4} > \frac{I^-}{N}$ and $b \leq |x|+2\theta-N^{-1/4} < \frac{I^+}{N}$, hence $[a,b] \subset (\frac{I^-}{N},\frac{I^+}{N})$. Therefore, for any $y\in[a,b]$, Lemma \ref{lem_Y_close} yields $|Y_N^\pm(y)-Y_N(y)| \leq N^{-1/12}$, and Lemma \ref{lem_Y_close_Y'} gives $|Y_N(y)-Y_N'(y)| \leq N^{-7/16}$, hence we get $|Y_N^\pm(y)-Y_N'(y)| \leq 2N^{-1/12}$, and we deduce $\|Y_N^\pm|_{[a,b]}-Y_N'|_{[a,b]}\|_\infty \leq 2N^{-1/12}$. This implies $\mathds{P}(\|Y_N^\pm|_{[a,b]}-Y_N'|_{[a,b]}\|_\infty > 2N^{-1/12}) \leq \mathds{P}(\mathcal{B}_2\cup\mathcal{B}_4^-\cup\mathcal{B}_4^+ \cup \{|I^-+(|x|+2\theta)N| \geq N^{3/4}\} \cup \{|I^+-(|x|+2\theta)N| \geq N^{3/4}\})$, which tends to 0 when $N$ tends to $+\infty$ thanks to Lemmas \ref{lem_zeta_eta_small}, \ref{lem_control_I} and \ref{lem_sum_eta_zeta}.
 \end{proof}
 
 Moreover, for any $-|x|-2\theta < a < b < |x|+2\theta$, by Donsker’s Invariance Principle, $Y_N'|_{[a,b]}$ converges in distribution to $B^x|_{[a,b]}$ when $N$ tends to $+\infty$ for the topology defined on $D[a,b]$ by the uniform norm. The proof of Proposition \ref{prop_uniform_conv} from this is standard, as was the proof of Theorem \ref{thm_main} from Lemmas \ref{lem_conv_Y''_M_1} and \ref{lem_Y_close_Y''}. 
 
 \section{No convergence in the Skorohod $J_1$ topology: proof of Proposition \ref{prop_no_J1_conv}}\label{sec_no_J1_conv}
 
 In this section, our aim is to prove that $Y_N^\pm$ does not converge in distribution in the Skorohod $J_1$ topology on $D(-\infty,+\infty)$ when $N$ tends to $+\infty$. We will first prove that if $Y_N^\pm$ converges in the Skorohod $J_1$ topology, the limit has to be the same as in the Skorohod $M_1$ topology, that is $(B_y^x \mathds{1}_{\{y\in[-|x|-2\theta,|x|+2\theta)\}})_{y\in\mathds{R}}$ by Theorem \ref{thm_main} (this will be Lemma \ref{lem_lim_J1_M1}). Afterwards, we will prove that $Y_N^\pm$ does not converge in distribution in the Skorohod $J_1$ topology to $(B_y^x \mathds{1}_{\{y\in[-|x|-2\theta,|x|+2\theta)\}})_{y\in\mathds{R}}$ by finding some closed set $\Xi$ so that $\limsup_{N\to+\infty}\mathds{P}(Y_N^\pm \in \Xi) > \mathds{P}((B_y^x \mathds{1}_{\{y\in[-|x|-2\theta,|x|+2\theta)\}})_{y\in\mathds{R}}\in \Xi)$, which is enough by the Portmanteau Theorem.
 
 \begin{lemma}\label{lem_lim_J1_M1}
  If $Y_N^\pm$ converges in distribution in the Skorohod $J_1$ topology on $D(-\infty,+\infty)$ when $N$ tends to $+\infty$, the limit is $(B_y^x \mathds{1}_{\{y\in[-|x|-2\theta,|x|+2\theta)\}})_{y\in\mathds{R}}$.
 \end{lemma}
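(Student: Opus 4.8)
\emph{Proof proposal.} The plan is to exploit the fact, recalled in the introduction (Theorem 12.3.2 of \cite{Whitt_Skorohod_topologies}), that the Skorohod $M_1$ topology on $D(-\infty,+\infty)$ is weaker than the Skorohod $J_1$ topology. Concretely, every function $f:D(-\infty,+\infty)\mapsto\mathds{R}$ that is bounded and continuous for the $M_1$ topology is also bounded and continuous for the $J_1$ topology, since the identity map from $(D(-\infty,+\infty),J_1)$ to $(D(-\infty,+\infty),M_1)$ is continuous. First I would suppose, for contradiction-free bookkeeping, that $Y_N^\pm$ converges in distribution in the $J_1$ topology to some limit $Z$. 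Then for every bounded $M_1$-continuous $f$, the function $f$ is bounded and $J_1$-continuous, so $\mathds{E}(f(Y_N^\pm))\to\mathds{E}(f(Z))$ as $N\to+\infty$; this is precisely the statement that $Y_N^\pm$ converges in distribution to $Z$ in the $M_1$ topology (equivalently, one applies the continuous mapping theorem to the identity map).

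Next, Theorem \ref{thm_main} asserts that $Y_N^\pm$ also converges in distribution to $(B_y^x \mathds{1}_{\{y\in[-|x|-2\theta,|x|+2\theta)\}})_{y\in\mathds{R}}$ in the $M_1$ topology. Since the $M_1$ topology is metrizable, by $d_{M_1}$, it is in particular Hausdorff, and weak limits of a sequence of random elements of a metric space are unique: a Borel probability measure on a metric space is determined by the integrals of bounded continuous (indeed bounded Lipschitz) functions against it. Therefore $Z$ and $(B_y^x \mathds{1}_{\{y\in[-|x|-2\theta,|x|+2\theta)\}})_{y\in\mathds{R}}$ have the same distribution, which is the claimed identification of the $J_1$-limit.

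I do not expect a genuine obstacle in this lemma: the only point requiring a line of care is the uniqueness of weak limits, which is a standard fact about metric spaces applied here to $D(-\infty,+\infty)$ equipped with $d_{M_1}$. The substantive difficulty in proving Proposition \ref{prop_no_J1_conv} is deferred to the following step, where one must exhibit a closed set $\Xi\subset D(-\infty,+\infty)$ for which the Portmanteau inequality $\limsup_{N\to+\infty}\mathds{P}(Y_N^\pm\in\Xi)\leq\mathds{P}((B_y^x \mathds{1}_{\{y\in[-|x|-2\theta,|x|+2\theta)\}})_{y\in\mathds{R}}\in\Xi)$ fails, using that the limit process has a jump of order $1$ at $|x|+2\theta$ while the jumps of $Y_N^\pm$ are of size $O(1/\sqrt N)$.
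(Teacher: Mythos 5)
Your proposal is correct and follows essentially the same route as the paper: both deduce from $d_{M_1}\leq d_{J_1}$ (Theorem 12.3.2 of \cite{Whitt_Skorohod_topologies}) that bounded $M_1$-continuous functions are $J_1$-continuous, hence $J_1$-convergence in distribution to $Z$ forces $M_1$-convergence in distribution to $Z$, and then identify $Z$ via Theorem \ref{thm_main} and uniqueness of weak limits. Your explicit mention of the uniqueness step is a minor point the paper leaves implicit; otherwise the arguments coincide.
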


 \begin{proof}
 The idea is that the Skorohod $J_1$ topology is stronger than the Skorohod $M_1$ topology. We assume $Y_N^\pm$ converges in distribution to some $Z$ in the Skorohod $J_1$ topology on $D(-\infty,+\infty)$ when $N$ tends to $+\infty$. It can be proven that for any $a>0$ we have $d_{M_1,a} \leq d_{J_1,-a,a}$. Indeed, this is Theorem 12.3.2 of \cite{Whitt_Skorohod_topologies}, whose proof is in the Internet supplement of that book (just replace the discontinuity points of $x_1$ with their image by $\lambda^{-1}$). This implies $d_{M_1} \leq d_{J_1}$. Therefore a function $g : D(-\infty,+\infty) \mapsto \mathds{R}$ bounded and continuous for the Skorohod $M_1$ topology is also continuous for the Skorohod $J_1$ topology. We deduce that $\mathds{E}(g(Y_N^\pm))$ converges to $\mathds{E}(g(Z))$ when $N$ tends to $+\infty$, thus $Y_N^\pm$ converges in distribution to $Z$ in the Skorohod $M_1$ topology when $N$ tends to $+\infty$. By Theorem \ref{thm_main}, the limit has to be $(B_y^x \mathds{1}_{\{y\in[-|x|-2\theta,|x|+2\theta)\}})_{y\in\mathds{R}}$. 
 \end{proof}

 We now define our closed set $\Xi$. The idea behind this definition is that with high probability, $B_{|x|+2\theta}^x$ is at some distance from 0, hence at some point around $|x|+2\theta$, $Y_N^\pm$ will be close to $B_{|x|+2\theta}^x$, thus at some distance from 0. Furthermore, at $|x|+2\theta$ the process $(B_y^x \mathds{1}_{\{y\in[-|x|-2\theta,|x|+2\theta)\}})_{y\in\mathds{R}}$ will jump directly from $B_{|x|+2\theta}^x$ to 0, while $Y_N^\pm$, which can make only jumps of order $\frac{1}{\sqrt{N}}$, will have to cross the distance separating $B_{|x|+2\theta}^x$ from 0 without bigs jumps. Therefore if $\delta_1>0$ is much smaller than $B_{|x|+2\theta}^x$, then $Y_N^\pm(y)$ will enter the interval $[\delta_1,2\delta_1]$ for $y$ near $|x|+2\theta$, while $(B_y^x \mathds{1}_{\{y\in[-|x|-2\theta,|x|+2\theta)\}})_{y\in\mathds{R}}$ will not. We thus set $\Xi$ to be roughly ``the function enters $[\delta_1,2\delta_1]$ around $|x|+2\theta$''. More rigorously, by the definition of $B^x$, the random variable $B_{|x|+2\theta}^x$ has distribution $\mathcal{N}(0,2\theta)$, hence there exists $\delta_1>0$ so that $\mathds{P}(|B_{|x|+2\theta}^x| \leq 4\delta_1) \leq 1/8$. Moreover, $B^x$ is continuous, hence there exists $0 < \delta_2 <\theta$ so that $\mathds{P}(\exists\, y \in [|x|+2\theta-\delta_2,|x|+2\theta], |B_y^x| \leq 3\delta_1) \leq 1/4$. We then define $\Xi = \{Z\in D(-\infty,+\infty)\,|\,\exists y \in [|x|+2\theta-\delta_2,|x|+2\theta+\delta_2],|Z(y)|\in[\delta_1,2\delta_1]$ or $|Z(y^-)|\in[\delta_1,2\delta_1]\}$ (the inclusion of $Z(y^-)$ was necessary for $\Xi$ to be closed). Then $\mathds{P}((B_y^x \mathds{1}_{\{y\in[-|x|-2\theta,|x|+2\theta)\}})_{y\in\mathds{R}} \in \Xi) \leq 1/4$. We will prove the two following lemmas.
 
 \begin{lemma}\label{lem_J1_proba_bound}
  When $N$ is large enough, $\mathds{P}(Y_N^\pm \in \Xi) \geq 1/2$.
 \end{lemma}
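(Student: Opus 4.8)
The plan is to show that with probability tending to $1$, the process $Y_N^\pm$ enters the band $[\delta_1,2\delta_1]$ somewhere in the abscissa window $[|x|+2\theta-\delta_2,|x|+2\theta+\delta_2]$, so that $\liminf_{N\to+\infty}\mathds{P}(Y_N^\pm\in\Xi)=1\geq 1/2$. The mechanism is exactly the intuition behind the definition of $\Xi$: on the one hand $Y_N^\pm$ is close to the Brownian motion $B^x$ just inside $|x|+2\theta$, so it takes a value of absolute value at least $3\delta_1$ there (on an event of probability at least $3/4$ by the choice of $\delta_2$ and Proposition \ref{prop_uniform_conv}); on the other hand $Y_N^\pm$ vanishes identically at and beyond $\frac{I^+}{N}$ (as established in Section \ref{sec_Skorohod_dist}), and $\frac{I^+}{N}\in[|x|+2\theta-\delta_2,|x|+2\theta+\delta_2]$ with high probability by Lemma \ref{lem_control_I}; finally $Y_N^\pm$ has jumps of size at most $\frac{1}{\sqrt{N}}<\delta_1$ for $N$ large, so in going from a value of modulus $\geq 3\delta_1$ down to $0$ it cannot jump over the band $[\delta_1,2\delta_1]$ (up to sign), hence must land in it.

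First I would fix the relevant good event. Using Proposition \ref{prop_uniform_conv} applied to a fixed closed interval contained in $(|x|+2\theta-\delta_2,|x|+2\theta)$ — or more simply the process $Y_N'|_{[a,b]}$ from Lemma \ref{lem_Ypm_close_Y'} with $-|x|-2\theta<a<b<|x|+2\theta$ and $[a,b]\subset[|x|+2\theta-\delta_2,|x|+2\theta)$, together with Lemma \ref{lem_conv_Y'_C} — and the choice of $\delta_2$ giving $\mathds{P}(\exists\,y\in[|x|+2\theta-\delta_2,|x|+2\theta],|B_y^x|\leq 3\delta_1)\leq 1/4$, I get that for $N$ large the event $E_N^{(1)}=\{\exists\,y_0\in[|x|+2\theta-\delta_2,|x|+2\theta),\ |Y_N^\pm(y_0)|\geq 3\delta_1\}$ has probability at least $1-1/4-o(1)$. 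Next, Lemma \ref{lem_control_I} gives that $E_N^{(2)}=\{|I^+-(|x|+2\theta)N|<N^{\delta_2 N/\text{anything}}\}$ — concretely $\{|I^+-(|x|+2\theta)N|\leq \delta_2 N\}$ — has probability tending to $1$, and on $E_N^{(2)}$ we have $\frac{I^+}{N}\in[|x|+2\theta-\delta_2,|x|+2\theta+\delta_2]$ and $Y_N^\pm(y)=0$ for all $y\geq\frac{I^+}{N}$ (by the argument recalled in Section \ref{sec_Skorohod_dist}: $\ell^-(T_N,\lfloor Ny\rfloor)=0$ for $y\geq\frac{I^+}{N}$, hence the same for $\ell^+$ and for $Y_N^\pm$ beyond $|x|+2\theta$; one should check the $y_0$ from $E_N^{(1)}$ satisfies $y_0<\frac{I^+}{N}$ up to shrinking, using $I^+\geq(|x|+2\theta)N-\delta_2 N$, which forces $\frac{I^+}{N}\geq|x|+2\theta-\delta_2>y_0$ only if $y_0$ is taken $<|x|+2\theta-\delta_2$; I would instead just take $y_0\in[|x|+2\theta-\delta_2/2,|x|+2\theta)$ and note $y_0<\frac{I^+}{N}$ fails only on a further negligible event, or argue directly that $Y_N^\pm(y_0)\neq 0$ forces $y_0<\frac{I^+}{N}$).

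Then comes the crossing argument. On $E_N^{(1)}\cap E_N^{(2)}$, consider the finitely many values $Y_N^\pm(\frac{k}{N})$ for $\frac{k}{N}$ ranging over $[y_0,\frac{I^+}{N}]$; this is a sequence starting at a point of modulus $\geq 3\delta_1$ (at $k=\lfloor Ny_0\rfloor$, up to the $\frac{1}{\sqrt N}$ difference between $Y_N^\pm(y_0)$ and $Y_N^\pm(\frac{\lfloor Ny_0\rfloor}{N})$, which is $\leq\frac{N^{1/16}+1/2}{\sqrt N}$ by Lemma \ref{lem_zeta_eta_small} on $(\mathcal B_2)^c$, hence still $\geq 2\delta_1$ for $N$ large) and ending at $0$, with consecutive increments bounded by $\frac{1}{\sqrt N}<\delta_1$ for $N$ large (each increment is $\frac{1}{\sqrt N}|\ell^\pm(T_N,k+1)-\ell^\pm(T_N,k)|$, which differs from a step of a nearest-neighbour-type walk by $O(1)$; it suffices that it is $\leq\delta_1\sqrt N$, which holds with probability $\to 1$ using Lemma \ref{lem_zeta_eta_small} and \eqref{eq_local_times_2}). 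A real-valued sequence going from $\{|z|\geq 2\delta_1\}$ to $0$ with steps $<\delta_1$ must take a value in $[-2\delta_1,-\delta_1]\cup[\delta_1,2\delta_1]$ at some intermediate index, whose abscissa lies in $[y_0,\frac{I^+}{N}]\subset[|x|+2\theta-\delta_2,|x|+2\theta+\delta_2]$; at such a point $Y_N^\pm$ has $|Y_N^\pm(y)|\in[\delta_1,2\delta_1]$, so $Y_N^\pm\in\Xi$. Intersecting the three good events ($E_N^{(1)}$, $E_N^{(2)}$, and the event controlling increments via $(\mathcal B_2)^c$), each of probability $\geq 3/4-o(1)$ or $1-o(1)$, gives $\mathds{P}(Y_N^\pm\in\Xi)\geq 1-1/4-o(1)\geq 1/2$ for $N$ large.

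The main obstacle is purely bookkeeping: making the "steps are small" and "the sequence reaches modulus $\geq 2\delta_1$" statements simultaneously rigorous, since both rely on $(\mathcal B_2)^c$ and on relating $\ell^\pm(T_N,k+1)-\ell^\pm(T_N,k)$ to $\eta_{k,\mp}(\cdot)+\tfrac12$ via \eqref{eq_local_times} and \eqref{eq_local_times_2}; one must be careful that the half-integer shifts and the $\mathds{1}_{\{j>0\}}$ terms do not spoil the bound $<\delta_1\sqrt N$, which is harmless for $N$ large since they contribute $O(1)$. There is also the minor subtlety of ensuring the witnessing abscissa genuinely lies in the closed window $[|x|+2\theta-\delta_2,|x|+2\theta+\delta_2]$ rather than just near it, which is why I keep the extra slack $\delta_2$ in $E_N^{(2)}$. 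No genuinely hard estimate is needed beyond what Lemmas \ref{lem_control_I} and \ref{lem_zeta_eta_small} and Proposition \ref{prop_uniform_conv} already provide.
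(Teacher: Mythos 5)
Your proposal is correct and follows essentially the same route as the paper: show $|Y_N^\pm|$ exceeds $3\delta_1$ at a fixed point just left of $|x|+2\theta$ with probability at least $3/4-o(1)$ (via closeness to $Y_N'$ and its Donsker limit, together with the choice of $\delta_2$), show $Y_N^\pm$ vanishes at $|x|+2\theta+\delta_2$ using the control of $I^+$ from Lemma \ref{lem_control_I}, and use the $(\mathcal{B}_2)^c$ bound on jump sizes to force a passage through the band $[\delta_1,2\delta_1]$ in modulus. The paper simply fixes the evaluation point to be $|x|+2\theta-\delta_2$ (avoiding your existential formulation and the worry about $y_0<\frac{I^+}{N}$), but the decomposition of the good event and the final probability accounting are the same.
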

 
 \begin{lemma}\label{lem_J1_closed}
  $\Xi$ is closed in the Skorohod $J_1$ topology on $D(-\infty,+\infty)$. 
 \end{lemma}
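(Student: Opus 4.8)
The plan is to show that the complement of $\Xi$ is open in the Skorohod $J_1$ topology on $D(-\infty,+\infty)$. Recall that the $J_1$ metric on $D(-\infty,+\infty)$ is $d_{J_1}(Z_1,Z_2)=\int_0^{+\infty}e^{-a}(d_{J_1,-a,a}(Z_1|_{[-a,a]},Z_2|_{[-a,a]})\wedge 1)\mathrm{d}a$, so it suffices to argue at the level of a fixed large $a$ (any $a>|x|+2\theta+\delta_2$ will do): if $Z\notin\Xi$, I want to exhibit $\varepsilon>0$ so that $d_{J_1,-a,a}(Z|_{[-a,a]},Z'|_{[-a,a]})<\varepsilon$ forces $Z'\notin\Xi$, and then translate this into a genuine $d_{J_1}$-ball around $Z$ by noting that $d_{J_1}(Z,Z')<e^{-a}\varepsilon$ (say) controls the relevant restriction, since $d_{J_1,-a',a'}$ is monotone enough in $a'$; more simply, I will use that $d_{J_1,-a,a}$ restricted contributes to the integral and $\int_0^\infty e^{-b}(\cdots)\mathrm{d}b$ small forces $d_{J_1,-a,a}$ small on a set of $b$'s near $a$ of positive measure, hence at $b=a$ by right-continuity of $b\mapsto d_{J_1,-b,b}(Z|_{[-b,b]},Z'|_{[-b,b]})$. (Alternatively, one can phrase $\Xi$ directly through restrictions to $[-a,a]$ and invoke that the projection $D(-\infty,\infty)\to D[-a,a]$ is continuous for $J_1$, which is standard.)

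The heart of the matter is the following claim at fixed $a$: the set $\Xi_a=\{W\in D[-a,a]\mid \exists\,y\in[|x|+2\theta-\delta_2,|x|+2\theta+\delta_2],\ |W(y)|\in[\delta_1,2\delta_1]\text{ or }|W(y^-)|\in[\delta_1,2\delta_1]\}$ is closed in $D[-a,a]$ for $d_{J_1,-a,a}$. I would prove this by a sequential argument: let $W_n\to W$ in $D[-a,a]$ for $J_1$ with $W_n\in\Xi_a$, and pick $\lambda_n\in\Lambda_{-a,a}$ with $\|W_n\circ\lambda_n-W\|_\infty\to 0$ and $\|\lambda_n-\mathrm{Id}\|_\infty\to 0$. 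For each $n$ there is $y_n$ in the compact interval $[|x|+2\theta-\delta_2,|x|+2\theta+\delta_2]$ with $|W_n(y_n)|\in[\delta_1,2\delta_1]$ or $|W_n(y_n^-)|\in[\delta_1,2\delta_1]$; passing to a subsequence, $y_n\to y_\infty$ in that interval. Set $z_n=\lambda_n^{-1}(y_n)$ — wait, more carefully, the value $W_n(y_n)$ equals $W(\lambda_n^{-1}(y_n))$ up to an error $\le\|W_n\circ\lambda_n-W\|_\infty\to0$ only if one writes $W_n(y_n)=(W_n\circ\lambda_n)(\lambda_n^{-1}(y_n))$ and $\lambda_n^{-1}(y_n)\to y_\infty$ since $\|\lambda_n-\mathrm{Id}\|_\infty\to0$. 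So along a further subsequence either $W(\lambda_n^{-1}(y_n))\to W(y_\infty)$ or $\to W(y_\infty^-)$, using that $W$ is càdlàg and $\lambda_n^{-1}(y_n)$ approaches $y_\infty$ either from the right or from the left (pass to a monotone-sided subsequence). In the first case $|W(y_\infty)|\in[\delta_1,2\delta_1]$, in the second $|W(y_\infty^-)|\in[\delta_1,2\delta_1]$; either way $W\in\Xi_a$. The same dichotomy handles the $|W_n(y_n^-)|$ alternative, using that a left limit of $W_n$ near $y_n$ is, after time-change, a value or left limit of $W$ near a point tending to $y_\infty$. This closedness of the time-window interval $[|x|+2\theta-\delta_2,|x|+2\theta+\delta_2]$ and of the closed band $[\delta_1,2\delta_1]$ (both in absolute value) is exactly why the definition of $\Xi$ was chosen with closed intervals and with the $Z(y^-)$ clause included.

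The main obstacle I anticipate is the bookkeeping around left limits and one-sided approach: a sequence $y_n\to y_\infty$ may approach from either side and in a $J_1$-convergent sequence the ``event at $y_n$'' can migrate between being a value $W_n(y_n)$ and a left limit $W_n(y_n^-)$ and between landing on $W(y_\infty)$ versus $W(y_\infty^-)$, so one must carefully pass to subsequences that fix (i) which of the two clauses ($W_n(y_n)$ or $W_n(y_n^-)$) holds for all $n$, (ii) the side from which $\lambda_n^{-1}(y_n)$ approaches $y_\infty$, and (iii) convergence of $y_n$ itself; and then check each of the four resulting cases lands in $\Xi_a$. This is routine but must be done with care since $J_1$ convergence only gives uniform closeness of the time-changed functions, not pointwise control at a moving point. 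Once $\Xi_a$ is closed for every large $a$, the continuity of the restriction map $D(-\infty,\infty)\to D[-a,a]$ (equivalently, the estimate relating $d_{J_1}$ to $d_{J_1,-a,a}$ noted above) gives that $\Xi=\{Z\mid Z|_{[-a,a]}\in\Xi_a\}$ is closed in $D(-\infty,+\infty)$ for the Skorohod $J_1$ topology, which is Lemma~\ref{lem_J1_closed}.
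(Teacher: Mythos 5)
Your proof is correct and follows essentially the same route as the paper's: extract near-optimal time changes $\lambda_n$, transport the witness points $y_n$ through $\lambda_n^{-1}$, pass to subsequences so that the approach to $y_\infty$ is one-sided, and conclude via the c\`adl\`ag dichotomy together with the closedness of the band $[\delta_1,2\delta_1]$ and of the time window (the paper sidesteps your left-limit bookkeeping by replacing $y_n$ with a nearby point $y_n'$ where the \emph{value} of $Z_n$ lies in a slightly enlarged band, but your direct treatment of the left-limit clause is also valid). One caveat: the restriction map $D(-\infty,\infty)\to D[-a,a]$ is \emph{not} continuous for $J_1$ at functions with a jump at $\pm a$, so the parenthetical appeal to it as ``standard'' should be dropped in favour of your other argument --- a positive-measure set of good radii $b$ near $a$, combined with the observation that membership in $\Xi_b$ does not depend on $b$ once $b>|x|+2\theta+\delta_2$ --- which is exactly the pigeonhole the paper uses to select a radius $a_N$ for each $N$.
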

 
 With these two lemmas, the proof of Proposition \ref{prop_no_J1_conv} becomes easy. 
 
 \begin{proof}[Proof of Proposition \ref{prop_no_J1_conv}.]
 Lemma \ref{lem_J1_proba_bound} yields $\limsup_{N\to+\infty}\mathds{P}(Y_N^\pm \in \Xi) \geq 1/2$, and the definition of $\Xi$ ensures that $\mathds{P}((B_y^x \mathds{1}_{\{y\in[-|x|-2\theta,|x|+2\theta)\}})_{y\in\mathds{R}} \in \Xi) \leq 1/4$, hence $\limsup_{N\to+\infty}\mathds{P}(Y_N^\pm \in \Xi) > \mathds{P}((B_y^x \mathds{1}_{\{y\in[-|x|-2\theta,|x|+2\theta)\}})_{y\in\mathds{R}}\in \Xi)$. Since Lemma \ref{lem_J1_closed} yields $\Xi$ is closed in the Skorohod $J_1$ topology on $D(-\infty,+\infty)$, the Portmanteau Theorem implies $Y_N^\pm$ does not converge in distribution in the Skorohod $J_1$ topology on $D(-\infty,+\infty)$ to $(B_y^x \mathds{1}_{\{y\in[-|x|-2\theta,|x|+2\theta)\}})_{y\in\mathds{R}}$ when $N$ tends to $+\infty$. Hence Lemma \ref{lem_lim_J1_M1} yields that $Y_N^\pm$ does not converge in distribution in the Skorohod $J_1$ topology on $D(-\infty,+\infty)$ when $N$ tends to $+\infty$, which is Proposition \ref{prop_no_J1_conv}. 
 \end{proof}
 
 Thus it remains only to prove Lemmas \ref{lem_J1_proba_bound} and \ref{lem_J1_closed}.  
 
 \begin{proof}[Proof of Lemma \ref{lem_J1_proba_bound}.]
 The idea is that with good probability, when $y$ is a bit smaller than $|x|+2\theta$, we have $Y_N^\pm(y)$ of the same order as $B_{|x|+2\theta}^x$, thus away from 0, while when $y$ is a bit larger than $|x|+2\theta$, we have $Y_N^\pm(y)=0$, so since $Y_N$ can only make jumps of order $\frac{1}{\sqrt{N}}$, it will enter $[\delta_1,2\delta_1]$. We now give the rigorous argument. We begin by assuming that $|Y_N^\pm(|x|+2\theta-\delta_2)| > 3\delta_1$ (that is $Y_N(y)$ is indeed away from 0 when $y$ is a bit smaller than $|x|+2\theta$), $(\mathcal{B}_2)^c$ occurs and $|I^+-(|x|+2\theta)N| < N^{3/4}$, and proving that when $N$ is large enough, $Y_N^\pm \in \Xi$. We first show $Y_N^\pm(|x|+2\theta+\delta_2)=0$. When $N$ is large enough, $\frac{I^+}{N} \leq |x|+2\theta+N^{-1/4} \leq |x|+2\theta+\delta_2$. Moreover, Lemma \ref{lem_0_after_I} implies $\ell^\pm(T_N,\lfloor Ny \rfloor)=0$ for any $y \geq \frac{I^+}{N}$, hence for $y=|x|+2\theta+\delta_2$. This yields $Y_N^\pm(|x|+2\theta+\delta_2)=0$. Moreover, we assumed $|Y_N^\pm(|x|+2\theta-\delta_2)| > 3\delta_1$. Furthermore, equations \eqref{eq_local_times} and \eqref{eq_local_times_2} yield that the jumps of $Y_N^\pm$ in $[|x|+2\theta-\delta_2,|x|+2\theta+\delta_2]$ are either $\frac{1}{\sqrt{N}}\eta_{i,+}(\ell^-(T_N,i))$ (if we deal with $Y_N^-$) or $\frac{1}{\sqrt{N}}\eta_{i+1,+}(\ell^-(T_N,i+1))$ (if we deal with $Y_N^+$) with $i\in\{\lfloor(|x|+2\theta-\delta_2)N\rfloor,...,\lfloor(|x|+2\theta+\delta_2)N\rfloor-1\}$. Since $(\mathcal{B}_2)^c$ occurs, the jumps of $Y_N^\pm$ in $[|x|+2\theta-\delta_2,|x|+2\theta+\delta_2]$ have size at most $\frac{1}{\sqrt{N}}(N^{1/16}+1/2)$, which tends to 0 when $N$ tends to $+\infty$. Therefore, when $N$ is large enough, there exists $y \in [|x|+2\theta-\delta_2,|x|+2\theta+\delta_2]$ so that $|Y_N^\pm(y)|\in[\delta_1,2\delta_1]$, hence $Y_N^\pm\in\Xi$. Consequently, when $N$ is large enough, if $|Y_N^\pm(|x|+2\theta-\delta_2)| > 3\delta_1$, $(\mathcal{B}_2)^c$ and $|I^+-(|x|+2\theta)N| < N^{3/4}$ then $Y_N^\pm\in\Xi$. This implies $\mathds{P}(Y_N^\pm \not\in \Xi) \leq \mathds{P}(|Y_N^\pm(|x|+2\theta-\delta_2)| \leq 3\delta_1)+\mathds{P}(\mathcal{B}_2)+\mathds{P}(|I^+-(|x|+2\theta)N| \geq N^{3/4})$. In addition, Lemma \ref{lem_zeta_eta_small} and Lemma \ref{lem_control_I} yield respectively that $\mathds{P}(\mathcal{B}_2)$ and $\mathds{P}(|I^+-(|x|+2\theta)N| \geq N^{3/4})$ tend to 0 when $N$ tends to $+\infty$. Therefore it is enough to prove that $\mathds{P}(|Y_N^\pm(|x|+2\theta-\delta_2)| \leq 3\delta_1) \leq 3/8$ when $N$ is large enough to deduce that $\mathds{P}(Y_N^\pm \not\in \Xi) \leq 1/2$ when $N$ is large enough and end the proof of Lemma \ref{lem_J1_proba_bound}.
 
 We now prove $\mathds{P}(|Y_N^\pm(|x|+2\theta-\delta_2)| \leq 3\delta_1) \leq 3/8$ when $N$ is large enough, by noticing $Y_N^\pm(|x|+2\theta-\delta_2)$ is close to $Y_N'(|x|+2\theta-\delta_2)$, which will converge in distribution to $B_{|x|+2\theta-\delta_2}^x$ when $N$ tends to $+\infty$. Lemma \ref{lem_Ypm_close_Y'} implies $\mathds{P}(\|Y_N^\pm|_{[0,|x|+2\theta-\delta_2]}-Y_N'|_{[0,|x|+2\theta-\delta_2]}\|_\infty > 2N^{-1/12})$ tends to 0 when $N$ tends to $+\infty$, hence $\mathds{P}(|Y_N^\pm(|x|+2\theta-\delta_2)-Y_N'(|x|+2\theta-\delta_2)| > 2N^{-1/12})$ tends to 0 when $N$ tends to $+\infty$, which implies $Y_N^\pm(|x|+2\theta-\delta_2)-Y_N'(|x|+2\theta-\delta_2)$ converges in probability to 0 when $N$ tends to $+\infty$. In addition, Lemma \ref{lem_conv_Y'_C} states $Y_N'|_{[-|x|-2\theta,|x|+2\theta]}$ converges in distribution to $B^x|_{[-|x|-2\theta,|x|+2\theta]}$ when $N$ tends to $+\infty$ for the topology defined on $C[-|x|-2\theta,|x|+2\theta]$ by the uniform norm, hence $Y_N'(|x|+2\theta-\delta_2)$ converges in distribution to $B_{|x|+2\theta-\delta_2}^x$ when $N$ tends to $+\infty$. Therefore Slutsky's Theorem yields that $Y_N^\pm(|x|+2\theta-\delta_2)$ converges in distribution to $B_{|x|+2\theta-\delta_2}^x$ when $N$ tends to $+\infty$. Moreover, we defined $\Xi$ so that $\mathds{P}(\exists\, y \in [|x|+2\theta-\delta_2,|x|+2\theta], |B_y^x| \leq 3\delta_1) \leq 1/4$, hence $\mathds{P}(|B_{|x|+2\theta-\delta_2}^x| \leq 3\delta_1) \leq 1/4$. This implies that when $N$ is large enough, $\mathds{P}(|Y_N^\pm(|x|+2\theta-\delta_2)| \leq 3\delta_1) \leq 3/8$.
 \end{proof}

 \begin{proof}[Proof of Lemma \ref{lem_J1_closed}.]
  Let $(Z_N)_{N\in\mathds{N}}$ be a sequence of elements of $\Xi$ converging to $Z$ in the Skorohod $J_1$ topology on $D(-\infty,+\infty)$, we will prove $Z \in \Xi$. By taking a subsequence, we may assume $d_{J_1}(Z,Z_N) < e^{-|x|-2\theta-\delta_2-1}/N$ for any $N\in\mathds{N}^*$. Then for any $N\in\mathds{N}^*$, some $a_N > |x|+2\theta+\delta_2+1$ so that $d_{J_1,-a_N,a_N}(Z|_{[-a_N,a_N]},Z_N|_{[-a_N,a_N]}) \leq 1/N$ will exist. Indeed, if it was not the case, for some $N$ we would have $d_{J_1}(Z,Z_N)=\int_0^{+\infty}e^{-a}(d_{J_1,-a,a}(Z|_{[-a,a]},Z_N|_{[-a,a]}) \wedge 1 )\mathrm{d}a \geq \int_{|x|+2\theta+\delta_2+1}^{+\infty}e^{-a}\frac{1}{N}\mathrm{d}a=e^{-|x|-2\theta-\delta_2-1}/N$, which does not happen. For all $N\in \mathds{N}^*$, the fact that we have $d_{J_1,-a_N,a_N}(Z|_{[-a_N,a_N]},Z_N|_{[-a_N,a_N]}) \leq 1/N$ implies there exists $\lambda_N \in \Lambda_{-a_N,a_N}$ with $\|Z|_{[-a_N,a_N]} \circ \lambda_N - Z_N|_{[-a_N,a_N]}\|_\infty\leq 2/N$ and $\|\lambda_N-\mathrm{Id}_{-a_N,a_N}\|_\infty \leq 2/N$. Moreover, $Z_N\in\Xi$, hence there exists $y_N \in [|x|+2\theta-\delta_2,|x|+2\theta+\delta_2],|Z_N(y_N)|\in[\delta_1,2\delta_1]$ or $|Z_N(y_N^-)|\in[\delta_1,2\delta_1]$. We now define $y_N'$ as follows: if $|Z_N(y_N)|\in[\delta_1,2\delta_1]$ we set $y_N'=y_N$. Otherwise, since $|Z_N(y_N^-)|\in[\delta_1,2\delta_1]$ we can take some $y_N'$ in $[y_N-\frac{1}{N},y_N]$ so that $|Z_N(y_N')|\in[\delta_1-1/N,2\delta_1+1/N]$. In both cases, we have $y_N' \in [|x|+2\theta-\delta_2-1/N,|x|+2\theta+\delta_2]$ and $|Z_N(y_N')|\in[\delta_1-1/N,2\delta_1+1/N]$. Furthermore, $\|\lambda_N-\mathrm{Id}_{-a_N,a_N}\|_\infty \leq 2/N$, hence $|\lambda_N(y_N')-y_N'| \leq 2/N$, thus $\lambda_N(y_N') \in [|x|+2\theta-\delta_2-3/N,|x|+2\theta+\delta_2+2/N]$. In addition, $\|Z(\lambda_N(y_N')) - Z_N(y_N')\|_\infty\leq 2/N$, hence $|Z(\lambda_N(y_N'))|\in[\delta_1-3/N,2\delta_1+3/N]$. By taking a subsequence, we may assume that $\lambda_N(y_N')$ converges to some $y_\infty\in [|x|+2\theta-\delta_2,|x|+2\theta+\delta_2]$. In addition, $Z$ is càdlàg, hence there is a subsequence of $(Z(\lambda_N(y_N'))_{N\in\mathds{N}^*}$ that converges to either $Z(y_\infty)$ or $Z(y_\infty^-)$. Since $|Z(\lambda_N(y_N'))|\in[\delta_1-3/N,2\delta_1+3/N]$, we have $|Z(y_\infty)|$ or $|Z(y_\infty^-)|$ in $[\delta_1,2\delta_1]$. Therefore $Z\in \Xi$, which ends the proof. 
 \end{proof}
 
 \section{Convergence of the stopping time: proof of Proposition \ref{prop_fluctuations_T}}\label{sec_conv_T}
 
 We want to prove Proposition \ref{prop_fluctuations_T}, that is the convergence in distribution of $\frac{1}{N^{3/2}}(T_N-N^2(|x|+2\theta)^2)$ to the law $\mathcal{N}(0,\frac{32}{3}\mathrm{Var}(\rho_-)((|x|+\theta)^3+\theta^3))$ when $N$ tends to $+\infty$. In order to do that, we will prove that $\frac{1}{N^{3/2}}(T_N-N^2(|x|+2\theta)^2)$ is close to $2\int_{-|x|-2\theta}^{|x|+2\theta}Y_N'(y)\mathrm{d}y$ (where $Y_N'$ was defined at the beginning of Section \ref{subsec_proof_thm}), then show that $2\int_{-|x|-2\theta}^{|x|+2\theta}Y_N'(y)\mathrm{d}y$ converges to the desired distribution. 
 
 \begin{proposition}\label{prop_T_N_conv_int}
  $\mathds{P}(|\frac{1}{N^{3/2}}(T_N-N^2(|x|+2\theta)^2)-2\int_{-|x|-2\theta}^{|x|+2\theta}Y_N'(y)\mathrm{d}y| > 5(|x|+2\theta)N^{-1/12})$ tends to 0 when $N$ tends to $+\infty$. 
 \end{proposition}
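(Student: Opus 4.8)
The plan is to start from the exact identity $T_N=\sum_{i\in\mathds{Z}}(\ell^+(T_N,i)+\ell^-(T_N,i))$, noting that the sum is finite since $\ell^\pm(T_N,i)=0$ for $i\geq I^+$ or $i<I^-$ (and $\ell^-(T_N,i)=0$ already for $i\ge I^+$, $\ell^+(T_N,i)=0$ for $i<I^-$, by the argument used repeatedly in Section~\ref{sec_Skorohod_dist}). Using $\ell^+(T_N,i)=\ell^-(T_N,i+1)$ the sum telescopes into $2\sum_i \ell^-(T_N,i)$ plus a boundary correction of order $1$, and the deterministic part $N^2(|x|+2\theta)^2$ will be recovered as $2\int_{-|x|-2\theta}^{|x|+2\theta} N(\tfrac{|x|-|y|}{2}+\theta)_+\,\mathrm{d}y$. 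Thus after dividing by $N^{3/2}$ the left-hand side becomes, up to negligible terms, $\frac{2}{N^{3/2}}\sum_i\big(\ell^-(T_N,i)-N(\tfrac{|x|-|y|}{2}+\theta)_+\big)$, and I would rewrite this as $2\int_{\mathds{R}}\tfrac{1}{\sqrt N}\big(\ell^-(T_N,\lfloor Ny\rfloor)-N(\tfrac{|x|-|y|}{2}+\theta)_+\big)\mathrm{d}y = 2\int_{\mathds{R}}Y_N^-(y)\,\mathrm{d}y$ (taking $\iota=-$ for concreteness; the other case differs by $O(N^{-1})$ terms from the $\mathds{1}_{\{\iota=+\}}$ discrepancies in \eqref{eq_local_times}).

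The next step is to replace $2\int_{\mathds{R}}Y_N^-(y)\,\mathrm{d}y$ by $2\int_{-|x|-2\theta}^{|x|+2\theta}Y_N'(y)\,\mathrm{d}y$. On the event $\mathcal{B}^c$, I would split the integral at $\pm(|x|+2\theta)$ and use the estimates already assembled: outside $[\tfrac{I^-}{N},\tfrac{I^+}{N}]$ one has $Y_N^-(y)=-\sqrt N(\tfrac{|x|-|y|}{2}+\theta)_+$, which is $0$ for $|y|\ge|x|+2\theta$ and, since $|I^\pm\mp(|x|+2\theta)N|<N^{3/4}$ on $\mathcal{B}^c$, contributes at most $O(N^{-3/2}\cdot N^{3/4}\cdot N^{3/4})=O(1)$ to the unnormalized sum, hence a negligible amount after the $N^{-1/2}$ that is already built into $Y_N^-$; more carefully, the contribution to $2\int Y_N^-$ over $[\tfrac{I^-}{N},-|x|-2\theta]\cup[|x|+2\theta,\tfrac{I^+}{N}]$ is bounded using $(\mathcal{B}_3^\pm)^c$ and the explicit form of $Y_N^-$ there. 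On the central interval $[(-(|x|+2\theta)N)\vee I^-)/N,((|x|+2\theta)N)\wedge I^+)/N]$, Lemma~\ref{lem_Y_close} gives $|Y_N^-(y)-Y_N(y)|\le N^{-1/12}$ and Lemma~\ref{lem_Y_close_Y'} gives $|Y_N(y)-Y_N'(y)|\le N^{-7/16}$, so $|Y_N^-(y)-Y_N'(y)|\le 2N^{-1/12}$ there; integrating over an interval of length at most $2(|x|+2\theta)$ and multiplying by $2$ yields a discrepancy at most $8(|x|+2\theta)N^{-1/12}$, and gathering the boundary pieces and the $O(N^{-1})$ telescoping/floor errors keeps the total below $5(|x|+2\theta)N^{-1/12}$ for $N$ large (the constants can be tuned; the bookkeeping only needs each contribution $o(N^{-1/12})$ except the main central one). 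Finally $\mathds{P}(\mathcal{B})\to0$ as shown in the proof of Proposition~\ref{prop_distance_param}, which closes the argument.

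The main obstacle I anticipate is the careful accounting near $\pm(|x|+2\theta)$: one must check that the region where $Y_N^-$ and $Y_N'$ genuinely differ — roughly the window of width $O(N^{3/4})$ around $(|x|+2\theta)N$ where the local times descend to $0$ while $Y_N'$ keeps oscillating like a random walk — contributes only $o(N^{-1/12})$ to $\tfrac{2}{N^{3/2}}\times(\text{sum})$. This needs the bound $|I^\pm\mp(|x|+2\theta)N|<N^{3/4}$ together with $(\mathcal{B}_3^\pm)^c$ (which controls partial sums of $\zeta_i$ over that window by $N^{19/48}$) and the trivial bound $|\eta_{i,\pm}|\le N^{1/16}$ on $(\mathcal{B}_2)^c$, so that over a window of length $N^{3/4}$ the unnormalized discrepancy is at most $O(N^{3/4+19/48})=o(N^{3/2})$, in fact $o(N^{3/2-1/12})$ with room to spare. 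Everything else — the telescoping identity, the exact computation of $\int N(\tfrac{|x|-|y|}{2}+\theta)_+\mathrm{d}y=\tfrac12 N(|x|+2\theta)^2$, and the uniform estimates on the central interval — is routine given the lemmas already proved.
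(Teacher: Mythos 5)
Your overall strategy is the same as the paper's: write $T_N=\sum_i(\ell^+(T_N,i)+\ell^-(T_N,i))$, convert the centred sum into an integral of the fluctuation processes, use Lemma \ref{lem_Y_close} and Lemma \ref{lem_Y_close_Y'} on the central interval, and treat the windows around $\pm(|x|+2\theta)$ separately. (The paper works with $\int(Y_N^++Y_N^-)$ rather than telescoping to $2\sum_i\ell^-(T_N,i)$, which sidesteps your ``order $1$'' correction --- that correction is in fact of order $N$, coming from the $\pm1$ discrepancies between $\ell^+(T_N,i)$ and $\ell^-(T_N,i+1)$ for $0<i<\chi(N)$, though it is still harmless after division by $N^{3/2}$. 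Likewise your central-interval constant $8(|x|+2\theta)N^{-1/12}$ exceeds the target $5(|x|+2\theta)N^{-1/12}$; this is repaired by using the per-point bound $N^{-1/12}+N^{-7/16}$ rather than $2N^{-1/12}$.)

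The genuine gap is in the boundary windows. With the window $|I^\pm\mp(|x|+2\theta)N|<N^{3/4}$ inherited from $\mathcal{B}$, your estimate fails. The only deterministic control you have on the local times inside $[\lfloor(|x|+2\theta)N\rfloor,I^+]$ is obtained by anchoring at $I^+$ (where $\ell^-=0$) and summing increments: via $(\mathcal{B}_2)^c$ this gives $\ell^-(T_N,i)\le (I^+-i)(N^{1/16}+1/2)\le 3N^{13/16}$, whence $|Y_N^-|\le 3N^{5/16}$ over an interval of length $N^{-1/4}$, contributing $3N^{1/16}\to+\infty$ to the integral. Your alternative claim that $(\mathcal{B}_3^\pm)^c$ bounds the ``unnormalized discrepancy'' by $O(N^{3/4+19/48})$ implicitly bounds each $\ell^-(T_N,i)$ in the window by $O(N^{19/48})$, but the identity $\ell^-(T_N,i)=\frac{I^+-i}{2}-\sum_{j=i}^{I^+-1}\zeta_j+O(N^{1/3})$ (from \eqref{eq_local_times} and $(\mathcal{B}_4^+)^c$) contains the drift term $\frac{I^+-i}{2}$, which can be as large as $\frac12 N^{3/4}$; so the partial-sum control on the $\zeta_j$ alone does not give what you need, and the resulting bound on the window's contribution is only $O(1)$ after normalization, not $o(N^{-1/12})$. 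The paper's fix is to strengthen the localization of $I^\pm$ for this proposition: it works on $\{|I^\pm\mp(|x|+2\theta)N|<N^{5/8}\}$ (available from Lemma \ref{lem_control_I} with $\delta=1/8$), under which the crude increment bound gives $\ell^\pm(T_N,i)\le 4N^{11/16}$ throughout the window (its equation \eqref{eq_small_local_times}), so $|Y_N^\pm|\le 4N^{3/16}$ over a window of length $N^{-3/8}$, contributing $O(N^{-3/16})=o(N^{-1/12})$. You would need to import this narrower window (or some equivalent control on the local times near $I^\pm$) for your boundary accounting to close.
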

 
 \begin{proof}
 The result will come from the fact that $T_N$ can be written as the sum of the local times, which is itself related to the integrals of $Y_N^-$ and $Y_N^+$, which are close to $Y_N$ by Lemma \ref{lem_Y_close} hence to $Y_N'$ by Lemma \ref{lem_Y_close_Y'}. It is enough to prove that if $(\mathcal{B}_2)^c$, $(\mathcal{B}_4^-)^c$ and $(\mathcal{B}_4^+)^c$ occur and if $|I^-+(|x|+2\theta)N| < N^{5/8}$, $|I^+-(|x|+2\theta)N| < N^{5/8}$, then $|\frac{1}{N^{3/2}}(T_N-N^2(|x|+2\theta)^2)-2\int_{-|x|-2\theta}^{|x|+2\theta}Y_N'(y)\mathrm{d}y| \leq 5(|x|+2\theta)N^{-1/12}$, since Lemma \ref{lem_zeta_eta_small} implies $\mathds{P}(\mathcal{B}_2)$ tends to 0 when $N$ tends to $+\infty$, Lemma \ref{lem_sum_eta_zeta} implies $\mathds{P}(\mathcal{B}_4^-)$ and $\mathds{P}(\mathcal{B}_4^+)$ tend to 0 when $N$ tends to $+\infty$, and Lemma \ref{lem_control_I} implies $\mathds{P}(|I^-+(|x|+2\theta)N| \geq N^{5/8})$ and $\mathds{P}(|I^+-(|x|+2\theta)N| \geq N^{5/8})$ tend to 0 when $N$ tends to $+\infty$. We assume $(\mathcal{B}_2)^c$, $(\mathcal{B}_4^-)^c$ and $(\mathcal{B}_4^+)^c$ occur and $|I^-+(|x|+2\theta)N| < N^{5/8}$, $|I^+-(|x|+2\theta)N| < N^{5/8}$, let us study $T_N$. 
 
 In order to do that, we first need to prove an auxiliary result, more precisely that the following holds when $N$ is large enough:
 \begin{equation}\label{eq_small_local_times}
 \text{if }|i-I^+| \leq N^{5/8}+1 \text{ or } |i-I^-| \leq N^{5/8}+1 \text{ then } \ell^+(T_N,i) \leq 4N^{11/16} \text{ and }\ell^-(T_N,i) \leq 4N^{11/16}.
 \end{equation}
 We prove \eqref{eq_small_local_times} for the case $|i-I^-| \leq N^{5/8}+1$, since the other is similar. Let $i \in \mathds{Z}$ so that $|i-I^-| < N^{5/8}+1$. We notice that since $|I^-+(|x|+2\theta)N| < N^{5/8}$ we have $I^-,i < 0$ when $N$ is large enough, so \eqref{eq_local_times} yields $|\ell^+(T_N,i)-\ell^+(T_N,I^-)| \leq \sum_{|j-I^-| < N^{5/8}+1}|\eta_{j,-}(\ell^+(T_N,j))|$, thus since $\ell^+(T_N,I^-)=0$ we have $\ell^+(T_N,i) \leq \sum_{|j-I^-| < N^{5/8}+1}|\eta_{j,-}(\ell^+(T_N,j))|$. In addition, we assumed $(\mathcal{B}_2)^c$, hence $\ell^+(T_N,i) \leq \sum_{|j-I^-| < N^{5/8}+1}(N^{1/16}+1/2) \leq 3 N^{5/8}N^{1/16}=3N^{11/16}$ when $N$ is large enough. Furthermore, \eqref{eq_local_times_2} implies $|\ell^-(T_N,i)-\ell^+(T_N,i)|=|\eta_{i,-}(\ell^+(T_N,i))| \leq N^{1/16}+1/2$ thanks to $(\mathcal{B}_2)^c$, hence $\ell^-(T_N,i) \leq 3N^{11/16}+N^{1/16}+1/2 \leq 4 N^{11/16}$ when $N$ is large enough, which ends the proof of \eqref{eq_small_local_times}.
 
 We now write $T_N$ as the sum of the local times and relate $\frac{1}{N^{3/2}}(T_N-N^2(|x|+2\theta)^2)$ to the integral of $Y^+$ and $Y^-$. We have $T_N = \sum_{i\in\mathds{Z}}(\ell^+(T_N,i)+\ell^-(T_N,i))$. Moreover, Lemma \ref{lem_0_after_I} implies that for all $i \geq I^+$ and $i \leq I^-$ we have $\ell^+(T_N,i)=\ell^-(T_N,i)=0$. Consequently, $T_N = \sum_{i=I^- \wedge (-\lfloor (|x|+2\theta)N\rfloor)}^{I^+ \vee \lfloor (|x|+2\theta)N\rfloor}(\ell^+(T_N,i)+\ell^-(T_N,i))$. We thus have $|\frac{1}{N^{3/2}}(T_N-N^2(|x|+2\theta)^2)-\int_{(I^-\wedge (-(|x|+2\theta)N))/N}^{(I^+\vee (|x|+2\theta)N)/N}(Y_N^+(y)+Y_N^-(y))\mathrm{d}y| \leq \frac{1}{N^{3/2}}(\ell^+(T_N,I^+ \vee \lfloor (|x|+2\theta)N\rfloor)+\ell^-(T_N,I^+ \vee \lfloor (|x|+2\theta)N\rfloor)+\ell^+(T_N,-\lfloor (|x|+2\theta)N\rfloor-1)+\ell^-(T_N,-\lfloor (|x|+2\theta)N\rfloor-1))$. Since we assumed $|I^-+(|x|+2\theta)N| < N^{5/8}$ and $|I^+-(|x|+2\theta)N| < N^{5/8}$, equation \eqref{eq_small_local_times} yields 
 \begin{equation}\label{eq_T_close_int}
 \left|\frac{1}{N^{3/2}}(T_N-N^2(|x|+2\theta)^2)-\int_{(I^-\wedge (-(|x|+2\theta)N))/N}^{(I^+\vee (|x|+2\theta)N)/N}(Y_N^+(y)+Y_N^-(y))\mathrm{d}y\right| \leq \frac{1}{N^{3/2}}16 N^{11/16}=16N^{-13/16}.
 \end{equation}
 
 We now prove that $\int_{(I^-\wedge (-(|x|+2\theta)N))/N}^{(I^+\vee (|x|+2\theta)N)/N}(Y_N^+(y)+Y_N^-(y))\mathrm{d}y$ is close to $2\int_{-(|x|+2\theta)}^{|x|+2\theta}Y_N(y)\mathrm{d}y$. We begin by considering $\int_{\chi(N)/N}^{(I^+\vee (|x|+2\theta)N)/N}(Y_N^+(y)+Y_N^-(y))\mathrm{d}y$. We first assume $I^+ \geq (|x|+2\theta)N$. Since we assumed $(\mathcal{B}_2)^c$, $(\mathcal{B}_4^-)^c$ and $(\mathcal{B}_4^+)^c$ occur, Lemma \ref{lem_Y_close} yields  $|\int_{\chi(N)/N}^{(I^+\vee (|x|+2\theta)N)/N}(Y_N^+(y)+Y_N^-(y))\mathrm{d}y-2\int_{\chi(N)/N}^{|x|+2\theta}Y_N(y)\mathrm{d}y| \leq 2(|x|+2\theta-\frac{\chi(N)}{N})N^{-1/12}+\int_{|x|+2\theta}^{I^+/N}|Y_N^+(y)+Y_N^-(y))|\mathrm{d}y$. In addition, we know $I^+ - (|x|+2\theta)N \leq N^{5/8}$ and \eqref{eq_small_local_times}, hence 
 \[
 \int_{|x|+2\theta}^{I^+/N}|Y_N^+(y)+Y_N^-(y))|\mathrm{d}y \leq N^{-3/8}\frac{1}{\sqrt{N}}\left(\max_{\lfloor (|x|+2\theta)N\rfloor\leq i \leq I^+}\ell^+(T_N,i)+\max_{\lfloor (|x|+2\theta)N\rfloor\leq i \leq I^+}\ell^-(T_N,i)\right)
 \]
 \[
 \leq N^{-3/8}N^{-1/2}8N^{11/16} = 8N^{-3/16}.
 \]
 We deduce 
 \[
 \left|\int_{\chi(N)/N}^{(I^+\vee (|x|+2\theta)N)/N}(Y_N^+(y)+Y_N^-(y))\mathrm{d}y-2\int_{\chi(N)/N}^{|x|+2\theta}Y_N(y)\mathrm{d}y\right| \leq 2\left(|x|+2\theta-\frac{\chi(N)}{N}\right)N^{-1/12}+8N^{-3/16}.
 \]
 We now assume $I^+ < (|x|+2\theta)N$. In this case, we have 
 \[
 \left|\int_{\chi(N)/N}^{(I^+\vee (|x|+2\theta)N)/N}(Y_N^+(y)+Y_N^-(y))\mathrm{d}y-2\int_{\chi(N)/N}^{|x|+2\theta}Y_N(y)\mathrm{d}y\right| 
 \]
 \[
 \leq \int_{\chi(N)/N}^{I^+/N}|Y_N^+(y)+Y_N^-(y)-2Y_N(y)|\mathrm{d}y+\int_{I^+/N}^{|x|+2\theta}|Y_N^+(y)+Y_N^-(y)|\mathrm{d}y+\int_{I^+/N}^{|x|+2\theta}|2Y_N(y)|\mathrm{d}y.
 \]
 Moreover, Lemma \ref{lem_Y_close} yields $\int_{\chi(N)/N}^{I^+/N}|Y_N^+(y)+Y_N^-(y)-2Y_N(y)|\mathrm{d}y \leq 2(|x|+2\theta-\frac{\chi(N)}{N})N^{-1/12}$. Furthermore, for $y \geq \frac{I^+}{N}$ we have $\ell^\pm(T_N,\lfloor Ny\rfloor)=0$. Since $|I^+ - (|x|+2\theta)N| < N^{5/8}$ this yields $|Y_N^\pm(y)| \leq \frac{1}{2}N^{1/8}$. Thus $\int_{I^+/N}^{|x|+2\theta}|Y_N^+(y)+Y_N^-(y)|\mathrm{d}y \leq \int_{I^+/N}^{|x|+2\theta}N^{1/8}\mathrm{d}y \leq N^{-3/8}N^{1/8}=N^{-1/4}$.  We deduce 
 \[
 \left|\int_{\chi(N)/N}^{(I^+\vee (|x|+2\theta)N)/N}(Y_N^+(y)+Y_N^-(y))\mathrm{d}y-2\int_{\chi(N)/N}^{|x|+2\theta}Y_N(y)\mathrm{d}y\right| 
 \]
 \[
 \leq 2\left(|x|+2\theta-\frac{\chi(N)}{N}\right)N^{-1/12}+N^{-1/4}+\int_{I^+/N}^{|x|+2\theta}|2Y_N(y)|\mathrm{d}y.
 \]
 In addition, for any $y\in[\frac{I^+}{N},|x|+2\theta]$, we have $|Y_N(y)| \leq |Y_N(y)-Y_N(\frac{I^+}{N})|+|Y_N(\frac{I^+}{N})-Y_N^-(\frac{I^+}{N})|+|Y_N^-(\frac{I^+}{N})|$. Lemma \ref{lem_Y_close} yields that $|Y_N(\frac{I^+}{N})-Y_N^-(\frac{I^+}{N})| \leq N^{-1/12}$, and since $|I^+ - (|x|+2\theta)N| < N^{5/8}$ we have $|Y_N^-(\frac{I^+}{N})|=|\frac{1}{\sqrt{N}}(\ell^\pm(T_N,I^+)-N(\frac{|x|-|I^+/N|}{2}+\theta)_+)|
\leq \frac{1}{2}N^{1/8}$, hence 
\[
|Y_N(y)| \leq \left|Y_N(y)-Y_N\left(\frac{I^+}{N}\right)\right|+N^{-1/12}+\frac{1}{2}N^{1/8}=\frac{1}{\sqrt{N}}\left|\sum_{i=I^+}^{\lfloor Ny\rfloor-1}\zeta_i\right|+N^{-1/12}+\frac{1}{2}N^{1/8} 
\]
\[
\leq \frac{1}{\sqrt{N}}\sum_{i=I^+}^{\lfloor (|x|+2\theta)N\rfloor-1}|\zeta_i|+N^{-1/12}+\frac{1}{2}N^{1/8} \leq \frac{1}{\sqrt{N}}N^{5/8}N^{1/16}+N^{-1/12}+\frac{1}{2}N^{1/8} \leq 2N^{3/16}
\]
since $(\mathcal{B}_2)^c$ occurs. This implies $\int_{I^+/N}^{|x|+2\theta}|2Y_N(y)|\mathrm{d}y \leq \int_{I^+/N}^{|x|+2\theta}4N^{3/16}\mathrm{d}y=N^{-3/8}4N^{3/16}=4N^{-3/16}$. We deduce $|\int_{\chi(N)/N}^{(I^+\vee (|x|+2\theta)N)/N}(Y_N^+(y)+Y_N^-(y))\mathrm{d}y-2\int_{\chi(N)/N}^{|x|+2\theta}Y_N(y)\mathrm{d}y| \leq 2(|x|+2\theta-\frac{\chi(N)}{N})N^{-1/12}+5N^{-3/16}$. Consequently, in all cases we have $|\int_{\chi(N)/N}^{(I^+\vee (|x|+2\theta)N)/N}(Y_N^+(y)+Y_N^-(y))\mathrm{d}y-2\int_{\chi(N)/N}^{|x|+2\theta}Y_N(y)\mathrm{d}y| \leq 2(|x|+2\theta-\frac{\chi(N)}{N})N^{-1/12}+8N^{-3/16}$. One can prove similarly that $|\int_{(I^-\wedge (-(|x|+2\theta)N))/N}^{\chi(N)/N}(Y_N^+(y)+Y_N^-(y))\mathrm{d}y-2\int_{-(|x|+2\theta)}^{\chi(N)/N}Y_N(y)\mathrm{d}y| \leq 2(|x|+2\theta+\frac{\chi(N)}{N})N^{-1/12}+8N^{-3/16}$. We conclude that $|\int_{(I^-\wedge (-(|x|+2\theta)N))/N}^{(I^+\vee (|x|+2\theta)N)/N}(Y_N^+(y)+Y_N^-(y))\mathrm{d}y-2\int_{-(|x|+2\theta)}^{|x|+2\theta}Y_N(y)\mathrm{d}y| \leq 4(|x|+2\theta)N^{-1/12}+16N^{-3/16}$. 
 
We are now in position to conclude. Indeed, the previous result and \eqref{eq_T_close_int} imply that when $N$ is large enough, $|\frac{1}{N^{3/2}}(T_N-N^2(|x|+2\theta)^2)- 2\int_{-(|x|+2\theta)}^{|x|+2\theta}Y_N(y)\mathrm{d}y| \leq 16N^{-13/16}+4(|x|+2\theta)N^{-1/12}+16N^{-3/16}$. Moreover, $(\mathcal{B}_2)^c$ occurs, hence Lemma \ref{lem_Y_close_Y'} yields $\sup\{|Y_N(y)-Y_N'(y)|:y\in[-|x|-2\theta,|x|+2\theta)\} \leq N^{-7/16}$, therefore $|\int_{-(|x|+2\theta)}^{|x|+2\theta}Y_N(y)\mathrm{d}y-\int_{-(|x|+2\theta)}^{|x|+2\theta}Y_N'(y)\mathrm{d}y| \leq 2(|x|+2\theta)N^{-7/16}$. We deduce that when $N$ is large enough, $|\frac{1}{N^{3/2}}(T_N-N^2(|x|+2\theta)^2)- 2\int_{-(|x|+2\theta)}^{|x|+2\theta}Y_N'(y)\mathrm{d}y| \leq 16N^{-13/16}+4(|x|+2\theta)N^{-1/12}+16N^{-3/16}+4(|x|+2\theta)N^{-7/16} \leq 5(|x|+2\theta)N^{-1/12}$, which ends the proof.
 \end{proof}
 
 Now that we know $\frac{1}{N^{3/2}}(T_N-N^2(|x|+2\theta)^2)$ is close to $2\int_{-|x|-2\theta}^{|x|+2\theta}Y_N'(y)\mathrm{d}y$, we need to prove $2\int_{-|x|-2\theta}^{|x|+2\theta}Y_N'(y)\mathrm{d}y$ converges to the desired distribution. In order to do that, we will use the convergence of $Y_N'$ to a Brownian motion stated in Lemma \ref{lem_conv_Y'_C}, so $2\int_{-|x|-2\theta}^{|x|+2\theta}Y_N'(y)\mathrm{d}y$ will converge to the integral of a Brownian motion, the law of the latter being characterized by the following lemma, where we denote by $(B_t)_{t\in\mathds{R}^+}$ a standard Brownian motion with $B_0=0$. This lemma is quite standard (the interested reader can find a proof in the first arXiv version of this paper \cite{Mareche2022v1}).

 \begin{lemma}\label{lem_int_BM}
  For any $y>0$, the integral $\int_0^y B_{z} \mathrm{d}z$ has distribution $\mathcal{N}(0,\frac{y^3}{3})$.
 \end{lemma}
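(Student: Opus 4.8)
The plan is to show first that $\int_0^y B_z\,\mathrm{d}z$ is a centered Gaussian random variable, and then to identify its variance by a direct second-moment computation. For the Gaussianity, I would approximate the integral by the Riemann sums $S_n = \frac{y}{n}\sum_{k=1}^n B_{ky/n}$: each $S_n$ is a linear combination of the jointly Gaussian variables $(B_{ky/n})_{1\leq k\leq n}$, hence is a centered Gaussian variable, and $S_n$ converges almost surely to $\int_0^y B_z\,\mathrm{d}z$ by continuity of the Brownian paths. Since the characteristic functions of the $S_n$ then converge to that of the limit, and the characteristic function of a centered Gaussian variable is $t\mapsto e^{-\sigma^2t^2/2}$, the limit $\int_0^y B_z\,\mathrm{d}z$ is again a centered Gaussian variable. (Alternatively, one may use the pathwise integration-by-parts identity $\int_0^y B_z\,\mathrm{d}z = \int_0^y (y-z)\,\mathrm{d}B_z$ to see directly that $\int_0^y B_z\,\mathrm{d}z$ is a Wiener integral of a deterministic function, hence centered Gaussian with variance $\int_0^y (y-z)^2\,\mathrm{d}z = \frac{y^3}{3}$.)

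It then remains to compute $\mathds{E}((\int_0^y B_z\,\mathrm{d}z)^2)$. By Fubini's theorem, which applies since $\mathds{E}\int_0^y\int_0^y |B_zB_w|\,\mathrm{d}z\,\mathrm{d}w < +\infty$, and using $\mathds{E}(B_zB_w)=z\wedge w$, we obtain
\[
 \mathds{E}\left(\left(\int_0^y B_z\,\mathrm{d}z\right)^2\right) = \int_0^y\int_0^y (z\wedge w)\,\mathrm{d}z\,\mathrm{d}w = 2\int_0^y\left(\int_0^w z\,\mathrm{d}z\right)\mathrm{d}w = \int_0^y w^2\,\mathrm{d}w = \frac{y^3}{3}.
\]
Combining this with the Gaussianity yields that $\int_0^y B_z\,\mathrm{d}z$ has distribution $\mathcal{N}(0,\frac{y^3}{3})$.

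There is no real obstacle in this argument; the only points needing a word of justification are the passage to the limit (a limit in probability of centered Gaussian variables whose variances converge is a centered Gaussian variable, via convergence of characteristic functions) and the interchange of expectation and double integral, licensed by Fubini. I would present the Riemann-sum version in the paper so that the proof stays self-contained and does not require developing the stochastic integral.
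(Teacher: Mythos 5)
Your proof is correct. The Gaussianity argument is essentially the one in the paper: both approximate $\int_0^y B_z\,\mathrm{d}z$ by Riemann sums of the jointly Gaussian values $B_{\ell/k}$, use almost-sure convergence along continuous paths, and pass to the limit through characteristic functions. Where you diverge is in identifying the variance. The paper rewrites the Riemann sum by Abel summation as $\frac{1}{k}\sum_{\ell}(\lfloor ky\rfloor+1-\ell)(B_{\ell/k}-B_{(\ell-1)/k})$, a combination of independent increments, computes its variance exactly as $\frac{1}{k^3}\frac{\lfloor ky\rfloor(\lfloor ky\rfloor+1)(2\lfloor ky\rfloor+1)}{6}$, and lets $k\to+\infty$; you instead compute the second moment of the limit directly via Fubini, $\mathds{E}\bigl(\bigl(\int_0^y B_z\,\mathrm{d}z\bigr)^2\bigr)=\int_0^y\int_0^y (z\wedge w)\,\mathrm{d}z\,\mathrm{d}w=\frac{y^3}{3}$, which is correct and arguably cleaner (it avoids the discrete sum-of-squares formula at the cost of one interchange of expectation and integral, easily justified since $\mathds{E}\int_0^y\int_0^y|B_zB_w|\,\mathrm{d}z\,\mathrm{d}w<+\infty$). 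One small point worth making explicit in your version: when you pass to the limit you should note that the pointwise convergence of $e^{-\sigma_n^2 t^2/2}$ to a characteristic function continuous at $0$ forces $\sigma_n^2$ to converge to a finite limit $\sigma^2$, so that the limit law is $\mathcal{N}(0,\sigma^2)$ and its second moment is indeed the $\sigma^2$ you then compute. Your parenthetical alternative via the Wiener integral $\int_0^y(y-z)\,\mathrm{d}B_z$ is also valid and is essentially the continuous analogue of the paper's Abel summation, but, as you say, the Riemann-sum version keeps the proof self-contained.
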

 
 We are now able to prove Proposition \ref{prop_fluctuations_T}. 
 
 \begin{proof}[Proof of Proposition \ref{prop_fluctuations_T}.]
 Proposition \ref{prop_T_N_conv_int} implies $\frac{1}{N^{3/2}}(T_N-N^2(|x|+2\theta)^2)-2\int_{-|x|-2\theta}^{|x|+2\theta}Y_N'(y)\mathrm{d}y$ converges in probability to 0 when $N$ tends to $+\infty$. Hence by Slutsky's Theorem, it is enough to prove $2\int_{-|x|-2\theta}^{|x|+2\theta}Y_N'(y)\mathrm{d}y$ converges in distribution to $\mathcal{N}(0,\mathrm{Var}(\rho_-)\frac{32}{3}((|x|+\theta)^3+\theta^3))$ when $N$ tends to $+\infty$ to prove Proposition \ref{prop_fluctuations_T}. In addition, by Lemma \ref{lem_conv_Y'_C}, $Y_N'|_{[-|x|-2\theta,|x|+2\theta]}$ converges in distribution to $B^x|_{[-|x|-2\theta,|x|+2\theta]}$ when $N$ tends to $+\infty$ for the topology defined on $C[-|x|-2\theta,|x|+2\theta]$ by the uniform norm. Moreover, the integral between $-|x|-2\theta$ and $|x|+2\theta$ is continuous for this topology, hence $\int_{-|x|-2\theta}^{|x|+2\theta}Y_N'(y)\mathrm{d}y$ converges in distribution to $\int_{-|x|-2\theta}^{|x|+2\theta}B_y^x\mathrm{d}y$ when $N$ tends to $+\infty$. Furthermore, $B^x$ is a two-sided Brownian motion with $B_x^x=0$ and variance $\mathrm{Var}(\rho_-)$, hence we can write $\int_{-|x|-2\theta}^{|x|+2\theta}B_y^x\mathrm{d}y=\int_{-|x|-2\theta}^{x}B_y^x\mathrm{d}y+\int_{x}^{|x|+2\theta}B_y^x\mathrm{d}y$ where $\int_{-|x|-2\theta}^{x}B_y^x\mathrm{d}y$ and $\int_{x}^{|x|+2\theta}B_y^x\mathrm{d}y$ are independent. In addition, $\int_{x}^{|x|+2\theta}B_y^x\mathrm{d}y$ has the distribution of $\sqrt{\mathrm{Var}(\rho_-)}\int_{0}^{2\theta}B_y\mathrm{d}y$, which is $\mathcal{N}(0,\mathrm{Var}(\rho_-)\frac{(2\theta)^3}{3})$ by Lemma \ref{lem_int_BM}, and $\int_{-|x|-2\theta}^x B_y^x\mathrm{d}y$ has the distribution of $\sqrt{\mathrm{Var}(\rho_-)}\int_{0}^{2|x|+2\theta}B_y\mathrm{d}y$, which is $\mathcal{N}(0,\mathrm{Var}(\rho_-)\frac{(2|x|+2\theta)^3}{3})$ by Lemma \ref{lem_int_BM}. We obtain that $\int_{-|x|-2\theta}^{|x|+2\theta}B_y^x\mathrm{d}y$ has the distribution $\mathcal{N}(0,\mathrm{Var}(\rho_-)\frac{(2|x|+2\theta)^3}{3}+\mathrm{Var}(\rho_-)\frac{(2\theta)^3}{3})=\mathcal{N}(0,\mathrm{Var}(\rho_-)\frac{8}{3}((|x|+\theta)^3+\theta^3))$. Consequently, $\int_{-|x|-2\theta}^{|x|+2\theta}Y_N'(y)\mathrm{d}y$ converges in distribution to $\mathcal{N}(0,\mathrm{Var}(\rho_-)\frac{8}{3}((|x|+\theta)^3+\theta^3))$ when $N$ tends to $+\infty$, which ends the proof of Proposition \ref{prop_fluctuations_T}.
 \end{proof}
 
\section*{Acknowledgements}

The author was supported by the University of Strasbourg Initiative of Excellence. She wishes to thank Thomas Mountford for introducing her to this random walk and pointing her to some references.

\end{document}